\theoremstyle{definition}
\newtheorem* {theorem*}{Theorem}
\newtheorem* {conjecture*}{Conjecture}
\newtheorem{theorem}{Theorem}[section]
\theoremstyle{definition}
\newtheorem{observation}[theorem]{Observation}
\newtheorem* {example*}{Example}
\newtheorem{lemma}[theorem]{Lemma}
\theoremstyle{definition}
\newtheorem{definition}[theorem]{Definition}
\theoremstyle{definition}
\newtheorem{conjecture}[theorem]{Conjecture}
\newtheorem{proposition}[theorem]{Proposition}
\newtheorem{corollary}[theorem]{Corollary}
\newtheorem *{remark}{Remark}
\theoremstyle{definition}
\newtheorem {example}[theorem]{Example}
\theoremstyle{definition}
\theoremstyle{definition}
\theoremstyle{definition}
\newtheorem* {remarks}{Remarks}
\numberwithin{equation}{section}
\def\({\left(}
\def\){\right)}
     \newcommand{\CC}{\mathbb{C}}     \newcommand{\cP}{\mathcal{P}} 
\newcommand{\cR}{\mathcal{R}}   \newcommand{\cI}{\mathcal{I}}
\newcommand{\cZ}{\mathcal{Z}}
\def\NN{\mathbb{N}}
\def\CC{\mathbb{C}}
\def\RR{\mathbb{R}}
\def\ZZ{\mathbb{Z}} \def\Aut{\mathrm{Aut}}  
  \def\wt{\widetilde}
   \newcommand{\supp}{\mathrm{supp}}
\def\barr{\begin{array}}
\def\earr{\end{array}}
\def\ba{\begin{aligned}}
\def\ea{\end{aligned}}
\def\be{\begin{equation}}
\def\ee{\end{equation}}
\def\cX{\mathcal{X}}
\def\cY{\mathcal{Y}}
\def\Cyc{\mathrm{Cyc}}
\def\Fix{\mathrm{Fix}}
\def\qquand{\qquad\text{and}\qquad}
\def\quand{\quad\text{and}\quad}
\def\qquord{\qquad\text{or}\qquad}
\def\cW{\mathcal{W}}
\def\inv{\mathrm{inv}}
\def\ainv{\mathrm{inv}_\cA}
\def\I{\mathcal{I}}
\def\DesR{\mathrm{Des}_R}
\def\DesL{\mathrm{Des}_L}
\def\omdef{\overset{\mathrm{def}}}
\def\hs{\hspace{0.5mm}}
\def\id{\mathrm{id}}
\def\PP{\mathbb{P}}
\def\ben{\begin{enumerate}}
\def\een{\end{enumerate}}
\def\cE{\mathcal E}
\def\op{\mathrm{op}}
\newcommand{\Mon}[1]{\mathscr{F}(#1)}
\def\hs{\hspace{0.5mm}}
\def\simstar{{\hs\hs\sim_{\I_*}\hs\hs}}
\def\fpf{{\tt {FPF}}}
\def\ellhat{\hat\ell}
\def\Ifpf{\I_\fpf}
\def\x{\textbf{x}}
\def\y{\textbf{y}}
\def\z{\textbf{z}}
\def\a{\textbf{a}}
\def\b{\textbf{b}}
\def\c{\textbf{c}}
\def\e{\textbf{e}}
\newcommand{\wfpf}[1]{w_\fpf}
\newcommand{\xRightarrow}[2][]{\ext@arrow 0359\Rightarrowfill@{#1}{#2}}
\newcommand{\Fl}{\operatorname{Fl}}
\newcommand{\std}{\operatorname{std}}
\renewcommand{\O}{\operatorname{O}}
\newcommand{\Sp}{\operatorname{Sp}}
\newcommand{\rank}{\operatorname{rank}}
\def\dbldots{: \hspace{.65mm} : \hspace{.65mm} :}
\def\L{\underline L}
\def\R{R}
\newcommand{\cA}{\mathcal{A}}
  \newcommand{\cB}{\mathcal{B}}
\def\DemA{\mathcal{B}}
\def\pB{\mathcal{B}'}
\def\cAfpf{\cA_\fpf}
\def\DemAfpf{\DemA_\fpf}
\def\cRfpf{\hat\cR_\fpf}
\def\act{\rtimes}
\newcommand\dact{\hat\circ}
\def\dem{\circ}
\def\invdemspace{\hspace{.85mm}}
\newcommand{\geninvdem}[3]{#2 \invdemspace \dact#1\invdemspace #3}
\newcommand{\idem}[2]{\geninvdem{_*}{#1}{#2}}
\newcommand{\sidem}[2]{\geninvdem{}{#1}{#2}}
\renewcommand{\@makefnmark}{\mbox{\textsuperscript{}}}
\begin{document}
\title{Involution words II: braid relations and atomic structures}

\author{Zachary Hamaker 
\\
 Institute for Mathematics and its Applications \\ University of Minnesota \\ { \tt zachary.hamaker@gmail.com}
 \and
   Eric Marberg\footnote{This author was supported through a fellowship from the National Science Foundation.}
\\
 Department of Mathematics \\  Stanford University \\ {\tt eric.marberg@gmail.com}
\vspace{3mm}
\and
Brendan Pawlowski \footnote{This author was partially supported by NSF grant 1148634.}
\\ Department of Mathematics  \\ University of Michigan \\ {\tt br.pawlowski@gmail.com}
}

\date{}

\maketitle

\begin{abstract}
Involution words are  variations of reduced words for twisted involutions in Coxeter groups.
They arise naturally in  the study of the Bruhat order, of certain Iwahori-Hecke algebra modules, 
and
of    orbit closures in  flag varieties.
Specifically, 
to  any twisted involutions $x,y$ in a Coxeter group $W$ with automorphism $*$, we associate a set of involution words $\hat\cR_*(x,y)$. This set  is  the disjoint union of the reduced words of a set of group elements  $\cA_*(x,y)$, which we call the atoms of $y$ relative to $x$. The atoms, in  turn, are contained in a larger set $\cB_*(x,y) \subset W$ with a similar definition, whose elements we refer to as  Hecke atoms.
Our main results concern some interesting properties of the sets $\hat\cR_*(x,y)$ and $\cA_*(x,y) \subset \DemA_*(x,y)$. For finite Coxeter groups we prove that  $\cA_*(1,y)$ consists of exactly the minimal-length elements $w \in W$ such that $w^* y \leq w$ in Bruhat order, and conjecture a more general property for arbitrary Coxeter groups.
In type $A$, we describe a simple set of conditions characterizing the sets $\cA_*(x,y)$ for all involutions $x,y \in S_n$, giving a common generalization of three recent theorems of Can, Joyce, and Wyser. We   show that   the atoms of a fixed involution in the symmetric group (relative to $x=1$) naturally form a graded poset, while the Hecke atoms surprisingly form an equivalence class under the ``Chinese relation'' studied by Cassaigne, Espie, \emph{et al.}  These facts allow us to recover a recent theorem of Hu and Zhang describing a set of ``braid relations'' spanning  the involution words of any self-inverse permutation. We prove a generalization of this result giving an analogue of Matsumoto's theorem for involution words in arbitrary Coxeter groups.
\end{abstract} 

\tableofcontents
\setcounter{tocdepth}{2}

\section{Introduction}\label{intro-sect}

\subsection{Background}

Let $(W,S)$ be a Coxeter system with length function $\ell : W \to \NN$ 
and write  $\Aut(W,S)$ for the group of automorphisms of $W$ that preserve the set of simple generators $S$.
Since $\Aut(W,S)$ is in bijection with the   set of graph automorphisms of the Coxeter diagram 
of $(W,S)$,   we sometimes refer to elements of $\Aut(W,S)$ as \emph{diagram automorphisms}.

Fix an involution   $* \in \Aut(W,S)$, i.e., a diagram automorphism of order 1 or 2.
We denote the action of $*$ on  elements $w \in W$ by $w^*$,
and 
refer to triples $(W,S,*)$ as \emph{twisted Coxeter systems}.
Each twisted Coxeter system  $(W,S,*)$
determines a set of \emph{twisted involutions} in $W$ given by
\[ \I_*(W) = \{ w \in W : w^{-1} = w^* \}.\]
When $W$ is fixed we write $\I_*$ in place of $\I_*(W)$. 
This work 
is primarily about   certain analogues of reduced words for elements of $\I_*$, which we define  as follows.
Given $s \in S$ and $w \in W$, let 
\be\label{demprod}
s \dem w= \begin{cases} sw &\text{if $\ell(w) < \ell(sw)$} \\ w&\text{otherwise}\end{cases} \qquand w \dem s= \begin{cases} ws &\text{if $\ell(w) < \ell(ws)$} \\ w&\text{otherwise}.\end{cases}
\ee
The symbol
 $\dem$ uniquely extends to an associative binary operation $W \times W \to W$, known in the literature as the \emph{Demazure product} \cite[\S3]{KM}.
Observe that if $s_i \in S$ and $w=s_1s_2\cdots s_k$ is a reduced expression then $s_1\dem s_2\dem \cdots \dem s_k = w$.
%
%
%
In turn, let
\be\label{dact-def}
\idem{x}{w} = (w^*)^{-1} \dem x \dem w \qquad\text{for }x,w \in W.
\ee
A more explicit formula for $\dact_*$ is given in Corollary \ref{invdem-cor}.
Although  it is a simple exercise to show that
$\idem{(\idem{x}{y})}{z} = \idem{x}{( y\dem z)}$ for all $x,y,z \in W$, note that
 the operation $\dact_*$ is not associative.
 However,
with the convention that
$\idem{\idem{\idem{\idem{a}{b}}{c}}{\cdots}}{z} =\idem{(\cdots \idem{(\idem{(\idem{a}{b})}c)}{\cdots})}{z}$, we usually omit the parentheses in  expressions with iterated instances of $\dact_*$.

\begin{definition}\label{def0}
Let $x,y \in \I_*$.
An \emph{involution word} of $y$ relative to $x$ 
is    
a sequence $(s_1,s_2,\dots,s_k)$ with $s_i \in S$ 
of shortest possible length $k$ such that 
$\idem{\idem{\idem{\idem{x}{s_1}}{s_2}}{\cdots}}{s_k} = y.$
We denote the (possibly empty) set of such sequences by $\hat \cR_*(x,y)$. 
\end{definition}


We define  $\hat\cR_*(y) = \hat\cR_*(1,y)$, and 
when $*=\id$ we abbreviate by writing $ \hat\cR(x,y)$ in place of  $\hat \cR_\id(x,y)$.
It is a basic fact that the operation $\dact_*$ defines a map $\I_* \times W \to \I_*$ 
and 
that the set
$\hat \cR_*(y)$; see Corollaries \ref{invdem-cor} and \ref{all-nonempty-cor}.  We call the elements of $\hat\cR_*(y)$   the \emph{involution words} of $y$.

\begin{example}\label{0-ex}
Suppose $W = S_n$ is the symmetric group, viewed as a twisted Coxeter system relative to $*=\id$ and the generating set $S = \{ s_1,s_2,\dots,s_{n-1}\}$ where $s_i=(i,i+1)$.
Writing $\dact=\dact_\id$,
we have
$\sidem{\sidem{\sidem{(1,2)(5,6)}{s_3}}{s_2}}{ s_1}=s_1\cdot ( s_2\cdot ( (1,2)(5,6)\cdot s_3)\cdot s_2)\cdot s_1 = (1,4)(2,3)(5,6)$,
and in fact $(s_3,s_2,s_1) \in \hat\cR(x,y)$   for
 $x=(1,2)(5,6)$ and $y=(1,4)(2,3)(5,6)$.
\end{example}

 Involution words arise naturally in at least three different contexts: in the study of the Bruhat order of $W$ restricted to $\I_*$ \cite{H1,H2,H3,RichSpring,RichSpring2},
in the study of certain Iwahori-Hecke algebra modules related to unitary representations of complex reductive groups  \cite{HuZhang,LV2,LV1,RV},
and in the study
of the cohomology of   orbit closures in the flag variety \cite{CJ,CJW,HMP1,Wyser,WY}.
After possibly exchanging our ``right-handed'' definition with its left-handed version, involution words are the same as what are called
  ``admissible sequences'' in \cite{RichSpring}, 
 ``$\underline S$-expressions'' 
 in \cite{H2,H3},
 and
  ``$\textbf{I}_*$-expressions''
  in
\cite{HuZhang,EM1}. For permutations, involution words are identified in  \cite{CJ,CJW} with  maximal chains in the  weak order on the set of $B$-orbit closures in certain spherical varieties.
We introduced  the term ``involution words'' in our complementary work \cite{HMP1}
with the hope of providing a sensible replacement for
this diversity of names   in the literature.

This paper is an investigation of the properties of involution words for their own sake.
However, many of our results, rather than addressing the sets $\hat \cR_*(x,y)$ directly, will concern the following  related  collections of group elements.

\begin{definition}\label{def1} For each $x,y \in \I_*$ let
$\DemA_*(x,y)= \{ w \in W: \idem{x}{w} = y\}
$
and define
$\cA_*(x,y)$ as the subset of minimal-length elements in $\DemA_*(x,y)$.
We refer to the elements of $\cA_*(x,y)$ as the \emph{atoms} of $y$ relative to $x$, and
 to the elements of $\DemA_*(x,y)$ as the \emph{Hecke atoms} of  $y$ relative to $x$.
\end{definition}

Recall that a \emph{reduced word} for $w \in W$ is a sequence $(s_1,s_2\dots,s_k)$ with $s_i \in S$ of shortest possible length $k$ such that $w=s_1s_2\cdots s_k$. Let $\cR(w)$ denote the set of reduced words for $w \in W$.
One reason for viewing $\cA_*(x,y)$ as a set of ``atomic'' entities is  the following observation.

\begin{proposition}\label{atomexist-prop} For each $x,y \in \I_*$,
it holds  that $\hat \cR_*(x,y) = \bigcup_{w \in \cA_*(x,y)} \cR(w)$.
Thus each of the sets $\hat \cR_*(x,y)$ for $x,y \in \I_*$ is preserved by the braid relations of $(W,S)$.
\end{proposition}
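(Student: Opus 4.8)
The plan is to collapse every iterated instance of $\dact_*$ with single generators into a single application of $\dact_*$ to a Demazure product, and then read off the claim from a comparison of lengths. The first step is to prove, by induction on $k$, the identity
\[
\idem{\idem{\idem{\idem{x}{s_1}}{s_2}}{\cdots}}{s_k} = \idem{x}{(s_1 \dem s_2 \dem \cdots \dem s_k)}
\]
for all $x \in W$ and all sequences $(s_1,\dots,s_k)$ of simple generators. The base case $k=1$ is immediate, and the inductive step is exactly the relation $\idem{(\idem{a}{b})}{c} = \idem{a}{(b \dem c)}$ recorded in the excerpt, applied with $a = x$, $b = s_1 \dem \cdots \dem s_{k-1}$, and $c = s_k$.

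Granting this identity, a sequence $(s_1,\dots,s_k)$ of simple generators satisfies $\idem{\idem{\idem{\idem{x}{s_1}}{s_2}}{\cdots}}{s_k} = y$ if and only if its Demazure product $w = s_1 \dem \cdots \dem s_k$ lies in $\DemA_*(x,y)$. Since $\ell(w) \le k$ always, with equality precisely when $(s_1,\dots,s_k)$ is a reduced word for $w$ (using that the Demazure product of a reduced word recovers the element, as noted in the excerpt), I would next argue that the minimal length of such a sequence equals $\min\{\ell(w) : w \in \DemA_*(x,y)\}$, which is by definition the common length of the atoms in $\cA_*(x,y)$. Indeed, any valid sequence has length at least $\ell(w) \ge \min\{\ell(u): u \in \DemA_*(x,y)\}$, while any reduced word for an atom $w_0 \in \cA_*(x,y)$ is itself a valid sequence of exactly that length.

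The set equality then follows in both directions. For $\supseteq$, a reduced word for $w_0 \in \cA_*(x,y)$ has minimal length and yields $y$, hence lies in $\hat\cR_*(x,y)$. For $\subseteq$, given $(s_1,\dots,s_k) \in \hat\cR_*(x,y)$ with $w = s_1 \dem \cdots \dem s_k$, the bound $\ell(w) \le k$ together with the minimality of $k$ forces $\ell(w) = k$; thus $w$ has minimal length in $\DemA_*(x,y)$, so $w \in \cA_*(x,y)$, and the sequence is a reduced word for $w$. For the braid-relation consequence, I would invoke that each $\cR(w)$ is closed under braid relations, since a braid move sends a reduced word of $w$ to another reduced word of the same element; as reduced-word sets of distinct elements are disjoint, the union $\bigcup_{w \in \cA_*(x,y)} \cR(w)$ is closed under braid relations as well.

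I expect the only genuine content to be the collapse identity; everything after it is bookkeeping with lengths. The single subtlety to watch is that ``shortest possible length'' in the definition of involution words must be matched exactly against the minimal length attained in $\DemA_*(x,y)$, which is precisely what the inequality $\ell(w) \le k$ and its equality condition secure.
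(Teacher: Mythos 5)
Your proposal is correct and follows essentially the same route as the paper: both rest on the collapse identity $\idem{\idem{\idem{x}{s_1}}{\cdots}}{s_k} = \idem{x}{(s_1\dem\cdots\dem s_k)}$ (which the paper records as a consequence of $\idem{(\idem{a}{b})}{c}=\idem{a}{(b\dem c)}$) and then match the minimality of $k$ against the minimal length in $\DemA_*(x,y)$ via $\ell(s_1\dem\cdots\dem s_k)\leq k$ with equality exactly for reduced words. The paper compresses all of this into one sentence; your version just makes the same bookkeeping explicit.
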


\begin{proof}
Since $\idem{\idem{\idem{\idem{x}{s_1}}{s_2}}{\cdots}}{s_k} =\idem{x}{(s_1\dem s_2\dem\cdots \dem s_k)}$ for all $s_i \in S$,  a word $(s_1,s_2,\dots,s_k) $ belongs to $ \hat\cR_*(x,y)$ if and only if it is a reduced word for an element $w \in W$ of shortest possible length such that $\idem{x}{w} = y$, i.e., if and only if it belongs to $ \cR(w)$ for some $w \in \cA_*(x,y)$.
\end{proof}

Set $\cA_*(y) = \cA_*(1,y)$ and $\DemA_*(y) = \DemA_*(1,y)$,
and when $*=\id$ we drop the subscript and write $\cA(x,y) = \cA_\id(x,y)$ and $\DemA(x,y) = \DemA_\id(x,y)$. We  combine these conventions when possible.
It follows from  \cite[\S1]{Lu2015} that the sets  $\DemA_*(y)$ are the preimages of the surjective map $\pi : W \to \I_*$ described by \cite[Theorem 0.2(c)]{Lu2015} in a recent paper of Lusztig.

\begin{example}\label{1-ex}
Let $W = S_n$.
If $x = (1,3)(2,5) = [3,5,1,4,2]$ and $y = (1,4)(2,5) = [4,5,3,1,2],$
then \[
\ba
 \cA(x,x) &= \{1\} \subset \DemA(x,x) = 
\{ 1,\ s_2,\ s_4,\ s_2s_4\}
\\
\cA(x,y)& = \{s_3\} \subset \DemA(x,y) = 
\{ s_3,\ s_2s_3,\ s_3s_2,\ s_3s_4,\ s_2s_3s_2,\ s_2s_3s_4,\ s_2s_4s_3,\ s_2s_4s_3s_2\}
.
\ea
\]
In general, $\cB(x,x)$ is the subgroup generated by all right descents of $x$. 
\end{example}

Proposition \ref{atomexist-prop} shows that each set of involution words decomposes as a disjoint union of  the ordinary reduced words of some corresponding set of atoms.  The problem of studying involution words reduces in this sense to that of classifying the elements of  $\cA_*(x,y)$ and $\DemA_*(x,y)$. Our main theorems will highlight some noteworthy properties of these sets. 

\subsection{Results}

We now outline our main results.
Section  \ref{prelim-sect}  presents a few  general facts about the sets $\hat \cR_*(x,y)$
and  $\cA_*(x,y)\subset \DemA_*(x,y)$,
and derives a second definition of $\hat\cR_*(x,y)$, showing that our notion of involution words   is equivalent to analogous concepts in \cite{HMP1,HuZhang,H2,H3,RichSpring}; see Corollary \ref{altdef-cor}.
In Section \ref{duality-sect},
we prove some  results relating involution words defined with respect to distinct diagram automorphisms $*$ and $\diamond$ with  the same image in the outer automorphism group of $W$. 
Such facts allow us to  translate between most statistics of interest for involution words defined with respect to the two diagram automorphisms of the symmetric group, for example.

Let $\leq$ denote the Bruhat order on $W$.
In Section \ref{bruhat-sect}, we study a conjectural,   alternate definition of  $\cA_*(x,y)$, involving a simple condition depending only on this order.
Among other results, we derive the following theorem from a slightly more general statement;  see Theorem \ref{lastbr-thm}.

\begin{theorem*}
Let $(W,S,*)$ be a twisted Coxeter system. If $W$ is finite, then for each $y \in \I_*$ the set  $\cA_*(y)=\cA_*(1,y)$ consists of  precisely the  minimal-length elements $w \in W$ satisfying $w^*y \leq w$.
\end{theorem*}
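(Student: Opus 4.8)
The plan is to fix $y\in\I_*$ and compare the two sets
\[
\DemA_*(y)=\{w\in W:\idem 1 w=y\}\qquad\text{and}\qquad T=\{w\in W: w^* y\le w\},
\]
where $\idem 1 w=(w^*)^{-1}\dem w$ and, as in the paper's conventions, $\DemA_*(y)=\DemA_*(1,y)$ and $\cA_*(y)$ is the set of minimal-length elements of $\DemA_*(y)$. I would prove the theorem in two steps: (a) $\DemA_*(y)\subseteq T$, and (b) every minimal-length element of $T$ already lies in $\DemA_*(y)$. These two facts force the minimal-length elements of the two sets to coincide: since $\DemA_*(y)\subseteq T$, the minimal length $m$ attained on $T$ is at most the minimal length $\hat\ell(y)$ attained on $\DemA_*(y)$; but by (b) the minimizers of $T$ lie in $\DemA_*(y)$, so $m=\hat\ell(y)$, and the two sets of minimizers are then equal. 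Thus $\cA_*(y)$ equals the set of minimal-length $w$ with $w^*y\le w$, which is the assertion.

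For (a) the key is a general identity for the Demazure product. Using the greedy word description of $\dem$, for any $u,b\in W$ one may write $u\dem b=u\cdot b_K$ with $b_K\le b$ and $\ell(u\dem b)=\ell(u)+\ell(b_K)$: concatenating a reduced word for $u$ with one for $b$ and reading the Demazure product left to right lays down $u$ first (each step strictly increasing length) and then right-multiplies by the ``used'' letters of $b$, whose product $b_K$ is a reduced subword of $b$. Taking $u=(w^*)^{-1}$ and $b=w$ exhibits $(w^*)^{-1}\dem w=(w^*)^{-1}\,b_K$ as a reduced factorization, so for $w\in\DemA_*(y)$ we get
\[
w^* y=w^*\bigl((w^*)^{-1}\dem w\bigr)=b_K\le w,
\]
proving $w\in T$. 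This also records the sharp length identity $\ell(w^* y)=\ell(y)-\ell(w)$ for atoms, which I would exploit in (b).

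Step (b) is the crux. I would argue by induction on the involution length $\hat\ell(y)$ (the common minimal-atom length), the base case $y=1$ being immediate since $w^*\le w$ forces $w=w^*$ and hence $w=1$ at minimal length. For the inductive step I would choose a simple reflection $s$ with $ys<y$ (equivalently $s^* y<y$) and let $y'\lessdot y$ be the twisted involution it removes, so $y=\idem{y'}{s}$ and $\hat\ell(y')=\hat\ell(y)-1$. The heart of the matter is an exchange lemma: if $w\in T$ but $\idem 1 w\ne y$, then one can locate a right descent $s$ of $w$ for which $ws$ still satisfies $(ws)^* y\le ws$, producing a strictly shorter element of $T$ and contradicting minimality; conversely a minimal $w\in T$ should factor through the recursive description of $\cA_*(y)$ in terms of $\cA_*(y')$ by appending $s$. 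Establishing this compatibility between the Bruhat inequality $w^* y\le w$ and the covering structure of $\I_*$, via the lifting property of Bruhat order, is where essentially all of the work lies, and is the step I expect to be the main obstacle.

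Finally, finiteness of $W$ should enter precisely in step (b): the cleanest route I would attempt runs the descent argument against the longest element $w_0\in\I_*$, using the order-reversing bijection $u\mapsto w_0 u$ to control when the exchange lemma applies, which is plausibly also why the corresponding statement is only conjectural for infinite $W$. Alternatively, one can establish the more general Theorem~\ref{lastbr-thm} first and specialize to $x=1$, which is the route the paper indicates.
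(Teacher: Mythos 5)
Your reduction of the theorem to the two inclusions is sound, and your step (a) is complete and correct: the factorization $u \dem b = u\cdot b_K$ with $b_K \leq b$ and $\ell(u\dem b)=\ell(u)+\ell(b_K)$, applied to $y = (w^*)^{-1}\dem w$, does give $w^*y = b_K \leq w$ for every $w \in \DemA_*(y)$. This is a genuinely different (and arguably slicker) route to the containment $\cA_*(y)\subset\pB_*(y)$ than the paper's Proposition \ref{bruhatchar-prop}, which instead inducts on $\ellhat_*(x,y)$ using a lifting-property lemma so as to cover general $x$.

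The gap is step (b), and it is not a small one: everything that makes the theorem a theorem lives there, and you have only named it, not proved it. Your proposed ``exchange lemma'' --- that a minimal-length $w\in\pB_*(y)$ with $\idem{1}{w}\neq y$ admits a right descent $s$ with $ws\in\pB_*(y)$ --- is never established, and it is not the mechanism the paper uses. Two concrete problems with your plan. First, the upward induction on $\ellhat_*(y)$ from the base case $y=1$ does not match the recursion actually available: Proposition \ref{atomdes-prop}(a) expresses $\cA_*(x,y)$ in terms of atoms of the \emph{longer} involution $y\act_*s$, and correspondingly the paper's induction runs \emph{downward} on $\ellhat_*(y,w_0)$ from $y=w_0$. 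Moreover, if your upward induction closed without further input, it would prove the statement for infinite $W$ as well, which the paper leaves as part of Conjecture \ref{atoms-conj}; so finiteness must be doing real work that your sketch never localizes. Second, the base case of the paper's induction, $\DemA_*(w_0)=\pB_*(w_0)$ (Theorem \ref{b'-thm}), is itself a separate nontrivial argument using the anti-involution $w\mapsto w^*w_0$ and Knutson--Miller's subword characterization of Bruhat order (Lemma \ref{subword-lem}); and the inductive step is a four-case analysis (according to whether $us<u$ and whether $s^*y=ys$) in which the key move when $us<u$ is not to produce a shorter element of $\pB_*(y)$ but to show $u\in\pB_*(y\act_*s)$ for the \emph{longer} involution $y\act_*s$ and then invoke the inductive bound $\ell(u)\geq\ellhat_*(y\act_*s)>\ellhat_*(y)$, via Lemma \ref{techbr-lem} and the lifting property. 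You correctly guess the relevant tools ($w_0$, the order-reversing bijection, the lifting property) and correctly identify the fallback of proving Theorem \ref{lastbr-thm} first, but as written the crucial half of the argument is a statement of intent rather than a proof.
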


We conjecture  that if $(W,S,*)$ is any  twisted Coxeter system and $x,y \in \I_*$ are such that $\hat\cR_*(x,y)\neq\varnothing$, then $\cA_*(x,y)$ is the set of 
 of minimal-length elements $w\in W$ satisfying  $w^* y \leq xw$; see Conjecture \ref{atoms-conj}.
While it is not difficult to show that every  $w \in \cA_*(x,y)$ satisfies $w^*y \leq xw$ (see Proposition \ref{bruhatchar-prop}), our general conjecture  seems much harder to prove.
Our proof of the weaker form of this statement given above consists of an inductive argument leveraging   techniques from Knutson and Miller's theory of \emph{subword complexes} \cite{KM}.

Much of the rest of the paper  concerns   the sets $\cA_*(x,y)$ and $\cB_*(x,y)$   exclusively  when $*=\id$ is the identity and $W=S_n$ is the symmetric group.
One is particularly interested in the cases of these sets  when  $x = 1$ or when $n$ is even and $x  = s_1s_3s_5\cdots s_{n-1}$.
In these \emph{geometric cases},  elements of $\hat\cR(x,y)$ correspond to maximal chains in the natural ``weak order'' on the set of closures of the  $\O_n(\CC)$- or $\Sp_n(\CC)$-orbits  in the flag variety $\Fl(n)$. The collections of atoms $\cA(x,y)$  for $y \in \I(S_n)$ in turn can be identified with the sets $W(Y)$ attached to these orbit closures by a more general  construction of Brion  (see \cite[\S1]{Brion98}).
Such connections between involution words and the geometry of the flag variety are the main subject of our complementary work \cite{HMP1}.

Here our focus is the  algebraic structure of the sets $\cA(x,y)$ and $\DemA(x,y)$ in type $A$.
Let  $\Ifpf(S_{n})$  denote the set of fixed-point-free involutions in $S_n$. When $n$ is even, set $\wfpf{n} = s_1s_3s_5\cdots s_{n-1}$ and for each $y \in \Ifpf(S_n)$   define  
\[\cRfpf(y) = \hat \cR(\wfpf{n},y),
\qquad 
\cAfpf(y) = \cA(\wfpf{n},y),
\qquand 
\DemAfpf(y) = \DemA(\wfpf{n},y).\]
The sets $\cA(y)$ and $\cAfpf(y)$ for $y \in \I(S_n)$  have been studied previously by
Can, Joyce, and Wyser \cite{CJ,CJW}  using  different notation. 
 Can, Joyce, and Wyser's results   \cite[Theorem 2.5 and Corollary 2.16]{CJW}, in particular,
 provide a relatively simple list of numeric inequalities,
depending on the cycle structure of   $y$ and  involving only the one-line representation  of a permutation $w \in S_n$,
which are necessary and sufficient  for  $w$ to belong to $\cA(y)$ or $\cAfpf(y)$. 

 The main result of Section \ref{atomSn-sect}  is a more general statement of this type which  classifies the sets $\cA(x,y)$ for all  $x,y \in \I(S_n)$. 
Our exact statement requires  several notational preliminaries.
We sketch the idea here while deferring the   details.
Let $[n] = \{1,2,\dots,n\}$, and for $y \in \I(S_n)$, define
 \[ 
 \Cyc(y) =  \{ (a,b) \in [n]\times [n]  :  a \leq b=y(a)\}.
\]
The elements of $\Cyc(y)$ are in bijection with the cycles of $y$. 
Let  $\Gamma(y)$ be the  set formed by adding to $\Cyc(y)$   all pairs  $(a,b)$  with $a=y(a) >b =y(b)$.
To any  $\gamma,\gamma' \in \Gamma(y)$, we associate via \eqref{sigma-eq} a certain involution $\sigma(\gamma,\gamma') \in \I(B_4)$  in the Weyl group of type $B_4$.
We then introduce  a certain natural partial order $\prec$ on the finite set $\I(B_4)$; see  \eqref{prec-eq}. With $S_n$ acting on $[n]\times [n]$ by simultaneously permuting coordinates,  our classification of $\cA(x,y)$ is equivalent to the following statement (see Theorem \ref{atomSn-thm}):

\begin{theorem*}
Let $x,y \in \I(S_n)$ and $w \in S_n$. Then $w \in \cA(x,y)$ if and only if
$w\gamma \in \Gamma(x)$  for all $\gamma \in\Cyc(y)$,
and
$\sigma(w\gamma,w\gamma') \preceq \sigma(\gamma,\gamma')$
for all disjoint elements $\gamma,\gamma' \in \Cyc(y)$.
\end{theorem*}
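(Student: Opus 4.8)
The plan is to reformulate the defining condition $\idem{x}{w}=y$, which by Corollary~\ref{invdem-cor} amounts to $w^{-1}\dem x\dem w = y$ when $*=\id$, as a statement about how $w$ transports the cycle data of $y$ onto that of $x$, and then to extract the minimal-length requirement as a collection of purely pairwise constraints. Throughout I would represent an involution by its arc diagram, so that $\Cyc(y)$ records the $2$-cycles and fixed points of $y$, while $\Gamma(x)$ records the admissible images: the genuine $2$-cycles and fixed points of $x$, together with the \emph{descending} pairs of fixed points that arise exactly when Demazure multiplication collides two fixed points of $x$. The first step is a single-cycle analysis yielding condition~(1), the second is a two-cycle analysis yielding condition~(2), and the real content is that no higher interactions occur.

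For the forward direction, suppose $w\in\cA(x,y)$. Proposition~\ref{bruhatchar-prop} gives $wy\le xw$ in Bruhat order, which translates into a family of rank inequalities on the one-line notation of $w$. Combining these with the minimality of $\ell(w)$, I would show that $w$ sends each $\gamma\in\Cyc(y)$ into $\Gamma(x)$: a fixed point of $y$ is forced to a fixed point of $x$, and a $2$-cycle of $y$ is forced either to a $2$-cycle of $x$ or to a descending fixed pair, the descending order being precisely what prevents a shortening of $w$ (this is also why longer Hecke atoms in $\DemA(x,y)$ generally fail condition~(1)). For condition~(2) I would restrict, for each pair of disjoint cycles $\gamma,\gamma'\in\Cyc(y)$, to the at most four coordinates they occupy; the relative configuration of the two arcs and of their images is recorded by the involutions $\sigma(\gamma,\gamma')$ and $\sigma(w\gamma,w\gamma')$ of $\I(B_4)$ via \eqref{sigma-eq}. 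Minimality then forces $\sigma(w\gamma,w\gamma')\preceq\sigma(\gamma,\gamma')$ in the order \eqref{prec-eq}: were this to fail, the two image arcs would cross or nest more than necessary, and a braid or exchange move would produce a shorter element $w'$ with $\idem{x}{w'}=y$, contradicting the minimality of $w$.

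For the reverse direction, suppose conditions~(1) and~(2) hold. Here I would argue by induction on $n$, peeling off one cycle of $y$ together with its $w$-image in $x$, applying the inductive hypothesis to the smaller data, and then reinserting the removed cycle. Condition~(1) guarantees that reinsertion lands in an admissible slot of $x$, while condition~(2), applied to the pairs formed by the removed cycle with each remaining cycle, guarantees that reinsertion costs exactly the minimal number of inversions, so that the resulting word is reduced and computes $w^{-1}\dem x\dem w=y$. The two-cycle interactions occurring here are precisely the finite list of $B_4$ cases governing $\prec$, which can be checked directly. Specializing $x=1$ and $x=\wfpf{n}$ should recover the inequalities of Can, Joyce, and Wyser, both anchoring the base cases and providing a consistency check.

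The main obstacle is the claim implicit in the reverse direction that the pairwise conditions \emph{suffice}, i.e.\ that there is no irreducible three-cycle obstruction to minimality. I expect to handle this through an additivity statement: that both $\ell(w)$ and the correctness of $w^{-1}\dem x\dem w$ decompose as a single-cycle term controlled by condition~(1) plus independent pairwise terms controlled by condition~(2). Establishing this additivity, equivalently verifying that the order $\prec$ on $\I(B_4)$ from \eqref{prec-eq} is calibrated so that pairwise-minimal configurations can always be realized simultaneously, is the delicate part, and is where the induction and the explicit $B_4$ case analysis must be made to interlock.
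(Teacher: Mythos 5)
There is a genuine gap, and it sits exactly where you flag it: the sufficiency of the pairwise conditions. You correctly identify that the reverse direction is the real content, but the resolution you propose --- an ``additivity statement'' under which both $\ell(w)$ and the validity of $\idem{x}{w}=y$ decompose into a single-cycle term plus independent pairwise terms, verified by peeling off one cycle of $y$ at a time --- is not established and is not the mechanism that actually makes the proof work. The paper's proof of sufficiency does not decompose over cycles at all. Its linchpin is Lemma \ref{a-lem3}: conditions (a) and (b) alone already force the descent compatibilities $\DesL(w)\cap\DesR(x)=\varnothing$ and $\DesR(w)\subset\DesR(y)$. This is what permits a downward induction on $\ellhat(x,y)$, replacing $(x,y)$ by $(x\act s,y)$ or $(x,y\act s)$ and invoking Proposition \ref{atomdes-prop}, with the single base case $(x,y)=(1,w_0)$ handled by the explicit recursive construction of Lemma \ref{w0-lem}. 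Without an analogue of Lemma \ref{a-lem3} you cannot even start such an induction, because you do not know that $sw>w$ (resp.\ $ws>w$) for the relevant $s$; and your cycle-peeling induction faces the unaddressed problem that removing a cycle of $y$ and its image changes $\Gamma(x)$ (in particular the descending fixed-point pairs) in a way that is not obviously compatible with restricting $w$, nor with the Demazure product computation.

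The forward direction of your proposal is also on shakier ground than the paper's. You want to derive condition (1) from $wy\leq xw$ (Proposition \ref{bruhatchar-prop}) ``combined with minimality,'' but minimality of $w$ in $\DemA(x,y)$ is not the same as minimality in $\pB(x,y)$ --- identifying the two for general $x,y\in\I(S_n)$ is essentially Conjecture \ref{atoms-conj}, which the paper proves only under extra hypotheses --- and the claim that a failure of $\sigma(w\gamma,w\gamma')\preceqB\sigma(\gamma,\gamma')$ yields ``a braid or exchange move producing a shorter $w'$'' is asserted rather than argued. The paper avoids all of this with a one-step-at-a-time verification: for $s\in\DesR(y)$, Lemmas \ref{a-lem1} and \ref{a-lem2} show that $\gamma\mapsto s\gamma$ carries $\Gamma(y)$ into $\Gamma(y\act s)$ and weakly decreases $\sigma$ in the order $\preceqB$, and iterating along a reduced word for $w^{-1}$ gives conditions (a) and (b) directly, with no appeal to Bruhat order or to minimality beyond reducedness of the word. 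So while your high-level framing (single-cycle condition, pairwise condition via the $B_4$ poset, sufficiency as the hard part) matches the theorem's architecture, the two steps that carry the proof --- the descent lemma and the induction anchored at $(1,w_0)$ --- are missing, and the substitutes you sketch would need substantial new arguments to be made to work.
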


Unpacking the relevant definitions here translates this result into an explicit set of  numeric inequalities (see Theorem \ref{atomSn'-thm}) which matches more closely the  form of the related statements in \cite{CJ,CJW}. 
Our theorem subsumes these earlier results, and our methods are self-contained.
As a consequence of facts shown in Section \ref{duality-sect}, our theorem also provides a characterization of the sets $\cA_*(x,y)$ for all $x,y \in \I_*(S_n)$, when $*$ is the nontrivial diagram automorphism mapping $s_i\mapsto s_{n-i}$.

In Section \ref{poset-sect} we turn to  the larger  sets of Hecke atoms $\DemA(y)$ and $\DemAfpf(y)$ for involutions in  $ y \in S_n$, which do not appear to have been studied previously.  
We prove that these sets are spanned and preserved by two unexpectedly simple equivalence relations on $S_n$.
In detail, 
define $\sim_\DemA$ to be the weakest equivalence relation on integer sequences with
\begin{equation}
\label{eq:chinese}
[\ \cdots\ c,a,b\ \cdots\ ] \sim_\DemA [\ \cdots\ b,c,a\ \cdots\ ] \sim_\DemA [\ \cdots\ c,b,a\ \cdots\ ]
\qquad\text{when }a\leq b\leq c,
\end{equation}
where the ellipses mask arbitrary, but respectively matching subsequences. One applies this relation to $S_n$ by viewing permutations as $n$-element sequences in one-line notation. We prove the following as Theorem \ref{equiv1-thm}:
\begin{theorem*}
The sets $\DemA(y) $ for $y \in \I(S_n)$ are  precisely the images under the inversion map $w\mapsto w^{-1}$ of the distinct    equivalence classes in $S_n$ with respect to $\sim_\DemA$.
\end{theorem*}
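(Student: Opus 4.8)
The plan is to push the statement across the inversion map and reduce it to an assertion about a single function on $S_n$. Since $*=\id$ and $1\dem w=w$, Definition \ref{def1} gives $\idem{1}{w}=w^{-1}\dem w$, so $w\in\DemA(y)$ exactly when $w^{-1}\dem w=y$. Writing $\rho(v)=v\dem v^{-1}$ for $v\in S_n$, this says $\DemA(y)^{-1}=\rho^{-1}(y)$. From the elementary identity $(x\dem z)^{-1}=z^{-1}\dem x^{-1}$ one gets $\rho(v)^{-1}=\rho(v)$, so $\rho$ maps $S_n$ into $\I(S_n)$; moreover $\rho=\pi\circ(\text{inv})$ for the surjection $\pi\colon S_n\to\I(S_n)$, $\pi(w)=w^{-1}\dem w$, recalled below Definition \ref{def1}, so $\rho$ is onto. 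Hence the fibers $\{\rho^{-1}(y):y\in\I(S_n)\}=\{\DemA(y)^{-1}\}$ partition $S_n$ into exactly $|\I(S_n)|$ nonempty blocks. As inversion is a bijection of $S_n$, the theorem is equivalent to the statement that these fibers are precisely the $\sim_\DemA$-classes, i.e. that $\rho(v)=\rho(v')$ if and only if $v\sim_\DemA v'$.

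First I would treat the forward implication, that $\rho$ is constant on $\sim_\DemA$-classes, so that $\sim_\DemA$ refines the fibers of $\rho$. It suffices to verify invariance under one generating move. Reading \eqref{eq:chinese} as right multiplication, and writing $(v(i),v(i+1),v(i+2))=(c,a,b)$ with $a\le b\le c$, the move to $[\cdots c,b,a\cdots]$ is $v\mapsto vs_{i+1}$ and the move to $[\cdots b,c,a\cdots]$ is $v\mapsto vs_{i+1}s_i$. Two observations make this a finite calculation: permuting the three values inside a window of consecutive positions changes no inversion between the window and its complement, so only inversions internal to the window are affected; and for an ascent $s$ one has $\rho(vs)=v\dem s\dem v^{-1}$ (from $vs=v\dem s$, $(vs)^{-1}=s\dem v^{-1}$ and $s\dem s=s$), with an analogous relation for descents. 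Feeding these through the three patterns permitted by \eqref{eq:chinese}, namely $312$, $231$ and $321$, one checks that all three have the same image under $\rho$ (that of the ``closed-up'' pattern $321$), while the patterns $123,132,213$ are untouched. I expect this direction to be routine.

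The substance lies in the reverse implication: no fiber of $\rho$ is split, i.e. each $\rho^{-1}(y)$ is a \emph{single} $\sim_\DemA$-class. Because the forward implication already shows $\sim_\DemA$ refines the fiber partition, and there are exactly $|\I(S_n)|$ fibers, it is enough to prove that $S_n$ has exactly $|\I(S_n)|$ classes under $\sim_\DemA$: a refinement of a partition into the same number of blocks is an equality, which then forces each fiber to coincide with one class. The plan for this count is to produce a canonical representative of each $\sim_\DemA$-class, a normal form obtained by a Chinese-monoid insertion analogous to RSK, and to match these normal forms bijectively with $\I(S_n)$, appealing to the structure of the Chinese monoid studied by Cassaigne, Espie, \emph{et al.}; the cases $n\le 3$, giving $1,2,4$ classes, already agree with $|\I(S_n)|$.

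The hardest step is exactly this enumeration, equivalently the connectivity of each fiber under $\sim_\DemA$, and the main obstacle is that one cannot argue by greedily shortening. There exist non-minimal fiber elements with no reducible window: for instance $[4,2,3,1]\in\rho^{-1}(w_0)\subset S_4$ has length $5$ (the minimal length in $\rho^{-1}(w_0)$ being $\hat\ell(w_0)=4$) yet contains no decreasing consecutive triple, so every single $\sim_\DemA$-move out of it preserves or increases length; reaching an atom requires a length-preserving sidestep first (here $[4,2,3,1]\sim_\DemA[3,4,2,1]$) before any shortening is possible. Thus a global device is unavoidable: either the normal-form/insertion argument above, or a connectivity argument built from the graded poset on $\cA(y)$ established in this section (whose length-preserving covers should be realized by the $312\leftrightarrow 231$ moves) together with the explicit description of atoms in Theorem \ref{atomSn-thm} to handle the length-changing moves. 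This is where I expect the real work to be.
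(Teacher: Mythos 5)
Your proposal follows essentially the same route as the paper: both first show by a local window computation that $\sim_\DemA$ refines the partition of $S_n$ into the sets $\DemA(y)^{-1}$ (the paper does this via Lemma \ref{equiv1-lem} and the identity $\sidem{\sidem{1}{s_{i+1}}}{s_i}=\sidem{\sidem{1}{s_i}}{s_{i+1}}=\sidem{\sidem{\sidem{1}{s_i}}{s_{i+1}}}{s_i}=(i,i+2)$, which is your $\rho$-invariance check), and then conclude by comparing the number of classes with $|\I(S_n)|$, outsourcing the count to the Chinese-monoid literature (the paper cites Linton--Propp--Roby--West where you cite Cassaigne--Espie et al., but the cited fact is the same). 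Your correctly-identified worry about greedy shortening is real but is absorbed by the external count, exactly as in the paper; only note that your fallback idea of using the graded poset on $\cA(y)$ would be circular here, since Theorem \ref{prec1-thm} and Lemma \ref{notmin-lem} are themselves proved using this theorem.
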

Surprisingly, the relation $\sim_{\DemA}$ coincides with something already present in the literature with little apparent connection to involutions in Coxeter groups: namely, $\sim_{\DemA}$ is the so-called \emph{Chinese relation}  introduced by Duchamp and Krob~\cite{DuchampKrob} and further studied by Cassaigne, Espie, Krob, Novelli, and Hivert~\cite{CEHKN}. Thus, the sets $\cB(y)$ in type $A$ may be identified  (on taking inverses) with elements of the  \emph{Chinese monoid} considered in \cite{CEHKN,CM2}.
This connection seems to represent  the first appearance in the literature of the Chinese monoid ``in the wild.''
 
A similar result holds
which identifies the (inverse elements of the) sets $\DemAfpf (y)$ for $y \in \Ifpf(S_{2n})$ as  equivalence classes under a simple  relation $\sim_{\DemAfpf}$ on even-length words; see Theorem \ref{equiv2-thm}.
We prove that when restricted to (the inverse elements of) $\cA(y)$ and $\cAfpf(y)$, the relations $\sim_{\DemA}$ and $\sim_{\DemAfpf}$ become bounded partial orders (see Theorems \ref{prec1-thm} and \ref{prec2-thm}), which
make   $\cA(y)$ into a graded poset and  $\cAfpf(y)$ into a graded lattice (see Propositions \ref{graded-prop} and \ref{graded-prop2}). As one consequence of these results, we show in type $A$  that $|\cA(y)| = 1$ if and only if $y \in S_n$ is a 321-avoiding involution; see Corollaries \ref{prec1-cor} and \ref{prec2-cor}.
As a second application, these poset structures lead to a simple, alternate proof of the following analogue of Matsumoto's theorem for involution words in type $A$, which was recently found by Hu and Zhang~\cite{HuZhang}:

\begin{theorem*}[Hu and Zhang \cite{HuZhang}]
For each $y \in \I(S_n)$, the set $\hat\cR(y)$ is spanned and preserved by the braid relations for $S_n$ together with the relations
$(s_i,s_{i+1},\dots) \sim (s_{i+1},s_i,\dots)$  for $i \in [n-2]$.
\end{theorem*}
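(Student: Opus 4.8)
The plan is to exploit the decomposition $\hat\cR(y)=\bigsqcup_{w\in\cA(y)}\cR(w)$ from Proposition~\ref{atomexist-prop} to split the two families of relations according to their roles: the ordinary braid relations will act within each fiber $\cR(w)$, while the initial relations $(s_i,s_{i+1},\dots)\sim(s_{i+1},s_i,\dots)$ will move between distinct atoms. By the classical theorem of Matsumoto and Tits, the braid relations already act transitively on each $\cR(w)$ and preserve it, so modulo braid moves the set $\hat\cR(y)$ collapses onto the set of atoms $\cA(y)$. The theorem therefore reduces to two assertions about the initial relations: that they preserve $\cA(y)$, and that they connect it into a single class.

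The crux is a dictionary identifying a single initial relation with a single move of the relation $\sim_\DemA$ of \eqref{eq:chinese} on the \emph{inverse} atoms. Suppose $w=s_is_{i+1}u\in\cA(y)$ has a reduced word beginning $(s_i,s_{i+1})$, and let $w'=s_{i+1}s_iu$ be the element obtained from the initial relation. A direct computation with the one-line notation, writing $v=u^{-1}$, shows that $w^{-1}$ and $(w')^{-1}$ agree outside positions $i,i+1,i+2$ and there read $(v(i+2),v(i),v(i+1))$ and $(v(i+1),v(i+2),v(i))$ respectively. I will check that the swapped word $(s_{i+1},s_i,\dots)$ is again reduced exactly when $v(i)<v(i+1)<v(i+2)$, and that in this case the displayed pair of triples is precisely an instance $[\cdots c,a,b\cdots]\sim_\DemA[\cdots b,c,a\cdots]$ with $a<b<c$. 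A short inversion count shows this is the unique type of Chinese move preserving the number of inversions; since all atoms of $y$ share the same minimal length, these are exactly the moves of $\sim_\DemA$ that keep one inside $\cA(y)^{-1}$.

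Granting the dictionary, preservation is immediate from Theorem~\ref{equiv1-thm}: the set $\DemA(y)^{-1}$ is a single $\sim_\DemA$-class, so an inversion-preserving Chinese move sends $w^{-1}$ to some $(w')^{-1}\in\DemA(y)^{-1}$; then $w'\in\DemA(y)$ with $\ell(w')=\ell(w)$ minimal, forcing $w'\in\cA(y)$ and hence $(s_{i+1},s_i,\dots)\in\hat\cR(y)$. For spanning I will invoke Theorem~\ref{prec1-thm} and Proposition~\ref{graded-prop}, which make $\cA(y)^{-1}$ a bounded (graded) poset under $\sim_\DemA$; boundedness implies every atom is $\sim_\DemA$-connected within $\cA(y)^{-1}$ to the minimum, so any two atoms are joined by a chain of inversion-preserving Chinese moves. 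To realize such a move at positions $j,j+1,j+2$, where $w^{-1}$ necessarily has a $312$-pattern, I will verify that $s_j$ is a left descent of $w$ and $s_{j+1}$ a left descent of $s_jw$; thus braid relations carry some word in $\cR(w)$ to one beginning $(s_j,s_{j+1})$, where the initial relation applies. Running this over a connecting chain shows that braid and initial relations together make $\hat\cR(y)$ a single class.

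The main obstacle is the dictionary of the second paragraph: pinning down that reducedness of the swapped word coincides \emph{exactly} with the validity of the corresponding Chinese move, and confirming that the only $\sim_\DemA$-moves internal to $\cA(y)^{-1}$ are the inversion-preserving $312\leftrightarrow231$ ones. Once these bookkeeping points are settled, the assembly from Theorems~\ref{equiv1-thm} and~\ref{prec1-thm} is formal.
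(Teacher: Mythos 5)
Your proposal is correct and follows essentially the same route as the paper: the paper's own proof is a terse appeal to Matsumoto's theorem, Lemma \ref{equiv1-lem} (which is exactly your ``dictionary'' between initial swaps on reduced words of atoms and length-preserving Chinese moves on their inverses), and the interval characterization of $\cA(y)^{-1}$ in Theorem \ref{prec1-thm}. The bookkeeping points you flag (reducedness of the swapped word, ruling out the inversion-decreasing $[c,b,a]$ pattern for a minimal-length element) are precisely the content of Lemma \ref{equiv1-lem}, so nothing essentially new is needed.
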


Our approach also leads to a novel ``fixed-point-free'' version of Hu and Zhang's theorem: 

\begin{theorem*}
For each $y \in \Ifpf(S_{2n})$, the set $\hat\cR(y)$ is spanned and preserved by the braid relations for $S_{2n}$ together with the relations
$(s_{2i},s_{2i+1},\dots) \sim (s_{2i},s_{2i-1},\dots)$  for $i \in [n-1]$.
\end{theorem*}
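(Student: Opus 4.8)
The plan is to reduce the statement, via Proposition \ref{atomexist-prop}, to the combinatorics of the atom set $\cAfpf(y)$ and then to mirror the excerpt's treatment of Hu and Zhang's (non-fixed-point-free) theorem, substituting the fixed-point-free analogues throughout. Here $\hat\cR(y)$ denotes the fixed-point-free involution words $\cRfpf(y)=\hat\cR(\wfpf{2n},y)$, and Proposition \ref{atomexist-prop} gives $\cRfpf(y)=\bigsqcup_{w\in\cAfpf(y)}\cR(w)$. By Matsumoto's theorem the ordinary braid relations of $S_{2n}$ already span and preserve each individual $\cR(w)$, so two tasks remain: show the extra relations (call them $R_i$, for $i\in[n-1]$) preserve $\cRfpf(y)$, and show that $R_1,\dots,R_{n-1}$ together with the braid relations connect the reduced words of any two atoms.

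For the ``preserved'' half I would argue purely locally. Writing an involution word beginning with $(s_{2i},s_{2i+1})$ as a reduced word for an atom $w=s_{2i}\dem s_{2i+1}\dem u$, the claim that $R_i$ keeps us inside $\cRfpf(y)$ amounts to the identity $\idem{\idem{\wfpf{2n}}{s_{2i}}}{s_{2i+1}}=\idem{\idem{\wfpf{2n}}{s_{2i}}}{s_{2i-1}}$, after which acting by $u$ gives $\idem{\wfpf{2n}}{(s_{2i}\dem s_{2i+1}\dem u)}=\idem{\wfpf{2n}}{(s_{2i}\dem s_{2i-1}\dem u)}=y$. This is a computation supported on $\{2i-1,2i,2i+1,2i+2\}$: one checks $\idem{\wfpf{2n}}{s_{2i}}$ is the involution $(2i-1,2i+1)(2i,2i+2)$ and that acting by either $s_{2i+1}$ or $s_{2i-1}$ yields $(2i-1,2i+2)(2i,2i+1)$. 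Minimality of length is then automatic: since $w$ is a shortest element with $\idem{\wfpf{2n}}{w}=y$ and the Demazure product $s_{2i}\dem s_{2i-1}\dem u$ also maps to $y$, its length cannot drop below $\ell(w)$, so $(s_{2i},s_{2i-1},\dots)$ is again reduced and names an atom $w'\in\cAfpf(y)$.

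For the ``spanned'' half I would translate a single application of $R_i$ into a move between atoms. Under the descent/reduced-word dictionary (left descents of $w$ are the descents of the one-line word $w^{-1}$), a word beginning $(s_{2i},s_{2i+1})$ forces $w^{-1}(2i)$ to exceed both $w^{-1}(2i+1)$ and $w^{-1}(2i+2)$, and passing from $w=s_{2i}\dem s_{2i+1}\dem u$ to $w'=s_{2i}\dem s_{2i-1}\dem u$ replaces $w^{-1}$ by $(w')^{-1}$ precisely by interchanging the two consecutive two-letter blocks occupying positions $\{2i-1,2i\}$ and $\{2i+1,2i+2\}$. I would identify this block-swap with the generating move of $\sim_{\DemAfpf}$ from Theorem \ref{equiv2-thm}, so that one $R_i$ on reduced words corresponds to one $\sim_{\DemAfpf}$-rewrite on inverse atoms. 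Theorem \ref{prec2-thm} and Proposition \ref{graded-prop2} then supply connectivity: $\cAfpf(y)$ is a bounded graded lattice whose covering relations are single $\sim_{\DemAfpf}$-rewrites, so its Hasse diagram is connected through $R_i$-moves. Combining this with Matsumoto inside each $\cR(w)$ shows every pair of words in $\cRfpf(y)$ is joined by braid relations and the $R_i$, which is the assertion.

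The main obstacle is exactly this dictionary: matching a single $R_i$ on the first two letters of an involution word with a single generating move of $\sim_{\DemAfpf}$ on the inverse atoms, with correct parity bookkeeping. Two points need care. First, I must verify the block-swap is genuinely a covering step of the lattice of Theorem \ref{prec2-thm}, i.e. that every Hasse edge is realized by one $R_i$ rather than merely by some composite of $\sim_{\DemAfpf}$-moves; this is where the even-index restriction $s_{2i}$, inherited from the base point $\wfpf{2n}=s_1s_3\cdots s_{2n-1}$, is essential, since it is exactly the parity that aligns the two swapped blocks with the pairing $\{2j-1,2j\}$ underlying $\sim_{\DemAfpf}$. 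Second, I must ensure applicability is symmetric, so that whenever $R_i$ applies to a reduced word of $w$ the result is reduced for the partner $w'$; this again reduces to the length argument of the ``preserved'' step. Once the correspondence is pinned down, the remaining steps are formal consequences of the cited results.
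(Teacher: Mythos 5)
Your proposal is correct and follows essentially the same route as the paper: Proposition \ref{atomexist-prop} plus Matsumoto's theorem reduce everything to the atoms, Lemma \ref{equiv2-lem} identifies a single application of the extra relation with a single generating move of $\prec_{\cAfpf}$ on inverse atoms, and Theorem \ref{prec2-thm} supplies both preservation and connectivity of $\cAfpf(y)^{-1}$ under these moves. The only (harmless) divergence is that you verify preservation by the direct local computation $\idem{\idem{\wfpf{n}}{s_{2i}}}{s_{2i+1}}=\idem{\idem{\wfpf{n}}{s_{2i}}}{s_{2i-1}}$ together with the minimality-of-length argument, where the paper instead routes this through Theorem \ref{equiv2-thm}.
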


These results are restated as Theorems \ref{braid1-thm} and \ref{braid2-thm}.
By direct algebraic methods, we prove a common generalization of these theorems which describes a set of relations spanning the sets $\hat \cR_*(y)$ for any twisted Coxeter system; see Theorem \ref{matsumoto-twist}. 
The situation in type $A$ is  special;   our ``twisted Matsumoto's theorem'' requires us to add many relations to the braid relations in general, and reduces in type $A$ to a weaker statement than what can be shown using the combinatorial classification of atoms for involutions in the symmetric group.
It remains an open problem to find minimal sets of relations spanning the sets $\hat\cR_*(y)$ for any particular twisted Coxeter system.

%
%

\subsection*{Acknowledgements}

We thank
Dan Bump, Michael Joyce, Vic Reiner, Ben Wyser, Alex Yong, and Benjamin Young for many helpful conversations and suggestions in the course of the development of this paper.

\section{Preliminaries}
\label{prelim-sect}

In this section we review some basic facts about twisted involutions  to develop some   general properties of the sets  $\cA_*(x,y)$ and $\DemA_*(x,y)$.  Many  results mentioned here appear in some form in the papers of Richardson and Springer \cite{RichSpring,RichSpring2,Springer} or   in  more recent work of Hultman \cite{H1,H2,H3}.

Let  $(W,S,*)$  be an arbitrary twisted Coxeter system with length function
 $\ell : W \to \NN$, and  as usual let  $\I_* = \I_*(W) = \{ w \in W : w^{-1} = w^*\}$.
For $w \in W$ let  
$ \DesL(w) = \{ s \in S : \ell(sw) < \ell(w)\} $ 
and
$ \DesR(w) = \{ s \in S: \ell(ws) < \ell(w)\}$
 denote the corresponding \emph{left} and \emph{right descent sets}. 
We define $ \act_* : \I_* \times S \to \I_* $  
as the operator given by
\be\label{op-eq} x\act_*s =\begin{cases} 
s^*xs &\text{if }s^* x \neq xs \\ 
xs & \text{if }s^*x = xs\end{cases} 
\qquad\text{for $x \in \I_*$ and $s \in S$.}
\ee
Although $(x\act_*s )\act s = x$, this operation does not extend to a group action of $W$ on $\I_*$.
The following   fact is a straightforward consequence of the exchange principle for Coxeter systems, and is equivalent to \cite[Lemma 3.4]{H2}:
\begin{proposition}[See \cite{H2}] \label{des-lem} If $x \in \I_*$ and $s \in \DesR(x)$ then 
$
\ell(x\act_*s ) = \begin{cases} \ell(x) - 2 &\text{if }s^*x\neq xs \\ \ell(x) - 1 &\text{if }s^*x=xs.\end{cases}
$
\end{proposition}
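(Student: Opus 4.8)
The plan is to split on the two clauses in the definition \eqref{op-eq} of $x \act_* s$. If $s^*x = xs$, then $x\act_* s = xs$ by definition, and since $s \in \DesR(x)$ we immediately get $\ell(x \act_* s) = \ell(xs) = \ell(x) - 1$; this settles the second case. All the content lies in the case $s^*x \neq xs$, where $x \act_* s = s^* x s$ and I must show $\ell(s^*xs) = \ell(x) - 2$.

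First I would record the descent symmetry for twisted involutions: for any $t \in S$ and $x \in \I_*$ one has $t \in \DesL(x) \iff t^* \in \DesR(x)$. This follows from the one-line computation $\ell(tx) = \ell(x^{-1}t) = \ell(x^* t) = \ell(xt^*)$, which uses $t^2 = 1$, the defining identity $x^{-1} = x^*$ of $\I_*$, and the fact that the diagram automorphism $*$ permutes $S$ and hence preserves length (so $\ell(g^*) = \ell(g)$ and $(x^* t)^* = x t^*$). Taking $t = s^*$ and using $(s^*)^* = s$, the hypothesis $s \in \DesR(x)$ yields $s^* \in \DesL(x)$. Thus both $\ell(xs) = \ell(x) - 1$ and $\ell(s^* x) = \ell(x) - 1$.

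The crux is then the following ``dihedral'' fact, which I would prove directly from the exchange condition: if $w \in W$ and $a, b \in S$ satisfy $\ell(aw) = \ell(wb) = \ell(w) - 1$, then either $aw = wb$ (in which case $awb = w$) or else $\ell(awb) = \ell(w) - 2$. To see this, choose a reduced word $w = s_1 \cdots s_p$ with $s_p = b$, possible since $b \in \DesR(w)$; then $wb = s_1 \cdots s_{p-1}$ is reduced. Since $\ell(aw) < \ell(w)$, the exchange condition produces an index $i$ with $aw = s_1 \cdots \widehat{s_i} \cdots s_p$. If $aw \neq wb$, then this deletion cannot occur at $i = p$ (that choice would give exactly $wb$), so $i < p$; right-multiplying by $b = s_p$ then cancels the final letter and gives $awb = s_1 \cdots \widehat{s_i} \cdots s_{p-1}$, an expression of length $p-2$. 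Since also $\ell(awb) \geq \ell(aw) - 1 = p - 2$, this forces $\ell(awb) = \ell(w) - 2$. Applying the fact with $w = x$, $a = s^*$, $b = s$ and invoking the standing assumption $s^* x \neq xs$, I conclude $\ell(x \act_* s) = \ell(s^* x s) = \ell(x) - 2$, completing the argument.

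The main obstacle is the dihedral fact, and within it the key observation that when $aw \neq wb$ the exchange index is forced to be strictly less than $p$; everything else is bookkeeping with the definition of $\act_*$ and the length-preserving descent symmetry. I expect no essential difficulty beyond correctly invoking the ordinary exchange condition, since the whole statement is, as the authors note, a direct consequence of it.
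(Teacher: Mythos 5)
Your proof is correct, and it follows exactly the route the paper indicates: the paper gives no proof of this proposition, merely remarking that it is "a straightforward consequence of the exchange principle" and citing Hultman, and your argument supplies precisely those details (the descent symmetry $s \in \DesR(x) \Leftrightarrow s^* \in \DesL(x)$ via $x^{-1}=x^*$ and length-preservation of $*$, followed by the exchange-condition argument forcing the deleted letter to lie strictly before position $p$ when $s^*x \neq xs$). No gaps.
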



The proposition implies that the operation $\dact$ given in the introduction satisfies $\idem{x}{s} = x$ for $s \in \DesR(x)$ and $\idem{x}{s} = x\act_*s $ for $s \in S\setminus \DesR(x)$. We obtain this corollary as a consequence:
\begin{corollary}\label{invdem-cor}
If $ x \in \I_*$ and $s \in S$ then $\idem{x}{s} \in \I_*$ as
 \[ \idem{x}{s} = s^* \dem x \dem s =
\begin{cases} 
s^* xs & \text{if $s \notin \DesR(x)$ and $s^*x \neq xs$}
\\
xs &\text{if $s \notin \DesR(x)$ and $s^*x=xs$} 
\\
x&\text{if $s \in \DesR(x)$}.
 \end{cases}\]
 
\end{corollary}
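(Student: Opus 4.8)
The plan is to reduce everything to the remark stated just before the corollary, namely that $\idem{x}{s}=x$ when $s\in\DesR(x)$ and $\idem{x}{s}=x\act_*s$ when $s\notin\DesR(x)$, and then to substitute the definition \eqref{op-eq} of $\act_*$. To set this up, first I would observe that $s^*\in S$ is again a simple generator, so $(s^*)^{-1}=s^*$ and hence $\idem{x}{s}=(s^*)^{-1}\dem x\dem s=s^*\dem x\dem s$; by associativity of the Demazure product this equals $s^*\dem(x\dem s)$. The value of $x\dem s$ is read off from \eqref{demprod}: it is $xs$ when $s\notin\DesR(x)$ and $x$ when $s\in\DesR(x)$.

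The one auxiliary fact I would record is that $s\in\DesR(x)$ if and only if $s^*\in\DesL(x)$. This holds because $x\in\I_*$ gives $x^{-1}=x^*$, and because a diagram automorphism preserves $S$ and hence preserves length; concretely,
\[
\ell(s^*x)=\ell\bigl((s^*x)^{-1}\bigr)=\ell(x^{-1}s^*)=\ell\bigl((xs)^*\bigr)=\ell(xs),
\]
so $\ell(s^*x)<\ell(x)$ exactly when $\ell(xs)<\ell(x)$.

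With these in hand I would split into cases. If $s\in\DesR(x)$ then $x\dem s=x$ and, by the auxiliary fact, $s^*\in\DesL(x)$, so $s^*\dem x=x$ and therefore $\idem{x}{s}=x$, which lies in $\I_*$ by hypothesis. If $s\notin\DesR(x)$ then $x\dem s=xs$ with $\ell(xs)=\ell(x)+1$ and $s^*\notin\DesL(x)$, so it remains to evaluate $s^*\dem(xs)$, i.e.\ to compare $\ell(s^*xs)$ with $\ell(xs)$. When $s^*x=xs$ we get $s^*xs=x$, of length $\ell(x)<\ell(xs)$, so $s^*\dem(xs)=xs=x\act_*s$. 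When $s^*x\neq xs$ I would invoke the standard lifting property: if $\ell(s^*x)>\ell(x)$ and $\ell(xs)>\ell(x)$, then either $s^*x=xs$ or $\ell(s^*xs)=\ell(x)+2$. Since the former is excluded, $\ell(s^*xs)=\ell(x)+2>\ell(xs)$, whence $s^*\dem(xs)=s^*xs=x\act_*s$. In both subcases $\idem{x}{s}=x\act_*s\in\I_*$ because $\act_*$ maps $\I_*\times S$ into $\I_*$.

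The only nontrivial input is the lifting property used in the last subcase, which I expect to be the main point to justify carefully. It follows directly from the exchange condition: taking a reduced word $s_1\cdots s_k$ for $x$, the word $s^*s_1\cdots s_k$ is reduced for $s^*x$, and if $\ell(s^*xs)<\ell(s^*x)$ the exchange condition lets one delete a single letter. Deleting some $s_i$ would force $\ell(xs)<\ell(x)$, contrary to assumption, so the deleted letter must be the leading $s^*$, giving $s^*xs=x$ and hence $s^*x=xs$. This is precisely the dichotomy needed, and everything else is a routine unpacking of \eqref{op-eq}.
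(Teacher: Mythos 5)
Your proof is correct and follows essentially the same route as the paper, which deduces the corollary from the case analysis on $\DesR(x)$ and the length dichotomy of Proposition \ref{des-lem}; you simply supply in full the exchange-condition argument that the paper outsources to \cite{H2}. No gaps.
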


It follows by induction from these observations that for any $x \in \I_*$ there is some sequence $s_1,s_2,\dots,s_k \in S$ such that $x=\idem{\idem{\idem{\idem{1}{s_1}}{ s_2}}{\cdots}}{s_k}$.
Hence, we deduce the following:
\begin{corollary}\label{all-nonempty-cor} If $x \in \I_*$ then the sets $\hat \cR_*(x)$, $\cA_*(x)$, and $\DemA_*(x)$ are all  nonempty.
\end{corollary}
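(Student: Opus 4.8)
The plan is to first establish the existence claim stated just above the corollary---that every $x \in \I_*$ can be written as $x = \idem{\idem{\idem{\idem{1}{s_1}}{s_2}}{\cdots}}{s_k}$ for some $s_1,\dots,s_k \in S$---and then read off the three nonemptiness assertions from it. The existence claim I would prove by induction on $\ell(x)$. The base case $x=1$ is handled by the empty sequence, for which the iterated product is $1 = x$. For the inductive step, assume $x \neq 1$ and choose any $s \in \DesR(x)$, which exists since $x$ is not the identity. Set $x' = x \act_* s$. By Proposition \ref{des-lem} we have $\ell(x') < \ell(x)$, and $x' \in \I_*$ because $\act_*$ maps $\I_* \times S$ into $\I_*$. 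The inductive hypothesis then furnishes $s_1,\dots,s_{k-1}$ with $x' = \idem{\idem{\idem{\idem{1}{s_1}}{s_2}}{\cdots}}{s_{k-1}}$, and appending $s_k := s$ should recover $x$.

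The one point requiring care---and the main obstacle, albeit a minor one---is verifying that appending $s$ indeed produces $x$, i.e.\ that $\idem{x'}{s} = x$. By Corollary \ref{invdem-cor} this holds provided $s \notin \DesR(x')$, in which case $\idem{x'}{s} = x' \act_* s = (x \act_* s) \act_* s = x$, using the involutivity $(x \act_* s) \act_* s = x$ noted in the introduction. To see $s \notin \DesR(x')$, I would argue by contradiction: if $s \in \DesR(x')$, then Proposition \ref{des-lem} would give $\ell(x' \act_* s) < \ell(x')$, whereas $x' \act_* s = x$ satisfies $\ell(x) > \ell(x')$, a contradiction. This completes the induction.

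Finally I would deduce the corollary itself. Given a sequence $s_1,\dots,s_k$ as above, set $w = s_1 \dem s_2 \dem \cdots \dem s_k$. Then $\idem{1}{w} = \idem{\idem{\idem{\idem{1}{s_1}}{s_2}}{\cdots}}{s_k} = x$ by the identity $\idem{\idem{\idem{\idem{x}{s_1}}{s_2}}{\cdots}}{s_k} = \idem{x}{(s_1 \dem \cdots \dem s_k)}$ used in the proof of Proposition \ref{atomexist-prop}, so $w \in \DemA_*(x)$ and hence $\DemA_*(x) \neq \varnothing$. Since $\DemA_*(x)$ is then a nonempty set of group elements and lengths lie in $\NN$, the minimum length is attained, so the subset $\cA_*(x)$ of minimal-length elements is nonempty. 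Lastly, Proposition \ref{atomexist-prop} gives $\hat\cR_*(x) = \bigcup_{w \in \cA_*(x)} \cR(w)$, and each $\cR(w)$ is nonempty, whence $\hat\cR_*(x) \neq \varnothing$.

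In summary, the real content is the inductive construction of a single witnessing sequence; once existence is in hand, the three nonemptiness statements follow formally, with $\DemA_*(x) \neq \varnothing$ implying $\cA_*(x) \neq \varnothing$ by minimizing length and $\cA_*(x) \neq \varnothing$ implying $\hat\cR_*(x) \neq \varnothing$ via Proposition \ref{atomexist-prop}. I do not anticipate any genuine difficulty beyond the bookkeeping of the descent/length interplay in Proposition \ref{des-lem} and the careful application of Corollary \ref{invdem-cor}.
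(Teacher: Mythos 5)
Your proposal is correct and follows essentially the same route as the paper, which simply notes that the existence of a witnessing sequence ``follows by induction from these observations'' (namely Proposition \ref{des-lem} and Corollary \ref{invdem-cor}) and then reads off the three nonemptiness claims. Your write-up just fills in the details the paper leaves implicit: the descent-based induction on $\ell(x)$, the check that $s\notin\DesR(x\act_*s)$, and the formal passage from $\DemA_*(x)\neq\varnothing$ to $\cA_*(x)\neq\varnothing$ to $\hat\cR_*(x)\neq\varnothing$.
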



It is thus well-defined to introduce the following length function on $\I_*$.

\begin{definition}\label{ellhat-def} Let $\ellhat_*(x)$ for $x \in \I_*$ denote the common length of any word in $\hat \cR_*(x)$. 
\end{definition}

When $*$ is the identity automorphism, we omit the subscript and set $\ellhat(x) = \ellhat_\id(x)$.
In the case when
  $W=S_n$ is the   symmetric group, it holds that
$ \ellhat(x) = \tfrac{1}{2} \( \ell(x)+ \kappa(x)\)$
where $\kappa(x)$ is the number of 2-cycles in an involution  $x \in S_n$; see \cite{Incitti1}. Incitti's work \cite{Incitti2} derives similar formulas  for $\ellhat$ when $W$ is one of the other classical Weyl groups.

Let $\leq$ denote the Bruhat order on $W$ as defined, e.g., in \cite[\S.1]{CCG}. The poset $(\I_*,\leq)$ is graded with rank function $\ellhat_* : \I _*\to \NN$ \cite[Theorem 4.8]{H1}
and  inherits an  analogue of the subword characterization of $(W,\leq)$ \cite[Theorem 2.8]{H3}.
Just using
Proposition \ref{des-lem} and basic properties of the Bruhat order, we deduce this second corollary:


\begin{corollary}[See \cite{H2}] \label{h2-cor} If $x \in \I_*$ and $s \in S$ then the following properties are equivalent:
 $  x \act_* s < x$ $\Leftrightarrow$ $s \in \DesR(x)$ $\Leftrightarrow$ $s^* \in \DesL(x)$ $\Leftrightarrow$ $ \ell(x\act_* s) < \ell(x)$ $\Leftrightarrow$ $ \ellhat_*(x\act_* s) = \ellhat_*(x)-1.$
\end{corollary}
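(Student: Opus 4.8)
The plan is to prove the five listed conditions mutually equivalent by first relating the two descent conditions to each other, then tying descents to each of the length functions $\ell$ and $\ellhat_*$, and finally to Bruhat comparability, using Proposition \ref{des-lem} and Corollary \ref{invdem-cor} as the main engines.

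First I would dispatch the equivalence $s \in \DesR(x) \Leftrightarrow s^* \in \DesL(x)$, which is pure Coxeter bookkeeping and uses only that $x \in \I_*$. Since $*$ permutes $S$ it preserves $\ell$ and satisfies $(w^*)^* = w$, while $x^{-1} = x^*$ and $s^{-1} = s$. Then $\ell(xs) = \ell((xs)^{-1}) = \ell(s x^*) = \ell((s x^*)^*) = \ell(s^* x)$, so $\ell(xs)$ and $\ell(s^* x)$ are equal and hence simultaneously smaller than $\ell(x)$; this is exactly the claimed equivalence.

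The heart of the argument is the equivalence $s \in \DesR(x) \Leftrightarrow \ell(x \act_* s) < \ell(x)$. The forward direction is immediate from Proposition \ref{des-lem}, which gives $\ell(x \act_* s) \in \{\ell(x)-1,\ \ell(x)-2\}$. For the converse I would show $s \notin \DesR(x) \Rightarrow \ell(x \act_* s) > \ell(x)$ by splitting on the two cases of \eqref{op-eq}. If $s^* x = xs$ then $x \act_* s = xs$ and $\ell(xs) = \ell(x)+1$ since $s \notin \DesR(x)$. If $s^* x \ne xs$ then $x \act_* s = s^* x s$, and here $\ell(s^* x) = \ell(x)+1$ (by the previous step, $s \notin \DesR(x)$ gives $s^* \notin \DesL(x)$) together with $\ell(xs) = \ell(x)+1$; the standard Coxeter dichotomy (the lifting property) applied to $s^* x > x$ and $xs > x$ forces either $s^* x = xs$ or $\ell(s^* x s) = \ell(x)+2$, and the former is excluded, so $\ell(x \act_* s) = \ell(x)+2$. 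I expect this case, invoking the lifting property to rule out length preservation, to be the main obstacle, since it is the only place where a genuine Coxeter-theoretic input beyond Proposition \ref{des-lem} is required.

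With the two descent/length equivalences in hand, the Bruhat equivalence $s\in\DesR(x) \Leftrightarrow x\act_* s < x$ follows from basic properties of $\leq$: when $s\in\DesR(x)$, the element $x\act_* s$ is either $xs$, which is covered by $x$, or $s^* x s = s^*(xs)$, in which case $\ell(s^*(xs)) = \ell(xs)-1$ gives $s^* \in \DesL(xs)$ and hence $s^* x s < xs < x$; the converse is automatic since $x\act_* s < x$ implies $\ell(x\act_* s) < \ell(x)$. Finally, for $\ell(x\act_* s) < \ell(x) \Leftrightarrow \ellhat_*(x\act_* s) = \ellhat_*(x)-1$ I would combine two bounds. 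Writing $y = x\act_* s$ with $s \in \DesR(x)$, one checks $s \notin \DesR(y)$ (since $\ell(y\act_* s) = \ell(x) > \ell(y)$), so that $\idem{y}{s} = y \act_* s = x$ by Corollary \ref{invdem-cor}; appending $s$ to a shortest involution word of $y$ then yields $\ellhat_*(x) \le \ellhat_*(y)+1$. On the other hand $y < x$ in Bruhat order and the gradedness of $(\I_*,\le)$ by $\ellhat_*$ give $\ellhat_*(x) \ge \ellhat_*(y)+1$, so equality holds; the reverse implication follows by applying the same reasoning to $y$ when $s \notin \DesR(x)$, using that then $x < x\act_* s$. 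Chaining all four equivalences gives the corollary.
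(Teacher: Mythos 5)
Your proof is correct and follows essentially the route the paper intends: it deduces everything from Proposition \ref{des-lem}, the exchange/lifting dichotomy, and the fact (quoted just before the corollary from Hultman) that $(\I_*,\leq)$ is graded with rank function $\ellhat_*$. The only cosmetic point is that the dichotomy you use in the case $s^*x\neq xs$ (either $s^*xs=x$ or $\ell(s^*xs)=\ell(x)+2$) is most directly a consequence of the exchange condition, though as you note it can also be extracted from the lifting property by observing that $s\in\DesR(s^*x)$ together with $xs\leq s^*x$ and $\ell(xs)=\ell(s^*x)$ would force $xs=s^*x$.
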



This corollary implies the following alternate definition of involution words.

\begin{corollary}\label{altdef-cor}
Let $x,y \in \I_*$. If $\hat\cR_*(x,y)$ is nonempty (as happens, for example, when $x=1$) then the involution words of $y$ relative to $x$ are exactly the sequences $(s_1,s_2,\dots,s_k)$ with $s _i \in S$ of shortest possible length $k$ such that $y = x \act_* s_1 \act_* s_2 \act_*\cdots \act_* s_k$.
\end{corollary}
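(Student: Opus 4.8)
The plan is to deduce Corollary~\ref{altdef-cor} directly from the alternate formula for $\dact_*$ recorded in Corollary~\ref{invdem-cor} together with the descent characterization in Corollary~\ref{h2-cor}. First I would observe that by the definition of $\hat\cR_*(x,y)$ (Definition~\ref{def0}) and the associated identity $\idem{\idem{\idem{\idem{x}{s_1}}{s_2}}{\cdots}}{s_k} = \idem{x}{(s_1\dem s_2\dem\cdots\dem s_k)}$, a sequence $(s_1,\dots,s_k)$ is an involution word of $y$ relative to $x$ precisely when it is of shortest possible length among sequences for which the iterated operation $\dact_*$ carries $x$ to $y$. So the entire content of the corollary is that applying $\dact_*$ iteratively and applying $\act_*$ iteratively produce the \emph{same} set of minimal-length reaching sequences.

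The key comparison is between the two operations on a single generator. By the remark following Proposition~\ref{des-lem}, for $x\in\I_*$ and $s\in S$ we have $\idem{x}{s} = x$ if $s\in\DesR(x)$ and $\idem{x}{s} = x\act_* s$ if $s\notin\DesR(x)$. Thus the two operations differ only on generators that are already right descents, where $\dact_*$ fixes $x$ while $\act_*$ strictly decreases $\ellhat_*$ (this is exactly Corollary~\ref{h2-cor}). I would then argue as follows. Fix a shortest sequence $(s_1,\dots,s_k)$ with $y = x\act_* s_1\act_*\cdots\act_* s_k$. I claim that along such a minimal-length $\act_*$-sequence no step can decrease $\ellhat_*$: for if some step did, then since each $\act_*$-step changes $\ellhat_*$ by exactly $\pm 1$ (Corollary~\ref{h2-cor} gives $-1$ on a descent, and the same corollary applied in reverse to $(x\act_* s)\act_* s = x$ gives $+1$ otherwise), a standard exchange/deletion argument on the rank function $\ellhat_*$ would let me delete a pair of steps and obtain a shorter sequence, contradicting minimality. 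Hence every step of a minimal $\act_*$-sequence is $\ellhat_*$-increasing, which by the remark above means every step satisfies $s_i\notin\DesR(\cdot)$, and there precisely $\act_*$ agrees with $\dact_*$. Therefore the minimal $\act_*$-sequences are exactly the minimal $\dact_*$-sequences, i.e.\ exactly $\hat\cR_*(x,y)$.

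Conversely, I would check that any element of $\hat\cR_*(x,y)$ is a valid minimal-length $\act_*$-sequence reaching $y$: since $\hat\cR_*(x,y)=\bigcup_{w\in\cA_*(x,y)}\cR(w)$ by Proposition~\ref{atomexist-prop}, its elements all have length $\ell(w)=\ellhat_*(y)-\ellhat_*(x)$ (using that $\dact_*$ only applies non-descent generators, each raising $\ellhat_*$ by one), and this matches the minimal $\act_*$-length by the previous paragraph, so the two notions of ``shortest possible length'' coincide. The main obstacle I anticipate is the bookkeeping in the deletion argument: I must verify carefully that deleting an $\ellhat_*$-decreasing step together with a compensating step genuinely yields a strictly shorter sequence still reaching $y$, which requires that the intermediate involutions behave well under $\act_*$. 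This is where Corollary~\ref{h2-cor}'s equivalence $x\act_* s < x \Leftrightarrow \ellhat_*(x\act_* s)=\ellhat_*(x)-1$ does the real work, pinning the rank change to exactly $\pm1$ and forbidding any ``wasted'' step in a length-minimal sequence. Everything else is a routine unwinding of the definitions, so I would keep the proof short, citing Corollaries~\ref{invdem-cor} and~\ref{h2-cor} as the two essential inputs.
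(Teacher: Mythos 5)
Your overall approach is the one the paper intends: the paper offers no written proof, simply deriving the corollary from Corollary \ref{h2-cor} together with the observation after Proposition \ref{des-lem} that $\idem{x}{s}=x$ for $s\in\DesR(x)$ and $\idem{x}{s}=x\act_* s$ otherwise, and your proposal correctly identifies these as the two essential inputs. The one step I would not let stand as written is the ``exchange/deletion argument'': if a minimal $\act_*$-sequence contained an $\ellhat_*$-decreasing step, it is not true that one can locally delete that step together with a compensating one and still reach $y$ (the operation $\act_*$ does not extend to a group action, and a decreasing step followed by unrelated generators admits no such surgery in general). The correct and simpler way to close this is global rather than local: since every $\act_*$-step changes $\ellhat_*$ by exactly $\pm 1$, any $\act_*$-sequence from $x$ to $y$ has length at least $\ellhat_*(y)-\ellhat_*(x)$, with equality if and only if every step increases $\ellhat_*$; and the hypothesis $\hat\cR_*(x,y)\neq\varnothing$ supplies a strictly increasing sequence of exactly that length (a minimal $\dact_*$-sequence, all of whose steps are non-descents and hence agree with $\act_*$). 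Hence the minimal $\act_*$-length equals $\ellhat_*(y)-\ellhat_*(x)$, every minimal $\act_*$-sequence is strictly increasing, and on strictly increasing sequences $\act_*$ and $\dact_*$ coincide, which is the whole corollary. Note that the nonemptiness hypothesis is genuinely needed at this point and not merely for the statement to be non-vacuous: without an increasing path the minimal $\act_*$-length could exceed $\ellhat_*(y)-\ellhat_*(x)$, so the counting bound alone does not force monotonicity. With the deletion argument replaced by this counting argument, your proof is complete and matches the paper's.
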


We now describe some properties of the sets $\cA_*(x,y)$ and $\DemA_*(x,y)$ 
given in
Definition \ref{def1}.

\begin{lemma}\label{atomdes-lem}
 If $x,y \in \I_*$ and $w \in \cA_*(x,y)$ then
$\DesR(w) \subset \DesR(y) $ and $\DesL(w) \cap \DesR(x)=\varnothing$.
 \end{lemma}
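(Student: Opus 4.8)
The statement has two parts, and both should follow from the characterization of $\cA_*(x,y)$ as the set of minimal-length $w$ with $\idem{x}{w}=y$, together with the explicit formula for $\dact_*$ in Corollary \ref{invdem-cor} and the length behavior in Proposition \ref{des-lem}. The plan is to argue both containments by contradiction: if $w$ has a ``bad'' descent, I will produce a strictly shorter element $w'$ still lying in $\DemA_*(x,y)$, contradicting minimality of $w \in \cA_*(x,y)$.

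For the first containment $\DesR(w) \subset \DesR(y)$, suppose $s \in \DesR(w)$, so that $w = w's$ with $\ell(w') = \ell(w)-1$ and $w' = ws$. The key identity to exploit is the associativity-type relation $\idem{x}{(w' \dem s)} = \idem{(\idem{x}{w'})}{s}$ recorded in the introduction, applied with $w = w' \dem s$ (valid since $s \in \DesR(w)$ means $w\dem s = w$, equivalently $w' \dem s = w$). This gives $y = \idem{x}{w} = \idem{(\idem{x}{w'})}{s}$, so writing $z = \idem{x}{w'} \in \I_*$ we have $\idem{z}{s} = y$. By Corollary \ref{invdem-cor}, either $\idem{z}{s}=z$ (when $s \in \DesR(z)$) or $\idem{z}{s}$ is obtained by the operator $\act_*$. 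If $s \notin \DesR(y)$, I want to derive a contradiction: in that case I would show $y = \idem{z}{s}$ forces $s \in \DesR(z)$ and hence $z = y$, so $w' \in \DemA_*(x,y)$ with $\ell(w') < \ell(w)$, contradicting minimality. The clean way to see $s \in \DesR(y)$ is: by Corollary \ref{invdem-cor}, $\idem{z}{s}=y$ together with the formula shows that either $s \in \DesR(y)$ directly, or $y = s^* z s$ or $y = zs$ with $s \notin \DesR(z)$; and one checks using Corollary \ref{h2-cor} that in the latter cases $s$ is forced into $\DesR(y)$ as well. I expect this bookkeeping to be the first place care is needed.

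For the second containment $\DesL(w) \cap \DesR(x) = \varnothing$, suppose $s \in \DesL(w) \cap \DesR(x)$, so $w = sw'$ with $\ell(w') = \ell(w)-1$ and $s \in \DesR(x)$. The relevant computation is $\idem{x}{w} = (w^*)^{-1} \dem x \dem w = (w'^*)^{-1}\dem (s^* \dem x \dem s) \dem w'$, using $w = sw'$ and $w^* = s^* w'^*$ together with associativity of the Demazure product $\dem$. Since $s \in \DesR(x)$, Corollary \ref{invdem-cor} (or Corollary \ref{h2-cor}) gives $s^* \dem x \dem s = \idem{x}{s} = x$, so the whole expression collapses to $(w'^*)^{-1} \dem x \dem w' = \idem{x}{w'}$. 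Hence $\idem{x}{w'} = \idem{x}{w} = y$, so $w' \in \DemA_*(x,y)$ with $\ell(w') = \ell(w)-1 < \ell(w)$, again contradicting the minimality of $w$. This half is cleaner because $s \in \DesR(x)$ makes the left factor $s^* \dem x \dem s$ simplify to $x$ directly.

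The main obstacle I anticipate is the first part: unlike the second containment, where the Demazure factor on the left collapses immediately, the first containment requires correctly tracking how $\idem{z}{s}=y$ constrains $\DesR(y)$ through the case analysis of Corollary \ref{invdem-cor}, and verifying that the non-trivial cases ($y = s^*zs$ or $y = zs$) genuinely place $s$ in $\DesR(y)$ rather than merely failing to exclude it. I would handle this by invoking Corollary \ref{h2-cor}, which ties together $s \in \DesR(y)$, $y \act_* s < y$, and the length drop $\ellhat_*(y\act_* s) = \ellhat_*(y)-1$; the contrapositive (if $s \notin \DesR(y)$ then applying $\act_*$ with $s$ strictly increases $\ellhat_*$) is exactly what forces $z = y$ and yields the shorter element $w'$. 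Once this is set up, both containments reduce to one-line length comparisons.
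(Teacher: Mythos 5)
Your argument is correct and is exactly the verification the paper leaves to the reader: its proof of Lemma \ref{atomdes-lem} simply says the claim is ``straightforward to check using the definition of $\cA_*(x,y)$ and Corollary \ref{h2-cor},'' and your two contradiction arguments (peeling off a right descent of $w$ via $\idem{(\idem{x}{w'})}{s}=y$ and Corollaries \ref{invdem-cor}--\ref{h2-cor}, and collapsing $s^*\dem x\dem s=x$ for $s\in\DesR(x)$ on the left) are precisely that check. No gaps.
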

 
\begin{proof}
It is straightforward to check this using  the definition of $\cA_*(x,y)$ and Corollary \ref{h2-cor}.
\end{proof}


\begin{proposition}\label{atomdes-prop}
 Let $x,y\in \I_*$ and $s \in S$. 
 \ben
 \item[(a)] If $s \notin \DesR(y)$ then $\cA_*(x,y ) = \{ ws : w \in \cA_*(x,y\act_*s) \text{ with } s \in \DesR(w) \}.$
  \item[(b)] If $s \in \DesR(x)$ then $\cA_*(x,y) = \{ sw : w \in \cA_*(x \act_* s,y) \text{ with } s \in \DesL(w) \}.$
  \een
\end{proposition}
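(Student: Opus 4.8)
The plan is to prove both parts by exhibiting mutually inverse bijections given by multiplication by $s$, so that the asserted set equalities follow once lengths are controlled. For (a) I would use right multiplication $w\mapsto ws$; for (b) left multiplication $w\mapsto sw$. The two statements are mirror images, so I will describe (a) in detail and indicate the change needed for (b).

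For part (a) the engine is the peeling identity $\idem{x}{(u\dem v)} = \idem{\idem{x}{u}}{v}$ together with Corollary \ref{invdem-cor}. First I would record two one-step consequences. (i) If $a\in\cA_*(x,y)$, then Lemma \ref{atomdes-lem} gives $\DesR(a)\subseteq\DesR(y)$, and since $s\notin\DesR(y)$ we get $s\notin\DesR(a)$; hence $as = a\dem s$ and $\idem{x}{as}=\idem{\idem{x}{a}}{s}=\idem{y}{s}=y\act_* s$ by Corollary \ref{invdem-cor}. Thus $as\in\DemA_*(x,y\act_* s)$ with $s\in\DesR(as)$. (ii) Conversely, if $w\in\cA_*(x,y\act_* s)$ with $s\in\DesR(w)$, set $v=ws$, so $w=v\dem s$ and $\idem{\idem{x}{v}}{s}=\idem{x}{w}=y\act_* s$. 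By Corollary \ref{invdem-cor}, $\idem{\idem{x}{v}}{s}$ is either $\idem{x}{v}$ or $\idem{x}{v}\act_* s$; the first case would force $\idem{x}{v}=y\act_* s$, making $v$ a strictly shorter element of $\DemA_*(x,y\act_* s)$ than $w$, contradicting minimality. So the second case holds, $\idem{x}{v}=y$, and $ws\in\DemA_*(x,y)$. Hence right multiplication by $s$ sends $\cA_*(x,y)$ into $\DemA_*(x,y\act_* s)$ and sends $\{w\in\cA_*(x,y\act_* s):s\in\DesR(w)\}$ into $\DemA_*(x,y)$, and these maps are inverse to one another.

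The main obstacle is to verify that these maps actually land in the respective sets of \emph{atoms}, for which I must show that the minimal lengths $\alpha$ of $\DemA_*(x,y)$ and $\beta$ of $\DemA_*(x,y\act_* s)$ satisfy $\beta=\alpha+1$ (recall all atoms in a fixed set share a common length by Proposition \ref{atomexist-prop}). Step (i) yields $\beta\le\alpha+1$. For the reverse inequality I would argue by parity: by Corollary \ref{h2-cor} each $\act_*$-move changes $\ellhat_*$ by exactly $\pm1$, so by Corollary \ref{altdef-cor} any atom length is congruent mod $2$ to the $\ellhat_*$-difference of its endpoints; since $\ellhat_*(y\act_* s)=\ellhat_*(y)+1$, this gives $\alpha\not\equiv\beta\pmod 2$. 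It then remains to exclude $\beta\le\alpha-1$: appending $s$ to an involution word of $y\act_* s$ produces an $\act_*$-sequence for $(y\act_* s)\act_* s=y$ of length $\beta+1$, so $\alpha\le\beta+1$; the surviving possibility $\beta=\alpha-1$ would make this appended word a reduced word for an atom $a\in\cA_*(x,y)$ with $s\in\DesR(a)$, contradicting $\DesR(a)\subseteq\DesR(y)\not\ni s$. Hence $\beta=\alpha+1$, the two multiplication maps preserve atoms, and part (a) follows.

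For part (b) I would run the mirror-image argument with $\DesL$ in place of $\DesR$ and the left peeling identity $\idem{x}{sw}=\idem{x\act_* s}{w}$, valid when $s\in\DesR(x)$ and $s\in\DesL(w)$: here one writes $w=s\dem(sw)$ and uses that $s\notin\DesR(x\act_* s)$ (since $(x\act_* s)\act_* s=x>x\act_* s$), so $\idem{x\act_* s}{s}=(x\act_* s)\act_* s=x$ by Corollary \ref{invdem-cor}. The descent bookkeeping now uses the other half of Lemma \ref{atomdes-lem}, namely $\DesL(a)\cap\DesR(x)=\varnothing$, together with $\ellhat_*(x\act_* s)=\ellhat_*(x)-1$, and the parity argument is identical. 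Throughout, the degenerate cases where an atom set is empty are handled automatically: if either side of the claimed equality is nonempty, the peeling identities force the other to be nonempty as well.
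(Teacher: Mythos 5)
Your proof is correct and follows essentially the same route as the paper's: both establish the mutually inverse bijections $w \mapsto ws$ (resp.\ $w \mapsto sw$) between the two atom sets, with Lemma \ref{atomdes-lem} supplying the needed descent conditions and the identity $\idem{x}{(u\dem v)} = \idem{(\idem{x}{u})}{v}$ doing the peeling. The paper's own proof is much terser and leaves the length comparison between the two atom sets implicit (it follows from the standard fact that every atom in $\cA_*(x,y)$ has length $\ellhat_*(y)-\ellhat_*(x)$), whereas your parity argument gives a careful, self-contained justification of that step.
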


\begin{proof}
Suppose  $s \notin \DesR(y)$.
If $w \in \cA_*(x,y\act_* s)$
and $s \in \DesR(w)$, then $w$ has a reduced expression ending with $s$ and $ws \in \cA_*(x,y)$.
Conversely, if $v \in \cA_*(x,y)$, then $s \notin \DesR(v)$ by Lemma \ref{atomdes-lem} whence  $vs \in \cA_*(x,y\act_*s)$,   so $v=ws$ for the element $w=vs \in \cA_*(x,y)$  which has $s \in \DesR(w)$ by construction.
This proves part (a), and
part (b) follows  similarly.
\end{proof}

The left and right \emph{weak orders} $<_L$ and $<_R$ on $W$ are the transitive closures of the relations $w<_L sw$ and $w <_R wt$ for   $w \in W$ and $s,t \in S$ such that  
$\ell(sw) > \ell(w)$ and $\ell(wt) > \ell(w)$.
The appropriate analogue of these orders for twisted involutions is the following:

\begin{definition} The \emph{(two-sided) weak order} $<_{T,*}$ on $\I_*$  is the transitive closure of the relation with $w <_{T,*} w \act_* s$ for each $w \in \I_*$ and $s \in S$ such that $\ellhat_*(w)  < \ellhat_*(w\act_* s) $. 
\end{definition}

\begin{remark}
This order was first defined in \cite{RichSpring} (see also \cite[\S5]{H2}), and is   the unique partial order on $\cI_*$ with $x \leq_{T,*} y$ if and only if all (equivalently, any) of the sets $\hat \cR_*(x,y)$, $\cA_*(x,y)$, and $\DemA_*(x,y)$ are  nonempty.
If $x \leq_{T,*} y$ then $x \leq y$, but the reverse implication does not hold in general.
\end{remark}

%

If  $J \subset S$  then we write $W_J$ for the standard parabolic subgroup of $W$ which $J$ generates.  It is well-known (see, e.g., \cite[Chapter 1]{GP})  that $W_J$ is a lower set relative to the Bruhat order on $W$ (i.e., $x \leq y \in W_J$ implies $x \in W_J$), and hence also relative to both weak orders. 
It likewise follows that $\I_*\cap W_J$ is  a lower set relative to $\leq_{T,*}$.
If $X$ is any alphabet, then we write $\Mon{X}$ for the \emph{free monoid} on $X$, that is, the set of all finite sequences of elements of $X$.

\begin{proposition}\label{parred-prop}
Let $x,y \in \I_*$.
If $J \subset S$ and $y \in W_J$, then $\DemA_*(x,y) \subset W_J$ and $\hat\cR_*(x,y) \subset \Mon{J}$.
\end{proposition}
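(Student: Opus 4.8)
The plan is to prove the containment $\DemA_*(x,y) \subset W_J$ first; the second assertion $\hat\cR_*(x,y) \subset \Mon{J}$ will then follow quickly. Indeed, by Proposition \ref{atomexist-prop} we have $\hat\cR_*(x,y) = \bigcup_{w \in \cA_*(x,y)} \cR(w)$ with $\cA_*(x,y) \subset \DemA_*(x,y) \subset W_J$; and since $W_J$ is a lower set for the right weak order, one checks by induction on length that every reduced word of an element of $W_J$ uses only letters of $W_J \cap S = J$ (if $(s_1,\dots,s_k) \in \cR(w)$ with $w \in W_J$, then $s_1\cdots s_{k-1} <_R w$ lies in $W_J$, so $s_k = (s_1\cdots s_{k-1})^{-1}w \in W_J \cap S = J$, and one recurses). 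Hence each $\cR(w) \subset \Mon{J}$.

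For the containment $\DemA_*(x,y) \subset W_J$ I would induct on $\ell(w)$, proving simultaneously for all $y \in \I_* \cap W_J$ that $w \in \DemA_*(x,y)$ implies $w \in W_J$ (with $x$ fixed). The case $w = 1$ is immediate. When $w \neq 1$, choose $s \in \DesR(w)$ and set $w' = ws$, so that $w = w' \dem s$ and $\ell(w') < \ell(w)$. Writing $x' = \idem{x}{w'} \in \I_*$, the identity $\idem{\idem{x}{w'}}{s} = \idem{x}{(w' \dem s)}$ recalled in the introduction gives $y = \idem{x'}{s}$. It then suffices to show that $s \in J$ and that $x' \in W_J$, for then the inductive hypothesis applied to $w' \in \DemA_*(x,x')$ yields $w' \in W_J$, whence $w = w's \in W_J$.

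The crucial observation for $s \in J$ is that $s \dem s = s$, so that $\idem{y}{s} = \idem{\idem{x'}{s}}{s} = \idem{x'}{(s \dem s)} = \idem{x'}{s} = y$; by Corollary \ref{invdem-cor} this equality forces $s \in \DesR(y)$. Since $y \in W_J$, we have $ys <_R y$, so $ys \in W_J$ as $W_J$ is a lower set for the right weak order, whence $s = (ys)^{-1}y \in W_J \cap S = J$. For $x' \in W_J$, note that $y = \idem{x'}{s}$ gives $x' \leq_{T,*} y$ (either $s \in \DesR(x')$ and $x' = y$, or else $s \notin \DesR(x')$ and $y = x' \act_* s$ with $\ellhat_*(y) > \ellhat_*(x')$, so $x' <_{T,*} y$ by definition of the weak order); since $\I_* \cap W_J$ is a lower set for $\leq_{T,*}$ and $y \in \I_* \cap W_J$, we conclude $x' \in W_J$. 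I expect the step yielding $s \in J$ to be the main subtlety: a direct approach would instead run through the three cases of Corollary \ref{invdem-cor} for $\idem{x'}{s}$ to verify $s \in \DesR(y)$ case by case, whereas the idempotency identity $\idem{y}{s} = y$ disposes of all cases at once. Everything else is routine bookkeeping with the lower-set properties of $W_J$ and $\I_* \cap W_J$.
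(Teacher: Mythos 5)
Your proof is correct, but it runs along a different track than the paper's. The paper argues by contradiction from the left end of $w$: assuming $w\in\DemA_*(x,y)\setminus W_J$, it factors $w=utv$ with $u\in W_J$ and $t\in S\setminus J$ the first letter outside $J$, sets $a=\idem{x}{u}$ and $b=\idem{a}{t}$, and derives the contradiction that $at\notin W_J$ yet $at\leq b\leq_{T,*}y\in W_J$, using the Bruhat lower-set property of $W_J$ twice; it also first reduces to the case $J=J^*$. You instead induct on $\ell(w)$, peeling off the last letter $s\in\DesR(w)$, and your key mechanism is the idempotency $\idem{y}{s}=\idem{x'}{(s\dem s)}=y$ forcing $s\in\DesR(y)$ and hence $s\in J$; the lower-set property of $\I_*\cap W_J$ under $\leq_{T,*}$ (stated in the paper just before the proposition) then puts the intermediate involution $x'$ back in $W_J$ so the induction closes. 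Both arguments ultimately rest on the lower-set properties of parabolic subgroups, but yours substitutes a descent argument for the paper's Bruhat-comparison contradiction, avoids the reduction to $J=J^*$ (which, as you implicitly notice, is not actually needed), and as a by-product establishes the useful fact that $\DesR(w)\subset\DesR(y)$ for \emph{every} Hecke atom $w\in\DemA_*(x,y)$ --- a strengthening of Lemma \ref{atomdes-lem}, which the paper states only for atoms. Your handling of the reduction from $\hat\cR_*(x,y)\subset\Mon{J}$ to the containment of Hecke atoms matches the paper's (which simply cites \cite[Proposition 1.2.10]{GP} for $\cR(w)\subset\Mon{J}$).
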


\begin{proof}
Fix $y \in \I_* \cap W_J$. 
After possibly replacing $J$ by $J\cap J^*$, we may assume that $J=J^*$.
If $\DemA_*(x,y)$ is empty then the result holds trivially, so assume   $x \leq_{T,*} y $  so that $x \in W_J$.
Suppose $w \in \DemA_*(x,y)$ but $w\notin W_J$.
We may then write $w = utv$ for some $u \in W_J$ and $t \in S\setminus J$ and $v \in W$ such that $\ell(w) = \ell(u) + 1 + \ell(v)$.
Define $a = \idem{x}{u}$ and $b = \idem{a}{t}$ so that $y =\idem{b}{v}$.
Since  $b \leq_{T,*} y \in W_J$ by construction and since $W_J$ is a lower set under $\leq$, we must have $ b \in W_J$. 
However,
as $a \in W_J$, it follows that $a < at \leq b$, which implies   
 that  $at \in W_J$ (again, since $W_J$ is a lower set in Bruhat order) which is impossible since $t \notin J$.
Hence we must have $\cA_*(x,y)\subset \DemA_*(x,y) \subset W_J$ as desired, so $\hat\cR_*(x,y) \subset \Mon{J}$ as  $\cR(w) \subset \Mon{J}$ for all $w \in W_J$ (e.g., by \cite[Proposition 1.2.10]{GP}).
\end{proof}

We use this result to confirm the following, intuitively clear property.
For twisted Coxeter systems $(W,S,*)$ and $(W',S',\diamond)$, we define a \emph{morphism} $\phi : (W,S,*) \to (W',S',\diamond)$ to be a group homomorphism $W \to W'$ such that $\phi(S) \subset \{1\}\cup S'$ and $\phi(w^*) = \phi(w)^\diamond$ for all $w \in W$.

\begin{corollary}\label{morph-cor}
Let $\phi : (W,S,*) \to (W',S',\diamond)$ be an injective morphism of twisted Coxeter systems.
Then $\phi$ restricts to a map $\I_*(W) \to \I_\diamond(W')$ and the maps
\[ (s_1,s_2,\dots,s_k) \mapsto (\phi(s_1),\phi(s_2),\dots,\phi(s_k)) \qquand x \mapsto \phi(x) \]
are  bijections 
$
\hat\cR_*(x,y) \mapsto \hat\cR_\diamond(\phi(x),\phi(y))
$
and
$
\DemA_*(x,y) \to \DemA_\diamond\(\phi(x),\phi(y)\)
$ for each $x,y \in \I_*(W)$.
\end{corollary}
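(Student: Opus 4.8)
The plan is to first pin down the image of $\phi$ concretely and then check that $\phi$ intertwines every operation in sight. I would begin with the easy restriction claim: if $x \in \I_*(W)$ then $\phi(x)^{-1} = \phi(x^{-1}) = \phi(x^*) = \phi(x)^\diamond$, so $\phi(x) \in \I_\diamond(W')$. The crucial structural observation is that $\phi$ is in fact an isomorphism onto a \emph{standard parabolic} subgroup of $W'$. Since $\phi$ is injective it carries $S$ bijectively onto a subset $J := \phi(S) \subseteq S'$ (no generator can map to $1$), and since $\phi(W) = \langle J\rangle = W'_J$, the map $\phi$ is a group isomorphism $W \xrightarrow{\sim} W'_J$ sending the generating set $S$ bijectively onto the generating set $J$. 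As the order of $st$ equals the order of $\phi(s)\phi(t)$ for $s,t\in S$, this is an isomorphism of Coxeter systems. Invoking the standard fact that the length function of $W'$ restricts on $W'_J$ to the intrinsic length of $(W'_J,J)$, I conclude that $\phi$ is length-preserving: $\ell'(\phi(w)) = \ell(w)$ for all $w \in W$.

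Length preservation is what makes everything commute. First I would verify $\phi(a \dem b) = \phi(a)\dem \phi(b)$ for all $a,b \in W$: the Demazure product computed in $W'$ on the Bruhat-closed parabolic $W'_J$ agrees with the intrinsic one of $(W'_J,J)$, and Coxeter isomorphisms commute with $\dem$ because it is built entirely from length comparisons. Feeding this into \eqref{dact-def} together with $\phi(w^*) = \phi(w)^\diamond$ yields the key intertwining identity $\phi(\idem{x}{w}) = \phi(x)\dact_\diamond \phi(w)$ for all $x \in \I_*(W)$ and $w \in W$. (Equivalently one can check directly that $\phi(x\act_* s) = \phi(x)\act_\diamond \phi(s)$, where injectivity is needed precisely to transport the condition $s^*x \neq xs$ across $\phi$.)

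With this identity the bijection on Hecke atoms is almost formal. The identity shows $w \mapsto \phi(w)$ maps $\DemA_*(x,y)$ into $\DemA_\diamond(\phi(x),\phi(y))$, and this map is injective because $\phi$ is. For surjectivity I would apply Proposition \ref{parred-prop} to $(W',S',\diamond)$: since $\phi(y) \in \I_\diamond(W')\cap W'_J$ (and $J=\phi(S)=J^\diamond$, so $\im(\phi)$ is indeed this $\diamond$-stable parabolic), every element of $\DemA_\diamond(\phi(x),\phi(y))$ lies in $W'_J = \im(\phi)$, hence equals $\phi(w)$ for a unique $w \in W$; the intertwining identity and injectivity of $\phi$ then force $\idem{x}{w}=y$, i.e.\ $w \in \DemA_*(x,y)$. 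This gives a bijection $\DemA_*(x,y)\to\DemA_\diamond(\phi(x),\phi(y))$, which is length-preserving and hence restricts to a bijection $\cA_*(x,y)\to\cA_\diamond(\phi(x),\phi(y))$ on minimal-length elements.

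Finally, for involution words I would use Proposition \ref{atomexist-prop} to write $\hat\cR_*(x,y) = \bigsqcup_{w\in\cA_*(x,y)}\cR(w)$ and the analogous decomposition for $\diamond$. Since $\phi$ restricts to a length-preserving Coxeter isomorphism onto $W'_J$, and the reduced words of $\phi(w)\in W'_J$ use only letters of $J$ (the parabolic fact, cf.\ \cite[Proposition 1.2.10]{GP}), the letterwise map $(s_1,\dots,s_k)\mapsto(\phi(s_1),\dots,\phi(s_k))$ is a bijection $\cR(w)\to\cR(\phi(w))$ for each $w$. Combining this with the bijection $\cA_*(x,y)\to\cA_\diamond(\phi(x),\phi(y))$ produces the desired bijection on the disjoint unions. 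I expect the main obstacle to be the surjectivity step for Hecke atoms---namely, arguing that $\DemA_\diamond(\phi(x),\phi(y))$ cannot escape $\im(\phi)$---which is exactly where Proposition \ref{parred-prop} is needed; everything else is bookkeeping once the parabolic and length-preservation picture is in place.
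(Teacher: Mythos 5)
Your proposal is correct and follows exactly the paper's (very terse) argument: the paper's entire proof is to invoke Proposition \ref{parred-prop} to reduce to the case where $\phi$ is an isomorphism onto a standard parabolic, at which point the claim is clear, and your use of Proposition \ref{parred-prop} for the surjectivity of the Hecke-atom map together with the length-preservation and intertwining bookkeeping is precisely the expanded version of that reduction.
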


\begin{proof}
By Proposition \ref{parred-prop} we reduce to when $\phi$ is an isomorphism; then the result is clear.
\end{proof}

\section{Duality for twisted involutions}
\label{duality-sect}

In the symmetric group $S_n$ one has two kinds of twisted involutions: the ordinary involutions and those relative to
the diagram automorphism $s_i \mapsto s_{n-i}$ (which is also the inner automorphism induced by the group's longest element).  This section presents some general observations explaining a sort of ``duality'' relating  most properties  of interest  for these two   types of involution words.

%

Let $(W,S)$ be any Coxeter system. If  $J \subset S$ is a subset such that $W_J$ is finite, then we write $w_J$ for the longest element in $W_J$. When $W$ is itself finite, we define $w_0 = w_S$.
We say that two subsets $J, K \subset S$  \emph{commute} if $st =ts$ for all $s\in J$ and $t \in K$. 
The \emph{irreducible factors} of $(W,S)$ are    the  Coxeter systems  of the form $(W_J,J)$ where $\varnothing \subsetneq J \subset S$ is minimal among the nonempty subsets of simple generators which commute with their complements. 
For any subset $J\subset S$ commuting with $K = S\setminus J$, it holds that  $W = W_J \times W_K$ and $\ell(xy) = \ell(x) + \ell(y)$ for all $x \in W_J$ and $y \in W_K$. In this situation we write $w |_J$ and $w|_K$ for the images of $w \in W$ under the natural projections $W \to W_J$ and $W\to W_K$, so that $w = w|_J\cdot  w|_K = w|_K\cdot  w|_J$.
We will need the following fact:

\begin{proposition}[Franzsen and Howlett \cite{FH}] \label{disjdec-prop} If $v \in W$ then $v^{-1}   s   v \in S$ for all $s \in S$ if and only if 
 $v= w_J$ for a subset $J \subset S$ which commutes with  $S\setminus J$ and which is such that $W_J$ is finite.
\end{proposition}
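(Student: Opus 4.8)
The plan is to prove the two implications separately, working throughout in the Tits geometric representation of $(W,S)$ on a real vector space $V$ equipped with its $W$-invariant symmetric bilinear form $B$, with simple roots $\{\alpha_s : s \in S\}$ and root system $\Phi = \Phi^+ \sqcup \Phi^-$. The two facts I will lean on are that each reflection determines its root up to sign, so that $w s_\alpha w^{-1} = s_{w\alpha}$, and that $\ell(sw) < \ell(w)$ if and only if $w^{-1}\alpha_s \in \Phi^-$. Phrasing everything geometrically keeps the argument valid even when $W$ is infinite.

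The reverse implication is the easy one. Suppose $v = w_J$ with $J$ commuting with $K = S \setminus J$ and $W_J$ finite. For $s \in K$ the generator $s$ commutes with all of $W_J$, so $v^{-1} s v = s \in S$, while for $s \in J$ the standard fact that the longest element $w_J$ of the finite group $W_J$ induces a diagram automorphism of $(W_J,J)$ gives $v^{-1} s v = w_J s w_J \in J \subset S$. Hence $v^{-1} s v \in S$ for all $s$.

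For the forward implication, suppose $v^{-1} s v \in S$ for all $s \in S$. Then $\phi : w \mapsto v^{-1} w v$ is an automorphism of $W$ with $\phi(S) \subseteq S$; since $\phi(S)$ generates $W$ and $S$ is a minimal generating set (no proper subset generates $W$, as $W_{S'}$ is a proper parabolic for $S' \subsetneq S$), we conclude $\phi(S) = S$, so $\sigma := \phi|_S$ is a permutation of $S$. Geometrically this reads $v^{-1}\alpha_s = \varepsilon_s \alpha_{\sigma(s)}$ with $\varepsilon_s \in \{\pm 1\}$, where $\varepsilon_s = -1$ exactly when $s \in J := \DesL(v) = \{s : v^{-1}\alpha_s \in \Phi^-\}$; note $J$ is finite as $|\DesL(v)| \le \ell(v)$. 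First I would show $J$ commutes with $K := S \setminus J$: for $s \in J$ and $t \in K$, invariance of $B$ under $v^{-1}$ gives $B(\alpha_s,\alpha_t) = \varepsilon_s\varepsilon_t\, B(\alpha_{\sigma(s)},\alpha_{\sigma(t)}) = -\,B(\alpha_{\sigma(s)},\alpha_{\sigma(t)})$, and since $B$ is nonpositive on distinct simple roots, both sides must vanish, forcing $st = ts$. Because $J$ commutes with $K$ and $J \cup K = S$, we have $W = W_J \times W_K$ with additive lengths, so $v = ab$ with $a = v|_J$ and $b = v|_K$. Conjugation by $b \in W_K$ is trivial on $W_J$, so for $s \in J$ the hypothesis yields $a^{-1} s a = v^{-1} s v \in S \cap W_J = J$, while $\DesL(v) = \DesL(a) \sqcup \DesL(b) = J$ forces $\DesL(b) = \varnothing$, hence $b = 1$, and $\DesL(a) = J$. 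Thus $a^{-1}$ sends every simple root of $W_J$ to a negative root, which by the standard characterization of the longest element makes $W_J$ finite with $a = w_J$, giving $v = w_J$ as required.

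The step I expect to be the main obstacle is the very last one: inferring from the full descent condition $\DesL(a) = J$ that $W_J$ is finite and $a = w_J$, rather than merely that $a^{-1}$ inverts the simple roots. This rests on the propagation principle that an element sending every simple root to a negative root must invert the entire positive system, the classical characterization of $w_0$, which I would invoke from Björner--Brenti or Humphreys; it is precisely to make this and the commutation argument go through uniformly for possibly infinite $W$ that I prefer to argue via the form $B$ on $V$ rather than assuming finiteness at the outset.
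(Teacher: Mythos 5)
Your proof is correct, but it takes a genuinely different route from the paper's. The paper's proof is essentially a reduction plus a citation: it observes that the condition $v^{-1}sv\in S$ restricts to each irreducible factor $(W_J,J)$ of $(W,S)$, and then invokes Franzsen and Howlett's Lemma~9 for the statement that in an irreducible Coxeter group the only elements conjugating $S$ into $S$ are $1$ (infinite case) and $1,w_0$ (finite case); the decomposition into commuting pieces then comes for free from the factorization into irreducible components. You instead give a self-contained argument in the Tits representation: you identify the correct subset directly as $J=\DesL(v)$, prove that $J$ commutes with $S\setminus J$ by comparing the sign of $B(\alpha_s,\alpha_t)$ with that of $B(v^{-1}\alpha_s,v^{-1}\alpha_t)$ and using nonpositivity of $B$ on distinct simple roots, and then conclude from $\DesL(v|_J)=J$ and the propagation of negativity from simple roots to all positive roots that $\Phi_J^+$ is finite and $v|_J=w_J$. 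All the steps check out (in particular $\phi(S)=S$ follows from minimality of $S$ as a generating set, $|\DesL(v)|\leq\ell(v)$ keeps $J$ finite, and $\ell(v)=|\Phi_J^+|$ forces $W_J$ finite), so what you have is in effect a proof of the Franzsen--Howlett lemma itself rather than an application of it. What the paper's approach buys is brevity; what yours buys is self-containedness and the avoidance of any case split on irreducible components, since the commuting decomposition is extracted directly from the descent set. One cosmetic remark: the observation that $a^{-1}sa\in S\cap W_J=J$ for $s\in J$ is not actually needed once you know $\DesL(a)=J$, since that alone already forces $W_J$ finite and $a=w_J$.
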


\begin{proof}
Since $v^{-1} s v = (v|_J)^{-1} \cdot s\cdot  (v|_J) \in J$ if $s \in J$ and $(W_J,J)$ is an irreducible factor of $(W,S)$, it suffices to assume that $(W,S)$ is irreducible and show  that (i) $v=1$ if $W$ is infinite and (ii) $v \in \{1,w_0\}$ if $W$ is finite.
Assertion (i) is  \cite[Lemma 9]{FH}, whose proof also establishes (ii).
\end{proof}

Fix an involution $* \in \Aut(W,S)$. For subsets $X,Y \subset W$  let $XY = \{ xy : (x,y) \in X\times Y\}$.
Recall that $\Mon{S}$ denotes the free monoid on $S$. Given 
subsets $\cX,\cY\subset \Mon{S}$, let $\cX \shuffle \cY $ denote the set 
of words  whose letters may be partitioned into two disjoint subwords, one equal to a word $\a \in \cX$ and the other equal to some $\b \in \cY$.

\begin{lemma}\label{shuffle-lem} Suppose $J\subset S$ commutes with $K = S\setminus J$ and $J=J^*$. If $x,y \in \I_*(W)$ then
\ben
\item[(a)]
$
\hat\cR_*(x,y) = \hat\cR_*(x|_J,y|_J) \shuffle \hat\cR_*(x|_K,y|_K)
$.

\item[(b)] $ \cA_*(x,y) = \cA_*(x|_J,y|_J) \cA_*(x|_K,y|_K)
$
and 
$\DemA_*(x,y) =  \DemA_*(x|_J,y|_J) \DemA_*(x|_K,y|_K)
.
$
\een
\end{lemma}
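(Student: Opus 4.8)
The plan is to prove both parts of Lemma~\ref{shuffle-lem} by exploiting the direct-product decomposition $W = W_J \times W_K$ together with the hypothesis $J = J^*$, which forces $K = K^*$ as well (since $*$ permutes $S$ and fixes $J$ setwise). First I would observe that because $J$ commutes with $K$, the involution $*$ respects the decomposition: for $w \in \I_*(W)$ we have $w = w|_J \cdot w|_K$ with $w|_J \in W_J$ and $w|_K \in W_K$, and applying $*$ commutes with the projections, so $(w|_J)^{-1} = (w|_J)^*$ and likewise for $K$. Hence $w \in \I_*(W)$ if and only if $w|_J \in \I_*(W_J)$ and $w|_K \in \I_*(W_K)$. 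The key computational fact I would establish is that the operator $\dact_*$ factors through the decomposition: for $s \in J$, applying $\idem{x}{s}$ only affects the $J$-component (leaving $x|_K$ fixed), and symmetrically for $s \in K$. This follows from Corollary~\ref{invdem-cor}, since $s \in J$ commutes with every element of $W_K$, so $s^* x s$ (or $xs$) modifies $x|_J$ exactly as $\idem{x|_J}{s}$ would while preserving $x|_K$, using that $\DesR(x) \cap J = \DesR(x|_J)$ and $s^* x = x s \iff s^* (x|_J) = (x|_J) s$.

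\textbf{Proof of (a).} Given the factorization of $\dact_*$, an arbitrary word $(s_1,\dots,s_k)$ realizing $y$ from $x$ decomposes: its $J$-letters, in order, drive $x|_J$ to $y|_J$, and its $K$-letters drive $x|_K$ to $y|_K$, with the two streams acting independently because $J$- and $K$-steps commute. Thus the subword of $J$-letters lies in $\hat\cR_*(x|_J, y|_J)$ precisely when the whole word has minimal length, and dually for $K$; this gives the inclusion $\hat\cR_*(x,y) \subset \hat\cR_*(x|_J,y|_J) \shuffle \hat\cR_*(x|_K,y|_K)$. For the reverse inclusion, any shuffle of a word in $\hat\cR_*(x|_J,y|_J)$ with one in $\hat\cR_*(x|_K,y|_K)$ yields a word taking $x$ to $y$ under $\dact_*$; I would then check it has minimal possible length $\ellhat_*(x,y)$ by noting that independence of the two streams forces the minimal length to be the sum $\ellhat$ of the two components, which the shuffle achieves.

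\textbf{Proof of (b) and the main obstacle.} For the Hecke atoms, $w \in \DemA_*(x,y)$ means $\idem{x}{w} = y$; writing $w = w|_J \cdot w|_K$ and using the factorization of $\dact_*$ (via $\idem{x}{w} = \idem{x}{(w|_J \dem w|_K)}$ and the fact that $J,K$ commute so $\dem$ also factors) yields $\idem{x|_J}{w|_J} = y|_J$ and $\idem{x|_K}{w|_K} = y|_K$, giving the product decomposition of $\DemA_*$. The statement $\cA_*(x,y) = \cA_*(x|_J,y|_J)\,\cA_*(x|_K,y|_K)$ then follows because $\ell$ is additive across the product $W = W_J \times W_K$, so minimal-length elements of the product set are exactly products of minimal-length elements from each factor. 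The step I expect to demand the most care is verifying the factorization of $\dact_*$ cleanly — specifically confirming via Corollary~\ref{invdem-cor} that the three cases (descent, commuting non-descent, non-commuting non-descent) each localize to the correct component and never couple $J$ with $K$; once this lemma-level fact is pinned down, both parts are short. The interaction between the non-associative $\dact_*$ and the associative $\dem$ (through the identity $\idem{\idem{x}{u}}{v} = \idem{x}{(u\dem v)}$ noted in the introduction) is the subtle point to handle carefully, but it is exactly the tool that lets the $J$- and $K$-streams be separated.
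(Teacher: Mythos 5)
Your proposal is correct and rests on exactly the same key fact as the paper's (very terse) proof, namely that $\idem{(xy)}{s} = (\idem{x}{s})y$ for $s \in J$ and $\idem{(xy)}{t} = x(\idem{y}{t})$ for $t \in K$, i.e.\ that $\dact_*$ localizes to the factors of $W = W_J \times W_K$. The only cosmetic difference is that the paper deduces part (a) from part (b) via Proposition \ref{atomexist-prop} and the shuffle description of reduced words of commuting products, whereas you argue (a) directly; both routes are fine.
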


\begin{proof}
It suffices to prove part (b), and this is an easy exercise  on noting that for $x \in \I_*(W_J)$ and $y\in \I_*(W_K)$,
it holds that $\idem{(xy)}{s} = (\idem{x}{s})y$ for $s \in J$ and $\idem{(xy)}{t} = x(\idem{y}{t})$ for $t \in K$.
\end{proof}

For the rest of this section we fix 
 the following notation. First, let $v_0 \in \I_*(W)$ be an element
with $ v_0^{-1} s v_0 \in S$ for all $s \in S$.
We write $J\subset S$ for the subset which, by Proposition \ref{disjdec-prop}, exists
such that $v_0 = w_J$, and define $K=S\setminus J$. Note that $J$ and $K$ commute and that $v_0=v_0^{-1}=v_0^*$.
Recall that $*\in \Aut(W,S)$ is a fixed involution, and define $\diamond  \in \Aut(W,S)$ by
\be\label{diamond-def}  w^\diamond = v_0 w^* v_0\qquad\text{for $w \in W$.}\ee
It necessarily holds that $J = J^* = J^\diamond$ and so also $K=K^*=K^\diamond$.
By construction, both $(W,S,*)$ and $(W,S,\diamond)$ are twisted Coxeter systems,
and
our goal is to compare  involution words for elements of  $\I_*=\I_*(W)$ and $\I_\diamond=\I_\diamond(W)$.
We begin  by noting three quick lemmas:

\begin{lemma}\label{quickly-lem}
The map
$x \mapsto v_0 x
$
is a
  bijection $\I_\diamond \to \I_*$.
\end{lemma}

\begin{lemma}\label{dualtech-lem} If $s \in S$ and $x \in \I_\diamond$ then
$ (v_0 x) \act_* s = v_0( x \act_\diamond s)$.
\end{lemma}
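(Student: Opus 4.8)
The plan is to verify the identity by directly expanding both sides using the defining formula \eqref{op-eq} for $\act_*$ and $\act_\diamond$ and then matching up the two case distinctions. First I would record the key algebraic consequence of the standing hypotheses $v_0 = v_0^{-1} = v_0^*$ and of \eqref{diamond-def}. Since $v_0 = v_0^{-1}$ we have $v_0^2 = 1$, so applying \eqref{diamond-def} with $w = s \in S$ gives $s^\diamond = v_0 s^* v_0$ and hence
\[ v_0\, s^\diamond = s^* v_0. \]
This single identity is what converts the $\diamond$-twist appearing on the right into the $*$-twist appearing on the left; everything else is bookkeeping.

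By Lemma \ref{quickly-lem} we have $v_0 x \in \I_*$, so the left-hand side $(v_0 x)\act_* s$ is well-defined via \eqref{op-eq}, while the right-hand side is $v_0$ times $x \act_\diamond s$, which is again governed by \eqref{op-eq} (now for the automorphism $\diamond$, using that $s^\diamond \in S$). The next step is to check that the two branching conditions coincide. Left-multiplying by $v_0$ and using $v_0 s^\diamond = s^* v_0$ gives $v_0(s^\diamond x) = s^* v_0 x = s^*(v_0 x)$, while trivially $v_0(xs) = (v_0 x)s$. Since left multiplication by $v_0$ is a bijection, the condition $s^\diamond x \neq xs$ that distinguishes the two cases of $x \act_\diamond s$ is equivalent to the condition $s^*(v_0 x) \neq (v_0 x)s$ that distinguishes the two cases of $(v_0 x)\act_* s$.

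Finally I would compare the values branch by branch. In the ``conjugation'' branch the right-hand side is $v_0\, s^\diamond x s = (v_0 s^\diamond)\, x s = s^* v_0 x s = s^*(v_0 x)s$, which is exactly the left-hand side; in the ``multiplication'' branch both sides equal $v_0 x s$. Since the two expressions are controlled by equivalent conditions and agree within each case, they are equal, proving the lemma.

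I do not anticipate a genuine obstacle here: the entire content is the identity $v_0 s^\diamond = s^* v_0$, after which the claim follows from a two-case verification. The only point deserving care is confirming that the case \emph{conditions} on the two sides really are equivalent, rather than merely that the values happen to agree when both sides fall into the same case; this is exactly what the injectivity of left multiplication by $v_0$ supplies.
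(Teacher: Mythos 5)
Your proof is correct, and it is exactly the verification the paper has in mind: the authors omit the proof of Lemma \ref{dualtech-lem} with the remark that it is ``easy to check from the definitions,'' and your argument---reducing everything to the identity $v_0 s^\diamond = s^* v_0$ and then matching both the case conditions and the case values of \eqref{op-eq}---is that check, carried out carefully. In particular you are right that the essential point deserving attention is the equivalence of the two branching conditions, which your use of the bijectivity of left multiplication by $v_0$ handles properly.
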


Observe that   $ (\I_*)^\diamond = \I_*$, where 
for a subset $X \subset W$ we write $X^\diamond = \{ x^\diamond : x \in X\}$.

\begin{lemma}\label{diamond-lem} If $x,y \in \I_*$ then $ \cA_*(x^\diamond,y^\diamond) =\cA_*(x,y)^\diamond$ and 
 $ \DemA_*(x^\diamond,y^\diamond) =\DemA_*(x,y)^\diamond$.
\end{lemma}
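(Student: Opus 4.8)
The plan is to deduce both equalities from a single conjugation identity for the operation $\dact_*$, namely
\[ \idem{x^\diamond}{w^\diamond} = (\idem{x}{w})^\diamond \qquad\text{for all } x,w \in W. \]
To set this up, write $\theta$ for the map $u \mapsto v_0 u v_0$. Since $v_0 = v_0^{-1} = v_0^*$ and $v_0^{-1} s v_0 \in S$ for all $s \in S$ (the defining properties of $v_0$, guaranteed by Proposition \ref{disjdec-prop}), the map $\theta$ is an automorphism of $(W,S)$ with $\theta(S) = S$, hence length-preserving; the same is true of $*$. By \eqref{diamond-def} we have $w^\diamond = \theta(w^*)$, so $\diamond = \theta \circ *$ is itself a length-preserving automorphism preserving $S$. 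I would record as a preliminary observation that any length-preserving automorphism $\phi$ of $(W,S)$ with $\phi(S)=S$ commutes with the Demazure product, i.e. $\phi(a \dem b) = \phi(a)\dem \phi(b)$ for all $a,b \in W$: this is immediate from \eqref{demprod} for $a \in S$ because the inequality $\ell(b) < \ell(ab)$ is preserved by $\phi$, and the general case follows by writing $a$ as a reduced product and using associativity of $\dem$.

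Granting this, I would verify the identity by computing both sides as $\theta(\idem{x^*}{w^*})$. On one hand, since $*$ commutes with $\dem$ and with inversion, $(\idem{x}{w})^* = ((w^*)^{-1}\dem x \dem w)^* = w^{-1}\dem x^* \dem w^* = \idem{x^*}{w^*}$, whence $(\idem{x}{w})^\diamond = \theta\big((\idem{x}{w})^*\big) = \theta(\idem{x^*}{w^*})$. On the other hand, using $x^\diamond = \theta(x^*)$, $w^\diamond = \theta(w^*)$, and $(w^\diamond)^* = (v_0 w^* v_0)^* = v_0 w v_0 = \theta(w)$, the fact that $\theta$ commutes with $\dem$ gives
\[ \idem{x^\diamond}{w^\diamond} = \theta(w^{-1}) \dem \theta(x^*) \dem \theta(w^*) = \theta\big(w^{-1}\dem x^* \dem w^*\big) = \theta(\idem{x^*}{w^*}). \]
Comparing the two computations yields the identity.

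With the identity in hand, the lemma is essentially formal. For the Hecke atoms, the observation $(\I_*)^\diamond = \I_*$ (noted just before the statement) ensures $x^\diamond,y^\diamond \in \I_*$, and then $\idem{x}{w} = y$ holds if and only if $\idem{x^\diamond}{w^\diamond} = y^\diamond$, so $w \in \DemA_*(x,y)$ if and only if $w^\diamond \in \DemA_*(x^\diamond,y^\diamond)$; this is exactly $\DemA_*(x^\diamond,y^\diamond) = \DemA_*(x,y)^\diamond$. For the atoms, I would invoke that $\diamond$ is length-preserving, so the bijection $w \mapsto w^\diamond$ carries minimal-length elements of $\DemA_*(x,y)$ to minimal-length elements of $\DemA_*(x^\diamond,y^\diamond)$, giving $\cA_*(x^\diamond,y^\diamond) = \cA_*(x,y)^\diamond$. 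I do not anticipate a real obstacle here: the only point requiring care is establishing that $\diamond$ (and $\theta$) are length-preserving and therefore commute with $\dem$, which is where the hypotheses on $v_0$ are used; everything else is bookkeeping around the conjugation identity.
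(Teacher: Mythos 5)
Your proof is correct, and since the paper explicitly omits the proof of this lemma as "easy to check from the definitions," your direct verification via the conjugation identity $\idem{x^\diamond}{w^\diamond} = (\idem{x}{w})^\diamond$ is exactly the intended argument. The one point worth isolating — that $\theta$ and $*$ preserve $S$ and length and hence commute with the Demazure product — is handled properly, and the rest is the bookkeeping the authors had in mind.
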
 

We omit the proofs of these statements, which are easy to check from the definitions.
Given  $\e = (s_1,s_2,\dots,s_k ) \in \Mon{S}$, set  
$\e^* = (s_1^*,s_2^*,\dots,s_k^*)$
and
$
\e^{\op} = (s_k,\dots,s_2,s_1).
$
We extend these operations to subsets of $\Mon{S}$  element-wise, and for $X \subset W$  set $X^{-1} = \{ x^{-1} : x \in X\}$. 

\begin{proposition}\label{maindual-prop} Let $x,y \in \I_\diamond$ 
and suppose $x \leq_{T,\diamond} y$.
\ben
\item[(a)] If $x|_J = y|_J$ then  $   \hat\cR_\diamond(x,y)=\hat \cR_*(v_0x, v_0y)  $ and $  \cA_\diamond(x,y) = \cA_*(v_0x,v_0y)$.

\item[(b)] If $x|_K = y|_K$ then  $  \hat\cR_{\diamond}(x,y) = \hat \cR_*(v_0y,v_0x)^\op$ and $\cA_\diamond(x,y) = \cA_*(v_0y,v_0x)^{-1}$.

\item[(c)] If $x|_J \neq y|_J$ and $x|_K \neq y|_K$ then $v_0x$ and $v_0y$ are not comparable in $\leq_{T,*}$. 

\een
\end{proposition}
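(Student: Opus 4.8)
The plan is to reduce all three parts to a single comparability question in $\leq_{T,*}$, using the intertwining of Lemma \ref{dualtech-lem} as the engine. Iterating that lemma gives
\[
v_0\,(x \act_\diamond s_1 \act_\diamond \cdots \act_\diamond s_k) = (v_0 x) \act_* s_1 \act_* \cdots \act_* s_k
\]
for all $x \in \I_\diamond$ and $s_i \in S$, so a word $(s_1,\dots,s_k) \in \Mon{S}$ satisfies $x \act_\diamond s_1 \act_\diamond\cdots\act_\diamond s_k = y$ if and only if it satisfies $(v_0 x) \act_* s_1 \act_*\cdots\act_* s_k = v_0 y$. Thus the two families of ``achieving words'' literally coincide, and since $x \leq_{T,\diamond} y$, Corollary \ref{altdef-cor} identifies $\hat\cR_\diamond(x,y)$ with the shortest words carrying $v_0 x$ to $v_0 y$ under $\act_*$. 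The entire content of the proposition is therefore to decide, given $x \leq_{T,\diamond} y$, whether $v_0 x \leq_{T,*} v_0 y$, or $v_0 y \leq_{T,*} v_0 x$, or neither.

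I would settle this comparability question by splitting over the commuting factorization $W = W_J \times W_K$. Since $v_0 = w_J \in W_J$ is an involution commuting with $W_K$, one has $w^\diamond = v_0 w^* v_0 = w^*$ for $w \in W_K$, so $\diamond$ and $*$ agree on $W_K$; on $W_J$ the map $\theta \colon w \mapsto v_0 w = w_J w$ is (by Lemma \ref{quickly-lem} restricted to $W_J$) a bijection $\I_\diamond(W_J) \to \I_*(W_J)$. The key lemma is that $\theta$ \emph{reverses} the two-sided weak order: it intertwines $\act_\diamond$ and $\act_*$ by Lemma \ref{dualtech-lem}, and it carries each internal covering $w \lessdot w\act_\diamond s$ (with $s \in J \setminus \DesR(w)$) to a descent, because $\DesR(w_J w) = J \setminus \DesR(w)$ for $w \in W_J$; since both weak orders are graded by $\ellhat$ via Corollary \ref{h2-cor}, reversing coverings makes $\theta$ a poset anti-isomorphism. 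Now Lemma \ref{shuffle-lem} decomposes both $\leq_{T,\diamond}$ and $\leq_{T,*}$ over the two factors. Given $x \leq_{T,\diamond} y$, the $K$-parts always satisfy $x|_K \leq_{T,*} y|_K$, while on the order-reversed $J$-factor one gets $v_0 x|_J \leq_{T,*} v_0 y|_J$ iff $x|_J = y|_J$, and $v_0 y|_J \leq_{T,*} v_0 x|_J$ automatically. Reassembling: $v_0 x \leq_{T,*} v_0 y \iff x|_J = y|_J$, and $v_0 y \leq_{T,*} v_0 x \iff x|_K = y|_K$. This already proves (c).

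It then remains to read off the word and atom statements. In case (a) we have $v_0 x \leq_{T,*} v_0 y$, so $\hat\cR_*(v_0 x, v_0 y)$ is nonempty and, by Corollary \ref{altdef-cor}, equals the shortest $\act_*$-words from $v_0 x$ to $v_0 y$; by the first paragraph this is exactly $\hat\cR_\diamond(x,y)$, and Proposition \ref{atomexist-prop} (a word set determines its atoms as the common products of its members) upgrades this to $\cA_\diamond(x,y) = \cA_*(v_0 x, v_0 y)$. In case (b) we instead have $v_0 y \leq_{T,*} v_0 x$. Here I would exploit the reversibility $(w \act_* s)\act_* s = w$: word-reversal is a length-preserving bijection between $\act_*$-sequences $v_0 x \to v_0 y$ and $v_0 y \to v_0 x$, so the shortest ones biject, giving $\hat\cR_\diamond(x,y) = \hat\cR_*(v_0 y, v_0 x)^{\op}$. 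Since reversing a reduced word for $w$ is a reduced word for $w^{-1}$, passing to products yields $\cA_\diamond(x,y) = \cA_*(v_0 y, v_0 x)^{-1}$.

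The main obstacle is the reversal lemma of the second paragraph — establishing that multiplication by $w_J$ is an anti-automorphism of the two-sided weak order on $\I(W_J)$ — together with the bookkeeping of directions through the $J$/$K$ split. Once the intertwining and this reversal are in hand, parts (a), (b), (c) are formal consequences; the only care required is to track which of $v_0 x$ and $v_0 y$ lies above the other, so as to decide between $\hat\cR_*(v_0 x, v_0 y)$, its opposite, and incomparability.
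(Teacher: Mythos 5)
Your argument is correct, and it reuses the paper's main ingredients (Lemma \ref{dualtech-lem}, Lemma \ref{shuffle-lem}, Corollary \ref{altdef-cor}, Proposition \ref{atomexist-prop}) but organizes them along a genuinely different route. The paper proves the three parts separately: part (a) by reducing via Lemma \ref{shuffle-lem} to the factor $W_K$, where $*$ and $\diamond$ coincide; part (b) by first reducing to the case $W$ finite, $v_0=w_0$, and then invoking the fact that multiplication by $w_0$ reverses the \emph{Bruhat} order to see that the reversed words are genuine involution words; and part (c) again via the Bruhat order. You instead establish the comparability trichotomy up front --- showing that $w\mapsto w_Jw$ is an anti-isomorphism of the two-sided weak order on $\I(W_J)$ by the purely descent-theoretic identity $\DesR(w_Jw)=J\setminus\DesR(w)$ together with Corollary \ref{h2-cor} --- and then read off all three parts from the single word-level identification $(v_0x)\act_*s_1\act_*\cdots\act_*s_k=v_0(x\act_\diamond s_1\act_\diamond\cdots\act_\diamond s_k)$. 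This buys you two things: you never leave the weak order (no appeal to Bruhat-order reversal is needed), and part (b) is handled uniformly without the explicit reduction to finite $W$, since the needed inequality $v_0y\leq_{T,*}v_0x$ is already supplied by your trichotomy before Corollary \ref{altdef-cor} is applied. The one step you should spell out if writing this up is the verification that a bijection sending covering relations to reversed covering relations (in both directions) is an order anti-isomorphism; this is immediate from the definition of $\leq_{T,*}$ as a transitive closure, and the gradedness you mention is not actually needed for it.
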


\begin{proof}
Suppose $x|_J = y|_J$.  Since $v_0 =w_J \in W_J$ it  follows that 
$(v_0x)|_J = (v_0y)|_J $ and $ (v_0x)|_K = x|_K$ and $ (v_0y)|_K = y|_K,$ so by Lemma \ref{shuffle-lem} we have 
$\cA_\diamond(x,y)  =  \cA_\diamond(x|_K,y|_K)$
and
$\cA_*(v_0x,v_0y) =  \cA_*(x|_K,y|_K)$. As the involutions $*$ and $\diamond$ restrict to the same map on $W_K$, we deduce that $ \cA_\diamond(x|_K,y|_K) =  \cA_*(x|_K,y|_K)$, so $  \cA_\diamond(x,y) = \cA_*(v_0x,v_0y)$ and in turn $   \hat\cR_\diamond(x,y)=\hat \cR_*(v_0x, v_0y)  $.

To prove part (b), suppose $x|_K = y|_K$.  It then follows similarly by  Lemma \ref{shuffle-lem} that $\cA_\diamond(x,y) = \cA_\diamond(x|_J,y|_J)$ 
and 
$\cA_*(v_0y,v_0x) = \cA_*(v_0(y|_J),v_0(x|_J))$, so without loss of generality we may assume that $W$ is finite and $v_0 = w_0$  so that $J=S$ and $K=\varnothing$.
Adopt these hypotheses and fix a word $\e = (s_1,s_2,\dots,s_k) \in \Mon{S} $.
It is straightforward from Lemma \ref{dualtech-lem} to show that 
\[ x \act_\diamond s_1 \act_\diamond s_2 \act_\diamond \cdots \act_\diamond  s_k = y
\qquad\Leftrightarrow
\qquad
(v_0 y) \act_* s_k \act_* \cdots \act_* s_2 \act_* s_1 = v_0 x.
\]
From this equivalence and the fact that  multiplication by $v_0=w_0$ is order-reversing with respect to  Bruhat order \cite[Proposition 2.3.4]{CCG}, it follows in view of Corollary \ref{altdef-cor} that
$\e \in \hat\cR_\diamond(x,y)$ if and only if $\e^\op \in \hat\cR_*(v_0y,v_0x)$. 

Finally, to prove part (c) suppose $x|_J \neq y|_J$ and $x|_K \neq y|_K$. Write $x' = v_0 x$ and $y' = v_0y$. Since $ x \leq_{T,\diamond} y$ and since $\leq_{T,\diamond}$ is weaker than the Bruhat order, it follows from Lemma \ref{shuffle-lem} that $x|_J < y|_J$ and $x|_K < y|_K$. This implies, however, that $y'|_J = w_J (y|_J) < w_J (x|_J) = x'|_J$ while $x'|_K = x|_K < y|_K = y'|_K$;
hence,  by Lemma \ref{shuffle-lem},
$x'$ and $y'$ cannot be comparable in $\leq_{T,*}$. 
\end{proof}

We note two corollaries of the preceding result. Observe that $v_0 \in \I_*\cap \I_\diamond$.

\begin{corollary}\label{dualcor1} The map  $\e \mapsto \e^\op$  is   a bijection $\hat\cR_*(v_0) \leftrightarrow \hat\cR_\diamond(v_0)$.
\end{corollary}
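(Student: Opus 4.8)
The plan is to obtain this as an immediate consequence of Proposition \ref{maindual-prop}(b), applied to the pair $x = 1$ and $y = v_0$. First I would record the two elementary facts that make the hypotheses of part (b) hold. Since $v_0 = w_J$ is the longest element of the finite Coxeter group $W_J$, it is an involution, so $v_0 = v_0^{-1}$ and hence $v_0^2 = 1$; this was already noted in the running conventions. Moreover $1$ and $v_0$ both lie in $\I_\diamond$ (indeed $v_0 \in \I_* \cap \I_\diamond$), and $1 \leq_{T,\diamond} v_0$ because $\hat\cR_\diamond(v_0) = \hat\cR_\diamond(1, v_0)$ is nonempty by Corollary \ref{all-nonempty-cor}.

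The key point is that the projection hypothesis of part (b), namely $x|_K = y|_K$, is automatic here: we have $1|_K = 1$, and since $y = v_0 = w_J$ lies entirely in the parabolic subgroup $W_J$, its projection $(v_0)|_K$ onto $W_K$ is also trivial. Proposition \ref{maindual-prop}(b) then yields $\hat\cR_\diamond(1, v_0) = \hat\cR_*(v_0 v_0,\, v_0 \cdot 1)^\op$, and substituting $v_0^2 = 1$ collapses the first argument to give $\hat\cR_\diamond(v_0) = \hat\cR_*(v_0)^\op$.

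To finish, I would observe that reversal $\e \mapsto \e^\op$ is an involution on the free monoid $\Mon{S}$. Thus the set identity $\hat\cR_\diamond(v_0) = \hat\cR_*(v_0)^\op$ says exactly that $\e \mapsto \e^\op$ carries $\hat\cR_*(v_0)$ onto $\hat\cR_\diamond(v_0)$, and applying reversal again provides the two-sided inverse; hence it is a bijection. I do not expect any genuine obstacle, since the substantive work is entirely contained in part (b) of the preceding proposition; the only things to verify with care are that $v_0 \in W_J$ (which forces $x|_K = y|_K$) and that $v_0^2 = 1$ (which turns the first argument $v_0 y$ into $1$).
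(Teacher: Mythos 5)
Your proof is correct and follows exactly the paper's route: the paper's own proof is the one-line observation that since $v_0 = v_0^{-1} = w_J$, the claim is immediate from Proposition \ref{maindual-prop}(b) applied with $x=1$ and $y=v_0$. Your additional checks (that $v_0 \in W_J$ forces $x|_K = y|_K$, that $v_0^2=1$, and that $1 \leq_{T,\diamond} v_0$ by Corollary \ref{all-nonempty-cor}) simply make explicit what the paper leaves implicit.
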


\begin{proof}
Since $v_0=v_0^{-1}=w_J$, this follows immediately from Proposition \ref{maindual-prop}(b).
\end{proof}

\begin{corollary}\label{dualcor2}
Suppose $w \in \I_*$ is in the center of a standard parabolic subgroup of $W$. Then  $\hat\cR_*(w)$ is closed under $\e \mapsto \e^\op$, and both $\cA_*(w)$ and $\DemA_*(w)$ are  closed under taking inverses.
\end{corollary}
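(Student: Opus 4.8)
The plan is to deduce Corollary \ref{dualcor2} directly from the machinery already established, particularly Proposition \ref{maindual-prop}, Lemma \ref{diamond-lem}, and the characterization of the setup element $v_0$. The key observation is that the hypothesis ``$w$ is in the center of a standard parabolic subgroup'' is exactly what is needed to realize $w$ in the role of $v_0$ from the preceding discussion. Indeed, if $w$ lies in the center of $W_J$ for some $J \subseteq S$, then $w$ commutes with every generator in $J$, so conjugation by $w$ fixes each such generator; combined with the fact that $w \in W_J$ is a (twisted) involution, this should let me identify $w$ with $w_{J'}$ for an appropriate subset $J' \subseteq J$ commuting with its complement, matching the form required by Proposition \ref{disjdec-prop}. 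Thus I would first verify that I may take $v_0 = w$ in the notation fixed for the section.

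Having arranged $v_0 = w$, I would next invoke the crucial fact that $w = v_0 = v_0^{-1} = v_0^*$, so in particular $w^\diamond = v_0 w^* v_0 = v_0 v_0 v_0 = v_0 = w$, meaning $w$ is simultaneously a fixed point of both $*$ and $\diamond$ and equals $v_0$. The statement about $\hat\cR_*(w)$ being closed under $\e \mapsto \e^{\op}$ would then follow by combining Corollary \ref{dualcor1} (which gives a bijection $\hat\cR_*(v_0) \leftrightarrow \hat\cR_\diamond(v_0)$ via $\e \mapsto \e^{\op}$) with an identification of $\hat\cR_\diamond(v_0)$ and $\hat\cR_*(v_0)$. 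The second identification should come from the fact that, since $w = v_0 = w_J$ with $J$ commuting with its complement and $*$ and $\diamond$ differing precisely by conjugation by $v_0$, the two automorphisms agree on the relevant parabolic factors, so the involution words computed with respect to $*$ and $\diamond$ coincide as sets. Applying $\e \mapsto \e^{\op}$ twice returns the identity, so the composite bijection is an involution on $\hat\cR_*(w)$, proving closure.

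For the statements about $\cA_*(w)$ and $\DemA_*(w)$ being closed under inverses, I would run the analogous argument on the atom and Hecke-atom level. From Proposition \ref{atomexist-prop} we know $\hat\cR_*(w) = \bigcup_{u \in \cA_*(w)} \cR(u)$, and taking $\op$ of a reduced word corresponds to taking the inverse of the group element, since $\cR(u)^{\op} = \cR(u^{-1})$. Therefore closure of $\hat\cR_*(w)$ under $\e \mapsto \e^{\op}$ translates immediately into closure of $\cA_*(w)$ under $u \mapsto u^{-1}$. For $\DemA_*(w)$ I would similarly track the inverse map through the defining condition $\idem{w}{u} = w$, using the identity $\idem{x}{u} = (u^*)^{-1} \dem x \dem u$ from \eqref{dact-def}: taking inverses of the Demazure product (which reverses order) together with $w = w^* = w^{-1}$ should show that $\idem{w}{u} = w \iff \idem{w}{u^{-1}} = w$, giving the desired closure at the Hecke-atom level directly.

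The main obstacle I anticipate is the bookkeeping in the first step: justifying cleanly that an element $w$ in the center of a standard parabolic $W_J$ can be placed in the role of $v_0$, i.e., that $w = w_{J'}$ for a subset $J'$ commuting with $S \setminus J'$ with $W_{J'}$ finite, as required by Proposition \ref{disjdec-prop}. Being central in $W_J$ forces $w$ to commute with all of $J$, but I must also confirm that $w^{-1} s w \in S$ for \emph{all} $s \in S$ — not just $s \in J$ — which requires noting that a central involution of $W_J$ is a product of longest elements of the irreducible factors of $W_J$ and checking these factors commute with the rest of $S$. This is the one point where the precise structure theory underlying Proposition \ref{disjdec-prop} must be brought to bear; once $v_0 = w$ is secured, the remaining deductions are formal consequences of the section's lemmas.
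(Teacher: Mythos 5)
The skeleton of your argument is the paper's, but your first step contains a genuine gap. You propose to realize $w$ as the element $v_0$ of Section \ref{duality-sect} inside the ambient group $W$, which requires $w^{-1}sw\in S$ for \emph{all} $s\in S$; you acknowledge this and propose to verify it by checking that the irreducible factors of $W_J$ supporting $w$ commute with the rest of $S$. That check fails in general: for $W=S_3$, $*=\id$, $J=\{s_1\}$ and $w=s_1$, the element $w$ is central in $W_J$, yet $w^{-1}s_2w=s_1s_2s_1\notin S$, so there is no $J'$ with $w=w_{J'}$ and $J'$ commuting with $S\setminus J'$, and $w$ cannot serve as $v_0$ in $W$. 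The missing ingredient is Proposition \ref{parred-prop}: since $\hat\cR_*(w)\subset\Mon{J}$ and $\cA_*(w)\subset\DemA_*(w)\subset W_J$, and these sets coincide with the ones computed in the twisted Coxeter system $(W_J,J,*|_{W_J})$ (after replacing $J$ by $J\cap J^*$), one may replace $W$ by $W_J$ outright. Only after this reduction is $w$ central in the \emph{entire} ambient group, so that $v_0=w$ is legitimate, conjugation by $v_0$ is trivial, $\diamond=*$ identically, and Corollary \ref{dualcor1} gives the closure of $\hat\cR_*(w)$ under $\e\mapsto\e^\op$. This is exactly the paper's (very short) proof; without the reduction your argument does not get off the ground.

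There is a second gap in your treatment of $\DemA_*(w)$. The defining condition is $\idem{1}{u}=w$, i.e.\ $(u^*)^{-1}\dem u=w$, not $\idem{w}{u}=w$; and the formal manipulation you describe does not yield closure under inversion. Taking inverses of $(u^*)^{-1}\dem u=w$ (using $(a\dem b)^{-1}=b^{-1}\dem a^{-1}$ and $w=w^{-1}$) gives $u^{-1}\dem u^*=w$, which is the statement $\idem{1}{u^*}=w$, i.e.\ closure under $u\mapsto u^*$ --- not the statement $u^*\dem u^{-1}=w$ needed for $u^{-1}\in\DemA_*(w)$. The factors sit in the wrong order and the Demazure product is not commutative, so this part really does require the duality machinery: after the reduction, $w$ is the longest element of $W_J$, and the $\DemA$-level statement is the content of Theorem \ref{b'-thm} and Corollary \ref{dualcor1a} (with $\diamond=*$). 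By contrast, your derivation of the closure of $\cA_*(w)$ under inversion from the closure of $\hat\cR_*(w)$ under $\e\mapsto\e^\op$, via $\cR(u)^\op=\cR(u^{-1})$ and Proposition \ref{atomexist-prop}, is correct.
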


\begin{proof}
By Proposition \ref{parred-prop} we reduce to the case when $w \in Z(W)$.
Set $v_0 = w$. Then $\diamond = *$ so $\hat\cR_\diamond(w) = \hat\cR_*(w)$ and the desired statement follows from Corollary \ref{dualcor1}.
\end{proof}

\begin{remark}
It follows from  \cite[Exercise 4.10]{CCG} and \cite[Exercise 1 in \S6.3]{Hu} that 
 an element $w \in W$ belongs to the center of a standard parabolic subgroup under the following limited circumstances: namely, if and only if $w=w_L$ for a subset $L \subset S$ which commutes with $S\setminus L$ and which is such that $(W_L,L)$ is finite with no irreducible factors of type
 $A_n$ or $D_{2n+1}$ ($n\geq 2$), $E_6$, or $I_2(2m+1)$.
 \end{remark}

We explicitly record the following special case of these results. 

\begin{corollary}\label{lastdia-prop}
Assume  $W$ is finite and $v_0 = w_0$. Then, for all $x,y \in \I_\diamond$, it holds that
\[ \hat \cR_\diamond(x,y)= \hat \cR_*(w_0y,w_0x)^\op \qquand  \cA_\diamond(x,y) = \cA_*(w_0y,w_0x)^{-1}\]
and consequently $\ellhat_\diamond(x) = \ellhat_*(w_0) - \ellhat_*(w_0x)$.

\end{corollary}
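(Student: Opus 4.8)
The plan is to deduce everything from Proposition \ref{maindual-prop}(b) once the geometry of $v_0 = w_0$ is pinned down. First I would observe that the longest element $w_0$ of the finite group $W$ is an involution fixed by the diagram automorphism $*$, so $w_0 \in \I_*$, and that $w_0^{-1} s w_0 \in S$ for every $s \in S$; thus the running hypotheses of the section are met with $v_0 = w_0$, and Proposition \ref{disjdec-prop} forces $v_0 = w_S$, i.e. $J = S$ and $K = \varnothing$. Consequently the side condition ``$x|_K = y|_K$'' in Proposition \ref{maindual-prop}(b) is vacuous, so for every pair $x \leq_{T,\diamond} y$ in $\I_\diamond$ part (b) applies verbatim and gives both $\hat\cR_\diamond(x,y) = \hat\cR_*(w_0 y, w_0 x)^{\op}$ and $\cA_\diamond(x,y) = \cA_*(w_0 y, w_0 x)^{-1}$.

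The one gap is that the corollary asserts these identities for \emph{all} $x,y \in \I_\diamond$, whereas Proposition \ref{maindual-prop} assumes comparability; so the step I expect to need a separate argument is the incomparable case, where the left-hand sides are empty and I must show the right-hand sides are too. For this I would record the word-level equivalence underlying part (b): iterating Lemma \ref{dualtech-lem} shows $x \act_\diamond s_1 \act_\diamond \cdots \act_\diamond s_k = y$ iff $(w_0 x) \act_* s_1 \act_* \cdots \act_* s_k = w_0 y$, and since each $\act_*$-step is involutive, $(z\act_* s)\act_* s = z$, this is in turn equivalent to $(w_0 y)\act_* s_k \act_* \cdots \act_* s_1 = w_0 x$. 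The reversal $\e \mapsto \e^{\op}$ is a length-preserving bijection of $\Mon{S}$, so it matches shortest realizing words on the two sides; in particular $\hat\cR_\diamond(x,y)$ is nonempty exactly when $\hat\cR_*(w_0 y, w_0 x)$ is, equivalently $x \leq_{T,\diamond} y$ iff $w_0 y \leq_{T,*} w_0 x$. Hence when $x$ and $y$ are incomparable both sides of the two identities are empty, completing them for all pairs (the $\cA$ identity in the comparable case may alternatively be recovered from the $\hat\cR$ identity via Proposition \ref{atomexist-prop} and $\cR(w)^{\op} = \cR(w^{-1})$).

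Finally, for the length formula I would specialize the first identity to the pair $(1,x)$, giving $\hat\cR_\diamond(x) = \hat\cR_\diamond(1,x) = \hat\cR_*(w_0 x, w_0)^{\op}$; taking common word-lengths makes $\ellhat_\diamond(x)$ equal to the common length of words in $\hat\cR_*(w_0 x, w_0)$. Since $w_0$ is the maximum of $(\I_*, \leq_{T,*})$ this set is nonempty, so $w_0 x \leq_{T,*} w_0$, and because $(\I_*,\leq_{T,*})$ is graded by $\ellhat_*$ with each cover raising $\ellhat_*$ by exactly $1$ (Corollary \ref{h2-cor}), the shortest $\act_*$-sequences from $w_0 x$ to $w_0$ have length $\ellhat_*(w_0) - \ellhat_*(w_0 x)$. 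This yields $\ellhat_\diamond(x) = \ellhat_*(w_0) - \ellhat_*(w_0 x)$. The main obstacle, modest as it is, is the bookkeeping in the incomparable case and confirming that the grading delivers the length difference; the comparable case is immediate from the proposition.
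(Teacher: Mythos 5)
Your proposal is correct and follows essentially the same route as the paper: since $K=\varnothing$ the hypothesis $x|_K=y|_K$ of Proposition \ref{maindual-prop}(b) is automatic, and the only extra point is the equivalence $x\leq_{T,\diamond} y \Leftrightarrow w_0y\leq_{T,*}w_0x$, which disposes of the incomparable case. The paper gets that equivalence by invoking Proposition \ref{maindual-prop}(b) twice with the roles of $*$ and $\diamond$ interchanged, whereas you rederive it at the word level from Lemma \ref{dualtech-lem}; this is a cosmetic difference, and your deduction of the length formula from the grading of $(\I_*,\leq_{T,*})$ is the intended one.
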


\begin{proof}
In this case $K = \varnothing$ so we always have $x|_K =  y|_K = 1$. Invoking Proposition \ref{maindual-prop}(b) twice with the roles of $*$ and $\diamond$ interchanged   shows that $x \leq_{T,\diamond} y$ if and only if $w_0 y \leq_{T,*} w_0x$ and that in this case 
$\hat \cR_\diamond(x,y)= \hat \cR_*(w_0y,w_0x)^\op$ and $\cA_\diamond(x,y) = \cA_*(w_0y,w_0x)^{-1}$ as desired.
\end{proof}

\begin{remark}
Except in the case when $x=1$ and $y=v_0$ (see Corollary \ref{dualcor1a}), there does not seem to be a simple relationship between the sets $\DemA_*(x,y) $ and  $\DemA_\diamond(v_0y,v_0x)$.
It is an open problem to give twisted analogues of our results
in Section \ref{poset-sect}
 about   $\DemA(x,y)$ in type $A$.
\end{remark}

\section{Atoms and Bruhat order}\label{bruhat-sect}

Everywhere in this section $(W,S,*)$ denotes an arbitrary twisted Coxeter system. Write $\I_* = \I_*(W)$  and let $x,y \in \I_*$.
With $\ell$ the usual length function on $W$ and $\ellhat_*$  as in Definition \ref{ellhat-def}, we set
\[ \ell(x,y) = \ell(y)- \ell(x)\qquand  \ellhat_*(x,y) = \ellhat_*(y) - \ellhat_*(x).\]
Recall the definitions of $\cA_*(x,y)$ and $\DemA_*(x,y)$ from the introduction.
In this section we consider the following formally similar, though \emph{a priori} distinct sets:
\begin{definition}\label{atoms'-def} For each $x,y \in \I_*$ let
$\pB_*(x,y)= \{ w \in W: w^*y \leq xw\}
$
and define
$\cA'_*(x,y)$ as the subset of minimal-length elements in $\pB_*(x,y)$.
\end{definition}

Note that these sets are completely determined by the structure of the Bruhat order on $W$. 
The focus of this section is the following conjecture: 
\begin{conjecture}\label{atoms-conj} 
 $\cA_*(x,y) = \cA'_*(x,y)$ for all twisted involutions $x,y \in \I_*$ with $x \leq_{T,*} y$.
\end{conjecture}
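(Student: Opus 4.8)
The plan is to factor the problem through the auxiliary set
\[ \cQ_*(x,y) = \{\, w \in W : y \leq \idem{x}{w} \,\}, \]
to prove \emph{unconditionally} that its minimal-length elements are exactly $\cA_*(x,y)$, and thereby to reduce the conjecture to a single monotonicity implication for the Demazure product. First I would establish the length bound $\ellhat_*(\idem{x}{w}) \leq \ellhat_*(x) + \ell(w)$: writing a reduced word $w = s_1 s_2 \cdots s_k$ with $k = \ell(w)$, the identity $\idem{(\idem{x}{y})}{z} = \idem{x}{(y \dem z)}$ gives $\idem{x}{w} = \idem{\idem{\idem{x}{s_1}}{\cdots}}{s_k}$, and Corollaries~\ref{invdem-cor} and \ref{h2-cor} show that each single-generator step either leaves the value unchanged (when $s_i$ is a right descent) or raises $\ellhat_*$ by exactly $1$; summing over the $k$ steps yields the bound.

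Granting this, I claim the minimal-length elements of $\cQ_*(x,y)$ are precisely $\cA_*(x,y)$. Since $(\I_*,\leq)$ is graded by $\ellhat_*$, any $w \in \cQ_*(x,y)$ satisfies $\ellhat_*(y) \leq \ellhat_*(\idem{x}{w}) \leq \ellhat_*(x) + \ell(w)$, so $\ell(w) \geq \ellhat_*(x,y) =: d$, the common length of the involution words in $\hat\cR_*(x,y)$. Conversely, if $w \in \cQ_*(x,y)$ has $\ell(w) = d$, then $\ellhat_*(\idem{x}{w}) = \ellhat_*(y)$; since $y \leq \idem{x}{w}$ are elements of equal rank in a graded poset they must coincide, so $\idem{x}{w} = y$ and $w \in \DemA_*(x,y)$, hence $w \in \cA_*(x,y)$. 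As $\DemA_*(x,y) \subseteq \cQ_*(x,y)$ trivially and the atoms realize length $d$, this proves the claim.

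With $\cA_*(x,y)$ now identified with the minimal-length elements of $\cQ_*(x,y)$, the conjecture reduces to the single containment
\[ \pB_*(x,y) \subseteq \cQ_*(x,y), \qquad\text{equivalently}\qquad w^* y \leq x w \ \Longrightarrow\ y \leq \idem{x}{w}. \]
Indeed, granting it, every $w \in \pB_*(x,y)$ has $\ell(w) \geq d$, while Proposition~\ref{bruhatchar-prop} places the length-$d$ atoms inside $\pB_*(x,y)$; hence the minimal length in $\pB_*(x,y)$ is $d$, and any $w$ attaining it lies in $\cQ_*(x,y)$ at length $d$, so $w \in \cA_*(x,y)$ by the previous paragraph. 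This gives $\cA'_*(x,y) = \cA_*(x,y)$. I find this reformulation appealing because it eliminates all reference to minimality: it suffices to show that one Bruhat inequality implies another for \emph{every} $w \in W$, the converse implication being the comparatively easy direction of which Proposition~\ref{bruhatchar-prop} is the special case $y = \idem{x}{w}$.

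To prove $\pB_*(x,y) \subseteq \cQ_*(x,y)$ I would induct on $\ellhat_*(y)$, peeling a right descent $s \in \DesR(y)$ and passing to $y \act_* s$ in parallel with Proposition~\ref{atomdes-prop}(a); the lifting property of Bruhat order should convert both the hypothesis $w^* y \leq xw$ and the conclusion $y \leq \idem{x}{w}$ into their analogues for $y \act_* s$ plus one covering relation, while the recursive description of $\idem{x}{w}$ keeps the two sides synchronized. In the finite, $x = 1$ situation this is precisely the bookkeeping organized by Knutson and Miller's subword complexes \cite{KM}: one fixes a reduced word for $w_0$, realizes $\pB_*(1,y)$ and $\cQ_*(1,y)$ as unions of faces of a subword complex, and matches their minimal faces, which is the engine behind Theorem~\ref{lastbr-thm}. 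The hard part will be carrying this through without a longest element: the one operation the argument genuinely requires, namely absorbing the left factor $w^*$ across the relation $w^* y \leq xw$, is \emph{not} monotone for Bruhat order, and the finite-type proof sidesteps this only because every relation there can be encoded as a manifestly monotone subword condition inside a fixed long word. For infinite $W$ there is no canonical ambient word; I would first attempt to localize to finite standard parabolic subgroups via Proposition~\ref{parred-prop}, but this fails exactly when $y$ lies in no finite parabolic, and surmounting that case is, I expect, the crux of the conjecture.
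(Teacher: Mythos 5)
This statement is a \emph{conjecture} in the paper: the authors prove only the easy containment $\cA_*(x,y)\subset\pB_*(x,y)$ (Proposition~\ref{bruhatchar-prop}) and the special cases of Theorem~\ref{lastbr-thm} (finite $W$ with $\ell(x)=\ellhat_*(x)$ or $\ell(y,w_0)=\ellhat_*(y,w_0)$), and they report only computational evidence for the general case. So there is no proof in the paper to compare yours against; the relevant question is whether your proposal itself constitutes a proof, and it does not. You correctly reduce the conjecture to the single implication $w^*y\leq xw\Rightarrow y\leq \idem{x}{w}$, i.e.\ $\pB_*(x,y)\subseteq\cQ_*(x,y)$, but your final paragraph only sketches a strategy for that implication and explicitly concedes that the crux remains open. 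That unproven implication is the gap.

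The reduction itself checks out and is genuinely more than the paper records. Your identification of $\cA_*(x,y)$ with the minimal-length elements of $\cQ_*(x,y)=\{w: y\leq \idem{x}{w}\}$ uses only the one-step bound from Corollaries~\ref{invdem-cor} and~\ref{h2-cor} (each generator raises $\ellhat_*$ by at most one) together with Hultman's theorem that $(\I_*,\leq)$ is graded by $\ellhat_*$, so that $y\leq \idem{x}{w}$ with $\ell(w)=\ellhat_*(x,y)$ forces $y=\idem{x}{w}$; combined with Proposition~\ref{bruhatchar-prop} this yields $\cA_*'(x,y)=\cA_*(x,y)$ once $\pB_*\subseteq\cQ_*$ is known. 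This contrasts with the paper's route, which never isolates $\cQ_*$ and instead runs a descent induction anchored at $w_0$, which is why it needs $W$ finite.

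One concrete suggestion on the remaining step: your pessimism about the absence of a longest element may be misplaced, because the implication $w^*y\leq xw\Rightarrow y\leq\idem{x}{w}$ appears to be exactly within reach of Lemma~\ref{subword-lem}, by the same device as in the proof of Theorem~\ref{b'-thm}. Fix $(a_1,\dots,a_m)\in\cR(x)$ and $(s_1,\dots,s_q)\in\cR(w)$. Since the Demazure product of any word dominates its ordinary product in Bruhat order, one has $w^*y\leq xw\leq x\dem w=\delta(a_1,\dots,a_m,s_1,\dots,s_q)$, so Lemma~\ref{subword-lem} extracts a subword $(t_1,\dots,t_p)\in\cR(w^*y)$ of $(a_1,\dots,a_m,s_1,\dots,s_q)$; then $(s_q^*,\dots,s_1^*,t_1,\dots,t_p)$ is a subword of $(s_q^*,\dots,s_1^*,a_1,\dots,a_m,s_1,\dots,s_q)$ whose ordinary product is $(w^*)^{-1}(w^*y)=y$, whence $y\leq\delta(s_q^*,\dots,s_1^*,a_1,\dots,a_m,s_1,\dots,s_q)=\idem{x}{w}$. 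This needs neither finiteness nor any induction, only the two monotonicity properties of $\delta$ (under subwords, and over ordinary products) that follow formally from Lemma~\ref{subword-lem}. If this argument survives scrutiny it would close your reduction and settle the conjecture; you should verify it carefully rather than pursue the descent induction or the parabolic localization you outline, both of which run into exactly the obstructions you describe.
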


\begin{remark}
Note that if $x\not \leq_{T,*} y$ then  $\cA_*(x,y)= \varnothing$, but that   $\cA'_*(x,y)$ is always nonempty.
Using  a computer, 
we have checked this conjecture  for all finite Weyl groups of rank at most 6, as well as for the groups of the exceptional types $H_3$ and $H_4$.
Below, we will show that the desired identity holds in the case when $x=1$ for all finite Coxeter groups.
\end{remark}

As usual, we abbreviate by writing $\cA'_*(y) = \cA'_*(1,y)$ and $\cA'(x,y) = \cA'_{\id}(x,y)$ with analogous conventions for $\pB_*(x,y)$.
We begin by noting  the following ``duality'' similar to Corollary \ref{lastdia-prop}.
As in Section \ref{duality-sect}, when $W$ is finite we write $w_0 \in W$ for its   longest element.

\begin{proposition}\label{dual'-prop}
Assume $W$ is finite  and define $\diamond \in \Aut(W,S)$ by $ x^\diamond = w_0 x^* w_0$. Taking inverses is then a bijection $ \cA_\diamond'(x,y) \leftrightarrow \cA'_*(w_0y,w_0x) $ and $ \pB_\diamond(x,y) \leftrightarrow \pB_*(w_0y,w_0x)$
for all $x,y \in \I_\diamond(W)$.
\end{proposition}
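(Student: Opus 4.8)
The plan is to reduce everything to a pointwise equivalence: I will show that for every $w \in W$,
\[ w \in \pB_\diamond(x,y) \quad\Longleftrightarrow\quad w^{-1} \in \pB_*(w_0y, w_0x). \]
Both asserted bijections follow immediately from this. The statement about $\pB$ holds because $w \mapsto w^{-1}$ is an involution of $W$, so the biconditional exhibits it as a self-inverse bijection between the two sets. The statement about $\cA'$ then follows because inversion preserves length, $\ell(w^{-1}) = \ell(w)$, so it carries the minimal-length elements of $\pB_\diamond(x,y)$ exactly onto the minimal-length elements of $\pB_*(w_0y,w_0x)$, i.e.\ $\cA'_\diamond(x,y)$ onto $\cA'_*(w_0y,w_0x)$.

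To prove the biconditional I would first collect the standard properties of Bruhat order on the finite group $W$ that I intend to use: it is preserved by inversion ($u \leq v \iff u^{-1} \leq v^{-1}$); it is preserved by the diagram automorphism $*$ (since $*$ fixes $S$ and hence $\ell$); it is reversed by left multiplication by $w_0$ (\cite[Proposition 2.3.4]{CCG}); and $w_0^* = w_0$ (again because $*$ permutes $S$ and must fix the unique longest element). The key role of the hypothesis $x,y \in \I_\diamond$ is that it lets me rewrite $x^{-1} = x^\diamond = w_0 x^* w_0$ and $y^{-1} = y^\diamond = w_0 y^* w_0$.

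The core is then a short chain of equivalent reformulations collapsing both membership conditions to one inequality. Starting from the defining condition $w^\diamond y \leq xw$ of $\pB_\diamond(x,y)$ and substituting $w^\diamond = w_0 w^* w_0$, left multiplication by $w_0$ (which reverses the order) turns it into $w_0 x w \leq w^* w_0 y$. Separately, the condition $w^{-1} \in \pB_*(w_0y,w_0x)$ reads $(w^*)^{-1} w_0 x \leq w_0 y w^{-1}$; applying inversion gives $x^{-1} w_0 w^* \leq w y^{-1} w_0$, substituting $x^{-1} = w_0 x^* w_0$ and $y^{-1} = w_0 y^* w_0$ simplifies this (using $w_0^2 = 1$) to $w_0 x^* w^* \leq w w_0 y^*$, and finally applying $*$ (using $w_0^* = w_0$ and $(\cdot)^{**} = \id$) yields $w_0 x w \leq w^* w_0 y$. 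Since both conditions reduce to the single inequality $w_0 x w \leq w^* w_0 y$, they are equivalent, which is exactly the desired biconditional.

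I do not anticipate a genuine obstacle: the whole argument is bookkeeping of how the two conditions transform under inversion, $w_0$-multiplication, and $*$. The only points demanding care are tracking which operations reverse the inequality (solely the $w_0$-multiplications) versus which preserve it (inversion and $*$), and invoking the hypothesis $x,y \in \I_\diamond$ at precisely the step where $x^{-1}$ and $y^{-1}$ are rewritten in terms of $*$ and $w_0$; this is the unique place the involution condition enters, and the identity would fail without it.
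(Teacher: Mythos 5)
Your proposal is correct and follows essentially the same route as the paper: both reduce the claim to the pointwise equivalence $w \in \pB_\diamond(x,y) \Leftrightarrow w^{-1} \in \pB_*(w_0y,w_0x)$ via the standard facts that inversion and $*$ preserve Bruhat order while multiplication by $w_0$ reverses it, with the $\cA'$ statement following since inversion preserves length. The paper compresses this into a one-line chain of equivalences; your version just makes the same bookkeeping explicit.
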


\begin{proof}
We have
$w^\diamond y   \leq xw $ $ \Leftrightarrow$ $ (w_0 x)w \leq w^* (w_0y)$ $\Leftrightarrow$ $ (w^{-1})^*( w_0x) \leq (w_0y) w^{-1}$
for  $x,y \in \I_\diamond(W)$ and $w \in W$  by standard properties of the Bruhat order; see \cite[Chapter 2]{CCG}.
\end{proof}

We recall the \emph{lifting property} of the Bruhat order, stated for example as \cite[Proposition 2.10]{H2}: 
if $x,y \in W$ are such that  $x \leq y$ and $s \in \DesR(y)$, then (i) $xs\leq y$ and (ii) if $s \in \DesR(x)$ then $xs\leq ys$.
As $x\leq y $ if and only if $x^{-1} \leq y^{-1}$, there is  an analogous left-handed version of this statement.
We state a technical lemma.

\begin{lemma}\label{atomlem1} Let $w,z \in W$ and $y \in \I_*$ and  $s \in \DesR(w) \cap \DesR(y) \cap \DesR(z)$. 
If  $w^*s^*(y\act_* s) \leq zs$ then $w^*y \leq z.$
\end{lemma}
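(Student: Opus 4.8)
The plan is to reduce to two cases according to the formula for $y\act_* s$ in \eqref{op-eq}, simplify the left-hand side of the hypothesis in each case, and then finish using the descent $s\in\DesR(z)$ together with the lifting property recalled just above the lemma. Throughout I would use that $s^*$ is again a simple reflection, so that $(s^*)^2 = 1 = s^2$.

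By \eqref{op-eq} there are exactly two possibilities. If $s^* y \neq ys$, then $y\act_* s = s^* y s$, whence
\[ w^* s^* (y\act_* s) = w^* s^* s^* y s = w^* y s, \]
and the hypothesis $w^* s^*(y\act_* s)\leq zs$ reads $w^* y s \leq zs$. If instead $s^* y = ys$, then $y\act_* s = ys$, whence
\[ w^* s^* (y\act_* s) = w^* s^* y s = w^* y s s = w^* y, \]
and the hypothesis reads $w^* y \leq zs$. These simplifications need only that $*$ fixes $S$; the descent hypothesis that actually drives the argument is $s\in\DesR(z)$, which gives $zs < z$.

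To conclude, I would argue as follows. In the second case the result is immediate: $w^* y \leq zs < z$, so $w^* y \leq z$. In the first case I would combine $w^* y s \leq zs \leq z$ to get $w^* y s \leq z$, and then apply part (i) of the lifting property with $x = w^* y s$ and the right descent $s\in\DesR(z)$, yielding $(w^* y s)s = w^* y \leq z$ as desired. The only delicate point is this first case: the inequality $w^* y s \leq zs$ does not by itself give $w^* y \leq z$, so one must isolate the correct branch of \eqref{op-eq} and then lean on the lifting property to close the gap. Everything else is routine bookkeeping with $(s^*)^2 = s^2 = 1$.
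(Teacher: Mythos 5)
Your proof is correct and follows essentially the same route as the paper's: both arguments pass from $u = w^*s^*(y\act_* s) \leq zs$ to $u \leq z$ by transitivity, then use the lifting property at the descent $s \in \DesR(z)$ to also get $us \leq z$, and finish by observing via the two branches of \eqref{op-eq} that $w^*y$ equals $u$ or $us$. The only cosmetic difference is that you identify which of $u$, $us$ equals $w^*y$ in each case before invoking the relevant inequality, whereas the paper establishes both inequalities at once and then notes $w^*y \in \{u,us\}$.
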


\begin{proof}
Let $u = w^*s^*(y\act_* s)$ and suppose  $u \leq zs$. Then $u \leq z$, and from this we deduce by the lifting property of the Bruhat order that   $us \leq z$. There are two cases to consider according to whether $y\act_* s $ is $s^*y=ys$ or $s^*ys$,
but in either case   $w^*y \in \{u,us\}$ so  $w^*y \leq z$.
\end{proof}

We can   prove the following weaker form of Conjecture \ref{atoms-conj}.   

  \begin{proposition}\label{bruhatchar-prop} If $(W,S,*)$ is any twisted Coxeter system and $x,y \in \I_*$, then $\cA_*(x,y) \subset \pB_*(x,y)$.
  \end{proposition}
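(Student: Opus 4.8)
The plan is to induct on $\ell(w)$, peeling one simple generator off the right of an atom at a time and using Lemma \ref{atomlem1} as the engine that transfers the Bruhat inequality from a shorter atom to a longer one. We may assume $\cA_*(x,y)\neq\varnothing$, since otherwise there is nothing to prove. In the base case $\ell(w)=0$ we have $w=1$ and $\idem{x}{1}=x$, so $y=x$ and the desired inequality $1^{*}\,y\leq x\cdot 1$ reads $y\leq x$, which holds with equality.

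For the inductive step, take $w\in\cA_*(x,y)$ with $\ell(w)\geq 1$ and choose any $s\in\DesR(w)$. By Lemma \ref{atomdes-lem} we have $\DesR(w)\subseteq\DesR(y)$, so $s\in\DesR(y)$. Writing $w'=ws$, so that $\ell(w')=\ell(w)-1$, and $y'=y\act_* s$, Proposition \ref{atomdes-prop}(a), applied with target $y'$ (which satisfies $s\notin\DesR(y')$ and $y'\act_* s=y$), identifies $w'$ as an element of $\cA_*(x,y')$. Since $\ell(w')<\ell(w)$, the inductive hypothesis applies to $w'$ and yields $(w')^{*}y'\leq xw'$, that is,
\[
w^{*}s^{*}(y\act_* s)\leq (xw)s.
\]

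I would then invoke Lemma \ref{atomlem1} with $z=xw$. That lemma requires $s\in\DesR(w)\cap\DesR(y)\cap\DesR(xw)$ together with exactly the inequality $w^{*}s^{*}(y\act_* s)\leq(xw)s$ just produced, and it concludes $w^{*}y\leq xw$, which is precisely the membership $w\in\pB_*(x,y)$ to be shown. The first two descent conditions hold by construction, so the entire argument comes down to the third, $s\in\DesR(xw)$.

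This descent condition is the main obstacle, and I would extract it from the reduced-product identity
\[
\ell(xw)=\ell(x)+\ell(w)\qquad\text{for every }w\in\cA_*(x,y).
\]
Granting length-additivity, subadditivity of $\ell$ gives $\ell(xws)\leq \ell(x)+\ell(ws)=\ell(x)+\ell(w)-1=\ell(xw)-1$ (using $s\in\DesR(w)$), and since $\ell(xws)$ differs from $\ell(xw)$ by exactly one, we get $\ell(xws)<\ell(xw)$, i.e. $s\in\DesR(xw)$, as needed. Lemma \ref{atomdes-lem} already supplies the first-order shadow of length-additivity, namely $\DesR(x)\cap\DesL(w)=\varnothing$ (no simple reflection obstructs it), whereas length-additivity is equivalent to the stronger statement that no reflection $t$ at all satisfies both $\ell(xt)<\ell(x)$ and $\ell(tw)<\ell(w)$. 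I expect the promotion from simple reflections to all reflections to be where the real work lies. I would prove length-additivity by a second induction, on $\ell(x)$ via Proposition \ref{atomdes-prop}(b): choosing $s\in\DesR(x)$ and writing $w=sv$ with $v\in\cA_*(x\act_* s,y)$ and $s\in\DesL(v)$, the commuting case $s^{*}x=xs$ collapses cleanly, since then $x\act_* s=xs$, $xw=(x\act_* s)v$, and the inductive hypothesis for $x\act_* s$ applies directly. The non-commuting case $s^{*}x\neq xs$ is the crux: here $x\act_* s=s^{*}xs$ and one must show $s^{*}\notin\DesL\big((x\act_* s)v\big)$, equivalently that the conjugated reflection $(x\act_* s)^{-1}s^{*}(x\act_* s)$ lies outside the left inversion set of $v$, which is the point where the atom structure of $v$ must be used in an essential way.
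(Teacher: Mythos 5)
Your overall architecture is exactly the paper's: induct on $\ell(w)$, peel off $s\in\DesR(w)$, pass to $ws\in\cA_*(x,y\act_* s)$ via Proposition \ref{atomdes-prop}(a), and feed the inductive inequality into Lemma \ref{atomlem1} with $z=xw$. That part is correct, and you have rightly isolated the one hypothesis of Lemma \ref{atomlem1} that is not free, namely $s\in\DesR(xw)$, reducing it to the length-additivity $\ell(xw)=\ell(x)+\ell(w)$ for atoms (the paper dismisses this with ``by construction''). The problem is that your proof of length-additivity is not complete: your second induction on $\ell(x)$ via Proposition \ref{atomdes-prop}(b) handles only the case $s^*x=xs$, and in the case $s^*x\neq xs$ you state what would have to be shown ($s^*\notin\DesL((x\act_* s)v)$) and explicitly leave it open. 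Knowing $s^*\notin\DesL(x\act_* s)$ does not give $s^*\notin\DesL((x\act_* s)v)$ for a reduced product, so this is a real unclosed step, not a routine verification; as written the argument does not constitute a proof.

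The gap is easy to close, but by a different reduction: induct on $\ell(w)$ by peeling the \emph{first} letter of a reduced word $(s_1,\dots,s_k)$ of $w$ rather than a descent of $x$. Minimality of $w$ forces $s_1\notin\DesR(x)$, so $\ell(x\act_* s_1)\in\{\ell(x)+1,\ell(x)+2\}$, and $s_1w\in\cA_*(x\act_* s_1,y)$ has length $k-1$. If $x\act_* s_1=xs_1$ then $xw=(x\act_* s_1)(s_1w)$ and induction gives $\ell(xw)=\ell(x)+1+(k-1)=\ell(x)+\ell(w)$. If $x\act_* s_1=s_1^*xs_1$ then $xw=s_1^*\cdot(x\act_* s_1)(s_1w)$, and induction gives $\ell((x\act_* s_1)(s_1w))=\ell(x)+2+(k-1)=\ell(x)+\ell(w)+1$, whence $\ell(xw)\geq\ell(x)+\ell(w)$; the reverse inequality is subadditivity. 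Here the gain of $2$ in the conjugation step absorbs the cost of the extra $s_1^*$ on the left, which is precisely the leverage your $\DesR(x)$-peeling lacks. This is the ``reduced products'' observation made explicit in the proof of Proposition \ref{leq-lem}, and with it your argument becomes a complete proof identical in substance to the paper's.
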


  \begin{proof}
  This result holds trivially when $\ellhat_*(x,y) < 0$ as then  $\cA_*(x,y)= \emptyset$, so assume $\ellhat_*(x,y)\geq 0$.
 We fix  $w \in \cA_*(x,y)$ and proceed by induction. 
The desired property  is clear when $\ellhat_*(x,y) = 0$ since then $w=1$ and   $w^*y = y = x = xw$.
Assume $\ellhat_*(x,y)=\ell(w)>0$, choose $s \in \DesR(w)$, and set $z = xw$.  
By construction   $\ell(z) = \ell(x) + \ell(w)$ so $s \in \DesR(z)$,
and by Lemma \ref{atomdes-lem} we   have $s \in \DesR(y)$.
As $ws \in \cA_*(x,y\act_* s)$ by Proposition \ref{atomdes-prop}, we may  assume by induction that $(ws)^*(y\act_* s) \leq x(ws) = zs$; given this, it is immediate from Lemma \ref{atomlem1} that $w^*y \leq z=xw$.
%
  \end{proof}

We also have the following related statement.

  \begin{proposition}\label{leq-lem}
  Let $x,y,z \in \I_*$ and $w \in \cA_*(x,z)$.
  Then (a) if $w \in \pB_*(x,y)$ then $y \leq z$ and 
(b) if $w \in \pB_*(y,z)$ then $x \leq y$. 
  \end{proposition}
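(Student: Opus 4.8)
The plan is to prove part (a) directly from the domination property of the Demazure product, and to reduce part (b) to a monotonicity lemma for the operators $\act_*$, proved by induction.

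For part (a) I would use the standard fact that the Demazure product dominates ordinary products: $a'b'\le a\dem b$ whenever $a'\le a$ and $b'\le b$ (in particular $ab\le a\dem b$); see \cite{KM}. Since $w\in\cA_*(x,z)$ means $z=\idem{x}{w}=(w^*)^{-1}\dem x\dem w$, the hypothesis $w^*y\le xw\le x\dem w$ together with domination (taking the first factor to be all of $(w^*)^{-1}$) gives
\[ y=(w^*)^{-1}\cdot(w^*y)\ \le\ (w^*)^{-1}\dem(x\dem w)=\idem{x}{w}=z, \]
which is exactly $y\le z$. This step uses nothing about $w$ having minimal length.

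For part (b) the same domination argument applied to $w^*z\le yw\le y\dem w$ yields $z=(w^*)^{-1}(w^*z)\le(w^*)^{-1}\dem(y\dem w)=\idem{y}{w}$; since $z=\idem{x}{w}$ this is the comparison $\idem{x}{w}\le\idem{y}{w}$. The task then becomes to ``cancel'' this down to $x\le y$, which I would do by induction on $\ell(w)$, proving the implication $\idem{x}{w}\le\idem{y}{w}\Rightarrow x\le y$ for all $w\in\cA_*(x,z)$. If $\ell(w)=0$ then $x=z\le y$ directly. Otherwise choose $s\in\DesR(w)$; by Lemma \ref{atomdes-lem} and Corollary \ref{h2-cor} we have $s\in\DesR(z)$ and $z\act_* s<z$, and by Proposition \ref{atomdes-prop}(a) the element $v:=ws$ lies in $\cA_*(x,z\act_* s)$ with $\ell(v)=\ell(w)-1$. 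Writing $w=vs$ (reduced) and using $\idem{\idem{a}{v}}{s}=\idem{a}{(v\dem s)}=\idem{a}{w}$, the comparison $\idem{x}{w}\le\idem{y}{w}$ becomes $\idem{a}{s}\le\idem{b}{s}$ with $a=\idem{x}{v}$ and $b=\idem{y}{v}$, where crucially $s\notin\DesR(a)$ since $a=z\act_* s$ is a \emph{proper} descent of $z$. Hence $\idem{a}{s}=a\act_* s$, and it suffices to show $a\le b$, for then the inductive hypothesis for $v$ yields $x\le y$. When $s\in\DesR(b)$ this is immediate, as $\idem{b}{s}=b$ and $a<a\act_* s\le b$; when $s\notin\DesR(b)$, setting $A'=a\act_* s$ and $B'=b\act_* s$ we have $A'\le B'$ with $s\in\DesR(A')\cap\DesR(B')$, and we must show $A'\act_* s\le B'\act_* s$.

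This last statement is the heart of the matter, and I expect it to be the main obstacle: it asserts that $\act_* s$ is \emph{order-preserving} on $\{v\in\I_*:s\in\DesR(v)\}$. I would prove it via the lifting property, splitting into four cases according to whether each of $A',B'$ satisfies $s^*A'=A's$ (commuting type, so $A'\act_* s=A's$ of length $\ell(A')-1$) or $s^*A'\neq A's$ (conjugation type, so $A'\act_* s=s^*A's$ of length $\ell(A')-2$). Starting from the lifting inequalities $A's\le B's$ and $s^*A'\le s^*B'$, the two matching cases and the case ``$A'$ conjugation-type, $B'$ commuting-type'' close quickly by transitivity. The delicate case is ``$A'$ commuting-type, $B'$ conjugation-type'', where $A'\act_* s=A's=s^*A'$ and $B'\act_* s=s^*B's$; here naive transitivity fails. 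The resolution is to apply the \emph{full} lifting property (see \cite{CCG}) to $A's\le B's$: since $s^*\notin\DesL(A's)$ while $s^*\in\DesL(B's)$, one gets not merely $s^*(A's)\le B's$ but also $A's\le s^*(B's)=B'\act_* s$, which is precisely $A'\act_* s\le B'\act_* s$. With this monotonicity lemma in hand the induction closes and part (b) follows.
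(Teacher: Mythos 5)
Your proof is correct, but it follows a genuinely different route from the paper's. The paper proves both parts by a single uniform induction along a reduced word $(s_1,\dots,s_k)$ of $w$: it introduces bookkeeping letters $e_i \in \{s_i,1\}$ recording whether each step $u_{i-1}\mapsto u_{i-1}\act_* s_i$ is of conjugation or multiplication type, writes $z$ as the reduced product $e_k^*\cdots e_1^*\hs x\hs s_1\cdots s_k$, and peels off one letter at a time using the two-part lifting property to propagate the Bruhat inequality from $w^*y\leq xw$ to $y\leq z$ (and dually for part (b)). You instead dispatch part (a) in one stroke via the domination property of the Demazure product --- which also shows that (a) holds for all Hecke atoms $w\in\DemA_*(x,z)$, not just atoms --- and for part (b) you use domination to reduce to the cancellation statement $\idem{x}{w}\leq\idem{y}{w}\Rightarrow x\leq y$, which you establish by isolating a monotonicity lemma: the map $v\mapsto v\act_* s$ is order-preserving on $\{v\in\I_*: s\in\DesR(v)\}$. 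Your four-case verification of that lemma is right, including the delicate mixed case, which correctly invokes the third branch of the lifting property ($u\leq w$, $s\in\DesL(w)$, $s\notin\DesL(u)$ imply $u\leq sw$); note that this branch goes beyond the two-part version quoted in the paper, though it is standard (\cite[Prop.\ 2.2.7]{CCG}). Two things you should make explicit rather than merely cite: the domination inequality $a'b'\leq a\dem b$ for $a'\leq a$, $b'\leq b$ is not stated in \cite{KM} in that form --- it follows from Lemma \ref{subword-lem} together with the elementary fact that the ordinary product of any word is bounded above by its Demazure product --- and the fact that $ws\in\cA_*(x,z\act_* s)$ is the ``descending'' reading of Proposition \ref{atomdes-prop}(a). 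Each approach has its merits: yours isolates reusable structural facts (domination, monotonicity of $\act_* s$) and shortens part (a) considerably, while the paper's is self-contained relative to the lemmas it quotes and treats (a) and (b) symmetrically.
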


  \begin{proof}
We adopt the following convention in this proof: given    elements $a_1,a_2,\dots,a_k \in W$, we say that $b=a_1a_2\cdots a_k$ is a \emph{reduced product} if $\ell(b) = \sum_{i=1}^k \ell(a_i)$.

  Fix $(s_1,s_2,\dots,s_k) \in \cR(w)$.  Set $u_0 = x$ and for   $i \in [k]$ define $u_i = u_{i-1}\act_* s_i$ so that $z = u_k$.
For each $i \in [k]$ define $e_i$ to be $s_i$ if $u_i = s_i^*  u_{i-1}  s_i$ and 1 if $u_i = s_i^*  u_{i-1} = u_{i-1}  s_i$.
Observe  that 
\[
  u_j = e_i^*\cdots e_2^* e_1^* x  s_1 s_2 \cdots s_i
  = s_k^*  \cdots s_2^*  s_1^*  x e_1  e_2  \cdots e_k
 \] 
  and that both of these expressions are reduced products.
  Now, to prove part (a),  suppose $w^*y \leq xw$, define  $a_0 = w^*y$ and $b_0=xw$,  and for  $i \in [k]$ let
$
   a_i = s_i^*  \cdots s_2^* s_1^* w^* y
   $
   and
   $
   b_i 
   = e_i^*  \cdots e_2^* e_1^* x w
   .
$
Observe that  $a_k = y$ and $b_k = z$ and that $b_i > b_{i-1}$ whenever $e_i\neq 1$.
One checks that $s_i^* \in \DesL(b_{i})$ for all $i$;  this  clearly holds if $e_i=s_i$, and if $e_i=1$ then writing
\[
  b_i = b_{i-1}=  u_{i-1}  s_{i}  s_{i+1} s_{i+2} \cdots s_k = s_{i}^*  u_{i-1}   s_{i+1}  s_{i+2}\cdots s_k
\]
while noting that the right hand side is a reduced product
implies   that $s_i^* \in \DesL(b_i)$.
From this, it follows by induction that $a_i \leq b_i$ for all $i$: the case $i=0$ holds by assumption, while
if $i>0$ and $a_{i-1} \leq b_{i-1}$, then $a_i \leq b_i$ follows from the left-handed version of the lifting property of the Bruhat order.
Hence $y = a_k \leq b_k = z$.

To prove part (b), suppose  $w^*z \leq yw$, define  $c_0 = w^*z$ and $d_0=yw$,  and for each $i \in [k]$ let
 $
   c_i = x  e_1  e_2 \cdots e_{k-i}
   $
   and
   $
   d_i 
   = y  s_1  s_2 \cdots s_{k-i}
   .
   $
We have $c_k = x$ and $d_k = y$ and  $c_{i+1} < c_{i}$ whenever $e_{k-i}\neq 1$, and as in the previous case one checks that if $e_{k-i}=1$ then $s_{k-i} \notin \DesR(c_i)$. 
   Using these facts and the lifting property, one argues by induction as before that $c_i \leq d_i$ for all $i$, whence $x = c_k \leq d_k = y$. We omit the details which are similar to those in the preceding case.
  \end{proof}

For $x,z \in \I_*$, let
$\cA_*(x,-) = \bigcup_{y\in \I_*} \cA_*(x,y)
$
and
$
\cA_*(-,z) = \bigcup_{y \in \I_*} \cA_*(y,z).$

\begin{corollary}
Let $x,y \in \I_*$ with $x\leq_{T,*} y$. Then $\cA_*(x,y)$ is equal to the set of minimal-length elements in each of the intersections $\cA_*(x,-) \cap \pB_*(x,y)$ and $\cA_*(-,y)\cap \pB_*(x,y)$.
\end{corollary}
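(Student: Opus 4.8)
The plan is to derive both equalities from a single length comparison, combining the forward inclusion of Proposition \ref{bruhatchar-prop}, the implications of Proposition \ref{leq-lem}, and the fact that $(\I_*,\leq)$ is graded by $\ellhat_*$. Throughout I write $\ell_0 = \ellhat_*(x,y) = \ellhat_*(y)-\ellhat_*(x)$; since $x\leq_{T,*}y$ the set $\cA_*(x,y)$ is nonempty, and every one of its elements has length exactly $\ell_0$. First I would record the forward inclusions. For the first intersection, membership $\cA_*(x,y)\subseteq\cA_*(x,-)$ is immediate from the definition of the union $\cA_*(x,-)=\bigcup_{z}\cA_*(x,z)$, and $\cA_*(x,y)\subseteq\pB_*(x,y)$ is precisely Proposition \ref{bruhatchar-prop}; the inclusion $\cA_*(x,y)\subseteq\cA_*(-,y)\cap\pB_*(x,y)$ holds for the same two reasons. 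In particular the value $\ell_0$ is attained inside each intersection, so the minimal length there is well defined and is at most $\ell_0$.

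The heart of the argument is the matching lower bound. Suppose $w\in\cA_*(x,-)\cap\pB_*(x,y)$, say $w\in\cA_*(x,z)$ for some $z\in\I_*$, so that $\ell(w)=\ellhat_*(z)-\ellhat_*(x)$. Since $w\in\pB_*(x,y)$, Proposition \ref{leq-lem}(a) applies verbatim (its $x,y,z$ are ours) and yields $y\leq z$; because $\ellhat_*$ is the rank function of the graded poset $(\I_*,\leq)$, this forces $\ellhat_*(y)\leq\ellhat_*(z)$, with equality only when $y=z$. Hence $\ell(w)\geq\ell_0$, and $\ell(w)=\ell_0$ forces $z=y$, i.e.\ $w\in\cA_*(x,y)$. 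Together with the forward inclusion this identifies the minimal-length elements of $\cA_*(x,-)\cap\pB_*(x,y)$ as exactly those of length $\ell_0$, namely $\cA_*(x,y)$.

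For the second intersection the argument is symmetric: given $w\in\cA_*(-,y)\cap\pB_*(x,y)$, write $w\in\cA_*(z,y)$ so $\ell(w)=\ellhat_*(y)-\ellhat_*(z)$, and invoke Proposition \ref{leq-lem}(b) with its triple specialized so that $w\in\cA_*(z,y)$ and $w\in\pB_*(x,y)$; this returns $z\leq x$, whence $\ellhat_*(z)\leq\ellhat_*(x)$ by gradedness, with equality iff $z=x$. Again $\ell(w)\geq\ell_0$, with equality forcing $w\in\cA_*(x,y)$, and the forward inclusion shows $\ell_0$ is attained, so the minimal-length elements form exactly $\cA_*(x,y)$. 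I expect no serious obstacle: the only points requiring care are the correct matching of the variables of Proposition \ref{leq-lem} in each of the two cases (note that in part (b) the conclusion is the comparison $z\leq x$, with the source involutions swapped relative to part (a)) and the use of the gradedness of $(\I_*,\leq)$ to convert the Bruhat comparisons $y\leq z$ and $z\leq x$ into the length inequalities that drive the minimality statement.
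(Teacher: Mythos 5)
Your proof is correct and follows the same route as the paper: forward inclusion via Proposition \ref{bruhatchar-prop} plus the definitions of $\cA_*(x,-)$ and $\cA_*(-,y)$, then the length lower bound from Proposition \ref{leq-lem} combined with the gradedness of $(\I_*,\leq)$ by $\ellhat_*$. You are in fact slightly more careful than the published argument in making explicit that equality $\ell(w)=\ellhat_*(x,y)$ forces $z=y$ (resp.\ $z=x$) and hence $w\in\cA_*(x,y)$, which is needed to identify the minimal-length set exactly rather than merely bound it.
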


\begin{proof}
We know that $\cA_*(x,y) \subset \pB_*(x,y)$ by Proposition \ref{bruhatchar-prop}. The corollary follows since by Proposition \ref{leq-lem}
if $w \in \cA_*(x,-) \cap \pB_*(x,y)$ then $w \in \cA_*(x,y')$ for some twisted involution $y' \geq y$,
while if $w \in \cA_*(-,y)\cap \pB_*(x,y)$ then $w \in \cA_*(x',y)$ for some   $x'\leq x$, so in either case  $\ell(w) \geq \ellhat_*(x,y)$.
\end{proof}

Recall that $\Mon{S}$ denotes the free monoid on $S$.
A \emph{subword} of $\e \in \Mon{S}$ is any ordered subsequence of $\e$. 
For $\e=(s_1,s_2,\dots,s_k) \in \Mon{S}$  we define 
\be\label{delta*}
\delta(\e) = s_1 \dem s_2 \dem \cdots \dem s_k \in W
 \qquand
 \hat \delta_*(\e) 
 =  \idem{\idem{\idem{\idem{1}{s_1}}{ s_2}}{\cdots}}{s_k} \in \I_* 
 \ee
 where $\dact_*$ and $\dem$ are as in  \eqref{demprod} and \eqref{dact-def}. Note that $ \hat \delta_*(\e) =\idem{1}{\delta(\e)}$ and that 
 by definition 
$\hat\delta(s_1,s_2,\dots,s_k) = \delta(s_k^*,\dots,s_2^*,s_1^* , s_1,s_2,\dots,s_k)$.
The following result appears as  \cite[Lemma 3.4]{KM}: 

\begin{lemma}
[Knutson and Miller \cite{KM}]
\label{subword-lem}
If $\e \in \Mon{S}$ and $w \in W$
then $w \leq \delta(\e) $ if and only if  $\e$ has a subword in $\cR(w)$.
\end{lemma}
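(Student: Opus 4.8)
The plan is to prove both directions by induction on the length $k$ of $\e = (s_1, s_2, \dots, s_k)$, exploiting the inductive structure of the Demazure product $\delta(\e) = s_1 \dem s_2 \dem \cdots \dem s_k$. The base case $k=0$ is immediate: $\delta(\e) = 1$, and $w \leq 1$ holds precisely when $w = 1$, i.e. exactly when $\e$ has the empty subword in $\cR(1)$. For the inductive step, write $\e' = (s_1, \dots, s_{k-1})$ and $v = \delta(\e')$, so that $\delta(\e) = v \dem s_k$. By the definition \eqref{demprod} of the Demazure product, there are two cases according to whether $s_k \in \DesR(v)$: if $s_k \in \DesR(v)$ then $\delta(\e) = v$, while if $s_k \notin \DesR(v)$ then $\delta(\e) = vs_k$ with $\ell(vs_k) = \ell(v)+1$.

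First I would prove the ``only if'' direction: if $w \leq \delta(\e)$, then $\e$ has a subword in $\cR(w)$. In the case $\delta(\e) = v$, we have $w \leq v = \delta(\e')$, so by induction $\e'$ has a subword in $\cR(w)$, and this is also a subword of $\e$. In the case $\delta(\e) = vs_k$ with $s_k \in \DesR(vs_k)$, I would appeal to the lifting property of the Bruhat order (recalled just before the lemma): from $w \leq vs_k$ we get $ws_k \leq vs_k$, hence $ws_k \leq v$, so the smaller of $w, ws_k$ lies below $v = \delta(\e')$. If $s_k \notin \DesR(w)$, then $w \leq v$ directly (the lifting property gives $w \leq vs_k$ and $w \leq v$ cannot both fail), so $w \leq v = \delta(\e')$ and induction supplies a subword of $\e'$ in $\cR(w)$. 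If instead $s_k \in \DesR(w)$, then $ws_k \leq v$, so induction gives a subword $\a$ of $\e'$ in $\cR(ws_k)$; appending $s_k$ produces a subword of $\e$ that reduced-expresses $w$ since $w = (ws_k)s_k$ with $\ell(w) = \ell(ws_k)+1$.

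For the ``if'' direction, suppose $\e$ has a subword $\a \in \cR(w)$. If $\a$ is already a subword of $\e'$, then by induction $w \leq \delta(\e')$, and since $\delta(\e') \leq \delta(\e)$ always holds (the Demazure product is weakly increasing in Bruhat order under right multiplication), we conclude $w \leq \delta(\e)$. Otherwise $\a$ uses the final letter $s_k$, so $\a = (\a', s_k)$ where $\a'$ is a subword of $\e'$ in $\cR(ws_k)$; by induction $ws_k \leq \delta(\e')= v$. I would then use the lifting property together with the two cases for $\delta(\e)$ to promote $ws_k \leq v$ to $w \leq v \dem s_k = \delta(\e)$, checking that appending $s_k$ never pushes $w$ above the Demazure product. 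The main obstacle throughout is the careful bookkeeping in the lifting-property arguments: one must track which of $w$ and $ws_k$ is longer and handle the boundary case $s_k \in \DesR(v)$ (where $\delta(\e) = v$) separately from $s_k \notin \DesR(v)$, so that the monotonicity $\delta(\e') \leq \delta(\e)$ and the Bruhat comparisons align correctly. Since this is precisely \cite[Lemma 3.4]{KM}, I would also remark that the statement follows directly from that reference, and the induction above merely reconstructs its proof.
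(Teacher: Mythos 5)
The paper offers no proof of this lemma at all: it is quoted verbatim as \cite[Lemma 3.4]{KM} and used as a black box, so there is nothing internal to compare your argument against. Your inductive reconstruction is nevertheless essentially correct and is the standard argument: peel off the last letter $s_k$, compare $\delta(\e)=v\dem s_k$ with $v=\delta(\e')$, and shuttle between $w$ and $ws_k$ using the lifting property. Two small points of care. First, the chain ``from $w \leq vs_k$ we get $ws_k \leq vs_k$, hence $ws_k \leq v$'' is not literally valid as written ($ws_k \leq vs_k$ does not by itself give $ws_k \leq v$); what you actually need, and what your subsequent case split correctly uses, is the statement that when $s_k \in \DesR(vs_k)$ and $w \leq vs_k$, the shorter of $w, ws_k$ lies below $v$ and the longer below $vs_k$. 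Second, the final step of the ``if'' direction is only announced (``checking that appending $s_k$ never pushes $w$ above the Demazure product''), but it does go through: from $ws_k \leq v$ one gets $w \leq v$ when $s_k \in \DesR(v)$ and $w \leq vs_k$ when $s_k \notin \DesR(v)$, both by part (i) of the lifting property applied to the pair $(ws_k,\, v\dem s_k)$. With those two steps written out, your proof is complete and self-contained, which is more than the paper provides.
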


In general, the sets $\DemA_*(x,y)$ and $\pB_*(x,y)$ are not as clearly related as $\cA_*(x,y)$ and $\cA'_*(x,y)$. 
However,  the following   identity does hold:

\begin{theorem}\label{b'-thm} If $W$ is finite with longest element $w_0$, then $\DemA_*(w_0) = \pB_*(w_0)$.
\end{theorem}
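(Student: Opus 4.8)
The plan is to reduce the claimed set identity to a single equivalence of Bruhat inequalities, and then prove that equivalence with two short subword arguments. First I would observe that $\idem{1}{w} = (w^*)^{-1}\dem 1 \dem w = (w^*)^{-1}\dem w$ lies in $\I_*$, while $w_0$ is the maximum of $W$ under $\leq$; hence $\idem{1}{w}\leq w_0$ always, and so $w\in\DemA_*(w_0)$ holds exactly when $w_0 \leq (w^*)^{-1}\dem w$. On the other side, $w\in\pB_*(w_0)$ means precisely $w^*w_0 \leq w$. Thus the theorem follows once I establish, for every $w\in W$, that
\[ w_0 \leq (w^*)^{-1}\dem w \qquad\Longleftrightarrow\qquad w^*w_0 \leq w. \]

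For the backward implication I would invoke the subword description of the Demazure product. Fix reduced words $\a\in\cR((w^*)^{-1})$ and $\b\in\cR(w)$, so that $(w^*)^{-1}\dem w = \delta(\a\b)$. The one fact I need, immediate from Lemma \ref{subword-lem}, is: if $z=z_1z_2$ is a reduced product with $z_1\leq (w^*)^{-1}$ and $z_2\leq w$, then $z\leq (w^*)^{-1}\dem w$, since reduced subwords of $\a$ and $\b$ witnessing $z_1$ and $z_2$ concatenate into a reduced subword of $\a\b$ witnessing $z$. Now, assuming $w^*w_0\leq w$, I would factor $w_0 = (w^*)^{-1}\cdot(w^*w_0)$; this is a reduced product because $\ell(w^*w_0)=\ell(w_0)-\ell(w)$, its first factor is $\leq(w^*)^{-1}$ and its second factor is $\leq w$ by hypothesis, so the fact gives $w_0\leq (w^*)^{-1}\dem w$.

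For the forward implication I would run the same picture in reverse. If $w_0\leq\delta(\a\b)$, then by Lemma \ref{subword-lem} the word $\a\b$ has a subword that is reduced for $w_0$; splitting it at the junction between $\a$ and $\b$ expresses $w_0 = pq$ as a reduced product with $p\leq(w^*)^{-1}$ and $q\leq w$. Then $q = p^{-1}w_0$ with $p^{-1}\leq w^*$, and since right multiplication by $w_0$ reverses Bruhat order I obtain $q = p^{-1}w_0 \geq w^*w_0$. Combined with $q\leq w$ this yields $w^*w_0\leq w$.

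I expect the only load-bearing steps to be the ``junction split'' in the forward direction and confirming the two elementary subword facts, along with the standard interactions of $\leq$ with inversion ($a\leq b\Leftrightarrow a^{-1}\leq b^{-1}$) and with right multiplication by $w_0$ (order-reversing); none of these is difficult. The main conceptual content is really the reduction in the first paragraph, which converts a statement about the non-associative operator $\dact_*$ into a clean pair of Bruhat inequalities amenable to Lemma \ref{subword-lem}. As a sanity check, this argument is consistent with Proposition \ref{bruhatchar-prop}, which already gives $\cA_*(w_0)\subseteq\pB_*(w_0)$.
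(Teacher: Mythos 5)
Your proof is correct, but it routes one of the two inclusions differently from the paper. Both arguments establish $\pB_*(w_0)\subseteq\DemA_*(w_0)$ the same way: factor $w_0=(w^*)^{-1}\cdot(w^*w_0)$ as a reduced product, realize a reduced word for $w_0$ as a subword of the concatenation of reduced words for $(w^*)^{-1}$ and $w$, and conclude $w_0\leq (w^*)^{-1}\dem w$ via Lemma \ref{subword-lem}; your "backward implication" is exactly the paper's second paragraph in different clothing. The divergence is in the inclusion $\DemA_*(w_0)\subseteq\pB_*(w_0)$: the paper obtains it by combining Proposition \ref{bruhatchar-prop} (atoms lie in $\pB_*(w_0)$, proved earlier by induction) with the observation that $\pB_*(w_0)$ is an up-set for $\leq$ and that every Hecke atom dominates an atom, whereas you prove it directly by taking a reduced subword of $\a\b$ representing $w_0$, splitting it at the junction into $w_0=pq$ with $p\leq(w^*)^{-1}$ and $q\leq w$, and using that right multiplication by $w_0$ reverses Bruhat order. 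Your version is more self-contained and symmetric --- both directions rest only on Lemma \ref{subword-lem} and standard Bruhat-order facts, and it isolates the clean equivalence $w_0\leq(w^*)^{-1}\dem w\Leftrightarrow w^*w_0\leq w$ --- while the paper's version leans on machinery it has already built and thereby avoids re-running a subword argument. The junction-split step, which is the only point needing care, is handled correctly: reducedness of the subword for $w_0$ forces $\ell(w_0)=\ell(p)+\ell(q)$ and makes $p$ and $q$ Bruhat-below the respective factors.
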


\begin{proof}
Recall (e.g., from \cite[\S2.3]{CCG}) that $w\mapsto w^*w_0$ is an anti-involution of $(W,\leq)$ and that $\ell(w^*w_0) = \ell(w_0) - \ell(w)$ for all $w \in W$.    Therefore, if $v \geq w \in \pB_*(w_0)$, then $v^*w_0 \leq w^*w_0 \leq w \leq v$ so $v \in \pB_*(w_0)$. 
By construction   every $v \in \DemA_*(w_0)$ satisfies $v \geq w$ for some $w \in \cA_*(w_0)$, so as $\cA_*(w_0)  \subset \pB_*(w_0)$ by Proposition \ref{bruhatchar-prop}, it follows that $\DemA_*(w_0) \subset \pB_*(w_0)$.

To show the reverse inclusion, 
note that each $w \in W$ belongs to $\DemA_*(y)$ for a unique twisted involution $y \in \I_*$. Hence, it suffices to show that $\pB_*(w_0) \cap \DemA_*(y) = \varnothing$ unless $y=w_0$.
For this,
suppose $w \in   \pB_*(w_0) \cap \DemA_*(y)$ for some $y \in \I_*$.
Since $w^*w_0 \leq w$, the element $w^*w_0$ has a reduced word $(t_1,t_2,\dots,t_p)$ that is a subword of a reduced word 
$(s_1,s_2,\dots,s_q) \in \cR(w)$.
As the word $\e=(s_q^*,\dots,s_2^*,s_1^*,t_1,t_2,\dots,t_p) $ evidently belongs to $ \cR(w_0)$,
it follows by Lemma \ref{subword-lem}  that 
$ w_0 = \delta(\e) \leq \delta(s_q^*,\dots,s_2^*,s_1^*,s_1,s_2,\dots,s_q) = \hat\delta_*(s_1,s_2,\dots,s_q) = y$,
so  $y=w_0$ as desired.
\end{proof}

The following is immediate from the theorem and Proposition \ref{dual'-prop}

\begin{corollary}\label{dualcor1a} Assume $W$ is finite  and define $\diamond \in \Aut(W,S)$ by $w^\diamond = w_0 w^* w_0$.
The map  $w\mapsto w^{-1}$ is then a bijection $\DemA_*(w_0) \leftrightarrow \DemA_\diamond(w_0)$.
\end{corollary}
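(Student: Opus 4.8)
The plan is to obtain this as a formal chaining of Theorem \ref{b'-thm} (applied separately to the automorphisms $*$ and $\diamond$) with the $\pB$-bijection of Proposition \ref{dual'-prop}, specialized to the single index pair $x=1$, $y=w_0$. Before combining these, I would dispatch the routine preliminaries. Since $*$ is a diagram automorphism it preserves length, so $w_0^*=w_0$; hence conjugation by $w_0$ composed with $*$ is again a length-preserving, $S$-preserving involution, and one checks $w^{\diamond\diamond}=w_0(w_0w^*w_0)^*w_0 = w_0w_0\,w\,w_0w_0 = w$, so $(W,S,\diamond)$ really is a twisted Coxeter system. Moreover both $1$ and $w_0$ lie in $\I_*(W)\cap\I_\diamond(W)$: indeed $1^\diamond = w_0^2 = 1$ and $w_0^\diamond = w_0w_0^*w_0 = w_0 = w_0^{-1}$, and likewise for $*$. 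This makes Proposition \ref{dual'-prop} legitimately applicable at $x=1$, $y=w_0$.

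With these in place, Theorem \ref{b'-thm} applied with $*$ gives $\DemA_*(w_0) = \pB_*(w_0) = \pB_*(1,w_0)$, and the same theorem applied with $\diamond$ gives $\DemA_\diamond(w_0) = \pB_\diamond(w_0) = \pB_\diamond(1,w_0)$. Now I would invoke the second bijection of Proposition \ref{dual'-prop} with $x=1$ and $y=w_0$: it asserts that $w\mapsto w^{-1}$ is a bijection $\pB_\diamond(1,w_0)\leftrightarrow \pB_*(w_0\cdot w_0,\, w_0\cdot 1) = \pB_*(w_0^2,w_0) = \pB_*(1,w_0)$, using the involutivity $w_0^2 = 1$. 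Chaining the three identities yields
\[
\DemA_\diamond(w_0) = \pB_\diamond(1,w_0) \longleftrightarrow \pB_*(1,w_0) = \DemA_*(w_0),
\]
the displayed correspondence being inversion, which is exactly the claimed bijection $\DemA_*(w_0)\leftrightarrow\DemA_\diamond(w_0)$.

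The only genuine subtlety — the ``main obstacle'' in what is otherwise a bookkeeping argument — is making sure the index shift in Proposition \ref{dual'-prop} lands back on the correct set: the general target $\pB_*(w_0y,w_0x)$ collapses to $\pB_*(1,w_0)$ precisely because $w_0^2=1$, so that the two endpoints of the composed chain coincide rather than differing by a nontrivial shift. I would emphasize that no separate minimal-length or atom-counting argument is needed here, since Theorem \ref{b'-thm} has already identified each Hecke-atom set $\DemA_{(\cdot)}(w_0)$ with the purely Bruhat-defined set $\pB_{(\cdot)}(w_0)$ on which Proposition \ref{dual'-prop} operates directly; the statement is therefore immediate once the hypotheses are verified.
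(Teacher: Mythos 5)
Your proof is correct and follows exactly the route the paper takes: the paper derives this corollary as "immediate from" Theorem \ref{b'-thm} (identifying $\DemA_*(w_0)=\pB_*(w_0)$ and $\DemA_\diamond(w_0)=\pB_\diamond(w_0)$) combined with the $\pB$-bijection of Proposition \ref{dual'-prop} at $x=1$, $y=w_0$, using $w_0^2=1$. You have simply written out the routine verifications that the paper leaves implicit.
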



For the   next theorem,
we require one other    property of the Bruhat order \cite[Lemma 2.2.10]{CCG}. 

\begin{lemma}[See \cite{CCG}] \label{techbr-lem}
Let $(W,S)$ be any Coxeter system and set $T = \{ wsw^{-1} : (w,s) \in W\times S\}$. If $a,b \in W$ and $t \in T$ are such that $a<at$ and $b<tb$ then $ab<atb$.
\end{lemma}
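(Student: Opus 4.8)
The plan is to prove this via the reflection (geometric) representation of $(W,S)$, in which every reflection $t \in T$ is the reflection $s_\beta$ attached to a unique positive root $\beta$ in the root system $\Phi = \Phi^+ \sqcup \Phi^-$. I would invoke two standard facts valid for arbitrary Coxeter systems: (i) for a positive root $\beta$ and $w \in W$, one has $\ell(ws_\beta) > \ell(w) \iff w\beta \in \Phi^+$, and dually $\ell(s_\beta w) > \ell(w) \iff w^{-1}\beta \in \Phi^+$; and (ii) for any reflection $r \in T$ and any $w \in W$, one has $w < wr$ in Bruhat order if and only if $\ell(wr) > \ell(w)$. Both are classical and hold for infinite groups as well.

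First I would translate the hypotheses into root-positivity conditions. Writing $t = s_\beta$ with $\beta \in \Phi^+$, the assumption $a < at$ means $\ell(as_\beta) > \ell(a)$, hence $a\beta \in \Phi^+$ by fact (i). Similarly $b < tb$ means $\ell(s_\beta b) > \ell(b)$, hence $b^{-1}\beta \in \Phi^+$.

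The key computation is then a conjugation. Set $\gamma = b^{-1}\beta$, which is a positive root by the above. Then
\[
atb = a s_\beta b = ab\,(b^{-1} s_\beta b) = ab\, s_\gamma,
\]
so $ab$ and $atb$ differ on the right by the single reflection $s_\gamma$. By fact (ii) it therefore suffices to show $\ell(ab\, s_\gamma) > \ell(ab)$, which by fact (i) is equivalent to $(ab)\gamma \in \Phi^+$. But $(ab)\gamma = ab\,(b^{-1}\beta) = a\beta$, and $a\beta \in \Phi^+$ is exactly the first hypothesis. Hence $ab < atb$, as desired.

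The only genuinely delicate point is the choice of tool. A naive induction on $\ell(b)$ using the lifting property does get started: writing $b = b's$ with $s$ a right descent of $b$, one checks in both descent cases that $b' < tb'$, and by induction $ab' < atb'$. The difficulty is that left-multiplication by $a$ destroys control over whether $s$ is a right descent of $ab'$ versus $atb'$, so the lifting property alone cannot promote $ab' < atb'$ to $ab < atb$. The root-theoretic argument sidesteps this precisely because the positivity condition $a\beta \in \Phi^+$ is stable under the conjugation that turns $t$ (acting on the left of $b$) into $s_\gamma$ (acting on the right of $ab$). Alternatively, the statement is standard and may simply be cited as \cite[Lemma 2.2.10]{CCG}.
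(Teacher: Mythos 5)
Your proof is correct. Note first that the paper does not prove this lemma at all: it simply quotes it as \cite[Lemma 2.2.10]{CCG} (B\"jorner--Brenti), which is exactly the fallback you mention in your last sentence. So your root-theoretic argument supplies a proof where the paper supplies only a citation. The argument itself is sound and complete: both of the standard facts you invoke --- that $\ell(ws_\beta)>\ell(w)$ iff $w\beta\in\Phi^+$ for $\beta\in\Phi^+$ (with its left-handed dual obtained by inverting), and that $w$ and $wr$ are always Bruhat-comparable with the shorter element smaller --- hold for arbitrary, possibly infinite, Coxeter systems. The conjugation identity $atb = ab\,(b^{-1}s_\beta b) = ab\,s_\gamma$ with $\gamma=b^{-1}\beta\in\Phi^+$ is the right pivot, and the final computation $(ab)\gamma = a\beta\in\Phi^+$ closes the loop cleanly. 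Your diagnosis of why a naive induction on $\ell(b)$ via the lifting property stalls is also accurate and is a useful justification for reaching for the reflection representation. The only cosmetic caveat is that fact (i) is usually stated for $\beta$ a \emph{positive} root, so it is worth flagging explicitly (as you implicitly do) that $\gamma=b^{-1}\beta$ is positive before applying it on the right of $ab$; you have already established this from the hypothesis $b<tb$, so nothing is missing.
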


\begin{theorem}\label{lastbr-thm}
Suppose $W$ is finite with longest element $w_0$.
Let
 $x,y \in \I_*$ with $x\leq_{T,*} y$.
 If either 
$ \ell(x) = \ellhat_*(x) $ or $ \ell(y,w_0) = \ellhat_*(y,w_0)$,
then $\cA_*(x,y) = \cA'_*(x,y).$
\end{theorem}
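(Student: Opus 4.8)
The two hypotheses are exchanged by the duality of Section~\ref{duality-sect}, so the plan is to reduce the second case to the first and then prove $\cA_*(x,y)=\cA'_*(x,y)$ whenever $\ell(x)=\ellhat_*(x)$, for every finite twisted Coxeter system. Put $\diamond\in\Aut(W,S)$, $w^\diamond=w_0w^*w_0$, which is again an involution because $w_0^*=w_0$. For $a,b\in\I_\diamond$, Corollary~\ref{lastdia-prop} and Proposition~\ref{dual'-prop} give $\cA_\diamond(a,b)=\cA_*(w_0b,w_0a)^{-1}$ and $\cA'_\diamond(a,b)=\cA'_*(w_0b,w_0a)^{-1}$, so $\cA_*(x,y)=\cA'_*(x,y)$ is equivalent to $\cA_\diamond(w_0y,w_0x)=\cA'_\diamond(w_0y,w_0x)$. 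Using $\ellhat_\diamond(c)=\ellhat_*(w_0)-\ellhat_*(w_0c)$ from Corollary~\ref{lastdia-prop}, the condition $\ell(y,w_0)=\ellhat_*(y,w_0)$ translates exactly into $\ell(w_0y)=\ellhat_\diamond(w_0y)$; that is, the pair $(w_0y,w_0x)$ falls under the first case for $\diamond$. Hence it suffices to treat $\ell(x)=\ellhat_*(x)$.

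\textbf{The key Bruhat bound.}
By Proposition~\ref{bruhatchar-prop} we already have $\cA_*(x,y)\subseteq\pB_*(x,y)$, and every atom has length $\ellhat_*(x,y)$; the real content is the opposite comparison. I would isolate it as the claim that \emph{$w\in\pB_*(x,y)$ implies $y\le\idem{x}{w}$}, generalizing the mechanism behind Theorem~\ref{b'-thm}. Fix reduced words $\rho\in\cR(x)$ and $\sigma\in\cR(w)$, and let $(\sigma^*)^\op$ be the reduced word of $(w^*)^{-1}$ obtained by reversing $\sigma^*$. Since $\dem$ is associative and sends concatenation to Demazure products, $\idem{x}{w}=(w^*)^{-1}\dem x\dem w=\delta((\sigma^*)^\op\rho\sigma)$. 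From $w^*y\le xw\le x\dem w=\delta(\rho\sigma)$ and Lemma~\ref{subword-lem}, the word $\rho\sigma$ contains a subword $\tau\in\cR(w^*y)$. As $y=(w^*)^{-1}\cdot(w^*y)$ is an ordinary product and $ab\le a\dem b$ for group elements, we get $y\le(w^*)^{-1}\dem(w^*y)=\delta((\sigma^*)^\op\tau)$; and since $(\sigma^*)^\op\tau$ is a subword of $(\sigma^*)^\op\rho\sigma$, Lemma~\ref{subword-lem} gives $y\le\delta((\sigma^*)^\op\rho\sigma)=\idem{x}{w}$.

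\textbf{Finishing by a length count.}
To conclude, I would observe that each application of $\dact_*$ raises $\ellhat_*$ by at most one (Corollaries~\ref{invdem-cor} and~\ref{h2-cor}), so $\ellhat_*(\idem{x}{w})\le\ellhat_*(x)+\ell(w)$ for all $w$. Because $(\I_*,\le)$ is graded by $\ellhat_*$, the bound $y\le\idem{x}{w}$ forces $\ellhat_*(y)\le\ellhat_*(\idem{x}{w})\le\ellhat_*(x)+\ell(w)$, i.e.\ $\ell(w)\ge\ellhat_*(x,y)$ for every $w\in\pB_*(x,y)$. Thus the minimal length in $\pB_*(x,y)$ equals $\ellhat_*(x,y)$ and is attained by the atoms, giving $\cA_*(x,y)\subseteq\cA'_*(x,y)$. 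Conversely, if $w\in\pB_*(x,y)$ has $\ell(w)=\ellhat_*(x,y)$, these same inequalities force $\ellhat_*(\idem{x}{w})=\ellhat_*(y)$, so gradedness and $y\le\idem{x}{w}$ yield $\idem{x}{w}=y$; hence $w\in\DemA_*(x,y)$, and being of minimal length there, $w\in\cA_*(x,y)$. This gives $\cA'_*(x,y)\subseteq\cA_*(x,y)$, and the two sets coincide.

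\textbf{Where the difficulty lives.}
The one substantive step is the Bruhat bound $y\le\idem{x}{w}$; the surrounding argument is bookkeeping with the rank function $\ellhat_*$. Its difficulty is that the hypothesis $w^*y\le xw$ is phrased with the \emph{ordinary} product $xw$, while $\idem{x}{w}$ is controlled by the \emph{Demazure} product, and bridging the two is precisely the job of Lemma~\ref{subword-lem} together with $ab\le a\dem b$. I expect the hypotheses $\ell(x)=\ellhat_*(x)$ and $\ell(y,w_0)=\ellhat_*(y,w_0)$ to be what keeps the passage from $xw$ to $x\dem w$, and hence the rank estimate $\ellhat_*(\idem{x}{w})\le\ellhat_*(x)+\ell(w)$, sharp enough for the final count; away from these cases this comparison can be too lossy, which is presumably the source of the obstruction to the full Conjecture~\ref{atoms-conj}.
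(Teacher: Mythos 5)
Your proposal takes a genuinely different route from the paper's. The paper handles $x=1$ by downward induction on $\ellhat_*(y,w_0)$, with base case $y=w_0$ (Theorem \ref{b'-thm}) and an inductive step driven by the lifting property and Lemma \ref{techbr-lem}; it then treats general $x$ with $\ell(x)=\ellhat_*(x)$ by a second induction on $\ellhat_*(x)$, and this is exactly where that hypothesis is used (it guarantees a right descent $s$ of $x$ satisfies $s^*x=xs$, so $x\act_*s$ has length $\ell(x)-1$ and the induction can descend); the remaining hypothesis is dispatched by duality just as in your first paragraph. You replace both inductions by a single estimate: from $w^*y\leq xw$ you get
$y=(w^*)^{-1}(w^*y)\leq (w^*)^{-1}\dem(w^*y)\leq (w^*)^{-1}\dem(xw)\leq (w^*)^{-1}\dem(x\dem w)=\idem{x}{w}$
using that the Demazure product dominates the ordinary product and is order-preserving in each argument (both consequences of Lemma \ref{subword-lem}), and then the rank count $\ellhat_*(\idem{x}{w})\leq\ellhat_*(x)+\ell(w)$ together with gradedness of $(\I_*,\leq)$ finishes. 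I cannot exhibit a link in this chain that fails; each reduces to standard properties of $\dem$.

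That is precisely what you must confront before this can stand. Your argument nowhere uses $\ell(x)=\ellhat_*(x)$, nor $\ell(y,w_0)=\ellhat_*(y,w_0)$, nor even the finiteness of $W$: if it is sound, you have proved Conjecture \ref{atoms-conj} for arbitrary twisted Coxeter systems, a statement the authors describe as ``much harder'' and have verified only computationally through rank $6$. Your closing paragraph shows you noticed the hypotheses go unused, but the diagnosis you offer does not hold together: you suggest they are needed to keep the passage from $xw$ to $x\dem w$ ``sharp enough,'' yet your proof is a chain of one-directional inequalities and never requires sharpness anywhere, so lossiness cannot be what rescues the conjecture from your argument. You must resolve this tension explicitly: either locate the step that fails in general --- the only candidates are the two Demazure-product inequalities and the bound $\ellhat_*(\idem{x}{w})\leq\ellhat_*(x)+\ell(w)$, so write out complete proofs of each from Lemma \ref{subword-lem} and Corollaries \ref{invdem-cor} and \ref{h2-cor} rather than citing them as folklore --- or else test the unconditional statement computationally well beyond the authors' range and present what you have as a proof of the conjecture. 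A proof of a special case that silently establishes the general open conjecture, accompanied by a closing paragraph that misidentifies where its own hypotheses enter, is not yet acceptable as written.
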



\begin{proof}[Proof of Theorem~\ref{lastbr-thm}]
Since we know that $\cA_*(x,y) \subset \pB_*(x,y)$ by Theorem \ref{bruhatchar-prop},
it suffices to prove that (a) $\ell(u) \geq \ellhat_*(x,y)$ for all $u \in \pB_*(x,y)$
and
(b) $\cA'_*(x,y) \subset \cA_*(x,y)$. It will be helpful to distinguish between these two properties, although we note that (b) actually implies (a).

First assume $x=1$ so that $y \in \I_*$ is arbitrary. We  argue that (a) and (b) hold in this special case by induction on $\ellhat_*(y,w_0)$. If $\ellhat_*(y,w_0)=0$ so that $y=w_0$, then what we want to show is immediate from Theorem \ref{b'-thm}.
Suppose $y<w_0$ and assume that (a) and (b) hold with $x=1$ when $y$ is replaced by any twisted involution of greater length.
Fix $u \in \pB_*(y)$. 
Since $y<w_0$ there exists $s \in \{s_1,s_2,\dots,s_{n-1}\}$ such that $y<ys$. Set $z=y\act_*s$ and note that $(us)^*z \in \{ u^*y, u^*ys\}$. First suppose $u<us$. Then $us \in \pB_*(z)$ by  the lifting property,
so  by our inductive hypothesis   $\ell(us) = \ell(u)+1 \geq \ellhat_*(z) = \ellhat_*(y) + 1$ and if $\ell(u)=\ellhat_*(y)$ so that $\ell(us) = \ellhat_*(y)+1$ then $us \in \cA_*(z)$.
This  implies that (a) and (b) hold
by Proposition \ref{atomdes-prop}.
Alternatively suppose $us < u$;  one of the following   cases then occurs:
\ben
\item[(i)] 
Suppose $ s^*y=ys$. Then $u^*z=u^*s^*y$ and $y<ys=s^*y $, so as $u^*s^* < u^*$, it follows by Lemma \ref{techbr-lem}  with $a=u^*s^*$ and $b=y$ and $t=s^*$ 
that $u^*z = ab < atb = u^*y \leq u$.

\item[(ii)] Suppose $s^*y\neq ys$. Then $u^*z = us^*ys$ and $y<ys < s^*ys$, so it follows by Lemma \ref{techbr-lem}  with $a=u^*s^*$ and $b=ys$ and $t=s^*$ 
 that $u^*z = ab < atb = u^*ys$. Since $us<u$ and $u^*y\leq u$,  the lifting property implies that $u^*ys \leq u$, so we  again have $u^*z < u$.

\een
In either case we have $u \in \pB_*(z)$, so  by induction  
 $\ell(u) \geq \ellhat_*(z) > \ellhat_*(y)$ and therefore  $u\notin \cA'_*(y)$.
   From this analysis, we conclude   that (a) and (b) hold when $x=1$.

Now  suppose 
 that $\ell(x)=\ellhat_*(x)$. 
If $x=1$ then properties (a) and (b) hold by what we have just shown. Assume $x>1$, choose $s \in \DesR(x)$,  set $x' = x\act_*s = s^*x = xs$,
and fix $u \in \pB_*(x,y)$. If
 $su>u$, then it follows by  Lemma \ref{techbr-lem} that $x'u=s^*xu<xu$ so by the lifting property  $s^*u^*y \leq xu=x'su$. By induction on $\ellhat_*(x)$, if $su>u$ then  
$\ell(su) = \ell(u)+1 \geq \ellhat_*(x',y) = \ellhat_*(x,y)+1
$,
and  if  $\ell(su) = \ellhat_*(x',y)$ then $ su \in \cA_*(x',y)$. We deduce that:
\ben
\item[(1)] If $su>u$ then  $\ell(u) \geq \ellhat_*(x,y)$, and if $\ell(u) = \ellhat_*(x,y)$ then  $u \in \cA_*(x,y)$ by Proposition \ref{atomdes-prop}.
\item[(2)] If $su<u$, then    Lemma \ref{techbr-lem} implies  $u^*y \leq xu < s^*xu = x(su)$, so by the lifting property $su \in \pB_*(x,y)$, and  by (1) it holds that  $\ell(u) = \ell(su)+1 > \ellhat_*(x,y)$ whence $u \notin\cA'_*(x,y)$. 
 \een
Thus (a) and (b) continue to hold if $\ell(x) = \ellhat_*(x)$.

Finally, suppose $\ell(y,w_0) = \ellhat_*(y,w_0)$. Define $w^\diamond = w_0 w^* w_0$ for $w \in W$. 
Then, by Corollary \ref{lastdia-prop} and Proposition \ref{dual'-prop} and the cases already considered, it holds that
$\ell(w_0y) = \ellhat_\diamond(w_0y)$,
so we have 
$
\pB_*(x,y) = \pB_\diamond(w_0y,w_0x)^{-1} \subset \{ u \in W : \ell(u) \geq \ellhat_\diamond(w_0x,w_0y)\}^{-1} = \{ u \in W : \ell(u) \geq \ellhat_*(x,y)\}$
and likewise
$ \cA'_*(x,y) = \cA'_\diamond(w_0y,w_0x)^{-1} 
\subset
 \cA_\diamond(w_0y,w_0x)^{-1} = \cA_*(x,y)
 $ as desired.
\end{proof}

We highlight a  few special   cases of Theorem \ref{lastbr-thm}. As $\ell(1) =\ellhat_*(1) = 0$, the following holds:

\begin{corollary}
If $W$ is finite then $\cA_*(y) = \cA'_*(y)$ for all $y \in \I_*$.
\end{corollary}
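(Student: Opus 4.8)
The plan is to obtain this corollary as an immediate specialization of Theorem \ref{lastbr-thm} to the case $x = 1$. Since by our abbreviation conventions $\cA_*(y) = \cA_*(1,y)$ and $\cA'_*(y) = \cA'_*(1,y)$, it suffices to verify that the hypotheses of Theorem \ref{lastbr-thm} are met when $x = 1$ and $y \in \I_*$ is arbitrary.

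First I would confirm the comparability $1 \leq_{T,*} y$. By Corollary \ref{all-nonempty-cor}, the set $\cA_*(y) = \cA_*(1,y)$ is nonempty for every $y \in \I_*$, and by the remark following the definition of the two-sided weak order, nonemptiness of $\cA_*(x,y)$ is equivalent to $x \leq_{T,*} y$. Hence $1 \leq_{T,*} y$ holds automatically for all $y \in \I_*$, so the standing hypothesis of Theorem \ref{lastbr-thm} is satisfied.

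Next I would check that the branch of Theorem \ref{lastbr-thm} requiring $\ell(x) = \ellhat_*(x)$ applies to $x = 1$. The identity satisfies $\ell(1) = 0$, and since the empty sequence is the unique involution word of $1$, we also have $\ellhat_*(1) = 0$; thus $\ell(1) = \ellhat_*(1)$. Invoking Theorem \ref{lastbr-thm} with $x = 1$ then yields $\cA_*(1,y) = \cA'_*(1,y)$, which is precisely the asserted equality $\cA_*(y) = \cA'_*(y)$.

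I do not anticipate any genuine obstacle, since all the substantive work has been front-loaded into Theorem \ref{lastbr-thm}. The only two points needing confirmation are the trivial comparability $1 \leq_{T,*} y$ and the length identity $\ell(1) = \ellhat_*(1) = 0$, both of which are immediate from the definitions and Corollary \ref{all-nonempty-cor}.
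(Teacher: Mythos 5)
Your proposal is correct and is essentially identical to the paper's own argument, which likewise deduces the corollary from Theorem \ref{lastbr-thm} via the observation that $\ell(1) = \ellhat_*(1) = 0$ (the comparability $1 \leq_{T,*} y$ being automatic). No issues.
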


Write $\cAfpf(x) = \cA(\wfpf{n},x)$ and $\cAfpf'(x) = \cA'(\wfpf{n},x)$ for $x \in \Ifpf(S_{2n})$, where as in the introduction  $\wfpf{n}= s_1 s_3s_5\cdots s_{n-1} \in S_{2n}$.
Since $\ell(\wfpf{n}) = \ellhat(\wfpf{n})=n$, the following also holds.

\begin{corollary}\label{last-cor}
$\cA(x) =\cA'(x)$ for all $x \in \I(S_n)$ and $\cAfpf(y) = \cAfpf'(y)$ for all $y \in \Ifpf(S_{2n})$.
\end{corollary}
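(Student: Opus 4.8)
The plan is to obtain both equalities as direct instances of Theorem~\ref{lastbr-thm} with $*=\id$, the only substantive work being to verify the length hypothesis $\ell(x)=\ellhat_*(x)$ for the relevant base involution and, in the second case, the comparability hypothesis $x\leq_{T,*}y$.

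For the first equality I would set $W=S_n$ and apply Theorem~\ref{lastbr-thm} with $x=1$. Here $\ell(1)=\ellhat(1)=0$, so $\ell(x)=\ellhat_*(x)$ holds, and $1\leq_{T,\id}y$ for every $y\in\I(S_n)$ since $\hat\cR_\id(1,y)$ is nonempty by Corollary~\ref{all-nonempty-cor}. The theorem then gives $\cA(1,y)=\cA'(1,y)$, that is $\cA(y)=\cA'(y)$; this is exactly the special case $W=S_n$, $*=\id$ of the corollary stated just above, so the first claim needs nothing new.

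For the second equality I would set $W=S_{2n}$ and apply Theorem~\ref{lastbr-thm} with $x=\wfpf{n}=(1,2)(3,4)\cdots(2n-1,2n)$. The required computation is $\ell(\wfpf{n})=\ellhat(\wfpf{n})=n$: the involution $\wfpf{n}$ is a product of $n$ pairwise commuting simple transpositions, so $\ell(\wfpf{n})=n$, while Incitti's formula $\ellhat(x)=\tfrac12(\ell(x)+\kappa(x))$ from \cite{Incitti1}, applied with $\kappa(\wfpf{n})=n$, gives $\ellhat(\wfpf{n})=n$ as well. With $\ell(\wfpf{n})=\ellhat(\wfpf{n})$ verified, Theorem~\ref{lastbr-thm} yields $\cA(\wfpf{n},y)=\cA'(\wfpf{n},y)$, i.e.\ $\cAfpf(y)=\cAfpf'(y)$, for every $y\in\Ifpf(S_{2n})$ with $\wfpf{n}\leq_{T,\id}y$.

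The one point that is not purely formal---and the step I expect to be the main obstacle---is to check that $\wfpf{n}\leq_{T,\id}y$ holds for \emph{every} fixed-point-free involution $y$, so that the statement indeed covers all of $\Ifpf(S_{2n})$: if it failed for some $y$ then $\cAfpf(y)=\varnothing$ while $\cAfpf'(y)$ is nonempty, and the claimed equality would break. To establish this I would prove by induction on $\ellhat(y)$ that $\wfpf{n}$ is the $\leq_{T,\id}$-minimum of $\Ifpf(S_{2n})$. The key lemma is that for a fixed-point-free $x$ and a simple generator $s=s_i$ one has $sx=xs$ precisely when $(i,i+1)$ is a $2$-cycle of $x$, in which case $s\in\DesR(x)$; consequently every move $x\mapsto x\act_\id s$ with $s\in\DesR(x)$ along which $s$ does \emph{not} commute with $x$ is a genuine conjugation $x\mapsto sxs$, which preserves fixed-point-freeness and lowers $\ellhat$ by $1$. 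It then remains to show that any fixed-point-free $y\neq\wfpf{n}$ admits a right descent $s_i$ with $(i,i+1)$ not a $2$-cycle of $y$; this follows by a short argument examining $m=y(1)$ (using $y(m)=1$ to force $y(1)=2$, $y(2)=1$) and inducting on the interval $\{3,\dots,2n\}$. Given such a descent, $y\act_\id s\in\Ifpf(S_{2n})$ satisfies $\ellhat(y\act_\id s)<\ellhat(y)$ and $y\act_\id s<_{T,\id}y$, so by induction $\wfpf{n}\leq_{T,\id}y\act_\id s<_{T,\id}y$, which completes the reduction and hence both equalities.
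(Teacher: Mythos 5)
Your proposal is correct and follows essentially the same route as the paper: both equalities are read off from Theorem~\ref{lastbr-thm} after checking $\ell(1)=\ellhat(1)=0$ and $\ell(\wfpf{n})=\ellhat(\wfpf{n})=n$. The one thing you add beyond the paper's (essentially one-line) deduction is an explicit verification that $\wfpf{n}\leq_{T,\id}y$ for every $y\in\Ifpf(S_{2n})$ — a hypothesis of Theorem~\ref{lastbr-thm} that the paper leaves implicit (it is dispatched as ``a simple exercise'' only later, in the proof of Theorem~\ref{equiv2-thm}) — and your argument for it is sound.
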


\section{Relative atoms in type $A$}\label{atomSn-sect}
\def\precB{\prec}
\def\preceqB{\preceq}

The atoms of involutions in symmetric groups have been studied previously by Can, Joyce, and Wyser \cite{CJ,CJW}. Their results 
\cite[Theorem 3.7]{CJ}, \cite[Theorem 2.5]{CJW}, and \cite[Corollary 2.16]{CJW}  
provide a complete description of the elements  $w \in \cA(x,y)$ in the  cases when $x=1$ or $x=s_1s_3s_5\cdots s_{n-1}$ (with $n$ even); this description consists of a   simple list of numeric inequalities involving the one-line representation of $w$ and the cycle structure of  $y$. In this section we prove a common generalization of these results which describes $\cA(x,y)$ for all involutions $x,y \in S_n$. Our arguments are self-contained; our main result, Theorem \ref{atomSn-thm}, will follow from several auxiliary lemmas and definitions.

Let $\PP = \{1,2,\dots\}$ denote the set of positive integers and define $ [n] = \{ i \in \PP : i\leq n\}$ for  nonnegative integers $n \in \NN$.
As  we are concerned solely with atoms of  involutions in  symmetric groups, throughout this  section
we let $s_i =(i,i+1) \in S_n$ for $i \in [n-1]$.
We define
\[\cI(S_n) = \{ x \in S_n : x=x^{-1}\}\] and write
$ \act = \act_\id $ and $ \dact = \dact_\id $ and $ \hat\cR = \hat \cR_\id$ and $ \cA = \cA_\id$ and
$
\DemA = \DemA_\id$
and 
$\ellhat=\ellhat_\id$
for the notations given in Sections \ref{intro-sect} and \ref{prelim-sect}. 
Recall  that   $\hat\cR(y) = \hat\cR(1,y)$ and $\cA(y) = \cA(1,y)$ and $\cB(y) = \cB(1,y)$. 

The involutions in $S_n$ are  the permutations of $[n]$ whose cycles each have length 1 or 2. 
We have $x \act s_i = s_i  x  s_i$ if and only if $i$ and $i+1$ belong to distinct cycles of $x$ and are not both fixed points; otherwise $x \act s_i = s_i  x = x s_i$. 
Recall that if $w \in S_n$ then 
\be\label{des-sn} \DesR(w) =\{ s_i : i \in [n-1]\text{ such that }w(i)>w(i+1)\}.
\ee
Hence, we have $\sidem{x}{s_i} = x \act s_i$ if and only if $x(i)>x(i+1)$, and otherwise $\sidem{x}{s_i} =x$.
In reasoning about the operations $\act $ and $\dact$, it is often useful to draw an involution $x \in \I(S_n)$ as the incomplete matching on the numbers $1,2,\dots,n$, ordered from left to right in a line, whose connected components are the cycles of $x$.
For example, for $x=(1,5)(2,6)$ we would draw  
\[
\ba \\[-10pt]
x \ =\  {\xy<0cm,-.05cm>\xymatrix@R=.3cm@C=.2cm{ 
*{\bullet}   \ar @/^1.pc/ @{-} [rrrr]   & *{\bullet}  \ar @/^1.pc/ @{-} [rrrr]  & *{\bullet} & *{\bullet} & *{\bullet} & *{\bullet}
}\endxy}
\qquand
x\act s_1  \ =\  {\xy<0cm,-.05cm>\xymatrix@R=.3cm@C=.2cm{ 
*{\bullet}   \ar @/^1.2pc/ @{-} [rrrrr]   & *{\bullet}  \ar @/^0.8pc/ @{-} [rrr]  & *{\bullet} & *{\bullet} & *{\bullet} & *{\bullet}
}\endxy}
\qquand
x\act s_3 \ =\  {\xy<0cm,-.05cm>\xymatrix@R=.3cm@C=.2cm{ 
*{\bullet}   \ar @/^1.pc/ @{-} [rrrr]   & *{\bullet}  \ar @/^1.pc/ @{-} [rrrr]  & *{\bullet}  \ar @/^.5pc/ @{-} [r]& *{\bullet} & *{\bullet} & *{\bullet}
}\endxy}.
\ea
\]
In relation to this model, $\act $ and $\dact $ correspond to simple ``graph theoretic'' operations like adding or deleting an edge between adjacent vertices, or switching the locations of adjacent vertices.
Moreover, $s_i$ is a descent of $x$ if and only if, informally, the transition $x \mapsto x\act s_i$ results in a matching whose edges are ``smaller'' or have fewer ``nestings.''


We   introduce some less standard notation. First, we define the \emph{cycle set} and \emph{fixed-point set} of an involution $y \in \I(S_n)$ by
\[\Cyc(y) =  \{ (a,b) \in [n]\times [n] :  a \leq b=y(a)\}
\qquand
\Fix(y) = \{ a \in [n] : y(a) = a\}.\]
Note that $(i,i) \in \Cyc(y)$ if $i \in \Fix(y)$. Next, we define the \emph{extended cycle set} of $y \in \I(S_n)$ 
by
\[\Gamma(y) = 
\Cyc(y) \cup \{ (a,b) \in \Fix(y) \times \Fix(y) : b<a\}.\]
Equivalently, $\Gamma(y)$ consists of all pairs 
$(a,b) \in [n]\times [n]$ with $ \{a,b\}=\{y(a),y(b)\}$ and $ y(b)  \leq  y(a)$.
We suppress the dependence on $n$ in these definitions for convenience.

\begin{example}
If $y =(1,6)(3,5) \in \I(S_6)$ then 
$\Gamma(y) = \{ (1,6),(2,2),(3,5),(4,4), (4,2)\}$.
\end{example}

\begin{remark}
Define $\Delta^+ = \{ \varepsilon_i - \varepsilon_j \in \RR^n : 1\leq i< j \leq n\}$ 
and $\Delta^- = - \Delta^+$, so that 
$\Delta = \Delta^+ \cup \Delta^-$
is the root system of type $A_{n-1}$ on which $S_n$ acts by permuting coordinates.
Then we can write
$ \Gamma(y) = \{ \alpha \in \Delta^- : \alpha = y\alpha \} \cup \{ (i,i) : i \in \Fix(y)\} \cup
\{ \alpha \in \Delta^+ : \alpha = -y\alpha \}
$
 for  $y \in \I(S_n)$,
where here we identify  pairs $(i,j) \in [n]\times [n]$ with  roots $\varepsilon_i -\varepsilon_j$.
\end{remark}

The symmetric group $S_n$ acts on $[n]\times [n]$ by simultaneously permuting coordinates, so that  if $w \in S_n$ and  
 $\gamma=(a,b) \in [n]\times [n]$ then  $w\gamma = w(a,b) = (w(a),w(b))$.
   
   \begin{lemma}\label{a-lem1} Let $y \in \I(S_n)$ and $s \in \DesR(y)$. Then $s\gamma \in \Gamma(y\act s)$ for all $\gamma \in \Gamma(y)$.
\end{lemma}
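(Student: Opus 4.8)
The plan is to argue directly from the characterization noted immediately after the definition of $\Gamma(y)$: a pair $\gamma = (a,b)$ lies in $\Gamma(y)$ exactly when $\{a,b\} = \{y(a),y(b)\}$ and $y(b) \le y(a)$. Writing $s = s_i$ and $z = y \act s$, I must show that for every such $\gamma$ the image $s\gamma = (s(a),s(b))$ satisfies these same two conditions relative to $z$. I would organize the proof around the explicit description of $y \act s$ recalled at the start of the section: either $z = sys$ (when $i$ and $i+1$ lie in distinct cycles of $y$ and are not both fixed) or $z = ys = sy$ (otherwise). The hypothesis $s \in \DesR(y)$, i.e.\ $y(i) > y(i+1)$, will be invoked repeatedly to exclude degenerate configurations.

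In the first case $z = sys$, a one-line computation gives $z(s(a)) = s(y(a))$ and $z(s(b)) = s(y(b))$, so the set condition $\{s(a),s(b)\} = \{z(s(a)),z(s(b))\}$ follows at once by applying $s$ to $\{a,b\} = \{y(a),y(b)\}$. The order condition $z(s(b)) \le z(s(a))$ amounts to $s(y(b)) \le s(y(a))$, and this is where the only genuine subtlety lies: since $s = s_i$ is an adjacent transposition, it preserves the inequality $y(b) \le y(a)$ unless $\{y(b),y(a)\} = \{i,i+1\}$ with $y(b)=i$ and $y(a)=i+1$. I would show that this exceptional configuration forces $\{a,b\} = \{i,i+1\}$, and then that each reading contradicts the case hypothesis: $a=i$ makes $(i,i+1)$ a $2$-cycle of $y$ (placing $i,i+1$ in the same cycle), while $a=i+1$ makes both $i$ and $i+1$ fixed. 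Hence the reversal cannot occur and the order condition holds.

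In the second case the descent hypothesis forces $\{i,i+1\}$ to be a $2$-cycle of $y$, since the ``both fixed'' alternative would give $y(i)=i < i+1 = y(i+1)$, contradicting $s \in \DesR(y)$. Thus $z = ys$ simply converts $i$ and $i+1$ into fixed points and agrees with $y$ elsewhere. Here I would first observe that the only $\gamma \in \Gamma(y)$ meeting $\{i,i+1\}$ is $\gamma = (i,i+1)$ itself, whose image $s\gamma = (i+1,i)$ is exactly the fixed-point pair $(i+1,i) \in \Gamma(z)$. For every other $\gamma = (a,b)$ the transposition $s$ fixes both $a$ and $b$ and $z(a)=y(a)$, $z(b)=y(b)$, so both membership conditions transfer verbatim from $\Gamma(y)$ to $\Gamma(z)$.

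I expect the main obstacle to be precisely the order condition in the first case, namely tracking exactly when $s_i$ reverses the inequality $y(b) \le y(a)$ and verifying that every such reversal is ruled out by the combined cycle and descent hypotheses. The remainder is routine bookkeeping with the set-valued condition $\{a,b\}=\{y(a),y(b)\}$ and the two explicit formulas for $y \act s$.
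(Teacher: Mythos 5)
Your proof is correct and complete. The paper omits this argument as a ``straightforward exercise,'' and your case analysis --- splitting on whether $y \act s$ equals $sys$ or $ys=sy$, working from the characterization of $\Gamma(y)$ as the pairs $(a,b)$ with $\{a,b\}=\{y(a),y(b)\}$ and $y(b)\leq y(a)$ --- is precisely the intended direct verification; in particular, you correctly isolate the only configuration in which $s_i$ could reverse the inequality $y(b)\leq y(a)$ and show it is excluded by the cycle and descent hypotheses.
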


\begin{proof}
Proving this lemma is a straightforward exercise; we omit the details.
\end{proof}

In the next few pages we introduce a few more involved constructions, which will serve as bookkeeping devices to simplify the statement of  more complex conditions involving the cycles of an involution.

We  define a \emph{colored involution} on $[2n]$ to be a partial matching of $[2n]$ whose vertices are colored
by the numbers $1,2,\dots,n$ such that there are exactly two vertices of each color,
and such that any two connected vertices  have the same color.
One    identifies these objects with elements of the wreath product group
$G(n,2n) = S_{2n} \wr \ZZ/n\ZZ$ as follows.
Recall that elements of $G(n,2n)$ are pairs $(\theta, v)$ with $\theta \in S_{2n}$ and $v \in (\ZZ/n\ZZ)^{2n}$, and that multiplication in the group is given by 
$(\theta,v)(\theta',v') = (\theta\theta', v + \theta(v'))$  with $S_{2n}$ acting on $(\ZZ/n\ZZ)^{2n}$ by permuting coordinates.
A colored involution $\alpha$ on $[2n]$
corresponds to the element  $(\theta,v) \in G(n,2n)$ 
where $\theta$ is the involution in $S_{2n}$ whose 2-cycles are the  connected components of $\alpha$, and $v \in (\ZZ/n\ZZ)^{2n}$ is the vector whose $i$th coordinate is the color of vertex $i$ in $\alpha$, taken modulo $n$.
Observe that the   element $(\theta,v)$ constructed in this way belongs to $\I_*(G(n,2n))$
where  $* \in \Aut(G(n,2n))$ is the involution $(\theta,v) \mapsto (\theta,-v)$.

The number of colored involutions on $[2n]$ is $(2n)!$ and there is a simple bijection $\tau$ from permutations in $S_{2n}$ to colored involutions: given $w \in S_{2n}$, let $\tau(w)$ be the colored involution on $[2n]$ in which for each $i \in [n]$, vertices $w(2i-1)$ and $w(2i)$ are colored by $i$, and  connected by an edge if and only if $w(2i-1) > w(2i)$. We   view 
$\tau$ as an injective map
\[ \tau : S_{2n} \hookrightarrow \I_*\(G(n,2n)\).\]
We make no distinction between $\theta \in S_{2n}$ and $(\theta,0) \in G(n,2n)$ and so view $S_{2n}\subset G(n,2n)$.
With this convention, 
it holds that 
$ \tau(w) = w  (\theta_w,c) w^{-1}$ 
where $\theta_w$ is the product of the  odd descents of $w$ (that is, the elements $s_i \in \DesR(w)$ with $i$ odd)
and 
$
 c= (1,1,2,2,\dots,n,n) \in(\ZZ/n\ZZ)^{2n}.$
For $\alpha   \in \I_*(G(n,2n))$ and $i \in [2n-1]$,
we define $\alpha \act s_i$ as usual 
to be either $ \alpha s_i$ when $s_i \alpha = \alpha s_i$ or $s_i \alpha s_i$ when $s_i \alpha \neq \alpha s_i$.
Although $\tau$ is not a group homomorphism, the following property does hold:

\begin{proposition}
If $s \in \{s_1,s_2,\dots,s_{2n-1}\}$ and $w \in S_{2n}$ then $\tau(sw) = \tau(w) \act s$.
Hence, the operation $\act $   extends to a right action of $S_{2n}$ on colored involutions on $[2n]$.
\end{proposition}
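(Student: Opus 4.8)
The goal is to show $\tau(sw) = \tau(w) \act s$ for $s \in \{s_1, \dots, s_{2n-1}\}$ and $w \in S_{2n}$, where $\tau$ is the bijection sending permutations to colored involutions. The plan is to unwind the explicit formula $\tau(w) = w(\theta_w, c)w^{-1}$ given just before the statement, and compare it directly with the definition of $\act$ as given in \eqref{op-eq}.

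First I would compute $\tau(sw)$ using the formula $\tau(sw) = (sw)(\theta_{sw}, c)(sw)^{-1} = s\bigl(w(\theta_{sw},c)w^{-1}\bigr)s$. The crux is to understand how $\theta_{sw}$ relates to $\theta_w$ when we left-multiply $w$ by $s = s_i$. Left multiplication by $s_i$ swaps the values $i$ and $i+1$ in the one-line notation of $w$, which changes the descent set of $w$ only locally. I would split into cases according to where the positions $j$ with $w(j) \in \{i, i+1\}$ sit relative to the odd/even pairing $\{2k-1, 2k\}$ used to define the colors. The odd descents of $w$ that contribute to $\theta_w$ are exactly the $s_j$ with $j$ odd and $w(j) > w(j+1)$; left-multiplying by $s_i$ can only affect whether such a descent occurs at the (at most two) positions $j$ where $w(j)$ or $w(j+1)$ equals $i$ or $i+1$. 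The key observation is that passing from $w$ to $s_i w$ either leaves $\theta_w$ unchanged (when $i, i+1$ are the two colors are arranged so no odd descent toggles) or modifies it by a single factor, corresponding precisely to creating or deleting one edge in the colored matching.

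The cleaner route, which I expect to be the main line of argument, is to translate everything back to the colored-involution picture directly. The colored involution $\tau(w)$ has vertices $w(2k-1), w(2k)$ colored $k$, joined by an edge iff $w(2k-1) > w(2k)$. Left-multiplying $w$ by $s_i$ replaces each occurrence of the value $i$ by $i+1$ and vice versa in the one-line notation, which is exactly the operation of swapping vertices $i$ and $i+1$ in the matching: the two vertices exchange positions (carrying their colors and any incident edge with them), and if they happen to share a color, the edge between them is added or removed according to the orientation. This is precisely the combinatorial description of $\alpha \mapsto \alpha \act s_i$ from Corollary~\ref{invdem-cor} and \eqref{op-eq}: namely $\tau(w) \act s_i = s_i \tau(w) s_i$ when $s_i$ does not commute with $\tau(w)$ (the two vertices have different colors and swap places), and $\tau(w) \act s_i = \tau(w) s_i$ when $s_i \tau(w) = \tau(w) s_i$ (the two vertices share a color, so only the local edge toggles). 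I would verify these two cases match $\tau(s_i w)$ computed from the matching description.

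The main obstacle will be the bookkeeping in the shared-color case: when $i$ and $i+1$ receive the same color under $\tau(w)$ — equivalently, when $\{w^{-1}(i), w^{-1}(i+1)\} = \{2k-1, 2k\}$ for some $k$ — one must check carefully that left-multiplication by $s_i$ inverts the orientation of that single color-pair, which corresponds to exactly adding or deleting the edge of color $k$, matching $\tau(w)\act s_i = \tau(w) s_i$. In the differently-colored case one must confirm that $s_i$ genuinely fails to commute with $\tau(w)$ as an element of $G(n,2n)$, so that $\act$ is conjugation, and that conjugation $s_i \tau(w) s_i = s_i\bigl(w(\theta_w,c)w^{-1}\bigr)s_i$ coincides with $(s_iw)(\theta_{s_iw},c)(s_iw)^{-1}$; here one uses $\theta_{s_iw} = \theta_w$ since no odd descent toggles when the two swapped values lie in distinct color classes. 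Once both cases are dispatched, the final sentence — that $\act$ extends to a right action of $S_{2n}$ — follows formally: since $\tau$ is a bijection and $sw \mapsto \tau(w)\act s$ realizes left multiplication, the associativity of left multiplication in $S_{2n}$ transports to the required action axioms, so I would close by remarking that $(\alpha \act s)\act s' = \alpha \act (s's)^{\mathrm{op}}$-style compatibility is inherited directly from the group.
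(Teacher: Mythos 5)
Your verification is correct, and it is essentially the argument the paper has in mind — the paper's own ``proof'' is literally ``we leave this simple exercise to the reader,'' and your case split on whether $i$ and $i+1$ share a color (same color: the edge of that color toggles and $s_i$ commutes with $\tau(w)$ in $G(n,2n)$ because both the color vector and the matching are $s_i$-invariant, so $\tau(w)\act s_i=\tau(w)s_i$; different colors: the color vector is moved, so $s_i$ does not commute and $\tau(w)\act s_i=s_i\tau(w)s_i$ is just the relabeling of vertices) is exactly the intended check. The only blemish is the garbled closing remark about the action axioms: the clean statement is that one defines $\alpha\act u=\tau\bigl(u^{-1}\tau^{-1}(\alpha)\bigr)$, which by the proposition agrees with iterating $\act$ over any word for $u$ (each $s_j$ is an involution, so $u^{-1}=s_{i_k}\cdots s_{i_1}$) and visibly satisfies $(\alpha\act u)\act u'=\alpha\act(uu')$.
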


\begin{proof}
We leave this simple exercise to the reader.
\end{proof}

From
 any sequence of pairs in $\ZZ^2$ we construct a colored involution in the following way.
Let $\std$ be the usual \emph{standardization map} on words, so that if $\e$ is a finite sequence of  integers in which the $i$th smallest letter appears exactly $m_i$ times,
then $\std(\e)$ is the permutation whose one-line representation is obtained by replacing the letters equal to the smallest letter in $\e$ with $1,2,\dots,m_1$ (in that order),  the letters equal to the second smallest in $\e$  with $m_1+1,m_1+2,\dots,m_1+m_2$ (in that order), and so on. For example, $\std[1,2,1,1,2] = [1,4,2,3,5]$.
Now, for $(a_1,b_1),(a_2,b_2),\dots,(a_n,b_n) \in \ZZ^2$  we define 
\be\label{sigma-eq}
\sigma\( (a_1,b_1),(a_2,b_2),\dots,(a_n,b_n) \)
=
\tau \circ \std[ b_1,a_1,b_2,a_2,\dots,b_n,a_n] \in \I_*\( G(n,2n)\).
\ee
Equivalently, $\sigma\( (a_1,b_1),(a_2,b_2),\dots,(a_n,b_n) \)$ is the colored involution on $[2n]$
in which, writing $ \std[ b_1,a_1,b_2,a_2,\dots,b_n,a_n] = [b_1',a_1', b_2', a_2',\dots,b_n', a_n'] \in S_{2n}$,
 it holds that vertices $a_i'$ and $b_i'$ have color $i$, and are connected if and only if $a_i < b_i$.

We are primarily interested in this construction when $n=2$.
In this special case,
we draw the colored matching $\sigma\( (a_1,b_1),(a_2,b_2)\)$ on $\{1,2,3,4\}$
such that the vertices $a_1'$ and $b_1'$ are labeled as $\bullet$
while the vertices $a_2'$ and $b_2'$ are labeled as $\circ$,
as in the following example.

\begin{example} If $a,b,c,d$ are integers with $a<b<c<d$ then
\[ 
\ba
\\[-8pt]
&\sigma((a,c),(b,d)) 
\ =\ 
   {\xy<0cm,-.05cm>\xymatrix@R=.3cm@C=.2cm{ 
*{\bullet}   \ar @/^.9pc/ @{-} [rr]   & *{\circ}  \ar @/^.9pc/ @{-} [rr]  & *{\bullet} & *{\circ}
}\endxy}
\\
  \\
&\sigma((c,a),(b,d)) 
\ =\ 
   {\xy<0cm,-.05cm>\xymatrix@R=.3cm@C=.2cm{ 
*{\bullet}      & *{\circ}  \ar @/^.9pc/ @{-} [rr]  & *{\bullet} & *{\circ}
}\endxy}
\\
\\
&\sigma((c,a),(d,b)) 
\ =\ 
   {\xy<0cm,-.05cm>\xymatrix@R=.3cm@C=.2cm{ 
*{\bullet}      & *{\circ}    & *{\bullet} & *{\circ}
}\endxy}
\\[-10pt]\\
\ea
\qquand
\ba
\\[-10pt]
&\sigma((b,d),(a,c)) 
\ =\ 
   {\xy<0cm,-.05cm>\xymatrix@R=.3cm@C=.2cm{ 
*{\circ}   \ar @/^.9pc/ @{-} [rr]   & *{\bullet}  \ar @/^.9pc/ @{-} [rr]  & *{\circ} & *{\bullet}
}\endxy}
\\
  \\
&\sigma((d,a),(b,c)) 
\ =\ 
   {\xy<0cm,-.05cm>\xymatrix@R=.3cm@C=.2cm{ 
*{\bullet}      & *{\circ} \ar @/^.6pc/ @{-} [r]   & *{\circ} & *{\bullet}
}\endxy}
\\
\\
&\sigma((a,b),(c,c)) 
\ =\ 
   {\xy<0cm,-.05cm>\xymatrix@R=.3cm@C=.2cm{ 
*{\bullet} \ar @/^.6pc/ @{-} [r]      & *{\bullet}    & *{\circ} & *{\circ}
}\endxy}.
\\[-10pt]\\
\ea
\]
\end{example}


Recall that $(\theta,v)^* = (\theta,-v) \in G(n,2n)$.
If $\alpha = (a,b)$ and $\beta = (c,d)$, then we define $\alpha \cap \beta= \{a,b\} \cap \{c,d\}$.
The following observation is clear from our definitions.

 \begin{observation}\label{a-obs}
 If $\alpha_1,\alpha_2,\dots,\alpha_n \in \ZZ^2$
are pairwise disjoint in the sense that $\alpha_i \cap \alpha_j = \varnothing$ for   $i\neq j$, 
then $\sigma(\alpha_1,\alpha_2,\dots,\alpha_n)^* = \sigma(\alpha_n,\dots,\alpha_2,\alpha_1)$.
 \end{observation}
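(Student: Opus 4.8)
The plan is to reduce the statement to a single combinatorial fact about how the standardization map $\std$ interacts with reversing the order of the pairs. Write $\omega = [b_1,a_1,b_2,a_2,\dots,b_n,a_n]$ for the interleaved word attached to $(\alpha_1,\dots,\alpha_n)$ and $\omega' = [b_n,a_n,\dots,b_1,a_1]$ for the word attached to the reversed sequence $(\alpha_n,\dots,\alpha_1)$, so that $\sigma(\alpha_1,\dots,\alpha_n) = \tau(\std(\omega))$ and $\sigma(\alpha_n,\dots,\alpha_1) = \tau(\std(\omega'))$. The key observation is that $\omega' = \omega \circ \pi$, where $\pi \in S_{2n}$ is the fixed involution that interchanges the block of positions $\{2i-1,2i\}$ with the block $\{2(n-i+1)-1, 2(n-i+1)\}$ for each $i$, while preserving the relative order of the two positions inside each block.

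First I would establish the lemma $\std(\omega\circ\pi) = \std(\omega)\circ\pi$. For words with distinct letters this is immediate, since permuting positions does not change the rank of any letter. The only repetitions in $\omega$ come from diagonal pairs $\alpha_i = (a_i,a_i)$ --- which are allowed by the hypothesis, as disjointness only forces $a_i \notin \{a_j,b_j\}$ for $j\neq i$ --- and such a repeated letter occupies the two adjacent positions $2i-1$ and $2i$. Since $\pi$ keeps these two positions adjacent and in the same order, the position-based tie-breaking built into $\std$ is preserved, and the lemma follows. Setting $u = \std(\omega)$, we obtain $\std(\omega') = u\circ\pi$, so it remains to prove $\tau(u\circ\pi) = \tau(u)^*$.

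For this I would argue directly from the definition of $\tau$. The pair of vertices $\{u(2k-1), u(2k)\}$ carrying color $k$ in $\tau(u)$ is literally the same pair as $\{(u\circ\pi)(2(n-k+1)-1), (u\circ\pi)(2(n-k+1))\}$, which carries color $n-k+1$ in $\tau(u\circ\pi)$; moreover the condition $u(2k-1) > u(2k)$ determining whether this pair is joined by an edge is identical in the two colored involutions. Two things follow: the underlying matchings agree, so the $S_{2n}$-components of $\tau(u)$ and $\tau(u\circ\pi)$ are equal, and the color of every vertex is transformed by the order-reversing involution $k\mapsto n-k+1$ of the color set. Identifying this recoloring with the $(\ZZ/n\ZZ)^{2n}$-component of the automorphism $*$, namely $v\mapsto -v$, then gives $\tau(u\circ\pi) = \tau(u)^*$ and hence the observation.

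I expect the delicate point to be the final identification in the third paragraph: one must verify precisely that block reversal recolors vertices by the same rule that $*$ applies to the color vector, and confirm that no shift creeps in between the reflection induced by reversal and the negation defining $*$. The compatibility $\std(\omega\circ\pi)=\std(\omega)\circ\pi$ is a secondary point that genuinely requires the disjointness hypothesis, since it is exactly what confines all repeated letters (arising from fixed points) to single adjacent blocks, where the tie-breaking is visibly stable under $\pi$.
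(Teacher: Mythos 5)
The paper offers no proof of this observation at all (it is introduced with ``The following observation is clear from our definitions''), so your writeup is a complete verification of something the authors leave to the reader, and it is correct. The factorization $\omega' = \omega\circ\pi$ with $\pi$ the block-reversal of positions, the check that $\std(\omega\circ\pi)=\std(\omega)\circ\pi$ (which does genuinely require disjointness: a letter repeated \emph{across} two blocks would have its tie-break reversed by $\pi$, as one sees already for $\alpha_1=(1,2)$, $\alpha_2=(1,3)$), and the computation showing that $\tau(u\circ\pi)$ has the same underlying matching as $\tau(u)$ with colors permuted by $k\mapsto n+1-k$ are all sound.

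The delicate point you flag in your last paragraph is real, and it resolves against the letter of the paper's definition rather than against your argument. If one reads the color vector as having entries in $\{1,\dots,n\}$ reduced modulo $n$, then $v\mapsto -v$ sends color $k$ to $n-k$ (and fixes $n$), which is \emph{not} the recoloring $k\mapsto n+1-k$ produced by reversal; for $n=2$ the former is even the identity. However, every subsequent use of $*$ on colored involutions --- the pictures accompanying Observation \ref{precB-obs}, Figure \ref{arr-fig}, and the $n=2$ computations in Section \ref{atomSn-sect}, where $u^*$ is always $u$ with $\bullet$ and $\circ$ interchanged and the matching unchanged --- shows that the intended action of $*$ on the color vector is exactly the order-reversal $k\mapsto n+1-k$. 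With that (clearly intended) reading your third paragraph closes, and the observation holds; the off-by-one is an infelicity in the paper's identification of colored involutions with elements of $G(n,2n)$, not a gap in your proof.
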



Write $\pi$ for the  surjective group homomorphism
$ \pi : G(n,2n) \to S_{2n}$
given by $(\theta,v) \mapsto \theta$,
and define $\prec$ as the weakest partial order on $\I_*\(G(n,2n)\)$ such that 
\be\label{prec-eq} \alpha \act s \prec \alpha \qquad\text{whenever}\qquad \pi(\alpha\act s) <_T \pi(\alpha)\ee
for  $\alpha = (\theta,v) \in \I_*(G(n,2n))$ and $s \in \{s_1,s_2,\dots,s_{2n-1}\}$.
Note that it is not true that $\pi(\alpha \act s)  = \pi(\alpha) \act s$, and therefore 
 $\alpha \preceq \beta$ is not equivalent to $\pi(\alpha) \leq_T \pi(\beta)$.
Figure \ref{arr-fig} shows a lower interval in $\prec$  when $n=2$; in this case,
the figure completely determines $\prec$ in the following sense.

\begin{figure}[h]
\[
\begin{tikzpicture}
  \node (max) at (0,2) {
  \boxed{\xy\xymatrix@R=.3cm@C=.2cm{ 
  { }
  \\
*{\bullet}   \ar @/^1.2pc/ @{-} [rrr]   & *{\circ}  \ar @/^.6pc/ @{-} [r]  & *{\circ} & *{\bullet}
}\endxy}
};
  \node (a) at (-2.5,0) {
  \boxed{\xy\xymatrix@R=.3cm@C=.2cm{ 
  { }
  \\
*{\circ}   \ar @/^0.9pc/ @{-} [rr]   & *{\bullet}  \ar @/^.9pc/ @{-} [rr]  & *{\circ} & *{\bullet}
}\endxy}
};
  \node (b) at (0,0) {
  \boxed{\xy\xymatrix@R=.3cm@C=.2cm{ 
  { }
  \\
*{\bullet}   \ar @/^1.2pc/ @{-} [rrr]   & *{\circ} [r]  & *{\circ} & *{\bullet}
}\endxy}
};
  \node (c) at (2.5,0) {
  \boxed{\xy\xymatrix@R=.3cm@C=.2cm{ 
  { }
  \\
*{\bullet}   \ar @/^.9pc/ @{-} [rr]   & *{\circ}  \ar @/^.9pc/ @{-} [rr]  & *{\bullet} & *{\circ}
}\endxy}
};
  \node (d) at (-3.75,-2) {
  \boxed{\xy\xymatrix@R=.3cm@C=.2cm{ 
  { }
  \\
*{\circ}   \ar @/^.6pc/ @{-} [r]   & *{\circ}   & *{\bullet}  \ar @/^.6pc/ @{-} [r] & *{\bullet}
}\endxy}
};
  \node (e) at (-1.25,-2) {
  \boxed{\xy\xymatrix@R=.3cm@C=.2cm{ 
  { }
  \\
*{\circ}   & *{\bullet}  \ar @/^.9pc/ @{-} [rr]  & *{\circ}  & *{\bullet}
}\endxy}
};
  \node (f) at (1.25,-2) {
  \boxed{\xy\xymatrix@R=.3cm@C=.2cm{ 
  { }
  \\
*{\bullet}   \ar @/^.9pc/ @{-} [rr]   & *{\circ}    & *{\bullet}  & *{\circ}
}\endxy}
};
  \node (g) at (3.75,-2) {
  \boxed{\xy\xymatrix@R=.3cm@C=.2cm{ 
  { }
  \\
*{\bullet}   \ar @/^.6pc/ @{-} [r]   & *{\bullet}   & *{\circ}  \ar @/^.6pc/ @{-} [r] & *{\circ}
}\endxy}
};
  \node (h) at (-5,-4) {
  \boxed{\xy\xymatrix@R=.3cm@C=.2cm{ 
  { }
  \\
*{\circ}  \ar @/^.6pc/ @{-} [r]   & *{\circ}   & *{\bullet} & *{\bullet}
}\endxy}
};
  \node (i) at (-2.5,-4) {
  \boxed{\xy\xymatrix@R=.3cm@C=.2cm{ 
  { }
  \\
*{\circ}    & *{\circ}   & *{\bullet}  \ar @/^.6pc/ @{-} [r] & *{\bullet}
}\endxy}
};
  \node (j) at (0,-4) {
  \boxed{\xy\xymatrix@R=.3cm@C=.2cm{ 
  { }
  \\
*{\circ}     & *{\bullet}  \ar @/^.6pc/ @{-} [r]  & *{\bullet} & *{\circ}
}\endxy}
};
  \node (k) at (2.5,-4) {
  \boxed{\xy\xymatrix@R=.3cm@C=.2cm{ 
  { }
  \\
*{\bullet}   \ar @/^.6pc/ @{-} [r]   & *{\bullet}   & *{\circ}  & *{\circ}
}\endxy}
};
  \node (l) at (5,-4) {
  \boxed{\xy\xymatrix@R=.3cm@C=.2cm{ 
  { }
  \\
*{\bullet}     & *{\bullet}   & *{\circ}  \ar @/^.6pc/ @{-} [r] & *{\circ}
}\endxy}
};
%
  \node (m) at (-2.5,-6) {
  \boxed{\xy\xymatrix@R=.3cm@C=.2cm{ 
  \\
*{\circ}  & *{\circ}   & *{\bullet}  & *{\bullet}
}\endxy}
};
  \node (n) at (0,-6) {
  \boxed{\xy\xymatrix@R=.3cm@C=.2cm{ 
  \\
*{\circ}     & *{\bullet}   & *{\bullet} & *{\circ}
}\endxy}
};
  \node (o) at (2.5,-6) {
  \boxed{\xy\xymatrix@R=.3cm@C=.2cm{ 
  \\
*{\bullet}   & *{\bullet}   & *{\circ} & *{\circ}
}\endxy}
};
%
  \draw  [<-]
(max)   edge  (a)
(a)  edge  (d)
(d)   edge  (h)
(h)  edge  (m)
(max)   edge  (b)
(b)  edge  (e)
(e)  edge  (i)
(i)  edge  (m)
(max)   edge  (c)
(c)  edge (g)
(g)  edge (l)
(l)  edge  (o)
(d)  edge (i)
(e)  edge (j)
(j)  edge  (n)
(b)  edge  (f)
(f)  edge  (j)
(f)  edge  (k)
(k)  edge  (o)
(g)  edge  (k)
;
\end{tikzpicture}
\]
\caption{Hasse diagram of a lower interval of $\precB$ on colored involutions}\label{arr-fig}
\end{figure}

\begin{observation}\label{precB-obs}
Let  $\alpha,\alpha' ,\beta,\beta' \in \ZZ^2$ be   pairwise disjoint
and set  $u=\sigma(\alpha,\alpha') $ and $v= \sigma(\beta,\beta')$.
Then $u \precB v$  if and only if there exists a directed path
in Figure \ref{arr-fig} from  $u$ to $v$ or from $u^*$ to $v^*$.
In particular, $u \preceqB v$ if and only if $u^*\preceqB v^*$.
\end{observation}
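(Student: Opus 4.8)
The plan is to read Observation~\ref{precB-obs} as a statement about the finite poset $(\I_*(G(2,4)),\prec)$ and to reduce it to inspection of Figure~\ref{arr-fig} together with the symmetry coming from $*$. First I would note that as $\alpha,\alpha'$ range over all disjoint pairs in $\ZZ^2$ the element $u=\sigma(\alpha,\alpha')=\tau\circ\std[b_1,a_1,b_2,a_2]$ ranges over \emph{all} of $\I_*(G(2,4))$: since $\tau$ is a bijection $S_4\to\I_*(G(2,4))$ and $\std[b_1,a_1,b_2,a_2]$ already realizes every element of $S_4$ as the four coordinates run through all relative orders, the observation really concerns the whole $24$-element poset $\prec$. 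I would then establish that $x\mapsto x^*$ is an automorphism of $(\I_*(G(2,4)),\prec)$. This is immediate from the definition \eqref{prec-eq} of $\prec$ by covers, using the two identities $(\alpha\act s)^*=\alpha^*\act s$ (valid because $s^*=s$ for $s\in\{s_1,s_2,s_3\}$, so $*$ commutes with the operators $\act s$) and $\pi(\alpha^*)=\pi(\alpha)$ (immediate from $\pi(\theta,v)=\theta$ and $(\theta,v)^*=(\theta,-v)$): together these carry each cover $\alpha\act s\prec\alpha$ to a cover $\alpha^*\act s\prec\alpha^*$. In particular $u\preceq v\iff u^*\preceq v^*$, which is the final assertion of the observation.

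Next I would verify, by the finite computation that Figure~\ref{arr-fig} records, two facts about $\prec$. Writing $T$ for the fully nested element at the top of the figure, the claims are: (i) Figure~\ref{arr-fig} is exactly the Hasse diagram of the principal order ideal $I_T=\{w:w\preceq T\}$, so that directed paths in the figure correspond precisely to $\prec$-relations among elements of $I_T$; and (ii) the only two maximal elements of $\prec$ are $T$ and $T^*$, namely the two colored involutions whose underlying involution $\pi$ equals the longest element $w_0=(14)(23)$ of $S_4$, so that every colored involution lies below $T$ or below $T^*$. Both are checked by applying each operator $\act s_i$ ($i\in\{1,2,3\}$) to each of the $24$ colored involutions and testing, via \eqref{prec-eq}, whether $\pi$ strictly increases in the two-sided weak order $<_T$ on $\I(S_4)$. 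By the automorphism property just established (and Observation~\ref{a-obs}, which identifies $T^*$ as the colored involution obtained by swapping the two pairs), the Hasse diagram of the ideal $I_{T^*}=I_T^{\,*}$ is the $*$-image of the figure.

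Finally I would assemble the two directions. For $(\Leftarrow)$: a directed path $u\to v$ in the figure gives $u\prec v$ by~(i), while a directed path $u^*\to v^*$ gives $u^*\prec v^*$ and hence $u\prec v$ by the automorphism property. For $(\Rightarrow)$: suppose $u\prec v$. By~(ii) the element $v$ lies below a maximal element, so $v\preceq T$ or $v\preceq T^*$; after possibly applying $*$ we may assume $v\preceq T$, and then $u\prec v\preceq T$ forces $u\preceq T$ by transitivity. Any saturated $\prec$-chain from $u$ to $v$ stays inside $I_T$ (each intermediate element is $\preceq v\preceq T$), so by~(i) it is a directed path from $u$ to $v$ in the figure. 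The only genuine content is the finite verification in the middle paragraph, and the hard part there is precisely the claim that $T$ and $T^*$ exhaust the maximal elements: this must be done by hand rather than reduced to the weak order on $\I(S_4)$, exactly because (as noted after \eqref{prec-eq}) $\pi(\alpha\act s)$ need not equal $\pi(\alpha)\act s$, so a move that raises the underlying involution need not lift to a $\prec$-cover of the colored involution.
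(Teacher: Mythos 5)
Your overall architecture (the $*$-symmetry of $\prec$, the identification of Figure \ref{arr-fig} with a lower order ideal, and a reduction to a finite check) is the same as the paper's, but your key finite claim (ii) is false, and it is precisely the step you single out as ``the only genuine content.'' It is not true that $T$ and $T^*$ are the only maximal elements of $\prec$ on the $24$ colored involutions: the edge-free colored involution with alternating colors, namely $\sigma((c,a),(d,b))$ for $a<b<c<d$ (the pattern $\bullet\,\circ\,\bullet\,\circ$, which appears explicitly in the paper's example preceding the observation), and its $*$-image are \emph{isolated} points of the poset, simultaneously maximal and minimal. Indeed, writing such an element as $(\theta,v)$ with $\theta=1$ the identity of $S_4$, one has $s(v)\neq v$ for every $s\in\{s_1,s_2,s_3\}$, so $s$ never commutes with the element; hence $\alpha\act s=s\alpha s=(1,s(v))$ and $\pi(\alpha\act s)=1=\pi(\alpha)$, so no generating relation of \eqref{prec-eq} applies in either direction. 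These two elements lie in neither $I_T$ nor $I_{T^*}$ (neither appears in Figure \ref{arr-fig}, whose only edge-free nodes are $\circ\circ\bullet\bullet$, $\circ\bullet\bullet\circ$, $\bullet\bullet\circ\circ$, nor in the figure's $*$-image). Consequently your assertion that every colored involution lies below $T$ or $T^*$ fails, and with it the step ``$v$ lies below a maximal element, so $v\preceq T$ or $v\preceq T^*$.'' Had you carried out the finite verification you describe, it would have contradicted claim (ii) rather than confirmed it.

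The gap is repairable, and the repaired statement is what the paper actually verifies: the correct finite fact is that every element that is \emph{not minimal} with respect to $\prec$ appears in Figure \ref{arr-fig} or has its $*$-image there; equivalently, the maximal elements are $T$, $T^*$, and the two isolated points, the latter being also minimal. Since $u\prec v$ forces $v$ to be non-minimal, the exceptional maximal elements can never occur as the top of a strict relation, and your $(\Rightarrow)$ argument then goes through verbatim. Your $(\Leftarrow)$ direction, the $*$-automorphism argument (which also yields the final sentence of the observation), and claim (i) are all correct and agree with the paper's proof.
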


\begin{proof}
Since $w^* \act   s_i = (w\act   s_i)^*$ and   $\pi(w^*) = \pi(w)$ for all $w \in \I_*\(G(n,2n)\)$, we have $u \preceqB v $ if and only if $u^* \preceqB v^*$. 
As Figure \ref{arr-fig} depicts a lower interval with respect to $\precB$, the observation is  equivalent to the assertion that when $v$ is not minimal with respect to $\precB$, either $v$ or $v^*$ appears   in the figure. This property holds by inspection. 
\end{proof}

We  state some more substantial observations as the  next three  lemmas.

\begin{lemma}\label{a-lem2} Let $y \in \I(S_n)$ and $s \in \DesR(y)$. Then $\sigma(s\gamma,s\gamma')\preceqB \sigma(\gamma,\gamma')$ for all
$\gamma,\gamma' \in \Gamma(y)$ with $\gamma\cap \gamma' = \varnothing$.
\end{lemma}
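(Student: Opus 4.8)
The plan is to prove Lemma~\ref{a-lem2} by reducing to the case $n=2$ (so that we work with colored involutions on $[4]$) and then checking the claim against the explicit Hasse diagram in Figure~\ref{arr-fig}. First I would observe that $\sigma(\gamma,\gamma')$ and $\sigma(s\gamma,s\gamma')$ depend only on the four values appearing in the disjoint pairs $\gamma=(a,b)$ and $\gamma'=(c,d)$, together with the relative order of these values and the action of $s=s_i$ on them. Since $\prec$ is defined on $\I_*(G(2,4))$ via the standardization map $\std$ and the bijection $\tau$, and since $\sigma$ is defined precisely by applying $\std$ to the interleaved word $[b,a,d,c]$, the relation $\sigma(s\gamma,s\gamma') \preceqB \sigma(\gamma,\gamma')$ is invariant under order-preserving relabelings of the entries. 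Thus I would argue that only the \emph{relative order} of the (at most four) distinct integers among $a,b,c,d$ matters, reducing to finitely many configurations.

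Next I would make precise how $s=s_i \in \DesR(y)$ acts. Because $s \in \DesR(y)$ and $\gamma,\gamma' \in \Gamma(y)$, Lemma~\ref{a-lem1} guarantees $s\gamma, s\gamma' \in \Gamma(y\act s)$, so both $\sigma(\gamma,\gamma')$ and $\sigma(s\gamma,s\gamma')$ are genuine colored involutions of the expected form; the content is the inequality in $\prec$. The key point is that $s=s_i$ swaps the values $i$ and $i+1$ wherever they appear among $a,b,c,d$, and the descent condition $s\in\DesR(y)$ constrains how $i$ and $i+1$ can be distributed across the cycles encoded by $\gamma$ and $\gamma'$. I would enumerate the possible ways that $\{i,i+1\}$ can meet $\{a,b\}\cup\{c,d\}$: either one of $i,i+1$ lies in each of $\gamma,\gamma'$, or both lie in one pair, or one lies in a pair and the other is absent. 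In each case the transition $\sigma(\gamma,\gamma') \mapsto \sigma(s\gamma,s\gamma')$ corresponds to a specific local move on the colored matching of $[4]$ — swapping two adjacent vertices, possibly with a relabeling of colors — and I would verify by Observation~\ref{precB-obs} that each such move goes weakly downward (or stays fixed) in $\prec$, by exhibiting the corresponding directed path in Figure~\ref{arr-fig} (or from starred vertices, using $u\preceqB v \Leftrightarrow u^*\preceqB v^*$).

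The main obstacle I expect is the bookkeeping in translating the abstract action of $s$ on $\Gamma(y)$ into the concrete vertex-swap on the four-element colored involution: one must carefully track how $\std$ reinterprets the interleaved word $[b,a,d,c]$ versus $[sb,sa,sd,sc]$, since standardization can reshuffle which physical vertices of $[4]$ receive which colors. Getting the correspondence right requires checking that when $i,i+1$ are both among $a,b,c,d$ the swap is an honest adjacent transposition of vertices (so the move is an edge in Figure~\ref{arr-fig}), while when only one of them appears the configuration is unchanged after standardization (so the move is trivial and the inequality is an equality). Once these cases are correctly set up, verifying $\preceqB$ is a finite inspection of the diagram; the difficulty is entirely in establishing the dictionary between the symmetric-group action on cycles and the poset $\prec$, rather than in any hard combinatorics. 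I would organize the proof as a short case analysis keyed to the position of $\{i,i+1\}$, deferring the routine diagram-chasing to an appeal to Observation~\ref{precB-obs}.
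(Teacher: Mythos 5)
Your proposal is correct and follows essentially the same route as the paper: the published proof is precisely a case analysis on how $\supp(s)=\{i,i+1\}$ meets $\{a,b\}\cup\{a',b'\}$, concluding that the transition is either trivial (when some element of $\supp(s)$ lies outside the four entries) or a downward move $v\act s_j\precB v$ verified against Observation~\ref{precB-obs} and Figure~\ref{arr-fig}, with the fixed-point sub-cases ($a=b$ or $a'=b'$) checked directly. The one detail to make sure you nail down in the write-up is that the descent hypothesis $s\in\DesR(y)$ is what forces each nontrivial move to go \emph{downward} in $\prec$ rather than upward, especially in the degenerate case where one of $\gamma,\gamma'$ is a fixed-point pair and the move is not simply $v\act s_j$.
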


\begin{proof}
This assertion holds nearly by construction. For $w \in S_n$, let $\supp(w) = \{ i \in [n] : w(i) \neq i\}$.
Fix   $\gamma = (a,b)$ and $\gamma' = (a',b')$ in $ \Gamma(y)$ with $\gamma\cap \gamma' = \varnothing$, and define $u = \sigma(s\gamma,s\gamma')$ and $v = \sigma(\gamma,\gamma')$. 
If $ \supp(s) \not \subset \{ a,b,a',b'\}$ then $u =v$.
If $\supp(s) = \{a,b\}$ or $\supp(s) = \{a',b'\}$ then 
since $s$ is a descent of $y$ it follows that
$u = s_j  v = v  s_j =v \act  s_j \precB v$ for some $j \in \{1,2,3\}$.
Suppose, alternatively, that $\supp(s)$ contains exactly one element in $\{a,b\}$ and one element in $\{a',b'\}$. 
One checks that if $a\neq b$ and $a'\neq b'$ then  $u =  s_j  v   s_j = v \act  s_j \precB v$ for some $j \in \{1,2,3\}$. In turn, if $a'=b'$ 
then necessarily
\[
v \ =\    {\xy<0cm,-.05cm>\xymatrix@R=.3cm@C=.2cm{ 
*{\bullet}   \ar @/^0.9pc/ @{-} [rrr]   & *{\circ}   & *{\circ} & *{\bullet}
}\endxy}
\qquand
u \in \Bigl\{\    {\xy<0cm,-.05cm>\xymatrix@R=.3cm@C=.2cm{ 
*{\bullet}   \ar @/^0.5pc/ @{-} [r]   & *{\bullet}   & *{\circ} & *{\circ}
}\endxy}
\ , \
{\xy<0cm,-.05cm>\xymatrix@R=.3cm@C=.2cm{ 
*{\circ}     & *{\circ}   & *{\bullet}  \ar @/^0.5pc/ @{-} [r] & *{\bullet}
}\endxy}
\
\Bigr\}
\]
whence 
$u \preceqB v$ by Observation \ref{precB-obs}.
When $a=b$, finally, one deduces that $u \preceqB v$  either by a similar argument, or directly from the previous case via Observations \ref{a-obs} and \ref{precB-obs}.
\end{proof}

As usual, let $w_0=[n,n-1,\dots,2,1]$ denote the longest element of $S_n$.

\begin{lemma}\label{w0-lem}
Let $u   \in S_n$.
Suppose for all $i,j\in [n]$ 
it holds that
$u(n+1-i) < u(i)$ when $i<n+1-i$
and
that   $u(j) < u(n+1-i)$ or $u(i) < u(j)$
when $i<j<n+1-i$.
Then $u \in \cA(w_0)$. 
\end{lemma}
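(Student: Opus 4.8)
The plan is to reduce the statement to two numerical facts about $u$ and then verify them by induction on $n$, peeling off the outermost symmetric pair of positions $\{1,n\}$.

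First, by Corollary~\ref{last-cor} we have $\cA(w_0)=\cA'(w_0)$, and by Definition~\ref{atoms'-def} (with $x=1$, $y=w_0$, and $*=\id$, so that $w^*=w$) the set $\cA'(w_0)$ is precisely the collection of minimal-length elements of $\pB(w_0)=\{w\in S_n:ww_0\leq w\}$. Since $\cA(w_0)$ is nonempty and every atom has length $\ellhat(w_0)=\tfrac12\big(\binom n2+\lfloor n/2\rfloor\big)$ (using Incitti's formula quoted after Definition~\ref{ellhat-def}), this is the minimal length attained in $\pB(w_0)$. Hence it suffices to prove that $u$ satisfies (i) $uw_0\leq u$ and (ii) $\ell(u)=\ellhat(w_0)$.

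I would prove (i) and (ii) together by induction on $n$, with trivial base cases $n\in\{0,1\}$. For $n\geq 2$, condition~1 at $i=1$ gives $u(n)<u(1)$, and condition~2 at $i=1$ says no position $j\in\{2,\dots,n-1\}$ has $u(j)$ strictly between $u(n)$ and $u(1)$; since these are all the remaining positions, there is no integer strictly between $u(n)$ and $u(1)$, forcing $u(1)=u(n)+1=:c+1$. Let $\phi$ be the unique increasing bijection $[n]\setminus\{c,c+1\}\to[n-2]$ and define $u'\in S_{n-2}$ by $u'(k)=\phi(u(k+1))$. A short check — using that positions $k+1$ and $n-k$ form a symmetric pair in $[n]$ exactly when $k$ and $n-1-k$ do in $[n-2]$, and that $\phi$ preserves order — shows that $u'$ again satisfies conditions~1 and~2, now relative to the longest element of $S_{n-2}$. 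For (ii), counting the inversions of $u$ that involve position $1$ or position $n$ gives exactly $c+(n-c)-1=n-1$ (the values $\leq c$ all lie to the right of position $1$, the values $\geq c+1$ all lie to the left of position $n$, and the pair $(1,n)$ is counted once), while the remaining inversions are exactly those of $u'$. Thus $\ell(u)=\ell(u')+(n-1)$, and since a direct computation gives $\ellhat(w_0)=\ellhat(w_0^{(n-2)})+(n-1)$, the inductive hypothesis $\ell(u')=\ellhat(w_0^{(n-2)})$ yields (ii).

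For (i), I would use the standard monotonicity criterion for Bruhat order on $S_n$: $v\leq w$ if and only if for every $i$ the increasingly sorted list of $\{v(1),\dots,v(i)\}$ is dominated entrywise by that of $\{w(1),\dots,w(i)\}$. Writing $uw_0$ and $u$ as the sequences obtained from the inner words of $u'$ and its reversal by prepending $c,c+1$ and appending $c+1,c$ respectively, the inductive hypothesis $u'w_0^{(n-2)}\leq u'$ gives the required domination for all inner prefixes; one then extends across positions $1$ and $n$ using the elementary fact that entrywise domination of sorted sequences is preserved when one inserts $x$ into the smaller sequence and $y\geq x$ into the larger (which follows at once by comparing the counts $\#\{\,\cdot\leq t\,\}$ for each threshold $t$). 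Taking $x=c\leq y=c+1$ closes the induction. The main obstacle is exactly this bookkeeping in the structural reduction: one must check carefully that $u'$ inherits both conditions and that the sorted-prefix domination lifts through the insertion step, after which the two key estimates combine to place $u$ among the minimal-length elements of $\pB(w_0)=\cA(w_0)$.
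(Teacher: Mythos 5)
Your proof is correct, but it takes a genuinely different route from the paper's. The paper stays entirely inside the "atom calculus": it writes $w_0=(1,n)\cdot y$ with $y$ the longest element of the parabolic copy of $S_{n-2}$ generated by $\{s_2,\dots,s_{n-2}\}$, shows $\cA((1,n))\,\cA(y)\subset\cA(w_0)$ using Proposition \ref{parred-prop} and Corollary \ref{morph-cor}, and then factors each $u\in\cX_n$ explicitly as $u=ab$ with $a=s_is_{i-1}\cdots s_1s_{i+1}\cdots s_{n-1}\in\cA((1,n))$ and $b$ in the embedded image of $\cX_{n-2}$; the induction is multiplicative. You instead convert membership in $\cA(w_0)$ into the Bruhat-order condition via Corollary \ref{last-cor} (so you are leaning on the subword-complex machinery behind Theorems \ref{b'-thm} and \ref{lastbr-thm} — legitimately, since Section \ref{bruhat-sect} is proved independently of this lemma, so there is no circularity) and then verify $uw_0\leq u$ together with $\ell(u)=\ellhat(w_0)$ by a direct combinatorial induction: the observation $u(1)=u(n)+1$, the inversion count $\ell(u)=\ell(u')+(n-1)$, and the sorted-prefix dominance criterion with the insertion lemma are all sound, and the standardized $u'$ does inherit the hypotheses because symmetric position pairs in $[n]$ restrict to symmetric pairs in $[n-2]$. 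What the paper's approach buys is self-containedness within Section \ref{atomSn-sect} and an explicit reduced-word factorization of the atoms; what yours buys is a more elementary verification once the Bruhat characterization of $\cA(w_0)$ is granted, at the cost of importing that heavier result.
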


\begin{proof}
Let $\cX_n$ denote the set of permutations $u \in S_n$ satisfying the given conditions.
Clearly $\cX_n \subset \cA(w_0)$ when $n \in \{1,2\}$,
so assume   $n>2$ and that the lemma holds when $n$ is replaced by any smaller positive integer.
Write 
 $\phi : S_{n-2} \to  S_n$ for the injective  homomorphism with $s_j \mapsto s_{j+1}$
for $j \in [n-3]$
and 
define $x,y \in \I(S_n)$
by $x=(1,n)$ and $y=\phi([n-2,n-1,\dots,1]) = [1,n-1,n-2,\dots,2,n]$. 
Since by induction every element of $\cX_{n-2}$ is an atom of the longest element of $S_{n-2}$, 
it follows by  Corollary \ref{morph-cor}
 that $\phi(\cX_{n-2}) \subset \cA(y)$. 
 In turn, since $y$ belongs to the parabolic subgroup of $S_n$ generated by $J = \{ s_2,s_3,\dots,s_{n-2}\}$
and since
$wx=xw$ and $\ell(xw) = \ell(x) + \ell(w)$  for all $w \in \langle J \rangle$, it holds in view of Proposition \ref{parred-prop} that $\cA(x) \cA(y) \subset \cA(xy) = \cA(w_0)$.
Thus, to prove the lemma it suffices to show that $\cX(n) \subset \cA(x) \phi(\cX_{n-2})$.

To this end, 
 fix $u \in \cX_n$ and observe that necessarily  $u(n) = i < i+1 = u(1)$ for some $i \in [n-1]$.
Define 
$a = s_is_{i-1} s_{i-2}\cdots s_1s_{i+1} s_{i+2}\cdots s_{n-1}
$
and 
set
$b = a^{-1}u.$
One checks that the expression given for $a$ is reduced and that  $a \in \cA(x)$.
Since 
$a^{-1}$ 
is   precisely the permutation of $[n]$ that maps $i\mapsto n$ and $i+1\mapsto 1$ and restricts to an order-preserving bijection $[n]\setminus \{i,i+1\} \to \{ 2,3,\dots,n-1\}$,
it follows in turn that $b \in \phi(\cX_{n-2})$. As $u=ab$, we conclude that $\cX(n) \subset \cA(x) \phi(\cX_{n-2})$ as desired.
\end{proof}

The following is the key technical observation about the preceding constructions.

\begin{lemma}\label{a-lem3} Let $x,y \in \I(S_n)$. Suppose $w \in S_n$ is such that for all  $\gamma,\gamma' \in \Gamma(y)$ with $\gamma\cap \gamma' =\varnothing$, it holds that $w\gamma,w\gamma' \in \Gamma(x)$ and $\sigma(w\gamma,w\gamma') \preceqB \sigma(\gamma,\gamma')$. 
Then
\[ \DesL(w) \cap \DesR(x) =\varnothing \qquand \DesR(w) \subset \DesR(y).\]
\end{lemma}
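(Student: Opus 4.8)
The plan is to prove both descent conditions by contradiction, in each case reducing to a purely local analysis at a pair of adjacent positions (for the right descents of $w$) or adjacent values (for the left descents), and translating the hypothesis through Figure \ref{arr-fig} and Observations \ref{a-obs} and \ref{precB-obs}. Throughout, for a point $c \in [n]$ write $\rho(c)$ for the canonical element of $\Gamma(y)$ rooted at $c$: namely $\rho(c) = (c,c)$ if $c \in \Fix(y)$, and otherwise $\rho(c)$ is the $2$-cycle $\{c, y(c)\}$ written in increasing order, so that $\rho(c) \in \Cyc(y) \subseteq \Gamma(y)$. Whenever two points lie in distinct cycles of $y$, the corresponding pairs $\rho(\cdot)$ are disjoint, so the hypothesis applies to them directly.

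For $\DesR(w) \subseteq \DesR(y)$ I would argue the contrapositive: assume $s_i \notin \DesR(y)$, i.e.\ $y(i) < y(i+1)$, and show $w(i) < w(i+1)$. First note that $i$ and $i+1$ must lie in distinct cycles of $y$, since otherwise $\{i,i+1\}$ is a $2$-cycle and $y(i) = i+1 > i = y(i+1)$. Set $\gamma = \rho(i)$ and $\gamma' = \rho(i+1)$, which are disjoint, and put $v = \sigma(\gamma,\gamma')$ and $u = \sigma(w\gamma, w\gamma')$; the hypothesis gives $u \preceqB v$. According to the cycle types of $i$ and $i+1$ (both fixed; one fixed and one in a $2$-cycle; or both in $2$-cycles) and the relative order of the points involved, $v$ is one of a short list of colored involutions, each of which (or its image under $*$) appears in Figure \ref{arr-fig}. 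For each such $v$ I would read off its $\preceqB$-lower set from Figure \ref{arr-fig} and Observation \ref{precB-obs}, and check that membership in that lower set, together with the edge data recorded by $\sigma$, forces $w(i) < w(i+1)$; unpacking the relevant inequality of standardized words is routine.

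The analogous statement $\DesL(w) \cap \DesR(x) = \varnothing$ I would prove by assuming $s_i \in \DesR(x)$, so $x(i) > x(i+1)$, and showing $s_i \notin \DesL(w)$, i.e.\ $w^{-1}(i) < w^{-1}(i+1)$. Writing $p = w^{-1}(i)$ and $q = w^{-1}(i+1)$ and supposing for contradiction that $p > q$, there are two cases. If $p$ and $q$ lie in a common $2$-cycle of $y$, then $\gamma = (q,p) \in \Cyc(y)$ and $w\gamma = (i+1,i)$ is a negative root, so $w\gamma \in \Gamma(x)$ would force both $i$ and $i+1$ to be fixed by $x$, which is impossible since $x(i) > x(i+1)$; this already contradicts the membership part of the hypothesis. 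If instead $p$ and $q$ lie in distinct cycles of $y$, I would take the disjoint pair $\gamma = \rho(p)$, $\gamma' = \rho(q)$, whose images under $w$ contain the values $i$ and $i+1$ respectively, and combine the $\Gamma(x)$-membership of $w\gamma, w\gamma'$ (whose orientations are pinned by $x(i) > x(i+1)$) with the condition $\sigma(w\gamma,w\gamma') \preceqB \sigma(\gamma,\gamma')$ to reach a contradiction, exactly as in the computation above but now detecting the descent of $x$ through the orientation data.

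The main obstacle in both parts is the case where the rooted pairs are genuine $2$-cycles: then $\sigma(w\gamma, w\gamma')$ depends on the $w$-images of the partner points $y(i), y(i+1)$ (resp.\ $y(p), y(q)$), which the hypothesis does not control, so $u$ ranges over a nontrivial lower interval rather than a single minimal element. The resolution is that the edge data recorded by $\sigma$ --- which encodes whether $w$ preserves or reverses the increasing orientation of each $2$-cycle --- interacts with the precise shape of the intervals in Figure \ref{arr-fig} so as to pin down the relative order of $w(i)$ and $w(i+1)$ (resp.\ of $p$ and $q$) regardless of where the partner values fall. Verifying this compatibility case by case, while using Observations \ref{a-obs} and \ref{precB-obs} to halve the number of configurations via the symmetry $u \preceqB v \Leftrightarrow u^* \preceqB v^*$, is the technical heart of the argument.
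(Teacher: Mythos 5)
Your proposal follows essentially the same route as the paper's proof: both reduce each descent condition to a local comparison at two adjacent positions or values, apply the hypothesis to the disjoint pair of (extended) cycles of $y$ through the two relevant points, and read off the forced relative order of $w(i),w(i+1)$ (resp.\ $w^{-1}(i),w^{-1}(i+1)$) from the $\Gamma(x)$-membership together with Observation \ref{precB-obs} and Figure \ref{arr-fig}. The case-by-case verification you defer as ``routine'' is exactly what occupies the bulk of the paper's argument (organized there by the relative order of $i$, $i+1$ and the partner values $x(i),x(i+1)$ or $y(i),y(i+1)$), but your framework is sound and would carry through.
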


\begin{proof}
Observe that if $ j \in \Fix(y)$ then $w(j) \in \Fix(x)$ since  $(w(j),w(j)) \in \Gamma(x)$
as 
$(j,j) \in \Cyc(y)$. 
Hence, if $\gamma=(a,b) \in \Cyc(x)$ such that $a<b$, then  $w^{-1}(a)$ cannot be a fixed point of $y$, so since either $(a,wyw^{-1}(a))$ or $(wyw^{-1}(a),a)$  belongs to $\Gamma(x)$ by hypothesis, it follows that $b=wyw^{-1}(a)$ and $w^{-1}\gamma \in \Cyc(y)$ and  therefore $w^{-1}(a) < w^{-1}(b)$.

Fix $i \in [n-1]$  and let $s = s_i = (i,i+1)$.
First suppose $s \in \DesR(x)$, so that $x(i) > x(i+1)$; we will show that $w^{-1}(i) < w^{-1}(i+1)$.
This holds if $(i,i+1) \in \Cyc(x)$ by the observation in the preceding paragraph. 
Let $j = x(i+1)$ and $k = x(i)$ and suppose $\{ j,k \} \neq \{i,i+1\}$. Then $j<k$ since $s \in \DesR(x)$, and there are five distinct cases for the relative order of $i$, $i+1$, $j$, and $k$:
\ben
\item[(1)] Suppose $j<k<i<i+1$. Then $\gamma = (j,i+1)$ and $\gamma' = (k,i)$ 
both  belong to $\Cyc(x)$, so $w^{-1}\gamma$ and $ w^{-1}\gamma'$ must both belong to $ \Cyc(y)$. 
Since we therefore have
\[ \sigma( \gamma,\gamma')   \ =\    {\xy<0cm,-.05cm>\xymatrix@R=.3cm@C=.2cm{ 
*{\bullet}   \ar @/^.9pc/ @{-} [rrr]   & *{\circ}  \ar @/^.45pc/ @{-} [r]   & *{\circ} & *{\bullet}
}\endxy}
\ \preceqB\ \sigma(w^{-1}\gamma,w^{-1}\gamma')
\]
it follows that $\sigma(\gamma,\gamma') = \sigma(w^{-1}\gamma,w^{-1}\gamma')$ 
and that $w^{-1}(i) < w^{-1}(i+1)$ as desired.

\item[(2)] If $i<i+1<j<k$, then our   argument is  similar to  case (1); we  omit the details.

\item[(3)] Suppose $j<i<i+1<k$. Then $\gamma = (j,i+1)$ and $\gamma' = (i,k)$ 
both  belong to $\Cyc(x)$, so  $w^{-1}\gamma$ and $ w^{-1}\gamma'$  both belong to $ \Cyc(y)$ and we have by hypothesis that
\[
\sigma( \gamma,\gamma')   \ =\    {\xy<0cm,-.05cm>\xymatrix@R=.3cm@C=.2cm{ 
*{\bullet}   \ar @/^.8pc/ @{-} [rr]   & *{\circ}  \ar @/^.8pc/ @{-} [rr]   & *{\bullet} & *{\circ}
}\endxy}
\ \preceqB \sigma(w^{-1}\gamma,w^{-1}\gamma').
\]
It follows from Observation \ref{precB-obs} that
\[
\sigma(w^{-1}\gamma,w^{-1}\gamma')
\in \Bigl\{\ 
 {\xy<0cm,-.05cm>\xymatrix@R=.3cm@C=.2cm{ 
*{\bullet}   \ar @/^.8pc/ @{-} [rr]   & *{\circ}  \ar @/^.8pc/ @{-} [rr]   & *{\bullet} & *{\circ}
}\endxy}
\ , \ 
 {\xy<0cm,-.05cm>\xymatrix@R=.3cm@C=.2cm{ 
*{\bullet}   \ar @/^.9pc/ @{-} [rrr]   & *{\circ}  \ar @/^.45pc/ @{-} [r]   & *{\circ} & *{\bullet}
}\endxy}
\
\Bigr\}
\]
from which we deduce that $w^{-1}(i) < w^{-1}(i+1)$ as desired.

\item[(4)] Suppose $j<k=i<i+1$. 
Define 
$l = wyw^{-1}(i)$ and
note that  $l \in \Fix(x)$ since $i \in \Fix(x)$ and $\{ w^{-1}(i), w^{-1}(l)\}$ is a cycle of $y$.
Let $\gamma = (j,i+1)$
and define $\gamma'$ to be  $(i,l)$ if $l\leq i$ or $(l,i)$ otherwise.
One checks that both $\{\gamma,\gamma' \}\subset \Gamma(x)$ and  $\{w^{-1}\gamma,w^{-1}\gamma'\} \subset  \Gamma(y)$,
so by 
hypothesis
$\sigma( \gamma,\gamma')  \preceqB \sigma(w^{-1}\gamma,w^{-1}\gamma')$.
In turn, we observe that 
\[ \sigma( \gamma,\gamma')   \ =\    {\xy<0cm,-.05cm>\xymatrix@R=.3cm@C=.2cm{ 
*{\bullet}   \ar @/^.9pc/ @{-} [rrr]   & *{\circ}    & *{\circ} & *{\bullet}
}\endxy}
\qquord
 \sigma( \gamma,\gamma')   \ =\    {\xy<0cm,-.05cm>\xymatrix@R=.3cm@C=.2cm{ 
*{\circ}     & *{\bullet}  \ar @/^.8pc/ @{-} [rr]   & *{\circ} & *{\bullet}
}\endxy}
\qquord
 \sigma( \gamma,\gamma')   \ =\    {\xy<0cm,-.05cm>\xymatrix@R=.3cm@C=.2cm{ 
 *{\bullet}  \ar @/^.8pc/ @{-} [rr]   & *{\circ} & *{\bullet} & *{\circ}
}\endxy}
\]
according to whether $j<l < i+1$ or $l<j$ or $i+1<l$, respectively.
In each case, by noting the limited choices for $\sigma(w^{-1}\gamma,w^{-1}\gamma')\succeq \sigma(\gamma,\gamma')$ in view of  Observation \ref{precB-obs}, it is straightforward to deduce that
$w^{-1}(i) < w^{-1}(i+1)$ as desired.



\item[(5)] If $i<i+1=j<k$, then our   argument is  similar to  case (4); we  omit the details.

\een
We conclude from this analysis that $s \notin \DesL(w)$ when $s \in \DesR(x)$.

Next suppose $s \notin \DesR(y)$, so that $y(i) < y(i+1)$;
we will  now show that $w(i) < w(i+1)$.
This holds if $\{i,i+1\} \subset \Fix(y)$ 
since
then, by the observation at the beginning of this proof, we have
$\{ w(i), w(i+1)\} \subset \Fix(x)$   while  $(w(i+1), w(i)) \in \Gamma(x)$ by hypothesis.
Let $j = y(i)$ and $k=y(i+1)$ so that $j<k$ and assume that   $i\neq j$ or $i+1\neq k$. There are    three distinct cases: 
 \ben
 \item[(1)] Suppose $j<k<i<i+1$. Then $\gamma = (j,i)$ and $\gamma' = (k,i+1)$ both belong to $\Gamma(y)$ so
 \[\sigma(w\gamma,w\gamma') \preceqB \sigma( \gamma,\gamma')   \ =\    {\xy<0cm,-.05cm>\xymatrix@R=.3cm@C=.2cm{ 
*{\bullet}   \ar @/^.8pc/ @{-} [rr]   & *{\circ}  \ar @/^.8pc/ @{-} [rr]   & *{\bullet} & *{\circ}
}\endxy}.\]
Using Observation \ref{precB-obs}, we deduce that $w(i) < w(i+1)$ as desired.

\item[(2)] If  $j< i<i+1= k$ or $j=i<i+1< k$ then $\gamma = (j,i)$ and $\gamma' = (i+1,k)$   belong to $\Gamma(y)$ and 
 \[\sigma(w\gamma,w\gamma') \preceqB \sigma( \gamma,\gamma')   
 \in
 \Bigl\{
   {\xy<0cm,-.05cm>\xymatrix@R=.3cm@C=.2cm{ 
*{\bullet}    & *{\bullet}     & *{\circ} \ar @/^.5pc/ @{-} [r] & *{\circ}
}\endxy}
\ , \
  {\xy<0cm,-.05cm>\xymatrix@R=.3cm@C=.2cm{ 
*{\bullet} \ar @/^.5pc/ @{-} [r]    & *{\bullet}     & *{\circ} & *{\circ}
}\endxy}
\Bigr\},\]
from which we deduce again via Observation \ref{precB-obs} that $w(i) < w(i+1)$.

\item[(3)] If $i<i+1<j<k$, then  our   argument is  similar to  case (1); we  omit the details.
\een
We conclude that $s \notin \DesR(w)$ when $s\notin \DesR(y)$.
\end{proof}

The following is the main result of this section.
Showing that the conditions given in this theorem are necessary for $w \in\cA(x,y)$ will   be relatively straightforward; what is notable about this statement is   that these conditions turn out to be sufficient.

\begin{theorem}\label{atomSn-thm}
Let $x,y \in \I(S_n)$ and $w \in S_n$. Then $w \in \cA(x,y)$ if and only if:
\ben
\item[(a)]  $w\gamma \in \Gamma(x)$  for all $\gamma \in\Cyc(y)$.
\item[(b)] $\sigma(w\gamma,w\gamma') \preceqB \sigma(\gamma,\gamma')$
for all $\gamma,\gamma' \in \Cyc(y)$ with $\gamma\cap \gamma' =\varnothing$.
\een
\end{theorem}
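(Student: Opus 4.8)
The plan is to prove the biconditional by establishing the two inclusions separately, after first reducing the stated conditions, which are phrased over $\Cyc(y)$, to the formally stronger conditions phrased over the extended set $\Gamma(y)$ --- i.e.\ to the exact hypotheses of Lemma~\ref{a-lem3}. Since $\Cyc(y) \subseteq \Gamma(y)$, the $\Gamma$-conditions trivially imply the $\Cyc$-conditions; the substantive point is the converse. Applying condition (a) to the pairs $(a,a)$ gives $w(\Fix(y)) \subseteq \Fix(x)$, and applying condition (b) to pairs $(a,a),(b,b)$ of distinct fixed points forces $w$ to preserve their order: for $a>b$ one has $\sigma((a,a),(b,b)) = {\circ\circ\bullet\bullet}$, a minimal element of $\precB$, so $\sigma(w(a,a),w(b,b)) \preceqB \sigma((a,a),(b,b))$ can hold only with equality, whence $w(a)>w(b)$. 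Given this order-preservation, the condition $w\gamma \in \Gamma(x)$ for an extended pair $\gamma=(a,b)$ ($a>b$ in $\Fix(y)$) is immediate, and the remaining comparisons in (b) involving extended pairs follow by a short finite check against Figure~\ref{arr-fig} via Observation~\ref{precB-obs}. I may therefore assume throughout that $w$ satisfies (a) and (b) over all of $\Gamma(y)$, and invoke Lemma~\ref{a-lem3} freely.

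For necessity I would induct on $\ell(w)=\ellhat(x,y)$. The base case $\ell(w)=0$ is immediate, since then $w=1$, $x=y$, and both conditions hold with equality. For the step, take $w \in \cA(x,y)$ with $\ell(w)>0$, choose $s \in \DesR(w)$, and note $s \in \DesR(y)$ by Lemma~\ref{atomdes-lem}. Setting $z=y\act s<y$, Proposition~\ref{atomdes-prop}(a) gives $ws \in \cA(x,z)$ with $\ell(ws)=\ell(w)-1$, so by induction $ws$ satisfies the $\Gamma$-conditions for $(x,z)$. These transfer back via $w\gamma=(ws)(s\gamma)$: for $\gamma\in\Gamma(y)$ we have $s\gamma\in\Gamma(z)$ by Lemma~\ref{a-lem1}, so condition (a) for $ws$ gives $w\gamma\in\Gamma(x)$; and for disjoint $\gamma,\gamma'$ the chain $\sigma(w\gamma,w\gamma')=\sigma((ws)(s\gamma),(ws)(s\gamma'))\preceqB\sigma(s\gamma,s\gamma')\preceqB\sigma(\gamma,\gamma')$, using condition (b) for $ws$ and then Lemma~\ref{a-lem2} together with transitivity, gives condition (b) for $w$. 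This establishes the ``only if'' direction.

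For sufficiency I would induct on the nonnegative quantity $\ellhat(x)+\bigl(\ellhat(w_0)-\ellhat(y)\bigr)$, which vanishes precisely when $x=1$ and $y=w_0$. Assuming $w$ satisfies the $\Gamma$-conditions for $(x,y)$, Lemma~\ref{a-lem3} yields $\DesR(w)\subseteq\DesR(y)$ and $\DesL(w)\cap\DesR(x)=\varnothing$. If $x\neq 1$, pick $s\in\DesR(x)$, so $s\notin\DesL(w)$; then $sw$ satisfies the $\Gamma$-conditions for $(x\act s,y)$, since $(sw)\gamma=s(w\gamma)$ with $s(w\gamma)\in\Gamma(x\act s)$ by Lemma~\ref{a-lem1}, and $\sigma((sw)\gamma,(sw)\gamma')\preceqB\sigma(w\gamma,w\gamma')\preceqB\sigma(\gamma,\gamma')$ by Lemma~\ref{a-lem2} and condition (b). The measure drops, so by induction $sw\in\cA(x\act s,y)$, and Proposition~\ref{atomdes-prop}(b) gives $w=s(sw)\in\cA(x,y)$. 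If instead $x=1$ and $y\neq w_0$, pick $s\notin\DesR(y)$, whence $s\notin\DesR(w)$, set $z=y\act s>y$, and argue symmetrically, now applying Lemmas~\ref{a-lem1} and~\ref{a-lem2} to $z$ (which carries $s$ as a right descent, giving $s\,\Gamma(z)\subseteq\Gamma(y)$) to show $ws$ satisfies the conditions for $(1,z)$; Proposition~\ref{atomdes-prop}(a) then reduces $w\in\cA(y)$ to $ws\in\cA(z)$. The base case $x=1$, $y=w_0$ is exactly Lemma~\ref{w0-lem}, once one unwinds (a) and (b) for $(1,w_0)$ into its numeric hypotheses: (a) gives $w(n+1-i)<w(i)$ for $i<n+1-i$, and (b), applied to the nested cycle pairs of $w_0$ and read through Observation~\ref{precB-obs}, supplies the remaining inequalities.

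The main obstacle, as the remark preceding the theorem indicates, is the sufficiency direction. Two features demand care. First, the reductions transport cleanly only in terms of $\Gamma(y)$ rather than $\Cyc(y)$, because passing a descent across the matching can convert an adjacent $2$-cycle into a pair of fixed points and thereby create new extended pairs; the initial $\Cyc$-to-$\Gamma$ upgrade absorbs this fixed-point bookkeeping once and for all and is what makes Lemma~\ref{a-lem3} applicable at every stage. Second, one must verify that each reduction genuinely preserves the \emph{entire} system of conditions while strictly decreasing the measure, so that the induction terminates at $(1,w_0)$; the recurring technical key is to apply Lemmas~\ref{a-lem1} and~\ref{a-lem2} always to whichever of $x$ or $z=y\act s$ carries $s$ as an honest right descent, so that the containment $s\,\Gamma(\cdot)\subseteq\Gamma(\cdot\act s)$ points in the direction needed.
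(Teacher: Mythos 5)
Your proposal is correct and follows essentially the same route as the paper's own proof: the same initial upgrade from the $\Cyc(y)$-conditions to the $\Gamma(y)$-conditions, necessity by transporting the conditions along a reduced word via Lemmas \ref{a-lem1} and \ref{a-lem2}, and sufficiency by induction on $\ellhat(x)+\ellhat(w_0)-\ellhat(y)$ using Lemma \ref{a-lem3} together with Proposition \ref{atomdes-prop}, with base case $(1,w_0)$ settled by Lemma \ref{w0-lem}. The only differences are organizational (where the $\Cyc$-to-$\Gamma$ upgrade is performed and how the induction is phrased), not mathematical.
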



Before giving its proof,
it will be useful to state an equivalent form of this result in which our two conditions are unpacked into an explicit set of numeric inequalities.

\begin{theorem}[Restatement of Theorem \ref{atomSn-thm}]
\label{atomSn'-thm}
Let $x,y \in \I(S_n)$ and $w \in S_n$. Then $w \in \cA(x,y)$ if and only if whenever  
$ (a,b),(a',b') \in \Cyc(y)$, the following conditions hold:
\ben

\item If  $w(a)< w(b)$ then $(w(a),w(b))\in \Cyc(x)$ and otherwise $\{w(a),w(b)\} \subset \Fix(x)$.

\item If 
$a\leq  b < a'\leq b'$ then 
$
w(a) < w(a')
$
and
$
w(a) < w(b')
$
and
$
w(b)<w(b')
$
and
$
w(b) < w(a').
$

\item If 
$a < a' < b < b'$ then
$w(a) < w(a')$ and $w(a) < w(b')$ and $w(b) < w(b')$.

\item If 
$a<a' < b'<b$
 then 
neither $ w(b) < w(a') < w(a) $ nor $ w(b) < w(b') < w(a)$ occurs
and neither
$ w(a') < w(a) <w(b) < w(b') $ nor $ w(a')<w(b) \leq w(a) <w(b')
$
occurs.

\item If 
$a<a' = b'<b$
 then it does not occur that 
 $w(b) < w(a') = w(b') < w(a)$.

\een
\end{theorem}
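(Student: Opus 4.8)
The statement is the numerical restatement of Theorem~\ref{atomSn-thm}, so the plan is to prove Theorem~\ref{atomSn-thm} and then check that its two conditions unpack into the five families of inequalities listed here. First I would replace conditions (a) and (b), which quantify over $\Cyc(y)$, by their ``extended'' analogues (a$'$) $w\gamma\in\Gamma(x)$ for all $\gamma\in\Gamma(y)$ and (b$'$) $\sigma(w\gamma,w\gamma')\preceq\sigma(\gamma,\gamma')$ for all disjoint $\gamma,\gamma'\in\Gamma(y)$. The $\Gamma$-form is equivalent to the $\Cyc$-form (the extra pairs of $\Gamma(y)$ are reverses of pairs of fixed points, whose conditions follow from those on the corresponding diagonal elements together with Observation~\ref{a-obs}), but it is the $\Gamma$-form that behaves well under the operator $\act$, since Lemma~\ref{a-lem1} gives $s\gamma\in\Gamma(y\act s)$ precisely for $\gamma\in\Gamma(y)$. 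Write $\cC(x,y)$ for the set of $w$ satisfying (a$'$) and (b$'$); the goal is $\cC(x,y)=\cA(x,y)$.

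Necessity ($\cA\subseteq\cC$) I would prove by induction on $\ell(w)=\ellhat(x,y)$. When $w=1$ conditions (a$'$),(b$'$) force $x=y$ and hold trivially. Otherwise pick $s\in\DesR(w)$; by Lemma~\ref{atomdes-lem} we have $s\in\DesR(y)$, and applying Proposition~\ref{atomdes-prop}(a) to $y\act s$ gives $ws\in\cA(x,y\act s)$ with $\ell(ws)<\ell(w)$. For disjoint $\gamma,\gamma'\in\Gamma(y)$ set $\eta=s\gamma$ and $\eta'=s\gamma'\in\Gamma(y\act s)$ (Lemma~\ref{a-lem1}); the inductive hypothesis gives $\sigma(w\gamma,w\gamma')=\sigma(ws\cdot\eta,ws\cdot\eta')\preceq\sigma(\eta,\eta')=\sigma(s\gamma,s\gamma')$, and Lemma~\ref{a-lem2} gives $\sigma(s\gamma,s\gamma')\preceq\sigma(\gamma,\gamma')$. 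Transitivity of $\preceq$ yields (b$'$), and (a$'$) is handled the same way using Lemma~\ref{a-lem1}.

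Sufficiency ($\cC\subseteq\cA$) is the crux, and I would establish it by a nested induction that---crucially---reduces $x$ downward and $y$ upward, never $y$ downward. For the outer induction on $\ell(x)$: if $\DesR(x)\neq\varnothing$, choose $t\in\DesR(x)$; Lemma~\ref{a-lem3} forces $t\notin\DesL(w)$, so $w'=tw$ has $t\in\DesL(w')$. Each $\gamma\in\Gamma(y)$ satisfies $w'\gamma=t(w\gamma)\in\Gamma(x\act t)$ by Lemma~\ref{a-lem1}, and $\sigma(w'\gamma,w'\gamma')=\sigma(t\cdot w\gamma,t\cdot w\gamma')\preceq\sigma(w\gamma,w\gamma')\preceq\sigma(\gamma,\gamma')$ by Lemma~\ref{a-lem2} and (b$'$); hence $w'\in\cC(x\act t,y)$, the inductive hypothesis gives $w'\in\cA(x\act t,y)$, and Proposition~\ref{atomdes-prop}(b) returns $w=tw'\in\cA(x,y)$. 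This reduces to $x=1$. There I run an inner induction on the co-length $\ellhat(w_0)-\ellhat(y)$: if $y\neq w_0$, choose $s\notin\DesR(y)$ with $y<y\act s$; Lemma~\ref{a-lem3} gives $s\notin\DesR(w)$, so $w'=ws$ has $s\in\DesR(w')$, and for $\eta\in\Gamma(y\act s)$ one has $s\eta\in\Gamma(y)$ (Lemma~\ref{a-lem1}), whence $w'\eta=w(s\eta)$ and conditions (a$'$),(b$'$) for $w'$ over $\Gamma(y\act s)$ follow from those for $w$ over $\Gamma(y)$ together with Lemma~\ref{a-lem2} and transitivity. Proposition~\ref{atomdes-prop}(a) then gives $w=w's\in\cA(1,y)$. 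The base case $x=1,\ y=w_0$ is where Lemma~\ref{w0-lem} enters: unpacking (a$'$),(b$'$) at $w_0$ (whose cycles pair $i\leftrightarrow n+1-i$) reproduces the hypotheses of Lemma~\ref{w0-lem}, so $\cC(1,w_0)\subseteq\cA(w_0)$.

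The main obstacle is the asymmetry just noted: one cannot prove sufficiency by reversing the necessity induction, because $\preceq$ and Lemma~\ref{a-lem2} supply inequalities in only one direction, so reducing $y$ downward would leave two bounds $\sigma(w\gamma,w\gamma')$ and $\sigma(s\gamma,s\gamma')$ both below $\sigma(\gamma,\gamma')$ with no way to compare them. Getting the two monotone reductions ($x$ down, $y$ up) to chain correctly, grounding them at $y=w_0$ via Lemma~\ref{w0-lem}, and using the descent control of Lemma~\ref{a-lem3} to certify that each reduction step is legal, is the heart of the argument; working throughout with the $\Gamma$-form rather than the $\Cyc$-form is what keeps the conditions closed under $\act$. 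Finally, deducing Theorem~\ref{atomSn'-thm} from Theorem~\ref{atomSn-thm} is bookkeeping: for each relative order of $a\le b$ and $a'\le b'$ one computes the colored involution $\sigma(\gamma,\gamma')$ and reads its lower $\preceq$-interval off Figure~\ref{arr-fig}, translating ``$\sigma(w\gamma,w\gamma')$ lies in that interval'' into the inequalities of conditions (2)--(5), while condition (1) is the unpacking of condition (a).
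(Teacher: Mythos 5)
Your proposal is correct and takes essentially the same route as the paper: the paper likewise upgrades conditions (a),(b) to their $\Gamma$-forms (a$'$),(b$'$), grounds sufficiency at $(x,y)=(1,w_0)$ via Lemma \ref{w0-lem}, and inducts by moving $x$ down or $y$ up using Lemmas \ref{a-lem1}--\ref{a-lem3} together with Proposition \ref{atomdes-prop} (the paper packages your two nested inductions as a single induction on $\ellhat(x,y)$, which is only a cosmetic difference). The final translation into conditions 1--5 is the same case-by-case reading of Figure \ref{arr-fig}.
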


\begin{proof}[Proof of equivalence of  Theorems \ref{atomSn-thm} and \ref{atomSn'-thm}]
Fix $x,y \in \I(S_n)$ and $w \in S_n$.
Condition 1 in Theorem \ref{atomSn'-thm} simply restates condition (a) in Theorem \ref{atomSn-thm}. Assume this condition holds, and let $\gamma = (a,b)$ and $\gamma' = (a',b')$ be distinct elements of $\Cyc(y)$ such that $a<a'$.
It  suffices to show that conditions 2-5 in Theorem \ref{atomSn'-thm} are equivalent to the assertion that  $\sigma(w\gamma,w\gamma') \preceqB \sigma(\gamma,\gamma')$,
since
the latter holds
 if and only if $\sigma(w\gamma',w\gamma) \preceqB\sigma(\gamma',\gamma)$
 by
 Observations \ref{a-obs} and \ref{precB-obs}.
Proving this is  straightforward  from Observation \ref{precB-obs}.
For example, suppose $a<a'< b' <b$. Then
\[ 
\sigma(\gamma,\gamma')  \ = \  \xy<0cm,-.05cm>\xymatrix@R=.3cm@C=.2cm{ 
*{\bullet}   \ar @/^1.0pc/ @{-} [rrr]   & *{\circ}  \ar @/^.5pc/ @{-} [r]  & *{\circ} & *{\bullet}
}\endxy 
\]
so we have $\sigma(w\gamma,w\gamma') \preceqB \sigma(\gamma,\gamma')$  if and only if $\sigma(w\gamma,w\gamma')$ appears in Figure \ref{arr-fig}.
Only 8 colored involutions of the form $\sigma(\lambda,\mu)$ do not appear in this figure; namely:
\[
\xy<0cm,-.05cm>\xymatrix@R=.3cm@C=.2cm{ 
*{\circ}   \ar @/^1.0pc/ @{-} [rrr]   & *{\bullet}  \ar @/^.5pc/ @{-} [r]  & *{\bullet} & *{\circ}
}\endxy
,\quad
\xy<0cm,-.05cm>\xymatrix@R=.3cm@C=.2cm{ 
*{\circ}   \ar @/^1.0pc/ @{-} [rrr]   & *{\bullet}   & *{\bullet} & *{\circ}
}\endxy
,\quad
\xy<0cm,-.05cm>\xymatrix@R=.3cm@C=.2cm{ 
*{\bullet}    & *{\circ}  \ar @/^.75pc/ @{-} [rr]  & *{\bullet} & *{\circ}
}\endxy
,\quad
\xy<0cm,-.05cm>\xymatrix@R=.3cm@C=.2cm{ 
*{\circ}  \ar @/^.75pc/ @{-} [rr]  & *{\bullet} & *{\circ} & *{\bullet}
}\endxy
,\quad
\xy<0cm,-.05cm>\xymatrix@R=.3cm@C=.2cm{ 
*{\bullet}    & *{\circ} \ar @/^0.5pc/ @{-} [r] & *{\circ} & *{\bullet}
}\endxy
,\quad
\xy<0cm,-.05cm>\xymatrix@R=.3cm@C=.2cm{ 
*{\bullet}    & *{\circ}  & *{\bullet} & *{\circ}
}\endxy
,\quad
\xy<0cm,-.05cm>\xymatrix@R=.3cm@C=.2cm{ 
*{\circ}    & *{\bullet}  & *{\circ} & *{\bullet}
}\endxy
,\quand
\xy<0cm,-.05cm>\xymatrix@R=.3cm@C=.2cm{ 
*{\bullet}    & *{\circ}  & *{\circ} & *{\bullet}
}\endxy
.
\]
The element $\sigma(w\gamma,w\gamma')$ 
is equal to the first of these involutions if and only if $w(a') < w(a) < w(b) < w(b')$,
to the second if and only if $w(a') < w(b) \leq w(a) < w(b')$,
and to one
of 
the
remaining six  precisely when $w(b) < w(a')<w(a)$ or $w(b) < w(b') < w(a)$.
Thus, when $a<a'<b'<b$
we have
$\sigma(w\gamma,w\gamma') \preceqB \sigma(\gamma,\gamma')$  if and only if 
  condition 4 in Theorem \ref{atomSn'-thm} holds.
%
It follows by similar arguments that 
 the relation  $\sigma(w\gamma,w\gamma') \preceqB \sigma(\gamma,\gamma')$
is equivalent to conditions 2, 3, and 5 in Theorem \ref{atomSn'-thm} when $a\leq b < a' \leq b'$
and  $a<a'<b<b'$ and $a<a'=b'<b$, respectively.
\end{proof}

We  now prove Theorem \ref{atomSn-thm}.

\def\cAA{\cE}
\begin{proof}[Proof of Theorem \ref{atomSn-thm}]
If $w \in \cA(x,y)$,
then $y > y \act s_{i_1} > y \act s_{i_1} \act s_{i_2} > \dots > y \act s_{s_1} \act s_{i_2} \cdots \act s_{i_k} = x$
for any reduced expression $(s_{i_1},s_{i_2},\dots,s_{i_k}) \in \cR(w^{-1})$,
so 
it follows    by successively applying Lemmas \ref{a-lem1} and \ref{a-lem2}
that $w$ satisfies conditions (a) and (b).
Conversely, suppose $w \in S_n$ satisfies both of these conditions. We  argue that
 $w$ then satisfies the \emph{a priori} stronger conditions: 
 \ben
 \item[(a$'$)] $w\gamma \in \Gamma(x)$ for all $\gamma \in \Gamma(y)$.
 \item[(b$'$)]
$\sigma(w\gamma,w\gamma') \preceqB \sigma(\gamma,\gamma')$ for all $\gamma,\gamma' \in \Gamma(y)$ with $\gamma\cap \gamma' = \varnothing$.
\een
To show this, let $\gamma = (a,b) \in \Gamma(y)\setminus \Cyc(y)$ so that $a,b \in \Fix(y)$ and $b<a$.
Since $(a,a),(b,b) \in \Cyc(y)$,
 we are  able to deduce that $w(a,b) \in \Gamma(x)$ by noting by hypothesis that  
\be\label{rel1}\{ w(a,a),w(b,b)\} \subset \Gamma(x)
\qquand \sigma(w(a,a),w(b,b))\preceqB \sigma((a,a),(b,b)).\ee
Next, let $\gamma'=(a',b') \in \Gamma(y)$ such that $\{a,b\}\cap\{a',b'\}=\varnothing$. If $\gamma' \in \Cyc(y)$, then it is straightforward to show that $\sigma(w\gamma,w\gamma') \preceqB \sigma(\gamma,\gamma')$ from 
\eqref{rel1}
together with the additional relations
$ \sigma(w(a,a),w\gamma')\preceqB \sigma((a,a),\gamma')
$
and
$
\sigma(w(b,b),w\gamma')\preceqB \sigma((b,b),\gamma')
$
which hold by hypothesis.
If instead $\gamma' \in \Gamma(y)\setminus \Cyc(y)$,
then 
the same deduction follows by a similar argument.
We conclude that 
(a$'$) and (b$'$)
hold for all $w \in S_n$ which satisfy (a) and (b), as desired.

Let $\cAA(x,y)$ be the set of permutations $w \in S_n$ satisfying conditions 
 (a$'$) and (b$'$), and note that each element of $\cAA(x,y)$ automatically satisfies the conditions in Theorem \ref{atomSn'-thm}.
By the preceding paragraph, we know that $\cA(x,y) \subset \cAA(x,y)$ and it suffices to show  the reverse inclusion.
To see that this holds when $x=1$ and $y=w_0$, note that  $\Gamma(1)$ contains no pairs $(i,j)$ with $i<j$ and that every pair in $\Cyc(w_0)$ has the form $(i,n+1-i)$ with $i<n+1-i$.
Therefore, if $w \in \cAA(1,w_0)$, then
condition 1 in Theorem \ref{atomSn'-thm} implies that   
 $w(n+1-i) < w(i)$ for all $i<n+1-i$,
while  conditions 4 and 5 in Theorem \ref{atomSn'-thm} imply that no $i,j \in [n]$   satisfy both $i<j<n+1-i$ and  $w(n+1-i) < w(j) <w(i)$.
In view of
Lemma \ref{w0-lem},
we conclude that
$\cAA(1,w_0) \subset \cA(1,w_0)$.
 
 To treat the general case,
assume $\ellhat(x,y) < \ellhat(w_0)$ and 
 suppose $\cAA(x',y') \subset \cA(x',y')$ whenever $x',y' \in \I(S_n)$ are such that $\ellhat(x,y) < \ellhat(x',y')$.
There   necessarily then exists $s\in \{s_1,s_2,\dots,s_{n-1}\}$ such that  either $s \in \DesR(x)$ or $s \notin \DesR(y)$.
First suppose $s \in \DesR(x)$; then 
Lemmas \ref{a-lem1}, \ref{a-lem2}, and \ref{a-lem3} together imply that $w<sw \in \cAA(x\act s,y)$  for all $w \in \cAA(x,y)$,
so   
$\cAA(x,y) \subset \{ su : u \in \cAA(x\act s, y) \text{ and }s \in \DesL(u)\}.$
Since $\cAA(x\act s,y) \subset \cA(x\act s,y)$ by hypothesis,
it follows by Proposition \ref{atomdes-prop}(b) that $\cAA(x,y) \subset \cA(x,y)$  as desired.
On the other hand, if $s \notin \DesR(y)$,
then a similar combination of Lemmas \ref{a-lem1}, \ref{a-lem2}, and \ref{a-lem3},
Proposition \ref{atomdes-prop}(a), and our inductive hypothesis shows likewise that
$
\cAA(x,y) \subset \{ us : u \in \cAA(x, y\act s) \text{ and }s \notin \DesR(u)\}
\subset \cA(x,y).
$
We conclude by induction that $\cAA(x,y) \subset \cA(x,y)$ always holds, which completes our proof.
\end{proof}

While we have so far only used the notation $\sigma(\gamma_1,\dots,\gamma_k)$ when $k=2$, we conjecture that the general version of this construction can be used to give another  characterization of the sets $\cA(x,y)$ for $x,y \in \I(S_n)$.

\begin{conjecture}
Let $x,y \in \I(S_n)$ and $w \in S_n$. 
Then 
 $w \in \cA(x,y)$ if and only if
\ben
\item[(a)]  $w\gamma \in \Gamma(x)$  for all $\gamma \in\Cyc(y)$.
\item[(b)] $\sigma(w\gamma_1,\dots,w\gamma_k) \preceqB \sigma(\gamma_1,\dots,\gamma_k)$
where
$\gamma_1,\dots,\gamma_k$ are the distinct  elements of $\Cyc(y)$.
\een
\end{conjecture}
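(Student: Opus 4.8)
The plan is to deduce the conjecture from Theorem~\ref{atomSn-thm} by showing that its two conditions are equivalent to conditions (a) and (b) of that theorem. Condition (a) of the conjecture is literally condition (a) of Theorem~\ref{atomSn-thm}, so everything reduces to proving, for $w$ already satisfying (a), that the single relation $\sigma(w\gamma_1,\dots,w\gamma_k)\preceq\sigma(\gamma_1,\dots,\gamma_k)$ with $\gamma_1,\dots,\gamma_k$ enumerating $\Cyc(y)$ matches up with the family of pairwise relations $\sigma(w\gamma_i,w\gamma_j)\preceq\sigma(\gamma_i,\gamma_j)$. Rather than prove this order-theoretic equivalence abstractly, I would route both implications through membership in $\cA(x,y)$: the pairwise relations will give sufficiency via Theorem~\ref{atomSn-thm}, while the $k$-fold relation will be extracted directly once $w\in\cA(x,y)$.

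The engine behind both directions is a locality principle: the order $\prec$ on colored involutions is governed by its restrictions to pairs of colors. For a color set $C$, let $\alpha\mapsto\alpha|_C$ delete all vertices whose color lies outside $C$ and relabel positions and colors order-preservingly. I would first prove a \emph{Restriction Lemma}: the map $\alpha\mapsto\alpha|_C$ is order-preserving for $\prec$. This reduces to the covering moves generating $\prec$. If $\alpha\act s_m\prec\alpha$ and $P$ is the set of positions colored by $C$, then $P$ is invariant under the involution $\pi(\alpha)$ (which pairs equally colored vertices), so $\pi(\alpha)|_P$ is again an involution and equals $\pi(\alpha|_C)$. When positions $m,m+1$ both lie in $P$, one checks that $\act s_m$ commutes with restriction and that, since standardization preserves relative order, the descent forcing $\pi(\alpha\act s_m)<_T\pi(\alpha)$ descends to $\pi(\alpha|_C)$; when at most one of $m,m+1$ lies in $P$, the restriction is unchanged. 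Thus every covering move restricts to a move-or-identity that is $\preceq$, and transitivity finishes the lemma. Applying it with $C=\{i,j\}$, and noting $\sigma(\dots)|_{\{i,j\}}=\sigma(\gamma_i,\gamma_j)$ because standardization commutes with restriction to a subalphabet, yields condition (b) of Theorem~\ref{atomSn-thm} from condition (b) of the conjecture; together with (a), Theorem~\ref{atomSn-thm} gives $w\in\cA(x,y)$, which is sufficiency.

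For necessity I would prove the $k$-fold analogue of Lemma~\ref{a-lem2}: if $y\in\I(S_n)$, $s\in\DesR(y)$, and $\gamma_1,\dots,\gamma_k\in\Gamma(y)$ are pairwise disjoint, then $\sigma(s\gamma_1,\dots,s\gamma_k)\preceq\sigma(\gamma_1,\dots,\gamma_k)$. Writing $s=s_i$ and letting $i,i+1$ occur in $\gamma_p,\gamma_q$, applying $s$ to all pairs changes only the relative positions of the vertices colored $p$ and $q$, leaving all other colors fixed; hence $\sigma(s\gamma_\bullet)$ arises from $\sigma(\gamma_\bullet)$ by a single operation $\act s_m$ (or none). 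By the same locality argument, the defining descent of $\pi(\sigma(\gamma_\bullet))$ is already visible after restricting to $\{p,q\}$, where it is order-decreasing precisely by the pairwise Lemma~\ref{a-lem2} applied to $\gamma_p,\gamma_q$; so the $k$-fold move is $\preceq$. Given this, if $w\in\cA(x,y)$ I take $(s_{i_1},\dots,s_{i_\ell})\in\cR(w^{-1})$ and run along the chain $y>y\act s_{i_1}>\dots>x$, applying Lemma~\ref{a-lem1} at each step to recover condition (a) and the $k$-fold lemma to accumulate $\sigma(w\gamma_\bullet)\preceq\sigma(\gamma_\bullet)$, which is condition (b).

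The main obstacle is making this locality principle fully rigorous. One must verify in every case --- genuine $2$-cycles versus fixed points, edges versus isolated vertices, and the delicate situation in which an elementary move merely exchanges the colors of two fixed points without altering $\pi$ --- both that restriction commutes with $\act$ and that the covering condition of $\prec$ (a descent of $\pi$) depends only on the two colors actually moved. These checks are finite and patterned on the case analysis already carried out in Lemma~\ref{a-lem2} and Observation~\ref{precB-obs}, but they are where the genuine work lies; the rest of the argument is bookkeeping on top of Theorem~\ref{atomSn-thm} and Proposition~\ref{atomdes-prop}.
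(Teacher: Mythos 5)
First, a point of order: the paper does not prove this statement. It is stated explicitly as a conjecture (the authors prove only the $k=2$ specialization, Theorem~\ref{atomSn-thm}, and remark that they have merely ``conjectured'' the general version), so there is no proof of the authors' to compare yours against. Your overall strategy --- sufficiency by an order-preserving restriction map $\alpha\mapsto\alpha|_{\{i,j\}}$ reducing to Theorem~\ref{atomSn-thm}, necessity by a $k$-fold analogue of Lemma~\ref{a-lem2} pushed along a reduced word for $w^{-1}$ --- is a sensible and, I believe, viable route. The Restriction Lemma in particular looks sound: a covering move $\alpha\act s_m\prec\alpha$ occurs exactly when $\pi(\alpha)(m)>\pi(\alpha)(m+1)$ (the exceptional case where $s_m$ commutes with $\pi(\alpha)$ but the colors at $m,m+1$ differ forces both to be fixed points, hence is never a descent), and since the partners of $m$ and $m+1$ carry the same colors as $m$ and $m+1$, the descent survives restriction to any color set containing both; moves touching at most one position of the chosen colors leave the restriction unchanged.

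The genuine gap is in the necessity direction, in your claim that ``$\sigma(s\gamma_\bullet)$ arises from $\sigma(\gamma_\bullet)$ by a single operation $\act s_m$ (or none).'' This is false. Take $\gamma_p=(a,b)\in\Cyc(y)$ with $b>a+1$ and $\gamma_q=(a+1,a+1)$ a fixed point, and $s=s_a\in\DesR(y)$. In the word $[\dots,b,a,\dots,a+1,a+1,\dots]$ the letters $a$ (once) and $a+1$ (twice) receive three consecutive standardized values $v,v+1,v+2$; applying $s$ changes the multiplicities to ($a$ twice, $a+1$ once), so the values at those three slots are permuted by a $3$-cycle, i.e.\ $\sigma(s\gamma_\bullet)=\sigma(\gamma_\bullet)\act s_v\act s_{v+1}$, a product of \emph{two} covering moves with a nontrivial intermediate element. (This is precisely the case in which the paper's own proof of Lemma~\ref{a-lem2} abandons the single-move argument and falls back on Observation~\ref{precB-obs}.) Consequently your mechanism for verifying the descent --- ``the defining descent of $\pi(\sigma(\gamma_\bullet))$ is already visible after restricting to $\{p,q\}$, where it is order-decreasing by the pairwise Lemma~\ref{a-lem2}'' --- does not apply as stated: there is no single defining descent, and knowing only that the restricted endpoints satisfy $\preceq$ does not certify that the particular two-step path you must take consists of covering moves. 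The repair is to carry out the local analysis directly (the affected positions are consecutive because standardization assigns consecutive values to the letters $a,a+1$, and both intermediate descents can be checked to hold using $b>a+1$), but this is exactly the case analysis your sketch omits; the ``delicate situation'' you do flag (two fixed points exchanging colors) never yields a covering move of $\prec$ at all and is not where the difficulty lies.
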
 

\begin{remark}
If $\gamma_i= (a_i,b_i)$ and $z = \std[b_1,a_1,\dots,b_k,a_k]$,
then  (b) is equivalent to $\tau(\wt w z) \preceq \tau(z)$, where $\wt w$ is the element of $S_{2k}$ formed by
 doubling $w(i)$ in the one-line representation of $w$ for each $i \in \Fix(y)$ and then standardizing.
For example, if $w = [3,2,1,6,5,4]$ and $y=(1,3)(4,6)$ then $\wt w = \std[3,2,2,1,6,5,5,4] = [4,2,3,1,8,6,7,5]$. Notably, if $y$ has no fixed points then $\wt w = w$.
\end{remark}

As mentioned at the outset of this section, Theorem \ref{atomSn-thm} gives a common generalization of \cite[Theorem 3.7]{CJ} and \cite[Theorem 2.5 and Corollary 2.16]{CJW}.
We will use these results  in the next section, so  briefly indicate here how to recover  them as special cases of our theorem.
The following statements differ slightly from their predecessors in \cite{CJW} since 
the sets which Can, Joyce, and Wyser denote as $\cW(\pi)$ in \cite{CJW} are composed of the inverses of what we call the atoms of $\pi \in \I(S_n)$.

\begin{corollary}[Can, Joyce, and Wyser \cite{CJW}] \label{atomSn-cor} Let $y \in \I(S_n)$ and $w \in S_n$. Then $w \in \cA(y)$ if and only if  
the following properties hold:
\ben
\item[(a)] If $(a,b) \in \Cyc(y)$ then $w(b) \leq w(a)$.

\item[(b)] If $(a,b) \in \Cyc(y)$ then no integer $t$ with $a<t<b$ is such that $w(b) < w(t) < w(a)$. 

\item[(c)] If $(a,b),(a',b') \in \Cyc(y)$ are such that $a<a'$ and $b<b'$ then $w(b) \leq w(a) < w(b') \leq w(a')$.
\een
\end{corollary}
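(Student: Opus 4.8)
The plan is to derive Corollary~\ref{atomSn-cor} as the special case $x=1$ of the numeric reformulation in Theorem~\ref{atomSn'-thm}, by matching each of the five numbered conditions there against the three conditions (a), (b), (c) stated here. First I would record the relevant specializations: when $x=1$ we have $\Cyc(1)=\{(i,i):i\in[n]\}$, $\Fix(1)=[n]$, and $\Gamma(1)=\{(a,b):b\le a\}$. In particular the requirement ``$(w(a),w(b))\in\Cyc(1)$'' forces $w(a)=w(b)$, which is impossible for $a\neq b$, while the alternative ``$\{w(a),w(b)\}\subset\Fix(1)$'' holds vacuously. Thus condition~1 of Theorem~\ref{atomSn'-thm} collapses to exactly condition~(a): $w(b)\le w(a)$ for every $(a,b)\in\Cyc(y)$, with equality only when $a=b$. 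I would then assume (a) throughout the remaining matching, so that for cycles $(a,b)$ and $(a',b')$ we may freely use $w(b)\le w(a)$ and $w(b')\le w(a')$.

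Next I would show that conditions~2 and~3, taken together and given (a), are equivalent to condition~(c). The hypotheses match up: for disjoint $\gamma=(a,b)$ and $\gamma'=(a',b')$ in $\Cyc(y)$, the requirement ``$a<a'$ and $b<b'$'' occurs precisely in the disjoint case $a\le b<a'\le b'$ (condition~2) and the crossing case $a<a'<b<b'$ (condition~3), these two exhausting the possibilities up to interchanging $\gamma,\gamma'$. For the conclusions I would observe that, using $w(b)\le w(a)$ and $w(b')\le w(a')$, each conjunction of inequalities appearing in conditions~2 and~3, as well as the chain $w(b)\le w(a)<w(b')\le w(a')$ of condition~(c), reduces to the single inequality $w(a)<w(b')$. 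Hence conditions~2 and~3 hold if and only if (c) does.

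The main work, and the step I expect to be the most delicate, is matching conditions~4 and~5 against condition~(b). Since (b) quantifies over \emph{every} integer $t$ with $a<t<b$, the key is to classify such $t$ by its role in $y$. If $t$ is a fixed point, then applying condition~5 to the pair $(a,b),(t,t)$ forbids exactly $w(b)<w(t)<w(a)$. If $t$ lies in a $2$-cycle $(a',b')$ nested inside, i.e.\ $a<a'<b'<b$, then condition~4 for the pair $(a,b),(a',b')$ is the relevant constraint; here I would first note that, given (a), the two ``upper'' forbidden patterns $w(a')<w(a)<w(b)<w(b')$ and $w(a')<w(b)\le w(a)<w(b')$ are vacuous, since each contradicts $w(b)\le w(a)$ or $w(b')\le w(a')$. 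Thus condition~4 reduces to forbidding $w(b)<w(a')<w(a)$ and $w(b)<w(b')<w(a)$, which is precisely condition~(b) applied to $t=a'$ and $t=b'$. Finally, if $t$ lies in a $2$-cycle crossing $(a,b)$, so that only one endpoint falls in the open interval $(a,b)$, I would check that (b) for that $t$ is automatic from (c): applying (c) to the appropriately ordered pair yields $w(a)<w(a')$ in the case $a<a'<b<b'$ and $w(b')<w(b)$ in the case $a'<a<b'<b$, placing the value $w(t)$ above or below the interval $(w(b),w(a))$ respectively.

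Assembling these three subcases shows that, given (a) and (c), condition~(b) is equivalent to conditions~4 and~5. Combined with the first two paragraphs, this proves that $w\in\cA(1,y)$ if and only if (a), (b), and (c) hold, which is the corollary. The only genuine obstacle is the bookkeeping in the last paragraph, where one must verify that the apparently stronger per-$t$ statement (b) reassembles exactly from the pairwise conditions~4 and~5 plus the automatically satisfied crossing case; the vacuity argument for the two upper patterns of condition~4 is what makes this clean.
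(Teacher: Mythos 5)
Your proposal is correct and follows the same route as the paper: both derive the corollary by specializing Theorem~\ref{atomSn'-thm} to $x=1$, noting that condition~1 collapses to (a), conditions~2--3 to (c), and conditions~4--5 (given (a) and (c)) to (b). The paper leaves the final matching as ``straightforward to check,'' whereas you carry out the case analysis on the role of $t$ explicitly; the details you supply, including the vacuity of the two upper patterns in condition~4 and the handling of crossing cycles via (c), are all accurate.
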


\begin{proof}
Consider Theorem \ref{atomSn'-thm} with  $x=1$.
Since $\Cyc(1) = \{ (i,i) : i \in [n]\}$, the first condition in that result holds if and only if $w(b) \leq w(a)$ for all $(a,b) \in \Cyc(y)$.
Assume this   holds and let $(a,b),(a',b') \in \Cyc(y)$ with $a<a'$; then,  conditions 2 and 3 in Theorem \ref{atomSn'-thm} hold if and only $w(b) \leq w(a) < w(b') \leq w(a')$ when  $b<b'$,
while conditions 4 and 5
hold if and only if neither $w(b) < w(a') < w(a)$ nor $w(b) < w(b') < w(a)$ occurs when  $ b' \leq b$. It is straightforward to check that these properties are together equivalent to    (b) and (c).
\end{proof}

Recall that $w_0 $ denotes the longest element of $S_n$.

\begin{corollary}[Can and Joyce \cite{CJ}] \label{awn-ex}
Let $u   \in S_n$. Then $u \in \cA(w_0)$
if and only if for all $i,j \in [n]$ it holds that 
(a) if $i<n+1-i$  then $u(n+1-i) < u(i)$
and (b) if $i<j<n+1-i$ then either $u(j)<u(n+1-i)$ or $u(i) < u(j)$.
Consequently,  $|\cA(w_0)| = (n-1)!!$. 
\end{corollary}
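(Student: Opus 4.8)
The plan is to read both assertions off the general classification already in hand. The backward implication---that conditions (a) and (b) imply $u\in\cA(w_0)$---is exactly Lemma~\ref{w0-lem}, so nothing new is required there. For the forward implication I would specialize Theorem~\ref{atomSn'-thm} to $x=1$ and $y=w_0$, where $\Cyc(w_0)=\{(i,n+1-i):i\le\tfrac{n+1}2\}$ and $\Cyc(1)=\{(i,i):i\in[n]\}$. Condition~1 of that theorem then reads: for each $i$ with $i<n+1-i$ one cannot have $w(i)<w(n+1-i)$, because $(w(i),w(n+1-i))$ would then have to lie in $\Cyc(1)$, impossible since $w$ is injective; thus $w(n+1-i)<w(i)$, which is exactly condition~(a).

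Next I would reduce the two-cycle conditions to (b). For distinct cycles $(i,n+1-i)$ and $(i',n+1-i')$ with $i<i'$ we have $n+1-i>n+1-i'$, so setting $(a,b)=(i,n+1-i)$ and $(a',b')=(i',n+1-i')$ the order is always $a<a'\le b'<b$. Hence conditions~2 and~3 of Theorem~\ref{atomSn'-thm}, which require $b<b'$, never apply, and only condition~4 (when $i'<n+1-i'$) or condition~5 (when $i'=n+1-i'$) is relevant. Condition~(a) gives $w(b)<w(a)$ and $w(b')<w(a')$, so the two patterns in condition~4 forcing $w(a)<w(b)$ and $w(a')<w(b')$ respectively are automatically excluded; what remains are the prohibitions $w(b)<w(a')<w(a)$ and $w(b)<w(b')<w(a)$, and in condition~5 the single prohibition $w(b)<w(a')<w(a)$ at the central fixed point. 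Substituting $a=i$, $b=n+1-i$ and the analogous values, each of these says $w(n+1-I)<w(J)<w(I)$ for some triple $I<J<n+1-I$, which is precisely what (b) forbids. I expect the fiddliest step to be the converse matching: I must check that every triple $I<J<n+1-I$ violating (b) is produced by conditions~4 or~5. Here I would split on the position of $J$: if $J<\tfrac{n+1}2$ use the cycles $(I,n+1-I)$ and $(J,n+1-J)$ with the first pattern; if $J=\tfrac{n+1}2$ use condition~5; and if $J>\tfrac{n+1}2$ use the cycles $(I,n+1-I)$ and $(n+1-J,J)$ with the second pattern. Once this three-way check is complete, conditions~1--5 are equivalent to (a) and (b), finishing the characterization.

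For the cardinality I would run the recursion implicit in the proof of Lemma~\ref{w0-lem}. Writing $w_0^{(m)}$ for the longest element of $S_m$ and $\phi:S_{n-2}\hookrightarrow S_n$ for the map $s_j\mapsto s_{j+1}$, that proof shows each $u\in\cA(w_0)$ satisfies $u(n)=i$ and $u(1)=i+1$ for a unique $i\in[n-1]$, and factors uniquely as $u=a_i\,\phi(c)$, with $a_i$ the explicit atom of $(1,n)$ determined by $i$ and $c\in\cA(w_0^{(n-2)})$; conversely every such product lies in $\cA(w_0)$. This exhibits $u\mapsto\bigl(u(n),\ \phi^{-1}(a_{u(n)}^{-1}u)\bigr)$ as a bijection $\cA(w_0)\to[n-1]\times\cA(w_0^{(n-2)})$, the inverse being $(i,c)\mapsto a_i\phi(c)$; injectivity is clear since $i=u(n)$ recovers $a_i$ and hence $c$. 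Therefore $|\cA(w_0^{(n)})|=(n-1)\,|\cA(w_0^{(n-2)})|$, and as the base cases $n\in\{1,2\}$ each yield a single atom, induction gives $|\cA(w_0)|=(n-1)!!$.
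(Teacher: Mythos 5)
Your proof is correct and follows essentially the same route as the paper: the backward implication is exactly Lemma \ref{w0-lem}, and the forward implication specializes the general type-$A$ classification, with you working directly from Theorem \ref{atomSn'-thm} where the paper routes through its $x=1$ specialization (Corollary \ref{atomSn-cor}) and dismisses the case analysis as straightforward. Your three-way check on the position of $J$ and the counting recursion $|\cA(w_0)|=(n-1)\cdot|\cA(w_0')|$ (with $w_0'$ the longest element of $S_{n-2}$), which the paper leaves implicit in the proof of Lemma \ref{w0-lem}, both check out.
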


\begin{proof}
By Lemma \ref{w0-lem}, it suffices to show that if $u$ satisfies the conditions in Corollary \ref{atomSn-cor} for $y=w_0$ then $u$ satisfies (a) and (b). This is straightforward; we omit the details.
\end{proof}

Let   $\Ifpf(S_{n})$ denote the set of involutions $y \in \I(S_{n})$ which are \emph{fixed-point-free}, that is, with $\Fix(y) =\{ i \in [n]: y(i) = i\}= \varnothing$.
Evidently the set $\Ifpf(S_n)$ is empty unless $n$ is even.
Define 
$\wfpf{n} = s_1s_3s_5\cdots s_{2n-1} \in S_{2n}
$
and
$ \cAfpf(y) = \cA(\wfpf{n},y)
$
for $y \in \Ifpf(S_{2n})$,
and in the following statement, redefine $\Cyc(y) = \{ (a,b) \in [2n] \times [2n] : a< b=y(a)\}$.

\begin{corollary}[Can, Joyce, and Wyser \cite{CJW}] \label{atomSn-fpf-cor} Let $y \in \Ifpf(S_{2n})$ and $w \in S_{2n}$. Then $w \in \cAfpf(y)$ if and only if  
 the following properties hold:
\ben
\item[(a)] If $(a,b) \in \Cyc(y)$ then $w(a) = 2i-1 $ and $w(b)=2i$ for some $i \in [n]$.

\item[(b)] If  $(a,b),(a',b') \in \Cyc(y)$ such that $a<a'$ and $b<b'$ then $w(a)<w(b)<w(a') <w(b')$.
\een
\end{corollary}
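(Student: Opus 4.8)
The plan is to specialize Theorem \ref{atomSn'-thm} to the base involution $x = \wfpf{n} = s_1 s_3 \cdots s_{2n-1} = (1,2)(3,4)\cdots(2n-1,2n)$, exactly as Corollary \ref{atomSn-cor} specializes it to $x = 1$. First I would record the combinatorial data of this $x$: since $\wfpf{n}$ is fixed-point-free with $2$-cycles $(2i-1,2i)$, we have $\Fix(x) = \varnothing$, $\Cyc(x) = \{(2i-1,2i) : i \in [n]\}$, and $\Gamma(x) = \Cyc(x)$. Because $y$ is also fixed-point-free, every $(a,b) \in \Cyc(y)$ satisfies $a < b$ strictly, so condition~5 of Theorem \ref{atomSn'-thm} (which requires $a'=b'$, i.e.\ a fixed point) is vacuous. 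Condition~1 of Theorem \ref{atomSn'-thm} reads: for $(a,b) \in \Cyc(y)$, if $w(a) < w(b)$ then $(w(a),w(b)) \in \Cyc(x)$, and otherwise $\{w(a),w(b)\} \subset \Fix(x)$. As $\Fix(x) = \varnothing$, the second alternative is impossible, so condition~1 forces $w(a) < w(b)$ with $(w(a),w(b)) \in \Cyc(x)$; that is, $w(a) = 2i-1$ and $w(b) = 2i$ for some $i \in [n]$. This is precisely condition~(a) of the corollary.

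The crucial consequence of condition~(a), which I would isolate next, is that $w$ carries each cycle of $y$ to a pair of consecutive integers $\{2i-1,2i\}$, so the images of two distinct cycles are disjoint \emph{adjacent} pairs that cannot interleave on the number line. Assuming (a), I would then examine two disjoint cycles $(a,b),(a',b') \in \Cyc(y)$ with $a<a'$ (which is without loss of generality). Since $a<b$ and $a'<b'$, there are exactly three relative orderings: the disjoint case $a<b<a'<b'$ (governed by condition~2), the crossing case $a<a'<b<b'$ (condition~3), and the nesting case $a<a'<b'<b$ (condition~4).

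In the nesting case, writing $w(a)=2i-1$, $w(b)=2i$, $w(a')=2j-1$, $w(b')=2j$, each of the four forbidden patterns of condition~4 demands an interleaving of $\{2i-1,2i\}$ and $\{2j-1,2j\}$: for instance $w(a') < w(a) < w(b) < w(b')$ forces simultaneously $j<i$ and $i<j$, while $w(b) < w(a') < w(a)$ forces $2i < 2i-1$. Since adjacent pairs never interleave, all four patterns are impossible, so condition~4 holds automatically and the corollary rightly imposes no constraint here (as $b>b'$). In the disjoint and crossing cases --- exactly the situations with $b<b'$ --- the same consecutive-pair structure collapses the several inequalities of conditions~2 and~3 to the single chain $w(a) < w(b) < w(a') < w(b')$. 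Concretely, the chain visibly implies every inequality of conditions~2 and~3; conversely, $w(b) < w(a')$ (from condition~2) or $w(a) < w(a')$ (from condition~3) reads $i<j$, which upgrades to $2i-1 < 2i < 2j-1 < 2j$, i.e.\ the full chain. This chain is exactly condition~(b) of the corollary.

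The only real bookkeeping --- and hence the ``main obstacle'' --- is this case analysis verifying that condition~4 is vacuous in the nesting case and that conditions~2 and~3 are equivalent to the chain inequality; but each verification is immediate once one observes that consecutive-integer pairs are non-interleaving, so the work is genuinely routine given condition~(a). I would conclude that, under $x = \wfpf{n}$, the five conditions of Theorem \ref{atomSn'-thm} are together equivalent to conditions~(a) and~(b) of the corollary, completing the proof.
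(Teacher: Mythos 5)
Your proposal is correct and follows exactly the paper's route: specialize Theorem \ref{atomSn'-thm} to $x=\wfpf{n}$, observe that condition~1 becomes (a) because $\Fix(\wfpf{n})=\varnothing$, that conditions~4 and~5 are vacuous, and that conditions~2 and~3 collapse to the chain in (b). The paper states this in three sentences without the explicit case checks, so your write-up is simply a more detailed version of the same argument.
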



\begin{proof}
Consider Theorem \ref{atomSn'-thm} with  $x=\wfpf{n}$.
Since $\Cyc(\wfpf{n}) = \{ (2i-1,2i) : i \in [n]\}$ and $\Fix(\wfpf{n}) = \varnothing$, the first condition in Theorem \ref{atomSn'-thm} is   equivalent to condition (a) in the present corollary. When this holds, moreover, conditions 4 and 5 in Theorem \ref{atomSn'-thm} hold vacuously, while conditions 2 and 3 are equivalent to condition (b) here.
\end{proof}

\section{Orders and equivalence relations in type $A$}
\label{poset-sect}

In this section we write finite sequences of integers as lists bounded by square brackets, e.g.,  $[x_1,x_2,\dots,x_n]$, with the aim of distinguishing sequences  from cycles of permutations. 

\subsection{Atoms and  the Chinese monoid}

A \emph{consecutive subsequence} of a sequence   $ [x_1,x_2,\dots,x_n]$ is one of the form $[x_{i+1},x_{i+2},\dots,x_{i+t}]$ for some $i \in [n]$ and $t\geq 0$.
A \emph{consecutive pair, triple}, \emph{quadruple}, etc., is a consecutive subsequence of length 2, 3, 4, and so on.

\begin{definition}\label{sim-def}
Define $\sim_\DemA$ as the transitive and symmetric closure of the  relation on $n$-element integer sequences
given by setting $x \sim_\DemA y$ whenever $x$ contains a consecutive triple of the form $[c,a,b]$ with $a\leq b\leq c$ and $y$ is formed by replacing this triple by either $[b,c,a]$ or $[c,b,a]$. 
\end{definition}

\begin{remark}\label{sim-rmk}
More concisely,  $\sim_\DemA$ is the weakest equivalence relation on integer sequences with
\[
[\ \cdots\ c,a,b\ \cdots\ ] \sim_\DemA [\ \cdots\ b,c,a\ \cdots\ ] \sim_\DemA [\ \cdots\ c,b,a\ \cdots\ ]
\qquad\text{when }a\leq b\leq c,
\]
where  in this sort of statement, it is  assumed that the sequences  agree in all    entries within the corresponding ellipses (and so have the same length). 
\end{remark}

\begin{example}
The following elements constitute a single equivalence class under $\sim_\DemA$:
\[ [4,3,2,1] \sim_\DemA [4,3,1,2] \sim_\DemA [4,2,3,1]  \sim_\DemA [3,4,2,1]\sim_\DemA [4,1,3,2] \sim_\DemA [3, 4,1,2] \sim_\DemA [3,2,4,1].\]
\end{example}

As our notation suggests, there is a very natural connection between the equivalence relation $\sim_{\DemA}$ and the sets $\DemA(x)$ for involutions $x\in S_n$.
Somewhat remarkably,  this particular relation has been studied previously in a few places in the literature   \cite{CEHKN,DuchampKrob,CM2,LPRW} with     unrelated motivations.
Namely, $\sim_{\DemA}$ is what the authors of \cite{CEHKN,DuchampKrob} call the \emph{Chinese relation} (on the totally ordered alphabet $\ZZ$). The quotient of the free monoid by this relation, in turn,
 has been termed the \emph{Chinese monoid}.
 This monoid has many formal similarities with the better-known \emph{plactic monoid}; for example, both have the same Hilbert series \cite[Eq.\ (3)]{CEHKN}.
On account of this kind of comparison, the   authors of \cite{CEHKN} speculate  that
the Chinese monoid should be the quantized ``maximal torus'' of some unknown quantum group.
The results in this section may provide a useful step towards better understanding this heuristic,
in view of the 
connections between involution words and representations of reductive groups (see \cite{HMP1,LV1,WY}).

When it causes no ambiguity, we write $w_i$ in place of the usual notation $w(i)$ to the denote image of $i \in [n]$ under a permutation $w \in S_n$.
Throughout this section, we identify permutations $w \in S_n$ with the $n$-element sequences given by their one-line representations $[w_1,w_2,\dots,w_n]$.

\begin{lemma} \label{equiv1-lem}
Fix $u,v,w \in S_n$ and $i \in [n-2]$, and suppose $u_j = v_j = w_j$ for all $j \notin \{i,i+1,i+2\}$.
The following are then equivalent:
\ben
\item[(a)] 
We have $[u_i,u_{i+1},u_{i+2}] = [c,a,b]$
and
$[v_i,v_{i+1},v_{i+2}]  = [b,c,a]$
and
$[w_i,w_{i+1},w_{i+2}]  =[c,b,a]$
for some $a,b,c \in [n]$ with $a<b<c$.

\item[(b)] There exists $z \in S_n$ with $\ell(u) = \ell(v) = \ell(z)+2$ and $\ell(w) = \ell(z) + 3$ 
such that 
$ u =z s_{i+1}s_i $ and $ v = z s_i s_{i+1} $ and $ w = z s_is_{i+1} s_i = z s_{i+1}s_{i} s_{i+1}  .$

\een
\end{lemma}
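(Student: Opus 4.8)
The plan is to localize everything to the three positions $i,i+1,i+2$. Since right multiplication by $s_j$ swaps the entries in positions $j$ and $j+1$ of the one-line notation, and since $u,v,w$ agree off $\{i,i+1,i+2\}$, every operation appearing in (a) and (b) only permutes the three entries sitting in these slots. I would therefore work throughout with the local one-line pattern on positions $i,i+1,i+2$, reducing the equivalence to a computation inside the parabolic subgroup $\langle s_i,s_{i+1}\rangle\cong S_3$. The single tool needed is the descent criterion from \eqref{des-sn}: $s_j\in\DesR(p)$ if and only if $p(j)>p(j+1)$, so a product $ps_j$ is length-additive exactly when $p(j)<p(j+1)$.

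For the implication (b)$\Rightarrow$(a) I would begin from the length hypotheses. Reducedness of $zs_is_{i+1}$ forces $s_i\notin\DesR(z)$, i.e.\ $z(i)<z(i+1)$, while reducedness of $zs_{i+1}s_i$ forces $s_{i+1}\notin\DesR(z)$, i.e.\ $z(i+1)<z(i+2)$; hence $z$ reads $[a,b,c]$ on positions $i,i+1,i+2$ for some $a<b<c$. A direct swap-by-swap computation then shows that $zs_{i+1}s_i$ reads $[c,a,b]$, that $zs_is_{i+1}$ reads $[b,c,a]$, and that $zs_is_{i+1}s_i$ reads $[c,b,a]$ on these positions. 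Since these products agree with $u,v,w$ respectively away from $\{i,i+1,i+2\}$, this is precisely condition (a).

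For (a)$\Rightarrow$(b) I would simply set $z=us_is_{i+1}$. Because $u$ reads $[c,a,b]$ with $a<b<c$, we have $s_i\in\DesR(u)$, and then $us_i$ reads $[a,c,b]$ so that $s_{i+1}\in\DesR(us_i)$; thus both factors are length-decreasing, $\ell(z)=\ell(u)-2$, and $z$ reads $[a,b,c]$. The computation of the previous paragraph now applies verbatim: $zs_{i+1}s_i=u$, $zs_is_{i+1}=v$, and $zs_is_{i+1}s_i=w$, each a length-additive product of the asserted length, where I use that $v$ and $w$ agree with $u$ (hence with $z$) outside $\{i,i+1,i+2\}$, so that matching the local patterns suffices. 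The second expression $w=zs_{i+1}s_is_{i+1}$ is then immediate from the braid relation $s_is_{i+1}s_i=s_{i+1}s_is_{i+1}$.

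There is no serious difficulty here; the only thing demanding care is the bookkeeping of which entry occupies which position after each swap, together with the repeated verification of length-additivity via descents. I would streamline both implications by tabulating, once and for all, the local pattern of each of the words $z,\,zs_i,\,zs_{i+1},\,zs_is_{i+1},\,zs_{i+1}s_i,\,zs_is_{i+1}s_i$ when $z$ reads $[a,b,c]$, after which everything follows by inspection. The conceptual content the lemma records is exactly that a single Chinese move on a consecutive triple corresponds to passing between the three length-additive words built from the longest element of $\langle s_i,s_{i+1}\rangle\cong S_3$.
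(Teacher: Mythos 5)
Your proof is correct and follows essentially the same route as the paper's (much terser) argument: both implications reduce to the descent criterion \eqref{des-sn} and bookkeeping of the local pattern on positions $i,i+1,i+2$; the paper reads off $u_{i+1}<u_{i+2}<u_i$ from the chain $us_is_{i+1}<us_i<u<us_{i+1}$ where you equivalently read off $z(i)<z(i+1)<z(i+2)$, which is the same computation organized around $z$ instead of $u$.
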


\begin{proof}
The implication (a) $\Rightarrow$ (b) is a straightforward consequence of \eqref{des-sn}.
Conversely, if (b) holds, then 
$us_is_{i+1} < u s_i < u < us_{i+1}$, which implies  $ u_{i+1} < u_{i+2} < u_i$,
from which (a) is immediate as $w= us_{i+1}$ and $v = w s_i$.
%
\end{proof}

The connection between the Chinese relation  and involution words is now given as follows.
\begin{theorem}\label{equiv1-thm}
The sets $\DemA(x)^{-1} = \left\{ u^{-1} : u \in \DemA(x)\right\} $ for $x \in \I(S_n)$ are the distinct  equivalence classes in $S_n$ under $\sim_\DemA$.
In particular, each element of the Chinese monoid
contained in $S_n$  is
 the set of  inverses of the elements of $\DemA(x)$ for some $x \in \I(S_n)$.
\end{theorem}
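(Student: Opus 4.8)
The plan is to recast the theorem as an equality of two partitions of $S_n$. Taking inverses in Definition \ref{def1} and using $\sidem{1}{w} = w^{-1}\dem w$, an element $u\in S_n$ lies in $\DemA(x)^{-1}$ precisely when $u\dem u^{-1} = x$; so, writing $\Psi(u) = u\dem u^{-1}\in\I(S_n)$, we have $\DemA(x)^{-1} = \Psi^{-1}(x)$. Since the sets $\DemA(x)$ are the fibers of the surjection $\pi\colon W\to\I_*$ and are nonempty for every $x$ by Corollary \ref{all-nonempty-cor}, the blocks $\Psi^{-1}(x)$ for $x\in\I(S_n)$ form a partition of $S_n$ into exactly $|\I(S_n)|$ nonempty parts. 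It then suffices to prove that the $\sim_\DemA$-classes are exactly these blocks, and I would do this by establishing mutual refinement: (B) every generating Chinese move preserves $\Psi$, and (C) any two permutations with the same image under $\Psi$ are $\sim_\DemA$-equivalent.

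For (B) I would feed Lemma \ref{equiv1-lem} directly into a Demazure-product computation. That lemma presents the three permutations related by one move as $u = zs_{i+1}s_i$, $v = zs_is_{i+1}$, and $w = zs_is_{i+1}s_i$, all written as reduced products. Using associativity of $\dem$, the absorption rule $s\dem s = s$, and the braid relation $s_is_{i+1}s_i = s_{i+1}s_is_{i+1}$, one finds $\Psi(u) = z\dem(s_{i+1}\dem s_i\dem s_{i+1})\dem z^{-1}$ and $\Psi(v) = \Psi(w) = z\dem(s_i\dem s_{i+1}\dem s_i)\dem z^{-1}$, and these coincide. Hence $\Psi$ is constant on each $\sim_\DemA$-class, so $\sim_\DemA$ refines the $\Psi$-partition.

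Direction (C) is the real obstacle. I would argue by induction on $n$ that $\Psi(u)=\Psi(v)$ forces $u\sim_\DemA v$. The inductive engine is the recursion $\Psi(u) = \sidem{\Psi(su)}{s}$, valid for any $s\in\DesL(u)$, which follows from Corollary \ref{invdem-cor} just as the fiber recursion $\pi(w's) = \sidem{\pi(w')}{s}$ does, together with an analogue for the Hecke-atom sets $\DemA$ of the recursion in Proposition \ref{atomdes-prop}. The subtle point is that $\sidem{\cdot}{s}$ is far from injective, since $\sidem{y}{s} = x$ has the two solutions $y=x$ (when $s\in\DesR(x)$) and $y = x\act s$, so the fiber $\Psi^{-1}(x)$ is glued from several smaller fibers, and I must prove these gluings never break a fiber into two $\sim_\DemA$-classes. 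Concretely, I would single out a canonical representative of each class (for instance the one obtained by always contracting a fixed leftmost descent) and show, via the explicit moves supplied by Lemma \ref{equiv1-lem}, that every permutation in a fiber is carried to it by a chain of triple-moves $[\,\cdots c,a,b\,\cdots]\sim_\DemA[\,\cdots b,c,a\,\cdots]\sim_\DemA[\,\cdots c,b,a\,\cdots]$. This intra-fiber connectivity is the step I expect to fight hardest with, complicated by the fact that the clean recursion for $\Psi$ is left-handed while the Chinese moves act on the right (on positions), so the two must be reconciled at each inductive step.

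Once (B) and (C) are in hand, the $\sim_\DemA$-classes are exactly the sets $\DemA(x)^{-1}$, and the final ``in particular'' is immediate: the $\sim_\DemA$-classes of permutations are by definition the elements of the Chinese monoid lying in $S_n$, so each such element equals $\DemA(x)^{-1}$ for $x = \Psi(u)$. As an alternative to the hands-on argument for (C), if one imports from \cite{CEHKN} that the number of Chinese classes of permutations in $S_n$ equals $|\I(S_n)|$, then (C) follows formally from (B) together with the count above, since a refinement of a partition having the same number of blocks must be an equality.
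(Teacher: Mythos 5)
Your proposal is correct, and the route that actually closes the argument --- direction (B) showing each generating Chinese move preserves $\Psi(u)=u\dem u^{-1}$ via Lemma \ref{equiv1-lem}, combined with the external count that $S_n$ has exactly $|\I(S_n)|$ Chinese classes --- is precisely the paper's proof (the paper sources the count to \cite[Theorem 15(a)]{LPRW} rather than extracting it from the Hilbert series in \cite{CEHKN}, but either works). Be aware that your ``primary'' direct argument for (C), the intra-fiber connectivity by induction, is only a plan with an acknowledged unresolved core, so as written the proof stands or falls on the counting fallback; since that fallback is complete, the proposal is fine.
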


\begin{proof}
We claim that each equivalence class in $S_n$ under $\sim_\DemA$ is contained in a set $  \DemA(\theta)^{-1} $ for some $\theta \in \I(S_n)$.
To show this, suppose $u,v,w \in S_n$ and $i \in [n-2]$ satisfy the hypotheses of Lemma \ref{equiv1-lem} and possess the equivalent properties described by that result. 
Let $\theta = \sidem{1}{u^{-1}}$ and $\theta' = \sidem{1}{ v^{-1} }$ and $\theta'' = \sidem{1}{w^{-1}}$. 
By construction  $u\sim_\DemA v \sim_\DemA w$, and to prove our claim it suffices to check that $\theta = \theta' =\theta''$.
This is straightforward, for if $z \in S_n$ is as in Lemma \ref{equiv1-lem}(b), then we have 
$ \theta = \sidem{(\sidem{\sidem{1}{ s_{i+1}}}{s_{i}})}{z^{-1}}
$
and
$
\theta' = \sidem{(\sidem{\sidem{1}{ s_{i}}}{s_{i+1}})}{z^{-1}} 
$
and
$
\theta'' =  \sidem{(\sidem{\sidem{\sidem{1}{s_{i}}}{ s_{i+1}}}{s_{i}})}{z^{-1}}
$,
while  $\sidem{\sidem{1}{ s_{i+1}}}{s_{i}}= \sidem{\sidem{1}{ s_{i}}}{s_{i+1}}=\sidem{\sidem{\sidem{1}{s_{i}}}{ s_{i+1}}}{s_{i}}=(i,i+2)$.
From the claim and Corollary \ref{all-nonempty-cor}, we deduce that   the sets $ \DemA(\theta) ^{-1} $ for $\theta \in \I(S_n)$, which automatically partition $S_n$, each decompose as the union of some positive number of distinct equivalence classes under $\sim_\DemA$.
Since it is known \cite[Theorem 15(a)]{LPRW} that the total number of equivalence classes in $S_n$ under $\sim_\DemA$ is exactly $|\cI(S_n)|$, the theorem follows.
(In fact,  we only need to know  that the number of equivalence classes in $S_n$ under $\sim_\DemA$ is at most $ |\I(S_n)|$, and showing this is the easier half of the proof of \cite[Theorem 13]{LPRW}.)
\end{proof}

If we  omit the relations generating $\sim_{\DemA}$ which are not length-preserving then we obtain the following subrelation, which may be used to characterize the sets of atoms $\cA(x) $ for $x \in \I(S_n)$.

\begin{definition}
Define $\prec_\cA$ as the transitive closure of the relation on integer sequences  with 
$[\ \cdots\ c,a,b\ \cdots\ ] \prec_\cA [\ \cdots\ b,c,a\ \cdots\ ]$ when $a\leq b \leq c$ (interpreting this as in the remark above.) 
\end{definition}

By construction $x \prec_\cA y$ implies $x \sim_\DemA y$. The relation $\prec_\cA$ is a partial order, since it is a transitive subrelation of reverse lexicographic order.
A compactly-drawn interval in this order is shown in Figure \ref{fig1}.

\begin{figure}[h]
\[
\begin{tikzpicture}
  \node (max) at (0,2) {$435261$};
  \node (b) at (0,1) {$452361$};
  \node (c) at (2,1) {$435612$};
  \node (d) at (-2,0) {$524361$};
  \node (e) at (0,0) {$452613$};
  \node (f) at (2,0) {$436152$};
  \node (g) at (-2,-1) {$524613$};
  \node (h) at (0,-1) {$456123$};
  \node (i) at (2,-1) {$461352$};
    \node (j) at (-2,-2) {$526143$};
  \node (k) at (0,-2) {$461523$};
  \node (l) at (2,-2) {$614352$};
  \node (m) at (-2,-3) {$561243$};
  \node (n) at (0,-3) {$614523$};
  \node (min) at (0,-4) {$615243$};
    \draw  [->]
  (min) edge (m)
  (m) edge (j)
  (j) edge (g)
  (g) edge (d)
  (d) edge (b)
  (b) edge (max)
  (c) edge (max)
  (f) edge (c)
  (i) edge (f)
  (l) edge (i)
  (n) edge (l)
  (min) edge (n)
  (g) edge (e)
  (k) edge (i)
  (n) edge (k)
  (k) edge (h)
  (h) edge (e)
  (e) edge (b)
;
\end{tikzpicture}
\]
\caption{Hasse diagram of   $\{ u : \theta \preceq_\cA u \preceq_\cA \tau\}$ for $\theta=[6,1,5,2,4,3]$ and $\tau=[4,3,5,2,6,1]$ }
\label{fig1}
\end{figure}

We need some notation to construct the ``extremal'' atoms of an involution.
Fix   $x \in \I(S_n)$. Write $\Cyc_n(x)=\Cyc(x)$ for the cycle set of $x$, as given at the beginning of  Section \ref{atomSn-sect}.
Assume $\Cyc_n(x)$ has size $k$, and  define $(a_1,b_1),\dots,(a_k,b_k)$ and $(c_1,d_1),\dots,(c_k,d_k)$  such that  
\be\label{param-eq} 
\begin{cases} \{(a_i,b_i) :  i \in [k]\} = \Cyc_n(x) \\ a_1<a_2<\dots<a_k
\end{cases}
\qquand
\begin{cases}
\{ (c_i,d_i) : i \in [k]\} = \Cyc_n(x)
\\
d_1 < d_2 < \dots< d_k.
\end{cases}
\ee
We note a quick lemma before continuing. Recall that a permutation $w \in S_n$ is \emph{321-avoiding} if it never holds for $i<j<k$ in $[n]$ 
 that $w(i) >w(j) > w(k)$.

\begin{lemma}\label{321-lem}
We have $(a_i,b_i) = (c_i,d_i)$ for all $i \in[k]$ if and only if $x$ is 321-avoiding.
\end{lemma}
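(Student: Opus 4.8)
The plan is to reduce the statement to a \emph{concordance} property of the cycle set and then to connect that property to the absence of the decreasing patterns defining $321$-avoidance. First I would record that, since distinct cycles of $x$ have disjoint supports, the first coordinates $a_1,\dots,a_k$ are pairwise distinct, as are the second coordinates $d_1,\dots,d_k$. Thus the two lists in \eqref{param-eq} are nothing but the set $\Cyc_n(x)$ sorted by first coordinate and by second coordinate, respectively. Two such sortings of a finite set coincide term by term if and only if they induce the same total order, so I would observe that $(a_i,b_i)=(c_i,d_i)$ for all $i$ exactly when $\Cyc_n(x)$ is concordant in the sense that $a<a' \iff b<b'$ for all $(a,b),(a',b')\in\Cyc_n(x)$.

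It then remains to prove that concordance is equivalent to $x$ being $321$-avoiding. For the implication that $321$-avoidance forces concordance, I would argue the contrapositive: if concordance fails there are cycles with $a<a'$ but $b>b'$, and since $a\le b$ and $a'\le b'$ this forces the nesting $a<a'\le b'<b$; reading off the values $x(a)=b$, $x(a')=b'$, $x(b)=a$ at the positions $a<a'<b$ then exhibits the strictly decreasing triple $b>b'>a$, so $x$ is not $321$-avoiding. The converse, that concordance implies $321$-avoidance, is the step I expect to require the most care. Here I would again argue the contrapositive, assuming a pattern $i<j<k$ with $x(i)>x(j)>x(k)$ and producing a discordant pair of cycles. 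The key device is to classify each of $i,j,k$ as a weak excedance (a position $p$ with $x(p)\ge p$, which includes fixed points) or a deficiency (a position $p$ with $x(p)<p$); by pigeonhole two of the three positions lie in the same class.

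If two of them, say $p<p'$, are weak excedances, then their cycles are $(p,x(p))$ and $(p',x(p'))$ with $p<p'$ yet $x(p)>x(p')$, a discordant pair; if instead two of them are deficiencies, their cycles are $(x(p),p)$ and $(x(p'),p')$, and then $p<p'$ together with $x(p)>x(p')$ again orders the first and second coordinates oppositely. Either way concordance fails, which completes the equivalence. The only points needing a careful but routine check are that fixed points behave correctly (they count as weak excedances with cycle $(p,p)$, so the uniform statement $a<a'\Rightarrow b<b'$ still records the failure) and that the pigeonhole pair is automatically a decreasing pair, since the chosen positions increase while the pattern values strictly decrease. Beyond organizing this case analysis cleanly, I do not anticipate any genuine obstacle.
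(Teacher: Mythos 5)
Your proof is correct and follows essentially the same route as the paper: both reduce the termwise equality of the two sorted lists to the absence of a nested (equivalently, discordant) pair of cycles, and both extract such a pair from a $321$ pattern by a short case analysis on whether the positions are weak excedances or deficiencies. The only difference is cosmetic — you organize the converse via pigeonhole on the excedance class of the three positions, where the paper uses the trichotomy $x(i)\leq i$ or $x(j)\leq j$ or $i<j<x(j)<x(i)$.
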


\begin{proof}
We have $(a_i,b_i) = (c_i,d_i)$ for all $i \in [k]$ precisely when  
 no $(p,q),(p',q') \in \Cyc_n(x)$ exist with $p<p'\leq q'<q$, which is clearly the case if $x$ is 321-avoiding. 
Conversely, if there exist $i<j<k$ in $[n]$ with $x(i) > x(j) > x(k)$,
then either 
$x(i) \leq i$ or $x(j) \leq j$ or $i < j < x(j) < x(i)$, and in each case one easily constructs
pairs $(p,q),(p',q') \in \Cyc_n(x)$ with  $p<p'\leq q'<q$.
\end{proof}

Given any list of numbers $[e_1,e_2,\dots]$, we write $[[e_1,e_2,\dots]]$ for the sublist  formed by omitting all repeated entries
 after their initial occurrence, so that, for example, $[[1,2,1,1,3,3,2,4,2]] = [1,2,3,4]$.
With $(a_i,b_i)$ and $(c_i,d_i)$ as in \eqref{param-eq}, define $\hat0(x)$ and $\hat1(x)$  by
\[
 \hat0(x) = [[b_1,a_1,b_2,a_2,\dots,b_k,a_k]] \qquand \hat1(x) = [[d_1,c_1,d_2,c_2,\dots,d_k,c_k]].
 \]
We interpret these lists as permutations in $S_n$ written in one-line notation.
\begin{example} If  $x=(1,5)(2,4)
$  then
$
\hat0(x) =  [5,1,4,2,3]
 $
 and
 $
\hat1(x) =[3,4,2,5,1].
$
The minimal and maximal elements in  Figure~\ref{fig1} are $\hat0(x)$ and $\hat1(x)$ for $x=(1,6)(2,5)(3,4)$.
\end{example}

We require two lemmas to prove our main theorem about the partial order $\prec_\cA$.

\begin{lemma}\label{orderrev-lem}
Write $w_0$ for the longest element of $S_n$.
\ben
\item[(a)] If $x,y \in S_n$ then $x \preceq_\cA y$ if and only if $w_0yw_0 \preceq_\cA w_0 x w_0$.
\item[(b)] If $x \in \I(S_n)$  then $\hat 0(w_0x w_0) = w_0 \hat 1(x) w_0$ and  $\hat 1(w_0x w_0) = w_0 \hat 0(x) w_0$.
\een
\end{lemma}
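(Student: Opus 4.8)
For part (a), I would first record that conjugation by $w_0$ acts on one-line notation by reversing positions and complementing values: if $w=[w_1,\dots,w_n]$ then $w_0ww_0=[\,n+1-w_n,\ \dots,\ n+1-w_1\,]$. Write $\phi(w)=w_0ww_0$. Since $\phi$ is a bijection of $S_n$, it suffices to show that $\phi$ reverses each covering step of $\prec_\cA$, after which the statement for the transitive closure $\preceq_\cA$ follows formally. The one computation to carry out is this: suppose $u\prec_\cA u'$ is the covering step replacing a consecutive triple $[c,a,b]$ (with $a\le b\le c$) at positions $i,i+1,i+2$ by $[b,c,a]$. Putting $A=n+1-c$, $B=n+1-b$, $C=n+1-a$, one has $A\le B\le C$, and reversing-and-complementing shows that $\phi(u)$ carries the consecutive triple $[B,C,A]$ at the mirrored positions while $\phi(u')$ carries $[C,A,B]$ there, the two permutations agreeing elsewhere. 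As $[C,A,B]\prec_\cA[B,C,A]$ is itself a covering step, this gives $\phi(u')\prec_\cA\phi(u)$. I expect this part to be routine once the action of $\phi$ is written down.

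For part (b), I would begin with the cycle correspondence
\[
\Cyc_n(w_0xw_0)=\{\,(n+1-b,\ n+1-a):(a,b)\in\Cyc_n(x)\,\},
\]
which follows at once from $(w_0xw_0)(i)=n+1-x(n+1-i)$ together with the fact that $a\le b$ forces $n+1-b\le n+1-a$. Next I would isolate the common shape of the two extremal atoms: both $\hat 0(x)$ and $\hat 1(x)$ list the elements of each cycle in decreasing order (larger entry first), and differ only in how the cycles are ordered — $\hat 0$ sorts cycles by increasing minimum, $\hat 1$ by increasing maximum. The operation $\phi$ interchanges these conventions: under the correspondence the maximum $b$ of a cycle of $x$ becomes the minimum $n+1-b$ of the associated cycle of $w_0xw_0$, and since $b\mapsto n+1-b$ is order-reversing, the reversal built into $\phi$ converts the increasing-maximum ordering used by $\hat 1(x)$ into the increasing-minimum ordering of the conjugated cycles used by $\hat 0(w_0xw_0)$, while the complementation simultaneously turns each ``larger-first'' entry into the correct entry of the conjugated cycle. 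I would make this precise by comparing the length-$2k$ lists $[b_1,a_1,\dots,b_k,a_k]$ \emph{before} the repeat-removal operation $[[\,\cdot\,]]$ is applied, where the cycle correspondence and the sorting analysis give that the reverse-and-complement of the $\hat 1(x)$-list equals the $\hat 0(w_0xw_0)$-list verbatim.

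The main obstacle I anticipate is reconciling this with $[[\,\cdot\,]]$, since reversal changes which occurrence of a repeated letter is counted as ``first.'' Here I would use that the only repeats arise from fixed points of $x$, each contributing a \emph{consecutive} equal pair in the list with multiplicities, and that reverse-and-complement sends consecutive equal pairs to consecutive equal pairs; collapsing such a pair to a single letter is insensitive to orientation, so $[[\,\cdot\,]]$ commutes with $\phi$ on these lists. This yields $\hat 0(w_0xw_0)=w_0\hat 1(x)w_0$. Finally, the second identity $\hat 1(w_0xw_0)=w_0\hat 0(x)w_0$ follows by applying the first with $x$ replaced by $w_0xw_0$ and conjugating both sides by $w_0$, using $w_0^2=1$.
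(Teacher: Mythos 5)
Your proof is correct; the paper itself omits the argument, stating only that ``checking this lemma is a simple exercise, which we leave to the reader,'' and your writeup is exactly the intended verification: conjugation by $w_0$ acts as reverse-and-complement on one-line notation, sending each generating step $[c,a,b]\prec_\cA[b,c,a]$ to the generating step $[C,A,B]\prec_\cA[B,C,A]$ in the opposite direction, and interchanging the increasing-minimum and increasing-maximum orderings of cycles that define $\hat 0$ and $\hat 1$. Your handling of the repeat-removal $[[\,\cdot\,]]$ is also the right observation: since the cycles are disjoint, the only repeats are the consecutive equal pairs coming from fixed points, and collapsing these commutes with reverse-and-complement.
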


\begin{proof} Checking this lemma is a simple exercise, which we leave to the reader.
\end{proof}

\begin{lemma}\label{notmin-lem}
Let $x \in \I(S_n)$ and $u \in S_n$ with $u^{-1} \in \cA(x)$.
If  $u \neq \hat 0(x)$ (respectively, if $u\neq\hat 1(x)$) then $u$ is not minimal (respectively, maximal) in the order $\prec_\cA$.

\end{lemma}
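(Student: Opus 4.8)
The plan is to reduce the two halves of the statement to one another and then prove the ``minimal'' half by an induction that peels off the smallest value. First I would dispose of the duality. By Lemma~\ref{orderrev-lem}(a) the map $v \mapsto w_0 v w_0$ is an order-reversing bijection of $(S_n,\prec_\cA)$, and since conjugation by $w_0$ is the diagram automorphism $s_i \mapsto s_{n-i}$ (under which the Demazure product $\dact$, and hence $\cA$, is equivariant), it carries $\cA(x)$ bijectively to $\cA(w_0xw_0)$. Combined with Lemma~\ref{orderrev-lem}(b), which gives $w_0 \hat 1(x) w_0 = \hat 0(w_0 x w_0)$, the maximal case for $x$ is exactly the minimal case for $w_0 x w_0$. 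So it suffices to prove: if $u^{-1} \in \cA(x)$ and $u \neq \hat 0(x)$, then $u$ is not minimal. Next I would reformulate minimality combinatorially: since the entries of a permutation are distinct and $\prec_\cA$ is generated by replacing a consecutive triple $[c,a,b]$ (with $a<b<c$) by its larger neighbor $[b,c,a]$, the word $u$ is $\prec_\cA$-minimal if and only if it contains no consecutive triple in the pattern $231$, i.e. no index $i$ with $u_{i+2} < u_i < u_{i+1}$. Thus the lemma comes down to the claim that an element $u$ with $u^{-1} \in \cA(x)$ and no consecutive $231$ must equal $\hat 0(x)$.

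To prove this claim I would write $w = u^{-1} \in \cA(x)$ and read off from Corollary~\ref{atomSn-cor} the constraints on the positions $w(v)$ of the values $v$ in $u$: for each $2$-cycle $(a,b)$ of $x$ the larger value precedes the smaller ($w(b) < w(a)$); fixed points occur in increasing order; an ``aligned'' cycle (both endpoints larger) occurs entirely to the right; and a cycle or fixed point nested inside $(a,b)$ avoids the open position-interval $(w(b),w(a))$. I would then induct on $n$, peeling off the value $1$. If $1$ is a fixed point of $x$, conditions (a)--(c) of Corollary~\ref{atomSn-cor} alone force $w(1)$ to be smallest, so $u_1 = 1$. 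If $1$ lies in a $2$-cycle $(1,m)$ with $m = x(1)$, the alignment and nesting conditions force every other value to avoid the positions strictly between $m$ and $1$, so $1$ occupies the position immediately after $m$.

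The main obstacle -- and the only point where the no-$231$ hypothesis enters -- is showing that $m$ sits in position $1$. If it did not, the entry $v$ immediately to the left of $m$ would have to come from a cycle or fixed point nested inside $(1,m)$, forcing $1 < v < m$; then $v,m,1$ would occupy three consecutive positions in the pattern $231$, a contradiction. Hence $u$ begins with $[m,1]$ (or with $[1]$ in the fixed-point case). Deleting value $1$ (and $m$) and standardizing yields a permutation $u'$ whose inverse lies in $\cA(x')$, where $x'$ is $x$ with the cycle of $1$ removed: the positional conditions for $x$ restrict to exactly those for $x'$, and deleting a prefix creates no new consecutive $231$. By induction $u' = \hat 0(x')$, and since $\hat 0(x)$ is $[m,1]$ (resp.\ $[1]$) followed by the unstandardized copy of $\hat 0(x')$, we conclude $u = \hat 0(x)$.

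I expect the remaining work to be routine bookkeeping: verifying that restriction-and-standardization carries the characterization of $\cA(x)$ in Corollary~\ref{atomSn-cor} onto that of $\cA(x')$, which is immediate because the cycle of $1$ interacts with every other cycle only through the ``entirely-to-the-right'' or ``avoid-the-gap'' conditions, and checking that $\hat 0$ obeys the claimed recursion on cycles sorted by minimum. The one genuinely load-bearing step is the position-$1$ argument above, where the forbidden $231$ pattern is produced explicitly.
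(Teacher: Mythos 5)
Your proof is correct, but it takes a genuinely different route from the paper's. Both arguments begin with the same duality reduction via Lemma~\ref{orderrev-lem}, and both ultimately exhibit a consecutive triple $[u_{j-2},u_{j-1},u_j]$ with $u_j<u_{j-2}<u_{j-1}$ witnessing non-minimality; but the paper argues directly rather than by induction: it locates the first cycle $(a_l,b_l)$ at which $u$ deviates from $\hat 0(x)$, shows the position $j$ of $a_l$ satisfies $j\geq m+3$ (where $m$ is the length of the matching prefix), and then observes that since $a_l$ is the minimum of the suffix, the triple ending at $j$ is either the desired $231$ pattern or a $321$ pattern --- the latter being excluded because $[c,b,a]\sim_{\DemA}[c,a,b]$ would produce, via Theorem~\ref{equiv1-thm}, a shorter element of $\DemA(x)^{-1}$, contradicting $u^{-1}\in\cA(x)$. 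Your induction on $n$, peeling off the value $1$ and its partner $m=x(1)$, instead extracts everything from Corollary~\ref{atomSn-cor} alone: conditions (a)--(c) pin the value $1$ to the position immediately after $m$, and the no-$231$ hypothesis is invoked exactly once, to force $m$ into position $1$ (the entry to its left would necessarily come from a cycle nested in $(1,m)$ and hence lie strictly between $1$ and $m$, creating a $231$). This buys you independence from the Chinese-relation machinery of Theorem~\ref{equiv1-thm}, at the cost of the restriction-and-standardization bookkeeping (checking that Corollary~\ref{atomSn-cor} and the recursion for $\hat 0$ are compatible with deleting the first cycle), which you correctly flag and which does go through since the conditions in that corollary are pattern conditions preserved under order-preserving relabeling. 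Both proofs are sound; the paper's is shorter, yours is more self-contained at this point in the development.
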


\begin{proof}
These  assertions are equivalent by Lemma \ref{orderrev-lem}, so we only prove one.
Assume $u \neq \hat 0(x)$. 
Suppose $\Cyc_n(x)$ has $k$ elements and let $\{ (a_i,b_i) \}_{i \in[k]}$ be   as in \eqref{param-eq}.
By hypothesis there exists a smallest integer $l \in [k]$
such that $u$ does not begin with the  sequence $[[ b_1,a_1,\dots,b_l,a_l]]$.
 Observe that $u$ necessarily  does then begin with the (possibly empty) sequence $[[ b_1,a_1,\dots,b_{l-1},a_{l-1}]]$.
Define  $
i = u^{-1}(b_l) $ and $ j = u^{-1}(a_l)
$
so that $ u_i = b_l $ and $ u_j = a_l.$
We  claim that 
$ j>2
$
and $u_{j} < u_{j-2} < u_{j-1}$,
 from which  it is immediate that $u$ is not minimal in the order $\prec_\cA$.
 
To prove this claim, 
 let $m\geq 0$ be the length of the initial sequence $[[ b_1,a_1,\dots,b_{l-1},a_{l-1}]]$ in $u$, and note that  $m<i \leq j$ by Corollary \ref{atomSn-cor}(a) and that $a_l = \min \{u_{m+1},u_{m+2},\dots,u_n\}$.
 Since $u$ does not begin with $[[ b_1,a_1,\dots,b_l,a_l]]$, we cannot have $i=j=m+1$ or $(i,j) = (m+1,m+2)$,
 and in view of
Corollary \ref{atomSn-cor}(c)  we cannot have $i=j=m+2$ either.
Hence  we must have $j \geq m+3$,
  so either $u_{j} < u_{j-2} < u_{j-1}$ or  $u_j < u_{j-1} < u_{j-2}$. The first case is  what we want to show, while the second case cannot occur since then we would have $u \sim_{\DemA} us_{j-2} < u$, contradicting via Theorem \ref{equiv1-thm} our assumption that $u^{-1} \in \cA(x)$.
We conclude that part (a) holds, as desired.
\end{proof}

We  now prove the following statement.
As in Section \ref{duality-sect}, let $\cA(x)^{-1} = \{ u^{-1} : u \in \cA(x)\}$.
\begin{theorem}\label{prec1-thm}
Let $x \in \I(S_n)$. Then  
$\cA(x)^{-1}= \left\{ u \in S_n : \hat0(x) \preceq_\cA u \right\} = \left\{ u \in S_n : u \preceq_\cA \hat1(x)\right\}$.
\end{theorem}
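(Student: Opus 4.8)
My plan is to prove both equalities at once by recognizing $\cA(x)^{-1}$ as the minimal-length stratum of a single $\sim_\DemA$-equivalence class, and then showing that $\prec_\cA$ restricts to a bounded order on this stratum with bottom $\hat0(x)$ and top $\hat1(x)$. Write $C = \DemA(x)^{-1}$, which by Theorem \ref{equiv1-thm} is exactly one equivalence class of $\sim_\DemA$, and write $M = \cA(x)^{-1}$. Since inversion preserves Coxeter length, $M$ is precisely the set of minimal-length elements of $C$.

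The first key point is that $\prec_\cA$ preserves length. Because one-line representations of permutations have distinct entries, any masked consecutive triple $[c,a,b]$ with $a\le b\le c$ that occurs here in fact has $a<b<c$, so a single covering relation $[\,\cdots c,a,b\,\cdots\,]\prec_\cA[\,\cdots b,c,a\,\cdots\,]$ is exactly the transition $u\prec_\cA v$ of Lemma \ref{equiv1-lem}, where $\ell(u)=\ell(v)=\ell(z)+2$. Hence $u\preceq_\cA v$ forces $\ell(u)=\ell(v)$. Combining this with the inclusion $\prec_\cA\subseteq\sim_\DemA$ and Theorem \ref{equiv1-thm} yields a closure property: if $u\in M$ and $w$ is $\prec_\cA$-comparable to $u$, then $w\sim_\DemA u$ gives $w\in C$, while length preservation gives $\ell(w)=\ell(u)$, which is minimal in $C$, so $w\in M$. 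Thus $M$ contains the entire up-set and down-set under $\prec_\cA$ of each of its elements.

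With this closure established I run a finite descent/ascent argument. The set $M$ is finite and nonempty by Corollary \ref{all-nonempty-cor}, and $\prec_\cA$ is a partial order, so $M$ has $\prec_\cA$-minimal elements; by the closure property these coincide with the elements of $M$ that are minimal in all of $S_n$. Lemma \ref{notmin-lem} states that any $u\in M$ with $u\neq\hat0(x)$ fails to be minimal, so the only possible minimal element is $\hat0(x)$; since at least one minimal element exists, it must be $\hat0(x)$, which in particular shows $\hat0(x)\in M$ and that it is the unique minimum. Consequently every $u\in M$ admits a descending $\prec_\cA$-chain terminating at $\hat0(x)$, giving $\hat0(x)\preceq_\cA u$ and hence $M\subseteq\{u:\hat0(x)\preceq_\cA u\}$; conversely, if $\hat0(x)\preceq_\cA u$ then $u\in M$ by closure, since $\hat0(x)\in M$. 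This proves $M=\{u:\hat0(x)\preceq_\cA u\}$. The maximal half of Lemma \ref{notmin-lem} identifies $\hat1(x)$ as the unique $\prec_\cA$-maximal element of $M$, and the symmetric ascent argument gives $M=\{u:u\preceq_\cA\hat1(x)\}$, completing the proof.

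I do not expect a serious obstacle here: the two substantive inputs, namely the extremal elements supplied by Lemma \ref{notmin-lem} and the length-preservation of $\prec_\cA$, are already available, and the remainder is a formal consequence of finiteness together with the closure property. The one point requiring a little care is the length-preservation claim, where I must ensure the generating move permutes only values within a consecutive block and therefore leaves unchanged all inversions involving positions outside the block; rather than recounting inversions by hand I will invoke Lemma \ref{equiv1-lem}, which encodes exactly this fact.
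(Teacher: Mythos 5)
Your proof is correct and follows essentially the same route as the paper's: both rest on the closure of $\cA(x)^{-1}$ under $\prec_\cA$-comparability (via Theorem \ref{equiv1-thm} and the length-preservation in Lemma \ref{equiv1-lem}) together with the uniqueness of extremal elements from Lemma \ref{notmin-lem}. The only difference is cosmetic: the paper verifies $\hat0(x),\hat1(x)\in\cA(x)^{-1}$ directly from Corollary \ref{atomSn-cor}, whereas you deduce this from nonemptiness, finiteness, and the closure property, which is a valid shortcut.
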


\begin{proof}
By construction   
$\hat0(x)$ and $\hat 1(x)$ are respectively minimal and maximal relative to $\prec_\cA$.
In turn, it is straightforward using Corollary \ref{atomSn-cor} to show that $\hat 0(x)$ and $\hat 1(x)$ both belong to $\cA(x)^{-1}$.
Finally, if $u,v,w \in S_n$ are such that $v^{-1} \in \cA(x)$ and $u \preceq_\cA v\preceq_\cA w$, then both $u^{-1}$ and $w^{-1}$  belong to $ \cA(x)$ as well, since these elements belong to $\DemA(x)$ by 
Theorem \ref{equiv1-thm} and have the same length as $v^{-1}$ by Lemma \ref{equiv1-lem}.
Since $\preceq_\cA$ thus preserves membership in $ \cA(x)^{-1}$ and since $\hat 0(x)$ and $\hat 1(x)$ are the unique minimal and maximal elements of this set by Lemma \ref{notmin-lem}, the theorem follows.
\end{proof}

It follows from  Theorem \ref{prec1-thm} that $\cA(x)^{-1}= \left\{ u \in S_n : \hat0(x) \preceq_\cA u\preceq_\cA\hat 1(x)\right\}$.
This fact lets us   classify which involutions in $S_n$ have exactly one atom.

\begin{corollary}\label{prec1-cor} If $x \in \I(S_n)$ then $|\cA(x)| = 1$ if and only if $x$ is 321-avoiding
\end{corollary}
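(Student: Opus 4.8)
The plan is to read everything off from Theorem~\ref{prec1-thm}. Since $w \mapsto w^{-1}$ is a bijection we have $|\cA(x)| = |\cA(x)^{-1}|$, and by Theorem~\ref{prec1-thm} together with the remark immediately following it, the set $\cA(x)^{-1}$ is exactly the closed interval $\{u \in S_n : \hat0(x) \preceq_\cA u \preceq_\cA \hat1(x)\}$ in the partial order $\preceq_\cA$. An interval $[p,q]$ in a partial order is a singleton precisely when $p = q$, so the whole problem reduces to showing that $\hat0(x) = \hat1(x)$ if and only if $x$ is $321$-avoiding.

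For the forward implication I would invoke Lemma~\ref{321-lem} directly: if $x$ is $321$-avoiding then $(a_i,b_i) = (c_i,d_i)$ for all $i \in [k]$, so the two lists $[b_1,a_1,\dots,b_k,a_k]$ and $[d_1,c_1,\dots,d_k,c_k]$ of \eqref{param-eq} that define $\hat0(x)$ and $\hat1(x)$ are literally identical, whence $\hat0(x) = \hat1(x)$.

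The substance is the converse, which I would prove in contrapositive form. The key preliminary observation is that no cancellation occurs in the bracket notation across distinct cycles: the values contributed by different cycles are disjoint, so $\hat0(x)$ is simply the one-line word obtained by listing, over the cycles of $x$ in increasing order of smaller coordinate, the pair $b_i, a_i$ for each $2$-cycle and the single value $a_i$ for each fixed point, while $\hat1(x)$ is the analogous word taken in increasing order of the larger coordinate. Now suppose $x$ is not $321$-avoiding. By the proof of Lemma~\ref{321-lem} there are cycles $(p,q),(p',q') \in \Cyc(x)$ with $p < p' \leq q' < q$, so in particular $(p,q)$ is a genuine $2$-cycle and $q \neq q'$. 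In $\hat0(x)$ the block of $(p,q)$ lies entirely to the left of the block of $(p',q')$ because $p < p'$, so the value $q$ precedes the value $q'$; in $\hat1(x)$ the ordering is by larger coordinate and $q' < q$, so $q'$ precedes $q$. The two values thus occur in opposite relative orders, forcing $\hat0(x) \neq \hat1(x)$.

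I expect the only delicate point — and the step I would write out most carefully — to be the claim that no cross-cycle cancellation happens in the bracketed lists, since the entire relative-order argument hinges on each of $q$ and $q'$ genuinely appearing (exactly once) in both one-line words; this is immediate from the disjointness of the cycles of an involution but is worth stating explicitly. Everything else is bookkeeping, and the corollary then follows by chaining the two equivalences: $|\cA(x)| = 1 \iff \hat0(x) = \hat1(x) \iff x$ is $321$-avoiding.
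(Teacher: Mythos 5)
Your proof is correct and follows essentially the same route as the paper: reduce via Theorem~\ref{prec1-thm} to the equivalence $\hat0(x)=\hat1(x) \iff x$ is $321$-avoiding, and settle that with Lemma~\ref{321-lem}. The paper's own proof is just two sentences and leaves implicit the step you spell out (that $\hat0(x)=\hat1(x)$ forces $(a_i,b_i)=(c_i,d_i)$ for all $i$, or equivalently your relative-order argument for $q$ and $q'$); your added detail is accurate and harmless.
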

 
\begin{proof}
The theorem implies that $|\cA(x)| = 1$ if and only if $x \in \I(S_n)$ is such that $\hat 0(x) = \hat 1(x)$.
By Lemma \ref{321-lem}, this occurs if and only if $x$ is 321-avoiding.
\end{proof}

Using Corollary \ref{atomSn-cor}, we may show that the poset  $\(\cA(x)^{-1},\prec_\cA\)$ has some additional structure, as  Figure \ref{fig1} suggests. To describe this,
fix $x \in \I(S_n)$ and define 
\[\L_x = \{ a \in [n] : a \leq x(a)\}
\qquand
\R_x = \{b \in [n] : x(b) < b\}
\] so that $[n] = \L_x\sqcup \R_x$, where we write $\sqcup$ to denote disjoint union.
Also let
\[
\cZ_x = \left\{ (a,a') \in \L_x\times \L_x : a>a'\right\} \sqcup\left \{ (b,b') \in \R_x\times \R_x: b<b'\right\}.
\]
Finally, for $u \in S_n$  define 
$\ainv(u;x) = \left\{ (p,q) \in \cZ_x : u^{-1}(p) < u^{-1}(q)\right \}.$
We refer to $\ainv(u;x)$ as the \emph{$\cA$-inversion set} of $u$, relative to $x$.

\begin{example}
If $x=[6,5,4,3,2,1]$ then for $u = [5,6,1,2,4,3]$ and $v = [4,5,6,1,2,3]$  we have $\ainv(u;x) = \{(5,6)\}$ and $\ainv(v;x) = \{ (4,5),(5,6),(4,6)\}$.
 Note from Figure \ref{fig1} that   $u \not \prec_\cA v$.
\end{example}

Although containment of $\cA$-inversion sets does not imply that atoms are  comparable in $\prec_\cA$, the converse implication does hold. Recall that if $(\cP,<)$ is a poset, then  one says that  $y\in \cP$ \emph{covers} $x \in \cP$ when $\{ u\in \cP: x \leq u < y\}= \{x\}$.

\begin{lemma}\label{grade-lem}
Let $x \in \I(S_n)$ and $u,v \in \cA(x)^{-1}$ and suppose  $u\prec_\cA v$. Then 
$\ainv(u;x) \subset \ainv(v;x)$, and if $v$ covers $u$ then $
|\ainv(v;x)| = |\ainv(u;x)|+1.
$
\end{lemma}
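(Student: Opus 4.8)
The plan is to reduce everything to a single generating step of $\prec_\cA$ and then to track exactly how $\ainv(\,\cdot\,;x)$ changes across it. First I would record that, by Theorem~\ref{prec1-thm} and the remark following it, $\cA(x)^{-1}$ is the $\prec_\cA$-interval $[\hat0(x),\hat1(x)]$, hence convex in $(\,\cdot\,,\prec_\cA)$. Consequently, if $v$ covers $u$, then any chain of generating moves $u=w_0\prec_\cA w_1\prec_\cA\dots\prec_\cA w_m=v$ witnessing $u\prec_\cA v$ has every $w_j\in\cA(x)^{-1}$ (each lies between $u$ and $v$), so $m=1$: a cover is a single generating move. For the containment $\ainv(u;x)\subseteq\ainv(v;x)$ with $u\prec_\cA v$ arbitrary, I would chain the one-step containments along such a sequence. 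Thus it suffices to prove, for a single generating move $u\to v$ between $u,v\in\cA(x)^{-1}$, that $\ainv(u;x)\subseteq\ainv(v;x)$ and $|\ainv(v;x)|=|\ainv(u;x)|+1$.

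Next I would localize the change. By Lemma~\ref{equiv1-lem} the move occupies a window of positions $i,i+1,i+2$ on three values $\alpha<\beta<\gamma$, arranged as $u=[\dots,\gamma,\alpha,\beta,\dots]$ and $v=[\dots,\beta,\gamma,\alpha,\dots]$, so that $u^{-1}(\gamma)=i$, $u^{-1}(\alpha)=i+1$, $u^{-1}(\beta)=i+2$ while $v^{-1}(\beta)=i$, $v^{-1}(\gamma)=i+1$, $v^{-1}(\alpha)=i+2$, with $u^{-1}=v^{-1}$ on all other values. A short check shows that a pair $(p,q)\in\cZ_x$ with at most one coordinate in $\{\alpha,\beta,\gamma\}$ keeps its membership in $\ainv$, since the coordinate outside the window sits at a position either $<i$ or $>i+2$ in both $u$ and $v$. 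So only pairs with both coordinates in $\{\alpha,\beta,\gamma\}$ can change, and comparing the orders $\gamma,\alpha,\beta$ and $\beta,\gamma,\alpha$ isolates exactly four candidates: $(\gamma,\beta)$ and $(\alpha,\beta)$ (present in $u$, absent in $v$) and $(\beta,\gamma)$ and $(\beta,\alpha)$ (absent in $u$, present in $v$). Whether each lies in $\cZ_x$ is dictated by $\L_x/\R_x$-membership: $(\gamma,\beta)\in\cZ_x\iff\beta,\gamma\in\L_x$; $\ (\alpha,\beta)\in\cZ_x\iff\alpha,\beta\in\R_x$; $\ (\beta,\gamma)\in\cZ_x\iff\beta,\gamma\in\R_x$; and $(\beta,\alpha)\in\cZ_x\iff\alpha,\beta\in\L_x$.

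The crux is then a case analysis, using Corollary~\ref{atomSn-cor} for the atoms $w=u^{-1}$ and $v^{-1}$, establishing four facts: (1) $\beta,\gamma$ are not both in $\L_x$; (2) $\alpha,\beta$ are not both in $\R_x$; (3) $\beta\in\R_x\Rightarrow\gamma\in\R_x$; and (4) $\beta\in\L_x\Rightarrow\alpha\in\L_x$. Facts (1)--(2) kill both potential removals, so $\ainv(u;x)\subseteq\ainv(v;x)$; facts (3)--(4) show that exactly one addition occurs, namely $(\beta,\gamma)$ when $\beta\in\R_x$ (which then forces $\gamma\in\R_x$) and $(\beta,\alpha)$ when $\beta\in\L_x$ (which then forces $\alpha\in\L_x$). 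Since $\beta$ lies in exactly one of $\L_x,\R_x$, these two possibilities are mutually exclusive and precisely one always occurs, giving the net gain of one pair. Each implication I would prove by contradiction: negating it produces two cycles of $x$ on the relevant values, and applying condition (c) of Corollary~\ref{atomSn-cor} to the appropriately ordered cycle-pairs forces an inequality such as $w(\beta)<w(\gamma)$, i.e.\ $i+2<i$, a contradiction; in the nested sub-cases where (c) does not apply (for instance $\beta<\gamma<\beta'$ with $\gamma$ the smaller endpoint trapped inside the cycle $\{\beta,\beta'\}$), I would instead invoke condition (b), using that the middle value $\alpha$ is the smallest of the three and is pinned to the extreme window position, to exhibit a forbidden triple $w(\text{closer})<w(t)<w(\text{opener})$.

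The main obstacle is this last case analysis. Once the localization and the four target facts are isolated, the bookkeeping is routine, but the care lies in correctly matching each candidate pair to the $\L_x/\R_x$-status of $\beta$ and in handling the nested configurations, where condition (c) is unavailable and one must fall back on the ``no forbidden triple'' condition (b); keeping the direction of every forced inequality straight is where errors would most easily creep in.
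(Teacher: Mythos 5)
Your overall strategy is the paper's: reduce to a single generating move (the paper is terser about why a cover is a single move, but your convexity argument via Theorem~\ref{prec1-thm} is the right justification), localize the change in $\ainv(\,\cdot\,;x)$ to the four pairs supported on $\{\alpha,\beta,\gamma\}$, and then pin down the $\L_x/\R_x$-membership of the three values by contradiction from Corollary~\ref{atomSn-cor}. Your four facts (1)--(4) are, given that $\beta$ lies in exactly one of $\L_x,\R_x$, logically equivalent to the single claim the paper proves, namely $\alpha\in\L_x$ and $\gamma\in\R_x$; the bookkeeping of which pair is added is identical. Facts (3) and (4) are clean: the relevant cycles are automatically non-nested (one value is in $\L_x$, the other in $\R_x$), so condition (c) applies directly. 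Fact (1)'s nested sub-case also works as you describe, with $t=\gamma$ as the forbidden middle value in the cycle through $\beta$.

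The one genuine soft spot is the nested sub-case of your fact (2): suppose $\alpha,\beta\in\R_x$ with $x(\beta)<x(\alpha)<\alpha<\beta$. Condition (c) does not apply to nested cycles, and condition (b) applied to the cycle $(x(\beta),\beta)$ with $t=\alpha$ requires $w(\beta)<w(\alpha)$, i.e.\ $i+2<i+1$, which is false; with $t=x(\alpha)$ it requires $w(x(\alpha))<w(x(\beta))$, which you have no control over. So the two cycles through $\alpha$ and $\beta$ alone do not yield a contradiction, contrary to what your sketch suggests (``using that the middle value $\alpha$ \ldots to exhibit a forbidden triple''). The repair is to first use your fact (3) to conclude $\gamma\in\R_x$, check that $x(\gamma)\notin\{\alpha,\beta\}$ (each possibility contradicts $\alpha\in\R_x$ or $\beta\in\R_x$), so $w(x(\gamma))\geq i+3$, and then play the cycles through $\beta$ and $\gamma$ against each other: (c) if they are unnested, (b) with $t=\beta$ inside the cycle through $\gamma$ if they are nested. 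The paper avoids this issue entirely by formulating its claim in terms of $\alpha$ and $\gamma$ (positions $i+1$ and $i$, where $w(\gamma)<w(\alpha)$ points the right way for condition (b)); if you keep your $\beta$-centered formulation you must import the third cycle for this one sub-case.
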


\begin{proof}
Suppose $u,v \in \cA(x)^{-1}$ are such that $v$ covers $u$ in $\prec_{\cA}$. By definition, there exists  $i \in [n-2]$ such that $u_j=v_j$ for $j \notin \{i,i+1,i+2\}$
and such that
$[u_i,u_{i+1},u_{i+2} ]= [c,a,b]$ and $[v_i,v_{i+1},v_{i+2}] = [b,c,a]$ for  some $a<b<c$ in $[n]$.
We claim that $a \in \L_x$ and $c \in \R_x$. This claim suffices to prove the lemma since 
\begin{itemize}
\item If $a,b \in \L_x$ and $c\in \R_w$, then $\ainv(v;x) = \ainv(u;x) \sqcup \{ (b,a)\}$.
\item  If $a \in \L_x$ and $b,c \in \R_w$,  then $\ainv(v;x) = \ainv(u;x)\sqcup \{(b,c)\}$.
\end{itemize}
To proceed, we appeal to Corollary \ref{atomSn-cor} and argue by contradiction.
Define $a' = x(a)$ and $c'=x(c)$.
We cannot have  $a \in \R_x$ and $c \in \L_x$, since then $(a',a),(c,c') \in \Cyc(x)$ and $u^{-1}(c) < u^{-1}(a)$, which contradicts 
Corollary
 \ref{atomSn-cor}(c) as $a<c$. 
Suppose instead that both $a,c \in \L_x$. Then $(a,a'),(c,c') \in \Cyc(x)$ and  it follows by  Corollary \ref{atomSn-cor}(a) that 
 \[ u^{-1}(a') < u^{-1}(c)  < u^{-1}(a)
  \qquand
  u^{-1}(c') < u^{-1}(c)
 < u^{-1}(a).\]
In this case, we cannot have $a'<c'$ as this would again contradict Corollary \ref{atomSn-cor}(c), but if $c'<a'$ then  $a <c\leq c' <a'$ which   contradicts  Corollary \ref{atomSn-cor}(b). We conclude that $c \in \R_x$. Suppose finally that both $a,c \in \R_x$. Then
$(a',a),(c',c) \in \Cyc(x)$ and  by  Corollary \ref{atomSn-cor}(a) we have
 \[ u^{-1}(c) < u^{-1}(a)  < u^{-1}(a')
  \qquand
  u^{-1}(c) < u^{-1}(a)
 < u^{-1}(c').\]
As in the previous case, one argues that  $a'<c'$ contradicts Corollary \ref{atomSn-cor}(c) while $c'<a'$ contradicts  Corollary \ref{atomSn-cor}(b), so it must hold that $a \in \L_x$ and $c \in \R_x$,  which completes the proof.
\end{proof}

A poset $(\cP,<)$ is \emph{graded} if there exists a function $\rank : \cP \to \NN$ such that $\rank(y) = \rank(x) +1$ whenever $y$ covers $x$. A poset if \emph{bounded} if it has a  greatest element and a least element.

\begin{proposition}\label{graded-prop} For each $x \in \I(S_n)$, the poset $\(\cA(x)^{-1}, \prec_\cA\)$ is graded and bounded.
\end{proposition}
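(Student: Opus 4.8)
The plan is to read off both properties almost directly from the two results immediately preceding this proposition, namely Theorem \ref{prec1-thm} and Lemma \ref{grade-lem}, so the work consists mostly of assembling these pieces against the definitions of \emph{bounded} and \emph{graded}.

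First I would establish boundedness. Theorem \ref{prec1-thm} asserts that $\cA(x)^{-1} = \left\{u \in S_n : \hat 0(x) \preceq_\cA u\right\} = \left\{u \in S_n : u \preceq_\cA \hat 1(x)\right\}$, and (as observed in the paragraph following that theorem) these two equalities combine to give $\cA(x)^{-1} = \left\{u \in S_n : \hat 0(x) \preceq_\cA u \preceq_\cA \hat 1(x)\right\}$. In particular $\hat 0(x)$ and $\hat 1(x)$ both lie in $\cA(x)^{-1}$ and serve respectively as a least and a greatest element for the restriction of $\prec_\cA$ to this set. Hence $\(\cA(x)^{-1}, \prec_\cA\)$ is bounded.

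Next I would verify gradedness by exhibiting an explicit rank function. The natural candidate is $u \mapsto |\ainv(u;x)|$, the cardinality of the $\cA$-inversion set relative to $x$. By Lemma \ref{grade-lem}, whenever $v$ covers $u$ in $\(\cA(x)^{-1}, \prec_\cA\)$ one has $|\ainv(v;x)| = |\ainv(u;x)| + 1$; thus setting $\rank(u) = |\ainv(u;x)|$ produces a function $\cA(x)^{-1} \to \NN$ that increases by exactly one across each covering relation, which is precisely what the definition of a graded poset given above requires.

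There is essentially no obstacle remaining at this stage: the substantive content --- that $\hat 0(x)$ and $\hat 1(x)$ bound the atoms, and that the $\cA$-inversion count jumps by exactly one across covers --- has already been extracted in Theorem \ref{prec1-thm} and Lemma \ref{grade-lem}, whose own proofs rest on the combinatorial classification in Corollary \ref{atomSn-cor}. The only small point worth stating carefully is that the notion of graded adopted here demands merely the existence of a rank function respecting covers, so no separate argument that all maximal chains share a common length is needed; that consequence, if desired, would follow automatically from boundedness together with the rank function just constructed.
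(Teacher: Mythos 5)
Your proof is correct and follows exactly the paper's argument: boundedness from Theorem \ref{prec1-thm} via the extremal elements $\hat 0(x)$ and $\hat 1(x)$, and gradedness from Lemma \ref{grade-lem} with the rank function $u \mapsto |\ainv(u;x)|$. The paper's own proof is a one-line citation of these same two results with the same choice of rank function.
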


\begin{proof}
This follows  from Theorem \ref{prec1-thm} and Lemma \ref{grade-lem}, taking $\rank(u) =| \ainv(u;x)|$.
\end{proof}

We suspect that  $\(\cA(x)^{-1}, \prec_\cA\)$ has  even stronger properties than what is given in this proposition; e.g., examples suggest that the poset is always a lattice.

\begin{remark}
The  \emph{inversion set} of  $u \in S_n$ (in one formulation) is  the set $\inv(u) = \{ (p,q) \in [n]\times[n] : p>q\text{ and }u^{-1}(p)<u^{-1}(q)\}$.
It is well-known that a permutation is uniquely determined by its inversion set, and
one can show (as a slightly involved exercise from Corollary  \ref{atomSn-cor}) that if $x \in \cI(S_n)$ is fixed then  each $u \in \cA(x)$ is  uniquely determined likewise by its $\cA$-inversion set relative to $x$. \end{remark}

 \subsection{Fixed-point-free variants}

Fix $n \in \PP$ and, as usual, let $\wfpf{n} = s_1s_3s_5\cdots s_{2n-1} \in S_{2n}$.
Recall that for each $x \in \Ifpf(S_{2n})$  we define
$ \cAfpf(x) = \cA(\wfpf{n},x)$ and $ \DemAfpf(x) = \DemA(\wfpf{n},x).$
There are  analogous  results about these modified sets of atoms.
To describe these, we will need to 
 define some binary relations on  integer sequences of even length.  We introduce some convenient notation for this. Suppose $\sim$ is such a relation.
If $j\in \NN$  and $X_1,X_2,\dots,X_{2j}$ and $Y_1,Y_2,\dots,Y_{2j}$ are any numbers, then we write 
\[ [ \ \dbldots\ X_1,X_2,\dots,X_{2j}\ \dbldots\ ] \sim [ \ \dbldots\ Y_1,Y_2,\dots,Y_{2j}\ \dbldots\ ] \]
to mean that $[ \alpha_1,\dots,\alpha_{2i}, X_1,X_2,\dots,X_{2j}, \beta_1,\dots,\beta_{2k}] \sim [ \alpha_1,\dots,\alpha_{2i}, Y_1,Y_2,\dots,Y_{2j}, \beta_1,\dots,\beta_{2k}]
$
holds 
for all even-length sequences  $[\alpha_1,\dots,\alpha_{2i}]$ and $[\beta_1,\dots,\beta_{2k}]$. Here, the doubled dots in the symbol $\dbldots$ are meant to indicate that the ellipsis masks an even number of elements.

\begin{definition}\label{equiv2-def}
Define $\sim_{\DemAfpf}$ as the transitive and symmetric closure of the   relation on $2n$-element integer sequences
with $ [\ \dbldots\ a,b\ \dbldots\ ] \sim_{\DemAfpf} [\ \dbldots\ b,a\ \dbldots\ ]$ for all $a,b$ and with
\[
[\  \dbldots\ a,d,b,c\ \dbldots\ ] 
\sim_{\DemAfpf} [\ \dbldots\ b,c,a,d\ \dbldots\ ]
\sim_{\DemAfpf} [\ \dbldots\ b,d,a,c\ \dbldots\ ]
\sim_{\DemAfpf} [\ \dbldots\ c,d,a,b\ \dbldots\ ]
\]
for all $a,b,c,d$ with $a\leq b\leq c\leq d$.
\end{definition}

\begin{example}
One checks that
$
{ [1,5,4,6,2,3] }
\sim_{\DemAfpf} [1,5,3,6,2,4] 
\sim_{\DemAfpf} [1,5,2,6,3,4]  
\sim_{\DemAfpf} [1,5,3,4,2,6]
\sim_{\DemAfpf} [3,4,1,5,2,6]
\sim_{\DemAfpf} [3,5,1,4,2,6]
\sim_{\DemAfpf} [4,5,1,3,2,6] .
$
The full  equivalence class under $\sim_{\DemAfpf}$ of these  elements has size 56; one reaches the remaining elements  by applying  equivalences of the form $ [\ \dbldots\ a,b\ \dbldots\ ] \sim_{\DemAfpf} [\ \dbldots\ b,a\ \dbldots\ ]$ to  the elements listed.  
\end{example}

Given $I = [i_1,i_2,\dots,i_k]\subset [n]$ and $w \in S_n$,   set $w(I) = [w_{i_1}, w_{i_2},\dots,w_{i_k}]$.

\begin{lemma} \label{equiv2-lem}
Fix $t,u,v,w \in S_{2n}$ and $i \in [n-1]$. Let $I = [2i-1,2i,2i+1,2i+2]$ and suppose $t_j=u_j = v_j = w_j$ for all $j \notin I$.
The following are then equivalent:
\ben
\item[(a)] 
It holds that
$t(I) = [a,d,b,c] 
$
and
$u(I)= [b,c,a,d]$
and
$v(I)= [b,d,a,c]$
and
$w(I)= [c,d,a,b]$
for some $a,b,c,d \in [2n]$ with $a<b<c<d$.

\item[(b)] There exists $z \in S_{2n}$ with $\ell(t) = \ell(u) = \ell(z)+2$ and $\ell(v) = \ell(z) + 3$ and $\ell(w) = \ell(z)+4$ 
such that 
$ t =zs_{2i+1} s_{2i}$ and $ u = z s_{2i-1} s_{2i} $ and $ v = z s_{2i-1} s_{2i+1} s_{2i} $ and $w= z s_{2i}s_{2i-1} s_{2i+1} s_{2i}   .$

\een
\end{lemma}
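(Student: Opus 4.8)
The plan is to prove this lemma in exact parallel with Lemma~\ref{equiv1-lem}, treating the four consecutive positions $I = [2i-1,2i,2i+1,2i+2]$ as a window on which the generators $s_{2i-1}, s_{2i}, s_{2i+1}$ act as the adjacent transpositions of a copy of $S_4$. Throughout I will use the standard fact that right multiplication of a permutation by $s_j$ swaps the entries in positions $j$ and $j+1$ of its one-line notation, together with \eqref{des-sn}, which says $s_j$ is a right descent of a permutation precisely when its $j$th entry exceeds its $(j+1)$st. Since all four factorizations in (b) involve only generators supported on $I$, and since $t,u,v,w$ agree off $I$, every length computation reduces to counting inversions among the four window entries.

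For the direction (a) $\Rightarrow$ (b), I would define $z \in S_{2n}$ by setting $z_j = t_j$ for $j \notin I$ and $z(I) = [a,b,c,d]$, the increasing arrangement of the window values. A direct computation of the effect of the prescribed products of simple transpositions then gives $(z s_{2i+1}s_{2i})(I) = [a,d,b,c]$, $(z s_{2i-1}s_{2i})(I) = [b,c,a,d]$, $(z s_{2i-1}s_{2i+1}s_{2i})(I) = [b,d,a,c]$, and $(z s_{2i}s_{2i-1}s_{2i+1}s_{2i})(I) = [c,d,a,b]$, which are exactly $t(I), u(I), v(I), w(I)$; as these permutations also agree with $t,u,v,w$ outside $I$, the four identities of (b) hold. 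Because $z(I)$ is increasing, each simple transposition appended in these words acts as a right ascent at the relevant stage, so \eqref{des-sn} yields the stated length increments $\ell(z)+2$, $\ell(z)+2$, $\ell(z)+3$, and $\ell(z)+4$.

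For the converse (b) $\Rightarrow$ (a), I would read the length hypotheses as saying that each of the words $s_{2i+1}s_{2i}$, $s_{2i-1}s_{2i}$, $s_{2i-1}s_{2i+1}s_{2i}$, and $s_{2i}s_{2i-1}s_{2i+1}s_{2i}$ is reduced when appended to $z$, so that every initial segment raises length by one. Applying this to the first letter of each word forces, via \eqref{des-sn}, the inequalities $z_{2i-1} < z_{2i}$ (from $u$), $z_{2i} < z_{2i+1}$ (from $w$), and $z_{2i+1} < z_{2i+2}$ (from $t$); hence $z(I)$ is strictly increasing. Writing $a < b < c < d$ for these four entries and running the same computation as in the forward direction recovers the one-line forms of $t,u,v,w$ on $I$ required by (a).

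I expect the only real obstacle to be the bookkeeping for the length-four word $z s_{2i}s_{2i-1}s_{2i+1}s_{2i}$: one must check that each of its four partial products is a genuine ascent and track the window image through all four swaps, where the commutation $s_{2i-1}s_{2i+1} = s_{2i+1}s_{2i-1}$ is convenient. All of this is routine, and, as with Lemma~\ref{equiv1-lem}, the verifications are mechanical consequences of \eqref{des-sn}; no ideas beyond the $S_4$ window analysis are needed.
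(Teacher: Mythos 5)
Your argument is correct and is exactly the argument the paper intends: its proof of this lemma is simply ``similar to that of Lemma \ref{equiv1-lem}; we leave the details to the reader,'' and your window-on-$I$ analysis via \eqref{des-sn} (forward direction by direct computation from the increasing arrangement $z(I)=[a,b,c,d]$, converse by reading off $z_{2i-1}<z_{2i}<z_{2i+1}<z_{2i+2}$ from the first letters of the four reduced factorizations) is the natural filling-in of those details. The computations you defer all check out, including the four-step tracking of $z s_{2i}s_{2i-1}s_{2i+1}s_{2i}$.
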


\begin{proof}
The proof is similar to that of Lemma \ref{equiv1-lem}; we leave the details to the reader. 
\end{proof}

We have this analogue of Theorem \ref{equiv1-thm}.

\begin{theorem}\label{equiv2-thm}
The sets $\DemAfpf(x)^{-1} = \left\{ u^{-1} : u \in \DemAfpf(x)\right\} $ for $x \in \Ifpf(S_{2n})$ are the distinct  equivalence classes in $S_{2n}$ under the relation $\sim_{\DemAfpf}$.
\end{theorem}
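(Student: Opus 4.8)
The plan is to follow the proof of Theorem~\ref{equiv1-thm} line by line, replacing the invariant $u\mapsto \sidem{1}{u^{-1}}$ by $u\mapsto \sidem{\wfpf{n}}{u^{-1}}\in\Ifpf(S_{2n})$. Concretely, I would (i) check that each generating relation of $\sim_{\DemAfpf}$ preserves the value $\sidem{\wfpf{n}}{u^{-1}}$, so that every $\sim_{\DemAfpf}$-class lies inside a single set $\DemAfpf(\theta)^{-1}$; (ii) verify that the sets $\DemAfpf(\theta)^{-1}$ for $\theta\in\Ifpf(S_{2n})$ are nonempty and partition $S_{2n}$, so there are exactly $|\Ifpf(S_{2n})|$ of them; and (iii) bound the number of $\sim_{\DemAfpf}$-classes above by $|\Ifpf(S_{2n})|$, which together with (i) and (ii) forces each $\DemAfpf(\theta)^{-1}$ to be a single class. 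Since one-line words of permutations have distinct entries, only the strict cases $a<b<c<d$ of the defining relations arise, so Lemma~\ref{equiv2-lem} is available throughout.

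For step (i) there are two families of moves. A transposition $[\,\cdots a,b\,\cdots\,]\sim_{\DemAfpf}[\,\cdots b,a\,\cdots\,]$ acting at positions $2i+1,2i+2$ multiplies on the right by $s_{2i+1}$; writing $u$ for the word whose entries there are increasing and $v=us_{2i+1}$ for the other, we have $v^{-1}=s_{2i+1}\dem u^{-1}$, and since $s_{2i+1}\in\DesR(\wfpf{n})$ the identity $\sidem{x}{(y\dem z)}=\sidem{\sidem{x}{y}}{z}$ gives $\sidem{\wfpf{n}}{v^{-1}}=\sidem{\sidem{\wfpf{n}}{s_{2i+1}}}{u^{-1}}=\sidem{\wfpf{n}}{u^{-1}}$. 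For the quadruple moves I would invoke Lemma~\ref{equiv2-lem}(b): writing $t,u,v,w$ as there with common factor $z$, the same identity reduces the claim $\sidem{\wfpf{n}}{t^{-1}}=\sidem{\wfpf{n}}{u^{-1}}=\sidem{\wfpf{n}}{v^{-1}}=\sidem{\wfpf{n}}{w^{-1}}$ to the single equality
\[
\sidem{\sidem{\wfpf{n}}{s_{2i}}}{s_{2i+1}}=\sidem{\sidem{\wfpf{n}}{s_{2i}}}{s_{2i-1}}=\sidem{\sidem{\sidem{\wfpf{n}}{s_{2i}}}{s_{2i+1}}}{s_{2i-1}}=\sidem{\sidem{\sidem{\sidem{\wfpf{n}}{s_{2i}}}{s_{2i+1}}}{s_{2i-1}}}{s_{2i}}.
\]
Because the action by $s_{2i-1},s_{2i},s_{2i+1}$ disturbs only the block $\{2i-1,2i,2i+1,2i+2\}$, on which $\wfpf{n}$ restricts to $(2i-1,2i)(2i+1,2i+2)$, this is the purely local $S_4$ computation $\sidem{\sidem{\wfpf{2}}{s_2}}{s_3}=\sidem{\sidem{\wfpf{2}}{s_2}}{s_1}=(1,4)(2,3)=w_0$ followed by the descent absorptions $\sidem{w_0}{s_1}=\sidem{w_0}{s_2}=w_0$, so all four expressions land on the longest element of the block.

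For step (ii), the Demazure action preserves fixed-point-freeness: if $x\in\Ifpf(S_{2n})$ and $s\in\DesR(x)$ then $\sidem{x}{s}=x$, while if $s\notin\DesR(x)$ then $sx\neq xs$ for FPF $x$, so $\sidem{x}{s}=sxs$ is again a fixed-point-free involution. Hence $\sidem{\wfpf{n}}{u^{-1}}\in\Ifpf(S_{2n})$ always, and the $\DemAfpf(\theta)^{-1}$ partition $S_{2n}$. Nonemptiness of every $\DemAfpf(\theta)$ amounts to $\wfpf{n}\leq_{T,\id}\theta$, i.e.\ to $\wfpf{n}$ being the minimum of $(\Ifpf(S_{2n}),\leq_{T,\id})$; I would prove this by showing that any FPF $\theta\neq\wfpf{n}$ admits a right descent $s_j$ with $(j,j+1)$ not a cycle of $\theta$ (otherwise a short induction on the position of the first descent forces $\theta=\wfpf{n}$), yielding a length-lowering move $\theta\mapsto s_j\theta s_j\in\Ifpf(S_{2n})$ and hence a chain down to $\wfpf{n}$.

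The main obstacle is step (iii): unlike Theorem~\ref{equiv1-thm}, there is no citable enumeration of $\sim_{\DemAfpf}$-classes, so the bound (equivalently, the reverse inclusion that each $\DemAfpf(\theta)^{-1}$ is a \emph{single} class) must be established directly, by induction on $\ellhat(\theta)$. The base $\theta=\wfpf{n}$ is clean: by Example~\ref{1-ex}, $\DemAfpf(\wfpf{n})=\langle s_1,s_3,\dots,s_{2n-1}\rangle\cong(\ZZ/2\ZZ)^n$ consists of the permutations that swap entries within some subset of the pairs $\{2j-1,2j\}$, and all of these are joined by the transposition moves at the aligned positions, forming one class. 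For the inductive step I would peel off a non-cycle descent $s=s_j$ as in step (ii), set $\theta'=s\theta s$ with $\DemAfpf(\theta')^{-1}$ a single class by induction, and use the forward analysis (for $w\in\DemAfpf(\theta)$, either $ws\in\DemAfpf(\theta)$ or $\sidem{\wfpf{n}}{ws}\in\{\theta,\theta'\}$) together with the relations to connect $\DemAfpf(\theta)^{-1}$ to $\DemAfpf(\theta')^{-1}$. Verifying that the generating moves genuinely realize all these connections is the delicate part, and I expect it to require the explicit atom description of Corollary~\ref{atomSn-fpf-cor} to control the minimal-length elements of each fibre.
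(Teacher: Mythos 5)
Your steps (i) and (ii) are correct and coincide with what the paper does: the transposition moves are absorbed because $\sidem{\wfpf{n}}{s_{2j-1}}=\wfpf{n}$, the quadruple moves are handled via Lemma \ref{equiv2-lem}(b) and a local computation in the block $\{2i-1,2i,2i+1,2i+2\}$, and nonemptiness of each $\DemAfpf(\theta)$ is the paper's ``simple exercise using Proposition \ref{des-lem}.'' So every $\sim_{\DemAfpf}$-class lies in exactly one of the $|\Ifpf(S_{2n})|$ nonempty fibres $\DemAfpf(\theta)^{-1}$.

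The gap is in step (iii), and you have correctly located it but not filled it. You set up the right strategy --- bound the number of $\sim_{\DemAfpf}$-classes by $|\Ifpf(S_{2n})|$ --- but then abandon it in favor of a fibre-by-fibre induction on $\ellhat(\theta)$, whose inductive step (connecting $\DemAfpf(\theta)^{-1}$ to $\DemAfpf(s\theta s)^{-1}$ by explicit moves) you describe only in outline and yourself flag as ``the delicate part.'' That step is genuinely problematic: unlike $\cA(x,y)$, the Hecke atom fibres do not satisfy a clean recursion like Proposition \ref{atomdes-prop}, so controlling how elements of $\DemAfpf(\theta)$ with $s\in\DesR(w)$ distribute over $\{\theta,\theta'\}$ is exactly the difficulty, and no argument is supplied. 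The missing idea is that the upper bound needs no analysis of fibres at all: one shows by a direct sorting argument that every $u\in S_{2n}$ satisfies $u\sim_{\DemAfpf}w$ for some $w$ with $w_{2i-1}<w_{2i}$ for all $i\in[n]$ and $w_2<w_4<\cdots<w_{2n}$ (first move the letter $2n$ into the last position using the generating relations, then recurse on the first $2n-2$ positions), and such canonical words are in bijection with $\Ifpf(S_{2n})$ via $w\mapsto(w_1,w_2)(w_3,w_4)\cdots(w_{2n-1},w_{2n})$. This gives at most $|\Ifpf(S_{2n})|$ classes, and the pigeonhole count against your steps (i)--(ii) finishes the proof. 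Without this (or a completed version of your induction), the theorem is not proved: steps (i)--(ii) alone only show each fibre is a \emph{union} of classes.
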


\begin{remark}
Despite the similarities between Theorems \ref{equiv1-thm} and \ref{equiv2-thm}, we do not know of  any  place in the literature 
where the equivalence relation $\sim_{\DemAfpf}$ has been previously studied. The quotient of the submonoid of even-length sequences in $\Mon{\ZZ}$ by this relation may give an interesting analogue of the Chinese monoid.
We were led to connect $\sim_{\DemA}$ to  \cite{CEHKN,DuchampKrob,CM2,LPRW} by observing that the numbers 
$ \( |\DemA(w_{0})|\)_{n=0,1,2,\dots} = \(1,\ 1,\  1,\ 3,\ 7,\ 35,\ 135,\ 945,\ 5193,\ \dots\)   $
with $w_{0}=[n,n-1,\dots,2,1] \in S_n$ coincide with sequence \cite[A212417]{OEIS}. By contrast, there seems to be no sequence currently in \cite{OEIS} which contains 
$ \( |\DemAfpf(w_{0})|\)_{n=0,2,4,\dots} = \(1,\ 2,\ 16,\ 320,\ 12448,\ 809792,\ \dots\)$
or $ ( 2^{-\frac{n}{2}}|\DemAfpf(w_{0})|)_{n=0,2,4,\dots} = \(1,\ 1,\ 4,\ 40,\ 778,\ 25306,\ \dots\)$
  as a  subsequence.
\end{remark}

\begin{proof}
Our proof is similar to that of Theorem \ref{equiv1-thm}, except that we must substitute our reference to \cite{LPRW} with a direct argument.
Let $\wfpf{n} = s_1s_3s_5\cdots s_{2n-1}$.
Note that  if $\mu,\nu \in S_{2n}$ are such that $\nu=\mu s_{2j-1}$ for some $j \in [n]$, 
then $\sidem{\wfpf{n}}{\mu^{-1} }= \sidem{\wfpf{n} }{ \nu^{-1}}$ (with $\dact$ as in \eqref{dact-def})
since $\sidem{\wfpf{n}}{ s_{2j-1}  }= \wfpf{n}$.
In turn, if $t,u,v,w\in S_{2n}$ and $i \in [n-1]$ satisfy the hypotheses of Lemma \ref{equiv2-lem} and have the equivalent properties described by that result, 
then it is straightforward using Lemma \ref{equiv2-lem}(b) to check that 
$\sidem{\wfpf{n}}{  t^{-1}}= \sidem{\wfpf{n}}{ u^{-1}} = \sidem{\wfpf{n}}{ v^{-1}}=\sidem{ \wfpf{n}}{ w^{-1}}$.
%
Since $\sim_{\DemAfpf}$ is generated by equivalences of  the form   $\mu \sim_{\DemAfpf} \nu$ and  $t \sim_{\DemAfpf} u\sim_{\DemAfpf} v \sim_{\DemAfpf} w $,
we deduce that each equivalence class in $S_{2n}$ under $\sim_{\DemAfpf}$ is contained in a set $ \DemAfpf(\theta)^{-1} $ for some $\theta \in \Ifpf(S_{2n})$

Fix $\theta \in \Ifpf(S_{2n})$. It is a simple exercise using Proposition \ref{des-lem} to show that $\DemAfpf(\theta)$ is always nonempty,
and consequently
the first paragraph shows that  $ \DemAfpf(\theta)^{-1} $ is the disjoint union of some positive number of equivalence classes in $S_{2n}$ under $\sim_{\DemAfpf}$. To prove that this set is exactly one equivalence class, it is enough to show that the total number of equivalence classes in $S_{2n}$ under $\sim_{\DemAfpf}$ is at most $|\Ifpf(S_{2n})|$.
We do this by showing that each $u \in S_{2n}$ satisfies $u \sim_{\DemAfpf} w$ for some permutation $w \in S_{2n}$ with $w_{2i-1} < w_{2i}$ for all $i \in [n]$ and $w_2< w_4<w_6<\dots<w_{2n}$; the set of such permutations $w$ is in bijection with $\Ifpf(S_{2n})$ via the map $ w \mapsto (w_1,w_2)(w_3,w_4)\cdots (w_{2n-1},w_{2n}).$
This follows as a simple exercise by induction: one checks   by successively applying the  relations generating $\sim_{\DemAfpf}$ that an arbitrary permutation $u \in S_{2n}$ satisfies $u\sim_{\DemAfpf} u'$ for some $u' \in S_{2n}$ with $u'_{2n}=2n$; by induction the truncation $u''=[u_1',u_2',\dots,u'_{2n-2}]$ satisfies $u'' \sim_{\DemAfpf} w'$ for some sequence $w'=[w'_1,w'_2,\dots,w'_{2n-2}]$ obtained by permuting the entries of $u''$ 
and satisfying both $w'_{2i-1} < w'_{2i}$ for all $i $ and $w'_2< w'_4< \dots<w'_{2n-2}$;
and the concatenation $w=[w'_1,w'_2,\dots,w'_{2n-2},u'_{2n-1},u'_{2n}]$ is then   a permutation of the desired form with $u \sim_{\DemAfpf}w$.
\end{proof}

We may also define a fixed-point-free version of the partial order $\prec_{\cA}$.

\begin{definition}
Define $\prec_{\cAfpf}$ as the transitive closure of the relation on $2n$-element integer sequences  with 
$[\  \dbldots\ a,d,b,c\ \dbldots\ ] \prec_{\cAfpf} [\ \dbldots\ b,c,a,d\ \dbldots\ ]$ when $a\leq b \leq c\leq d$.
\end{definition}

As with $\prec_{\cA}$, it follows that the relation $\prec_{\cAfpf}$ is a partial order since it is a transitive subrelation of lexicographic order.
Figure \ref{fig2} shows an interval in this order; as a visual aid, we write $ab|cd|\cdots$ in place of $[a,b,c,d,\dots]$   in this picture.


\begin{figure}[h]
\[
\begin{tikzpicture}
  \node (max) at (0,2.4) {$15|36|07|48|29$};
  \node (a) at (-2,1.2) {$15|36|07|29|48$};
    \node (b) at (2,1.2) {$15|07|36|48|29$};
  \node (d) at (-2,0) {$15|07|36|29|48$};
  \node (e) at (2,0) {$07|15|36|48|29$};
  \node (g) at (-2,-1.2) {$15|07|29|36|48$};
  \node (h) at (2,-1.2) {$07|15|36|29|48$};
  \node (min) at (0,-2.4) {$07| 15 | 29| 36| 48$};
  \draw 
  [->]
  (min) edge (h)
  (h) edge (e)
  (e) edge (b)
  (b) edge (max)
  (a) edge (max)
  (d) edge (a)
  (g) edge(d)
  (min) edge (g)
  (h) edge (d)
  (d) edge (b)
;
\end{tikzpicture}
\]
\caption{Hasse diagram of an interval in $\prec_{\cAfpf}$ 
}\label{fig2}
\end{figure}

To describe the properties of the order $\prec_{\cAfpf}$ we require some notation similar to that in the previous section.
For $x \in \Ifpf(S_{2n})$,
let $\{(a_i,b_i)\}_{i \in [n]}$ and $\{(c_i,d_i)\}_{i \in [n]}$ be as in \eqref{param-eq}, but replacing $(k,n)$ by $(n,2n)$, and
define $\hat0_\fpf(x)$ and $\hat1_\fpf(x)$  as the permutations given in one-line notation by
\[
 \hat0_\fpf(x) = [a_1,b_1,a_2,b_2,\dots,a_n,b_n] \qquand \hat1_\fpf(x) = [c_1,d_1,c_2,d_2,\dots,c_n,d_n].
 \]
Observe that $\hat0_\fpf(x) = \hat0(x) \cdot \wfpf{n}$ and $\hat1_\fpf(x) = \hat1(x) \cdot \wfpf{n}$ where $\wfpf{n} = s_1 s_3s_5 \cdots s_{2n-1}$.

\begin{example}
For $x 
=(1,8)(2,3)(4,6)(5,7) \in \Ifpf(S_8)$  we have 
$ \hat0_\fpf(x) = [1,8,2,3,4,6,5,7]$ and $ \hat 1_\fpf(x) = [2,3,4,6,5,7,1,8].
$
\end{example}

The following result is similar to Lemma \ref{notmin-lem}.
There is also an obvious  ``fixed-point-free'' version of Lemma \ref{orderrev-lem}, but we omit its statement since we will not require it in what follows.

%

\begin{lemma}\label{notmin-lem2}
Let $x \in \Ifpf(S_{2n})$ and  $u \in S_{2n}$ with $u^{-1} \in \cAfpf(x)$.
 If $u \neq \hat 0_\fpf(x)$ (respectively, 
 if $u \neq \hat 1_\fpf(x)$)
 then $u$ is not minimal (respectively, maximal) in the order $\prec_{\cAfpf}$.
\end{lemma}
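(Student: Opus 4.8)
The plan is to mirror the proof of Lemma~\ref{notmin-lem}, replacing the one-line combinatorics governed by Corollary~\ref{atomSn-cor} with the fixed-point-free conditions of Corollary~\ref{atomSn-fpf-cor}. Since $u^{-1}\in\cAfpf(x)$, Corollary~\ref{atomSn-fpf-cor}(a) shows that every consecutive pair $[u_{2i-1},u_{2i}]$ is a cycle of $x$ listed with its smaller element first; thus $u$ is determined by the order in which the cycles of $x$ appear, and $\hat0_\fpf(x)=[a_1,b_1,\dots,a_n,b_n]$ is precisely the arrangement listing these cycles by increasing smaller element $a_1<\dots<a_n$. I would prove the ``minimal'' assertion in detail and then note that the ``maximal'' assertion follows by the symmetric argument (reading $u$ from the right and ordering the cycles by their larger elements $d_1<\dots<d_n$, as in $\hat1_\fpf(x)=[c_1,d_1,\dots,c_n,d_n]$); alternatively one could invoke the fixed-point-free analogue of Lemma~\ref{orderrev-lem}.

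For the minimal case, assume $u\neq\hat0_\fpf(x)$ and let $l$ be the least pair-index with $[u_{2l-1},u_{2l}]\neq[a_l,b_l]$. Minimality of $l$ forces the first $l-1$ pairs of $u$ to be exactly the cycles $(a_1,b_1),\dots,(a_{l-1},b_{l-1})$, so the cycles occupying pair-positions $l,l+1,\dots,n$ are precisely the remaining ones, among which $(a_l,b_l)$ has the smallest left endpoint. Consequently the cycle $(a_l,b_l)$ sits at some pair-position $m$ with $m>l$; in particular $m\geq 2$, so the pair immediately before it, say $(c,d)=[u_{2m-3},u_{2m-2}]$ with $c<d$, exists and lies at pair-position $m-1\geq l$. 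Being a remaining cycle distinct from $(a_l,b_l)$, this cycle has $c>a_l$.

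The crux is to pin down the order of $d$ and $b_l$. I would argue that $d<b_l$: if instead $b_l<d$, then the cycles $(a_l,b_l)$ and $(c,d)$ satisfy $a_l<c$ and $b_l<d$, so Corollary~\ref{atomSn-fpf-cor}(b) would give $u^{-1}(a_l)<u^{-1}(b_l)<u^{-1}(c)<u^{-1}(d)$, i.e.\ $2m-1<2m<2m-3<2m-2$, which is absurd. Hence $a_l<c<d<b_l$, so the four consecutive entries $[u_{2m-3},u_{2m-2},u_{2m-1},u_{2m}]=[c,d,a_l,b_l]$ form the pattern $[b,c,a,d]$ with $a\leq b\leq c\leq d$. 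By the definition of $\prec_{\cAfpf}$ this quadruple may be replaced by $[a_l,b_l,c,d]$ to produce a strictly smaller element, so $u$ is not minimal, as required.

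The main obstacle is the verification in the previous paragraph: locating the correct four-entry window (the pair carrying $(a_l,b_l)$ together with its predecessor) and checking that the two cycles are nested in the orientation demanded by $\prec_{\cAfpf}$. The two inequalities come directly from the atom conditions---$a_l<c$ from the minimality of the left endpoint $a_l$ among the not-yet-placed cycles, and $d<b_l$ from the ordering constraint of Corollary~\ref{atomSn-fpf-cor}(b)---and once both are in hand the descent in $\prec_{\cAfpf}$ is immediate. Everything else is the routine bookkeeping already rehearsed in Lemma~\ref{notmin-lem}.
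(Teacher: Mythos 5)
Your argument is correct, but it takes a genuinely different route from the paper's. The paper proves Lemma \ref{notmin-lem2} by reduction: since $(u\wfpf{n})^{-1}=\wfpf{n}u^{-1}\in\cA(x)$ and $u\wfpf{n}\neq\hat0(x)$, Lemma \ref{notmin-lem}(a) supplies a consecutive triple $[b,c,a]$ with $a<b<c$ in $u\wfpf{n}$; a descent argument (all odd $s_i$ are right descents of $u\wfpf{n}$) forces this triple to begin at an even index, so $u$ has a window $[\,\dbldots\ b,i,a,c\ \dbldots\,]$, and Theorem \ref{equiv2-thm} rules out $i<b$ and $i>c$, leaving the pattern $a<b<i<c$ that witnesses non-minimality. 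You instead re-run the proof of Lemma \ref{notmin-lem} from scratch in the fixed-point-free setting: Corollary \ref{atomSn-fpf-cor}(a) makes $u$ a sequence of cycles of $x$, you locate the first pair-position where $u$ deviates from $\hat0_\fpf(x)$, find the cycle $(a_l,b_l)$ at a later position $m$, and use Corollary \ref{atomSn-fpf-cor}(b) to force the predecessor pair $(c,d)$ to nest as $a_l<c<d<b_l$, producing the quadruple $[c,d,a_l,b_l]$ at the correct (odd) offset. All the steps check out, including the parity of the offset $2(m-2)$ and the disjointness that upgrades $d\leq b_l$ to $d<b_l$. The trade-off is clear: the paper's reduction is shorter on the page because it recycles Lemma \ref{notmin-lem} and the $\sim_{\DemAfpf}$ machinery, while yours is self-contained at the level of the atom characterization and avoids both the descent argument and the appeal to Theorem \ref{equiv2-thm}. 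Your handling of the maximal case by the mirror argument (ordering cycles by their right endpoints) is also sound and matches the paper's ``similar argument'' remark; the appeal to an unproved fpf analogue of Lemma \ref{orderrev-lem} is best left as the secondary option you present it as, since the paper explicitly declines to state that analogue.
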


\begin{proof}
Assume $u \neq \hat 0_\fpf(x)$ and write $\wfpf{n} = s_1s_3s_5\cdots s_{2n-1}$.
Then  $u\wfpf{n} \neq \hat 0(x)$, so since $(u \wfpf{n})^{-1} = \wfpf{n}   u^{-1} \in \cA(x)$ by definition,
it follows by Lemma \ref{notmin-lem}(a) that $u\wfpf{n}$ has a consecutive subsequence of the form $[b,c,a]$ for some $a<b<c$ in $[2n]$. Since $s_{i} \in \DesR(u\wfpf{n})$ whenever $i$ is odd by Lemma \ref{atomdes-lem} and the exchange principle, this consecutive subsequence must begin at an even index
and so we deduce that $u$ has the form $[\ \dbldots\ b, i, a,c\ \dbldots\ ]$ for some $i \in [2n]$.
If $i<b$ or $i>c$  then one easily constructs  $v \in S_{2n}$ with $u \sim_{\DemAfpf} v < u$, contradicting  via Theorem \ref{equiv2-thm} our assumption that $u^{-1} \in \cAfpf(x)$.
Therefore we must have $a<b<i<c$, so $u$ is  not minimal with respect to $\prec_{\cAfpf}$. This proves the first claim, and the second one follows by a similar argument, now appealing to Lemma \ref{notmin-lem}(b)
\end{proof}

We now prove the main result about the partial order $\prec_{\cAfpf}$ and its relation to  $\cAfpf(x)$.

\begin{theorem}\label{prec2-thm}
Let  $x \in \Ifpf(S_{2n})$. Then 
\[\cAfpf(x)^{-1}= \left\{ u \in S_{2n} : \hat0_\fpf(x) \preceq_{\cAfpf} u\right\}
=
\left\{ u \in S_{2n} : u \preceq_{\cAfpf} \hat1_\fpf(x)\right\}.
\]
\end{theorem}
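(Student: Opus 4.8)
The plan is to mirror the proof of Theorem~\ref{prec1-thm} step for step, substituting at each point the fixed-point-free analogue of the corresponding ingredient. First I would settle the two boundary claims: that $\hat0_\fpf(x)$ is minimal and $\hat1_\fpf(x)$ maximal with respect to $\prec_{\cAfpf}$, and that both lie in $\cAfpf(x)^{-1}$. Minimality and maximality are immediate from the construction \eqref{param-eq}: since $\hat0_\fpf(x) = [a_1,b_1,\dots,a_n,b_n]$ with $a_1<\dots<a_n$, no odd-indexed consecutive quadruple $[a_i,b_i,a_{i+1},b_{i+1}]$ can present the pattern needed to apply a lowering $\prec_{\cAfpf}$-move (that pattern would force $a_{i+1}<a_i$), and symmetrically $\hat1_\fpf(x) = [c_1,d_1,\dots,c_n,d_n]$ with $d_1<\dots<d_n$ admits no raising move (it would force $d_{i+1}<d_i$). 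Membership in $\cAfpf(x)^{-1}$ is a short verification against Corollary~\ref{atomSn-fpf-cor}: for $w=\hat0_\fpf(x)^{-1}$ one has $w(a_i)=2i-1$ and $w(b_i)=2i$, which gives condition (a), and $a_i<a_j$ forces $i<j$ and hence $w(a_i)<w(b_i)<w(a_j)<w(b_j)$, which gives (b); the check for $\hat1_\fpf(x)$ is identical using the ordering of the $d_i$.

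The central step is to show that $\preceq_{\cAfpf}$ preserves membership in $\cAfpf(x)^{-1}$, i.e.\ that $u \preceq_{\cAfpf} v \preceq_{\cAfpf} w$ with $v^{-1}\in\cAfpf(x)$ forces $u^{-1},w^{-1}\in\cAfpf(x)$. Two facts about a single generating move $[\ \dbldots\ a,d,b,c\ \dbldots\ ]\mapsto[\ \dbldots\ b,c,a,d\ \dbldots\ ]$ of $\prec_{\cAfpf}$ make this go through. By inspection it is one of the generating relations of $\sim_{\DemAfpf}$ in Definition~\ref{equiv2-def}, so $u\sim_{\DemAfpf}v\sim_{\DemAfpf}w$ and Theorem~\ref{equiv2-thm} places $u^{-1}$ and $w^{-1}$ in $\DemAfpf(x)$; and the same move is length-preserving, since applying the implication (a)$\Rightarrow$(b) of Lemma~\ref{equiv2-lem} to the two sequences (whose differing block is four distinct values $a<b<c<d$) gives $\ell(u^{-1})=\ell(v^{-1})=\ell(w^{-1})$. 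As $\cAfpf(x)$ is precisely the minimal-length subset of $\DemAfpf(x)$ and $v^{-1}$ already attains that length, $u^{-1}$ and $w^{-1}$ do too, so both lie in $\cAfpf(x)$.

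Finally I would assemble these into the interval description exactly as in Theorem~\ref{prec1-thm}. The set $\cAfpf(x)^{-1}$ is finite and, by the previous step, closed under passing to $\prec_{\cAfpf}$-smaller and $\prec_{\cAfpf}$-larger elements. By Lemma~\ref{notmin-lem2} the only $\prec_{\cAfpf}$-minimal element of $\cAfpf(x)^{-1}$ is $\hat0_\fpf(x)$, so beginning from any $u^{-1}\in\cAfpf(x)$ that is not $\hat0_\fpf(x)$ one may repeatedly descend to a strictly smaller element, each step remaining inside $\cAfpf(x)^{-1}$ by downward closure; finiteness forces termination at $\hat0_\fpf(x)$, whence $\hat0_\fpf(x)\preceq_{\cAfpf}u$. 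Conversely, every $u$ with $\hat0_\fpf(x)\preceq_{\cAfpf}u$ lies in $\cAfpf(x)^{-1}$ by upward closure, giving the first equality; the second follows by the symmetric argument with $\hat1_\fpf(x)$ and downward descent.

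Since the two genuinely substantive inputs, Theorem~\ref{equiv2-thm} and Lemma~\ref{notmin-lem2}, are already established, the remaining work is largely organizational and the proof can be kept brief. The one point I expect to require care rather than citation is the compatibility asserted in the central step: one must observe that a single $\prec_{\cAfpf}$-move is simultaneously a generator of $\sim_{\DemAfpf}$ (so that Theorem~\ref{equiv2-thm} applies) and length-preserving (so that minimal length, and hence atom-hood, is inherited). Because the non-length-preserving relations of $\sim_{\DemAfpf}$ in Definition~\ref{equiv2-def} are deliberately excluded from the definition of $\prec_{\cAfpf}$, this compatibility holds, and it is the hinge on which the whole argument turns.
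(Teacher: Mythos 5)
Your proof is correct and takes essentially the same approach as the paper, which simply transports the proof of Theorem \ref{prec1-thm} by substituting Corollary \ref{atomSn-fpf-cor}, Theorem \ref{equiv2-thm}, and Lemmas \ref{equiv2-lem} and \ref{notmin-lem2} for their untwisted counterparts. The compatibility you single out at the end --- that each generating move of $\prec_{\cAfpf}$ is simultaneously a length-preserving move and a generator of $\sim_{\DemAfpf}$ --- is precisely the hinge on which the paper's argument turns as well.
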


\begin{proof}
The proof is the same as that of Theorem \ref{prec1-thm}; one just replaces
  $\hat0(x)$ / $\hat 1(x)$ / $\cA(x)$ / $\prec_\cA$ by $\hat0_\fpf(x)$ / $\hat 1_\fpf(x)$ / $\cAfpf(x)$ / $\prec_{\cAfpf}$
and refers to  Corollary \ref{atomSn-fpf-cor}, Theorem \ref{equiv2-thm}, and Lemmas \ref{equiv2-lem} and \ref{notmin-lem2} in place of Corollary \ref{atomSn-cor}, Theorem \ref{equiv1-thm}, and Lemmas \ref{equiv1-lem} and \ref{notmin-lem}.
\end{proof}

The theorem implies  $\cAfpf(x)^{-1}= \left\{ u \in S_{2n} : \hat0_\fpf(x) \preceq_{\cAfpf} u\preceq_{\cAfpf} \hat 1_\fpf(x)\right\}$ for $x \in \Ifpf(S_{2n})$, from which we derive the following analogues of Corollary \ref{prec1-cor} and Proposition \ref{graded-prop}.

\begin{corollary}\label{prec2-cor} If $x \in \Ifpf(S_{2n})$ then $|\cAfpf(x)| = 1$ if and only if $x$ is 321-avoiding.
\end{corollary}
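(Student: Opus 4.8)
The plan is to deduce this directly from Theorem~\ref{prec2-thm}, exactly paralleling the proof of Corollary~\ref{prec1-cor}. By Theorem~\ref{prec2-thm} the set $\cAfpf(x)^{-1}$ is the closed interval $\{u \in S_{2n} : \hat0_\fpf(x) \preceq_{\cAfpf} u \preceq_{\cAfpf} \hat1_\fpf(x)\}$ in the partial order $\prec_{\cAfpf}$, and its endpoints $\hat0_\fpf(x)$ and $\hat1_\fpf(x)$ both lie in this (hence nonempty) set. Since such an interval is a singleton precisely when its two endpoints coincide, the first step is to observe that $|\cAfpf(x)| = 1$ if and only if $\hat0_\fpf(x) = \hat1_\fpf(x)$.

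Next I would translate the equality $\hat0_\fpf(x) = \hat1_\fpf(x)$ into a condition on the cycles of $x$. Writing the cycles of $x$ as in \eqref{param-eq} (with $(k,n)$ replaced by $(n,2n)$), we have $\hat0_\fpf(x) = [a_1,b_1,a_2,b_2,\dots,a_n,b_n]$ with $a_1 < \cdots < a_n$ and $\hat1_\fpf(x) = [c_1,d_1,\dots,c_n,d_n]$ with $d_1 < \cdots < d_n$. These are genuine one-line notations with no repeated entries, so comparing them position by position shows that $\hat0_\fpf(x) = \hat1_\fpf(x)$ if and only if $(a_i,b_i) = (c_i,d_i)$ for every $i \in [n]$. (Alternatively, one may use the identities $\hat0_\fpf(x) = \hat0(x)\cdot\wfpf{n}$ and $\hat1_\fpf(x) = \hat1(x)\cdot\wfpf{n}$ recorded just after their definition: since right multiplication by $\wfpf{n}$ is a bijection, $\hat0_\fpf(x) = \hat1_\fpf(x)$ is equivalent to $\hat0(x) = \hat1(x)$.)

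Finally I would invoke Lemma~\ref{321-lem}, applied to $x \in \I(S_{2n})$. Because $x$ is fixed-point-free its cycle set contains no pairs $(i,i)$, so the parametrization of \eqref{param-eq} used in Lemma~\ref{321-lem} agrees with the one above, and the lemma gives $(a_i,b_i) = (c_i,d_i)$ for all $i$ if and only if $x$ is 321-avoiding. Chaining the three equivalences yields the claim. There is no substantive obstacle: the real content has already been absorbed into Theorem~\ref{prec2-thm} and Lemma~\ref{321-lem}, and the only point requiring a moment's care is verifying that Lemma~\ref{321-lem} transfers verbatim to the fixed-point-free setting, which it does, since deleting fixed points only shrinks the supply of potential nested pairs $(p,q),(p',q')$ with $p < p' \leq q' < q$ that the lemma's proof rules out.
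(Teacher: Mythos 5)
Your proposal is correct and follows exactly the route the paper takes: Theorem~\ref{prec2-thm} reduces the question to whether $\hat0_\fpf(x) = \hat1_\fpf(x)$, which by Lemma~\ref{321-lem} (applied to the fixed-point-free parametrization of \eqref{param-eq}) is equivalent to $x$ being 321-avoiding. The extra care you take in checking that the lemma transfers to the fixed-point-free setting is sound but not needed beyond what the paper already asserts.
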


\begin{proof}
As with Corollary \ref{prec1-cor}, the result is clear from Lemma \ref{321-lem} given Theorem \ref{prec2-thm}.
\end{proof}

\begin{proposition}\label{graded-prop2} Let $x \in \Ifpf(S_{2n})$. The poset $\(\cAfpf(x)^{-1},\prec_{\cAfpf}\)$  is isomorphic to a lower interval in the right weak order on $S_{n}$, and consequently is a graded lattice.
\end{proposition}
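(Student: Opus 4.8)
The plan is to realize $\cAfpf(x)^{-1}$ concretely as a set of arrangements of the $2$-cycles of $x$, and to match these arrangements together with $\prec_{\cAfpf}$ to a principal lower interval in the right weak order on $S_n$. First I would fix notation for the cycles: label the $n$ two-cycles of $x$ as $\gamma_i=(a_i,b_i)$ with $a_1<a_2<\dots<a_n$, as in \eqref{param-eq}, and set $\pi_x=\std[b_1,b_2,\dots,b_n]\in S_n$, so that a pair $(i,j)$ with $i<j$ is an inversion of $\pi_x$ exactly when $b_i>b_j$, i.e.\ exactly when $\gamma_i$ and $\gamma_j$ are nested. By Corollary~\ref{atomSn-fpf-cor}(a), every $u\in\cAfpf(x)^{-1}$ has the property that each consecutive pair $(u_{2p-1},u_{2p})$ is one of the cycles $\gamma_i$ written with its smaller entry first; hence $u$ is determined by the permutation $\rho_u\in S_n$ characterized by $\gamma_{\rho_u(p)}=(u_{2p-1},u_{2p})$, and this defines an injection $\Phi\colon u\mapsto\rho_u$ with $\Phi(\hat0_\fpf(x))=\mathrm{id}$ and $\Phi(\hat1_\fpf(x))=\pi_x^{-1}$. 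Unwinding Corollary~\ref{atomSn-fpf-cor}(b) shows that $u\in\cAfpf(x)^{-1}$ if and only if $\gamma_i$ precedes $\gamma_j$ in $\rho_u$ whenever $i<j$ and $b_i<b_j$, which is equivalent to $\mathrm{Inv}(\rho_u^{-1})\subseteq\mathrm{Inv}(\pi_x)$. Thus $\Phi$ is a bijection from $\cAfpf(x)^{-1}$ onto $R:=\{\rho\in S_n:\mathrm{Inv}(\rho^{-1})\subseteq\mathrm{Inv}(\pi_x)\}$.

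Next I would identify $R$ as a weak-order interval. Using the standard fact that $\sigma\leq_L\sigma'$ in left weak order if and only if $\mathrm{Inv}(\sigma)\subseteq\mathrm{Inv}(\sigma')$, together with the inverse map relating the two weak orders (so that $\sigma\leq_L\sigma'$ iff $\sigma^{-1}\leq_R\sigma'^{-1}$), I obtain $R=\{\rho\in S_n:\rho\leq_R\pi_x^{-1}\}$, which is precisely the principal lower interval $[\mathrm{id},\pi_x^{-1}]_R$ in the right weak order on $S_n$. It then remains to check that $\Phi$ is order-preserving in both directions. Here I would use that, by Theorem~\ref{prec2-thm}, $\cAfpf(x)^{-1}$ is a genuine interval of $(S_{2n},\prec_{\cAfpf})$, so its covering relations are exactly the generating moves of $\prec_{\cAfpf}$ that remain inside it. Such a move replaces a consecutive block $[a_{\rho(p)},b_{\rho(p)},a_{\rho(p+1)},b_{\rho(p+1)}]$ of nested, outer-first shape $[a,d,b,c]$ by $[b,c,a,d]$, which is exactly right multiplication $\rho\mapsto\rho s_p$ turning an ascent into a descent, hence a right weak cover that raises $\ell(\rho_u)$ by one. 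Conversely, if $\rho,\rho s_p\in R$ with $\ell(\rho s_p)=\ell(\rho)+1$, then the two cycles in pair-positions $p,p+1$ must be nested, since otherwise $\rho s_p$ would violate condition (b); so every right weak cover inside $R$ comes from a generating move. Thus $\Phi$ is a cover-preserving bijection between two finite posets, and therefore an order isomorphism.

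Finally, since the right weak order on $S_n$ is a lattice graded by Coxeter length, every principal lower interval $[\mathrm{id},\pi_x^{-1}]_R$ is itself a graded lattice (any interval of a lattice is a lattice, with joins and meets inherited). Transporting this structure back through $\Phi$ then proves that $(\cAfpf(x)^{-1},\prec_{\cAfpf})$ is a graded lattice, completing the proof.

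The main obstacle I anticipate is the bookkeeping of the second step: correctly translating condition (b) of Corollary~\ref{atomSn-fpf-cor} into the inversion-set inclusion $\mathrm{Inv}(\rho_u^{-1})\subseteq\mathrm{Inv}(\pi_x)$, keeping the left/right weak-order conventions consistent with the paper's definition of $<_R$, and verifying the delicate claim in the third step that an ascending adjacent transposition preserving membership in $R$ must swap a nested pair, so that right weak covers and $\prec_{\cAfpf}$-covers coincide exactly.
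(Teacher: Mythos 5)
Your proof is correct and follows essentially the same route as the paper: both use the map sending $u\in\cAfpf(x)^{-1}$ to the permutation of $[n]$ recording which cycle of $x$ occupies each consecutive pair of positions, and both establish the isomorphism by matching covers of $\prec_{\cAfpf}$ (which, inside the interval $[\hat 0_\fpf(x),\hat 1_\fpf(x)]$, are exactly the generating moves on nested adjacent cycle-pairs) with right weak order covers, invoking Corollary \ref{atomSn-fpf-cor} for the key verification. The only difference is cosmetic: you additionally identify the image explicitly as the principal interval $[\id,\pi_x^{-1}]_R$ below the ``nesting permutation'' via inversion-set containment, whereas the paper only shows the image is closed downward under covers and hence is some lower interval.
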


\begin{proof}
List the elements of $\Cyc_{2n}(x)$ as $(a_1,b_1),(a_2,b_2),\dots,(a_n,b_n)$ such that $a_1<a_2<\dots<a_n$.
Define $\phi : \Cyc_{2n}(x) \to [n]$ by setting $\phi(a_i,b_i) = i$
and for $u  \in \cAfpf(x)^{-1}$
 let
\[
\Phi(u) = [ \phi(u_1,u_2), \phi(u_3,u_4),\dots,\phi(u_{2n-1},u_{2n})] \in S_n.
\]
Since   $u^{-1} \in \cAfpf(x)$  implies  $(u_{2i-1},u_{2u}) \in \Cyc_{2n}(x)$ for all $i \in [n]$
by Corollary \ref{atomSn-fpf-cor}(a), it follows that $\Phi$ is  a well-defined and injective
map
$ \cAfpf(x)^{-1} \to S_n$.

We claim that $\Phi$ is a poset isomorphism 
from $\cAfpf(x)^{-1} $ to a  lower interval in the right weak order $\leq_R$ on $S_n$. 
It is clear that  if $v$ covers $u$ in $\prec_{\cAfpf}$  then $\Phi(u) <_R \Phi(v)$.
Suppose conversely that $v' = \Phi(v )$ for some $v \in\cAfpf(x)^{-1}$  and that $v'$ covers $u' \in S_n$ in $<_R$. To prove our claim, it suffices to check that $u' = \Phi(u)$ for some $u \in   \cAfpf(x)^{-1}$
with $u \prec_{\cAfpf} v$.
To this end, let $i \in [n-1]$ be the unique index such that $v'_i > v'_{i+1}$ and $u'=v's_i$,
and define $a,b,c,d \in [2n]$ such that 
$[v_{2i-1},v_{2i},v_{2i+1},v_{2i+2}] = [b,c,a,d].$
 It is enough to check that $a<b<c<d$. Since $\phi(b,c) = v'_i > v'_{i+1}=\phi(a,d)$, we must have $a<b$, and since both $(a,d)$ and $(b,c)$ belong to $\Cyc_{2n}(x)$  by Corollary \ref{atomSn-fpf-cor}(a), it holds that $a<d$ and $b<c$.
We cannot have  $a<b<d<c$ or $a<d<b<c$ since if either inequality holds then
Lemma \ref{equiv2-lem} implies that there exists $w \in S_{2n}$ with  $v \sim_{\DemAfpf} w $ but $\ell(w) < \ell(v)$, contradicting via Theorem \ref{equiv2-thm} the fact that $v^{-1} \in \cAfpf(x)$.
Hence it must hold that  $a<b<c<d$, as desired.
We  conclude that $\(\cAfpf(x)^{-1},\prec_{\cAfpf}\) $ is a graded lattice by well-known properties of the right weak order on a Coxeter group; see \cite[Corollary 3.2.2]{CCG}. 
\end{proof}

One can check that if $w=[w_1,w_2,\dots,w_n] \in S_n$ is any permutation, then 
the interval bounded above by $w$ in $(S_n,\leq_R)$ is isomorphic via the  map $\Phi$ in the preceding proof  to $\(\cAfpf(y)^{-1},\prec_{\cAfpf}\)$  for the fixed-point-free involution 
$y = (w_1,n+1)(w_2,n+2)\cdots(w_n,2n)$.

\section{Braid relations for involution words}\label{braid-sect}

It is a special case of \emph{Matsumoto's theorem} (see Theorem \ref{matsumoto}) that the set of reduced words $\cR(w)$ for each $w \in S_n$ is an equivalence class under the relation generated by equivalences of the form
$(\dots,s_i,s_{i+1},s_i,\dots) \sim (\dots,s_{i+1},s_i,s_{i+1},\dots)
$
and
$
(\dots,s_i,s_j,\dots)\sim (\dots,s_j,s_i,\dots)
$
for $i \in [n-2]$ and $j \in [n-1]$ with $|i-j| > 1$. Refer to these as the \emph{braid relations} for $S_n$.

As noted in the introduction, there is an appealing version of this fact for involution words in type $A$, first noted in recent work of
 Hu and Zhang  \cite[Theorem 3.1]{HuZhang}.
Their proof of the following theorem derives from an intricate, direct argument, but one can also obtain  this statement as an easy corollary of  our results.

\begin{theorem}[Hu and Zhang \cite{HuZhang}] \label{braid1-thm} Let $x \in \I(S_n)$. Then $\hat\cR(x)$ is an equivalence class under the relation  generated by 
the   equivalences of the form
$(s_i,s_{i+1},\dots) \sim (s_{i+1},s_i,\dots)$  for $i \in [n-2]$
together with
the braid relations for $S_n$. \end{theorem}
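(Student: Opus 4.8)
The plan is to obtain this as a short corollary of the structural results of Sections~\ref{atomSn-sect} and~\ref{poset-sect}, doing essentially no new computation. Write $\approx$ for the equivalence relation on $\Mon{S}$ generated by the braid relations for $S_n$ together with the initial-swap relations $(s_i,s_{i+1},\dots)\sim(s_{i+1},s_i,\dots)$. By Proposition~\ref{atomexist-prop} we have the disjoint decomposition $\hat\cR(x)=\bigsqcup_{w\in\cA(x)}\cR(w)$, and by Matsumoto's theorem each block $\cR(w)$ is already a single equivalence class under the braid relations alone. So the assertion ``$\hat\cR(x)$ is an $\approx$-class'' splits into two tasks: showing that $\approx$ \emph{preserves} the set $\hat\cR(x)$, and showing that $\approx$ \emph{fuses} the finitely many blocks $\cR(w)$ into one class.

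For preservation I would check each generator separately. The braid relations carry $\hat\cR(x)$ into itself because they preserve both the underlying group element and reducedness, hence preserve every $\cR(w)$. For an initial swap it suffices to record, exactly as in the proof of Theorem~\ref{equiv1-thm}, that $\sidem{\sidem{1}{s_i}}{s_{i+1}}=\sidem{\sidem{1}{s_{i+1}}}{s_i}=(i,i+2)$. Consequently the iterated map $\hat\delta$ satisfies $\hat\delta(s_i,s_{i+1},t_3,\dots,t_k)=\hat\delta(s_{i+1},s_i,t_3,\dots,t_k)$, and the two words have equal length; since $\ellhat(x)$ is by definition the shortest achievable length, one of these words lies in $\hat\cR(x)$ precisely when the other does.

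For fusion I would run over the poset $\(\cA(x)^{-1},\prec_\cA\)$. By Theorem~\ref{prec1-thm} this set equals the interval $[\hat0(x),\hat1(x)]$, and the convexity established in that proof guarantees that any chain of single steps of the relation $[\cdots c,a,b\cdots]\to[\cdots b,c,a\cdots]$ (with $a<b<c$) realizing $\hat0(x)\preceq_\cA u$ stays inside $\cA(x)^{-1}$; hence the graph on $\cA(x)^{-1}$ whose edges are these single steps is connected. It then suffices to connect the blocks of two atoms joined by one such step. If $u,v\in\cA(x)^{-1}$ are so related, Lemma~\ref{equiv1-lem} produces $z\in S_n$ with $u=zs_{i+1}s_i$ and $v=zs_is_{i+1}$, both reduced products, whence $u^{-1}=s_is_{i+1}z^{-1}$ and $v^{-1}=s_{i+1}s_iz^{-1}$ are reduced of length $\ell(z)+2$. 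Choosing any $\mathbf r\in\cR(z^{-1})$, the word $(s_i,s_{i+1},\mathbf r)\in\cR(u^{-1})$ and the word $(s_{i+1},s_i,\mathbf r)\in\cR(v^{-1})$ differ by a single initial swap, so $\cR(u^{-1})$ and $\cR(v^{-1})$ lie in one $\approx$-class. Connectivity of the graph then collapses all of $\hat\cR(x)$ to a single class.

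The genuinely difficult content lives in the earlier sections (Theorem~\ref{atomSn-thm}, Theorem~\ref{equiv1-thm}, and the poset analysis culminating in Theorem~\ref{prec1-thm}); granting those, the only real subtlety here is bookkeeping with inverses, since the atoms form $\cA(x)$ while the clean order and its covering description live on $\cA(x)^{-1}$. The step I expect to require the most care is verifying that a single $\prec_\cA$-step between $u$ and $v$ translates, after inverting, into reduced words of $u^{-1}$ and $v^{-1}$ that begin with $s_is_{i+1}$ and $s_{i+1}s_i$ respectively and agree thereafter — which is exactly the factorization furnished by Lemma~\ref{equiv1-lem}.
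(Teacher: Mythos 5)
Your proof is correct and follows essentially the same route as the paper's, which likewise deduces the theorem from Matsumoto's theorem, Lemma \ref{equiv1-lem}, and the interval description of $\cA(x)^{-1}$ in Theorem \ref{prec1-thm}. The only difference is that you spell out explicitly the preservation step (via $\hat\delta$) and the bookkeeping with inverses, both of which the paper's two-sentence proof leaves implicit.
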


\begin{proof}
By Matsumoto's theorem and Lemma \ref{equiv1-lem}, the union of the sets $\cR(w)$ over the permutations $w$ in any  interval in $(S_n,\preceq_\cA)$ is spanned and preserved by the relations $(\dots,s_i,s_{i+1}) \sim (\dots, s_{i+1},s_i)$  for $i \in [n-2]$
together with
the braid relations. The  result now follows from Theorem \ref{prec1-thm}.
\end{proof}

There is  a nice version of this theorem for fixed-point-free involutions, which  does not seem to have been previously observed  in the literature.

\begin{theorem}\label{braid2-thm} Let $x \in \Ifpf(S_{2n})$. Then $\cRfpf(x)$ is an equivalence class under the relation  generated by 
the  equivalences of the form
$
(s_{2i},s_{2i-1},\dots) \sim (s_{2i},s_{2i+1},\dots)
$
for $i \in [n-1]$
together with the braid relations  for $S_{2n}$.
\end{theorem}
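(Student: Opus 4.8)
The plan is to mirror the proof of Theorem~\ref{braid1-thm} (equivalently, to carry out the fixed-point-free analogue of how Theorem~\ref{prec2-thm} was derived from Theorem~\ref{prec1-thm}), substituting each ingredient by its $\fpf$ counterpart. By Proposition~\ref{atomexist-prop}, $\cRfpf(x) = \hat\cR(\wfpf{n},x) = \bigcup_{v \in \cAfpf(x)} \cR(v)$, so I must show this union is a single equivalence class under the braid relations for $S_{2n}$ together with the relations $(s_{2i},s_{2i-1},\dots)\sim(s_{2i},s_{2i+1},\dots)$. By Theorem~\ref{prec2-thm}, the set $\cAfpf(x)^{-1}$ is the interval $[\hat0_\fpf(x),\hat1_\fpf(x)]$ in the order $\prec_{\cAfpf}$, which is connected — indeed a graded lattice by Proposition~\ref{graded-prop2}. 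I will therefore work directly with atoms and the stated (initial) relations, so that no reversal of words is needed.

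For the spanning claim, Matsumoto's theorem (Theorem~\ref{matsumoto}) already links any two reduced words of a single $v$ by braid relations, so it remains to link $\cR(v)$ to $\cR(v')$ whenever $v^{-1}$ and $v'^{-1}$ form a covering pair $w \prec_{\cAfpf} w'$ in the interval. Here Lemma~\ref{equiv2-lem} supplies the bridge: such a cover has the form $w = z s_{2i+1} s_{2i}$ and $w' = z s_{2i-1} s_{2i}$ with both words reduced, so passing to inverses yields reduced words $(s_{2i},s_{2i+1})\,\rho \in \cR(v)$ and $(s_{2i},s_{2i-1})\,\rho \in \cR(v')$ for a common $\rho \in \cR(z^{-1})$, and these differ by a single instance of the new relation. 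Since $[\hat0_\fpf(x),\hat1_\fpf(x)]$ is connected through its covers, induction shows that all of $\cRfpf(x)$ lies in one equivalence class.

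The step I expect to be the main obstacle is preservation: verifying that applying one of the new relations to a word of $\cRfpf(x)$ cannot escape $\cRfpf(x)$, and in particular cannot produce a non-reduced word. I would handle this at the level of the Demazure product. Writing $p = \sidem{\wfpf{n}}{s_{2i}}$, a direct computation with the explicit formula of Corollary~\ref{invdem-cor} gives the identity $\sidem{p}{s_{2i-1}} = \sidem{p}{s_{2i+1}}$ (both equal the single involution obtained from $\wfpf{n}$ by a conjugation near the index $2i$). Combined with the associativity relation $\sidem{(\sidem{a}{b})}{c} = \sidem{a}{(b \dem c)}$ from the introduction, this shows that if $e'$ is obtained from a word $e \in \cR(v)$ with $v \in \cAfpf(x)$ by one of the new relations, then the iterated application of $\dact$ to $e'$ still yields $x$. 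Now $e'$ has the same length as $e$, namely the common length $\ell(v)$ of the reduced words of atoms in $\cAfpf(x)$; and if $w''$ denotes the Demazure product $\delta(e')$, then $\sidem{\wfpf{n}}{w''} = x$, so $w'' \in \DemAfpf(x)$ and hence $\ell(w'') \geq \ell(v)$. Since a Demazure product always satisfies $\ell(w'') \leq |e'| = \ell(v)$, with equality precisely when $e'$ is reduced, it follows that $e'$ is reduced and that its underlying permutation lies in $\cAfpf(x)$; thus $e' \in \cRfpf(x)$. This is exactly the fixed-point-free counterpart of the redundancy of $(s_i,s_{i+1},s_i)$ that is used implicitly in the proof of Theorem~\ref{braid1-thm}, and it closes the argument that $\cRfpf(x)$ is a single equivalence class.
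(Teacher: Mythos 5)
Your proposal is correct and takes essentially the same route as the paper, whose proof of Theorem \ref{braid2-thm} likewise combines Matsumoto's theorem, Lemma \ref{equiv2-lem}, and the interval description of $\cAfpf(x)^{-1}$ from Theorem \ref{prec2-thm}, exactly mirroring the proof of Theorem \ref{braid1-thm}. The only difference is cosmetic: you verify the preservation step by an explicit Demazure-product and length-minimality computation, whereas the paper leaves this implicit in its appeal to Lemma \ref{equiv2-lem} (effectively routing through Theorem \ref{equiv2-thm}).
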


\begin{proof}
The result follows from Lemma \ref{equiv2-lem} and Theorem \ref{prec2-thm} exactly as in the previous proof.
\end{proof}

The goal of the rest of this section is to investigate the extent to which these results generalize to arbitrary Coxeter groups. We will find that the situation in type $A$ is quite special: 
in the generic case, one has to include many additional ``prefix-dependent'' relations to define an equivalence which spans and preserves $\hat \cR_*(x)$.

For the duration of this section we fix an arbitrary twisted Coxeter system $(W,S,*)$. Recall that $\Mon{S}$ denotes the set of all finite sequences of elements of $S$.
Given $s,t \in S$, we write $m(s,t)$ for the order of  $st$ in $W$.
We first review the general statement of Matsumoto's theorem. 


\begin{definition}\label{mat-def}
Define $\sim_W$ as the transitive  closure of the relation on $\Mon{S}$ 
with
\be\label{braid-def}
 ( a_1,\dots,a_j, \underbrace{s,t,s,t,\dots}_{m\text{ terms}}, b_1,\dots,b_k )
\sim_W
 (a_1,\dots,a_j, \underbrace{t,s,t,s,\dots}_{m\text{ terms}}, b_1,\dots,b_k )
\ee
whenever  $s,t,a_i,b_i \in S$ and $m=m(s,t)$.
\end{definition}
We   refer to the relations \eqref{braid-def} generating $\sim_W$ in this definition as the \emph{braid relations} for $W$.

\begin{theorem}[See \cite{GP}] \label{matsumoto}
If $w \in W$ then $\cR(w)$ is an equivalence class in $\Mon{S}$ under $\sim_W$.
\end{theorem}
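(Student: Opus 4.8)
The plan is to prove both inclusions implicit in the statement: that $\cR(w)$ is closed under $\sim_W$, and that any two reduced words for $w$ are $\sim_W$-equivalent. The first is immediate. Each braid relation \eqref{braid-def} rewrites an alternating factor $\underbrace{s,t,s,\dots}_{m}$ as $\underbrace{t,s,t,\dots}_{m}$ with $m=m(s,t)$, and since $\underbrace{sts\cdots}_{m}=\underbrace{tst\cdots}_{m}$ in $W$, this changes neither the product nor the length of a word. Hence applying $\sim_W$ to a word in $\cR(w)$ yields another word in $\cR(w)$, so $\cR(w)$ is a union of $\sim_W$-classes, and it remains to show it is a single class.

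For that I would argue by induction on $k=\ell(w)$, following the classical argument of Tits. Given reduced words $\mathbf a=(a_1,\dots,a_k)$ and $\mathbf b=(b_1,\dots,b_k)$ for $w$, set $s=a_1$ and $t=b_1$. If $s=t$, then $(a_2,\dots,a_k)$ and $(b_2,\dots,b_k)$ are reduced words for $sw$ of length $k-1$, so they are $\sim_W$-equivalent by induction, and prepending $s$ gives $\mathbf a\sim_W\mathbf b$. If $s\neq t$, the key point is that $w$ then admits a reduced word beginning with the length-$m$ alternating word in $s,t$, where $m=m(s,t)$; granting this, write $\mathbf c=(\underbrace{s,t,s,\dots}_{m})\cdot\mathbf d$ and $\mathbf c'=(\underbrace{t,s,t,\dots}_{m})\cdot\mathbf d$ for a common reduced tail $\mathbf d$. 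A single application of \eqref{braid-def} gives $\mathbf c\sim_W\mathbf c'$. Since $\mathbf a$ and $\mathbf c$ both begin with $s$, stripping the first letter and applying the inductive hypothesis to the length-$(k-1)$ element $sw$ gives $\mathbf a\sim_W\mathbf c$; symmetrically $\mathbf b\sim_W\mathbf c'$. Chaining these, $\mathbf a\sim_W\mathbf c\sim_W\mathbf c'\sim_W\mathbf b$.

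The one substantive ingredient, and the step I expect to be the main obstacle, is the claim invoked in the $s\neq t$ case: if $s\neq t$ both lie in $\DesL(w)$, then $m(s,t)<\infty$ and $w$ has a reduced word beginning with $\underbrace{s,t,s,\dots}_{m}$. I would prove this by decomposing $w=u\cdot v$ along the dihedral parabolic subgroup $W_{\{s,t\}}$, where $v$ is the minimal-length representative of the coset $W_{\{s,t\}}w$ and $\ell(w)=\ell(u)+\ell(v)$. Because $v$ is a minimal coset representative it has neither $s$ nor $t$ as a left descent, so by the standard fact that left descents lying in $\{s,t\}$ are detected by the $W_{\{s,t\}}$-part, we get $s,t\in\DesL(u)$ inside the dihedral group $W_{\{s,t\}}$. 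But in a dihedral group the only element admitting both $s$ and $t$ as left descents is its longest element, which exists precisely when $m<\infty$; hence $m(s,t)<\infty$, $u$ is that longest element, and its reduced words are exactly the two length-$m$ alternating words. Concatenating $(\underbrace{s,t,\dots}_{m})$ with any reduced word for $v$ yields the desired reduced word for $w$. All of these are standard facts about parabolic cosets and dihedral groups (as in \cite{GP}); the Exchange Condition underlies each of them, and this is where the genuine combinatorial content lives.
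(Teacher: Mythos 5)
Your proof is correct and is essentially the argument of \cite[Theorem 1.2.2]{GP}, which is the proof the paper itself defers to: the paper does not reprove Theorem \ref{matsumoto} but cites Geck--Pfeiffer, and it records exactly your ``one substantive ingredient'' as Lemma \ref{matsu-lem} (if $s\neq t$ both satisfy $sw<w$ and $tw<w$, then $m(s,t)<\infty$ and $w=\Delta y$ with $\ell(w)=\ell(\Delta)+\ell(y)$). Your derivation of that lemma via the minimal-length representative of the coset $W_{\{s,t\}}w$ is a standard, valid variant of the exchange-condition argument in \cite{GP}, and the rest of your induction is the classical Tits/Matsumoto proof.
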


For a proof of this result, see, for example, \cite[Theorem 1.2.2]{GP}. 
The proof in \cite{GP} depends on the following basic lemma (given as \cite[Lemma 1.2.1]{GP}), which will be of use later. Recall that we write $W_J$ for the parabolic subgroup generated by $J\subset S$,
and that $y<w$ implies $\ell(y) < \ell(w)$.

\begin{lemma}[See \cite{GP}]\label{matsu-lem}
Let $w \in W$ and $s,t \in S$. When $m(s,t) < \infty$, let $\Delta$ denote the longest element of the dihedral subgroup $W_{\{ s,t\}} = \langle s,t\rangle$ and define $x,y \in W$  such that $w = x\Delta = \Delta y$.
\ben
\item[(a)] If $ws < w$ and $wt < w$, then $m(s,t) < \infty$ and $\ell(w) = \ell(x) + \ell(\Delta)$.

\item[(b)] If $sw < w$ and $tw < w$, then $m(s,t) < \infty$ and $\ell(w) = \ell(\Delta) + \ell(y)$.
\een
\end{lemma}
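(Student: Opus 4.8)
The plan is to prove part (a) directly and then deduce part (b) by applying (a) to $w^{-1}$, using that $sw<w$ if and only if $w^{-1}s<w^{-1}$, so that $\DesL(w)=\DesR(w^{-1})$. Thus it suffices to treat the situation $s,t\in\DesR(w)$, and the natural setting is the rank-two parabolic subgroup $W_J=\langle s,t\rangle$ for $J=\{s,t\}$, together with the standard theory of minimal-length coset representatives for $W/W_J$ (as in \cite[Chapter 1]{GP}).

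First I would invoke the parabolic quotient decomposition: every $w\in W$ factors uniquely as $w=xu$ with $u\in W_J$ and $x$ the minimal-length element of the coset $wW_J$, this factorization being length-additive, $\ell(w)=\ell(x)+\ell(u)$, with $\DesR(x)\cap J=\varnothing$. The consequence I want is descent-compatibility within $J$: for $r\in J$ one has $wr<w$ if and only if $ur<u$, because $wr=x(ur)$ and length-additivity gives $\ell(wr)=\ell(x)+\ell(ur)$, so appending $r$ lowers the length of $w$ exactly when it lowers that of $u$. Applying this to $r=s$ and $r=t$, the hypotheses $ws<w$ and $wt<w$ translate into $s,t\in\DesR(u)$ with $u\in W_J$.

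Now I would classify the elements of the dihedral group $W_J$ having both $s$ and $t$ as right descents. Every nonidentity element of $\langle s,t\rangle$ has a unique reduced expression that is an alternating word in $s,t$, and appending its final letter is the only way to shorten it; hence a nonidentity element that is not longest has exactly one right descent. An element with \emph{both} $s$ and $t$ as right descents can therefore only be the longest element $\Delta=\underbrace{sts\cdots}_{m}=\underbrace{tst\cdots}_{m}$, and this element exists precisely when $m=m(s,t)<\infty$; when $m=\infty$ the group is infinite dihedral, no element has two right descents, and the hypotheses cannot be met. Thus $m<\infty$ and $u=\Delta$, whence $w=x\Delta$ with $\ell(w)=\ell(x)+\ell(\Delta)$; since $\Delta^{-1}=\Delta$, the element $x=w\Delta$ is exactly the $x$ in the statement. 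Part (b) then follows by applying (a) to $w^{-1}$: we obtain $m<\infty$ and $w^{-1}=x'\Delta$ length-additively, so $w=\Delta(x')^{-1}=\Delta y$ with $y=(x')^{-1}$ and $\ell(w)=\ell(\Delta)+\ell(y)$.

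I expect the main obstacle to be not the dihedral bookkeeping, which is elementary, but cleanly justifying the descent-compatibility step, i.e.\ the length-additive minimal-coset-representative factorization $w=xu$ and the equivalence $wr<w \Leftrightarrow ur<u$. If one wishes to avoid citing the parabolic quotient theory (to keep the argument self-contained and non-circular within the development of Matsumoto's theorem), the alternative is a direct induction on $\ell(w)$ using only the exchange condition: assuming $s,t\in\DesR(w)$ one shows by induction that the alternating word $\underbrace{\cdots ts}_{k}$ can be peeled off the right of $w$ with $\ell\bigl(w\cdot\underbrace{\cdots ts}_{k}\bigr)=\ell(w)-k$, the crux being to propagate the second descent down the chain. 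Verifying that this process terminates exactly at length $m$, thereby forcing $m<\infty$, is then the delicate point, and is where the exchange condition does the real work.
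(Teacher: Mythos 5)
Your proof is correct. Note first that the paper does not actually prove this lemma: it is quoted verbatim from Geck--Pfeiffer as \cite[Lemma 1.2.1]{GP}, so there is no in-paper argument to compare against. Your primary route --- factor $w=xu$ with $x$ the minimal-length representative of $wW_J$ and $u\in W_J$, use length-additivity to transfer the descents $s,t$ from $w$ to $u$, and then observe that the only element of a dihedral group with two right descents is its longest element (which exists only when $m(s,t)<\infty$) --- is a clean and complete argument, and the reduction of (b) to (a) via $w\mapsto w^{-1}$ is fine. This differs from the proof in \cite{GP} itself, which is essentially the direct exchange-condition induction you sketch as your fallback: one peels the alternating word $\cdots ts$ off the right of $w$ while propagating the second descent, and termination forces $m(s,t)<\infty$. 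Your concern about circularity is reasonable in spirit but harmless in substance: the length-additive parabolic factorization is proved from the exchange condition alone (independently of Matsumoto's theorem), so invoking it here does not make the development of Theorem \ref{matsumoto} circular; what your route buys is that all the combinatorial work is concentrated in one standard, quotable fact, while the direct induction buys self-containedness at the cost of the more delicate bookkeeping you identify.
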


To state an analogue of Theorem \ref{matsumoto} for  involution words, we require some new notation.
Fix $s,t \in S$ with $m(s,t) < \infty$ and, as in the lemma, let $\Delta$ be the longest element of $W_{\{s,t\}}$ so that 
\be\label{delta}\Delta = stst\cdots = tsts\cdots\quad\text{where both products have $m(s,t)$ factors.}\ee
 Note that if $\theta$ is a permutation of $J=\{s,t\}$, then $\theta$ extends uniquely to a diagram automorphism of the Coxeter system   $(W_J,J)$, which we denote by the same symbol. In this case $ \Delta =\Delta^{-1} =  \theta(\Delta)$, so the set $\hat \cR_\theta(\Delta)$ as well as the length $\ellhat_\theta(\Delta)$ (which is the length of any word in $\hat \cR_\theta(\Delta)$) is well-defined.

\begin{definition}
Let  $s$, $t$, and $\Delta$ be as above.
Given an arbitrary map $\theta: W \to W$,  define
\[ m(s,t;\theta) \omdef=\begin{cases}
 \ellhat_\theta(\Delta)&\text{if $\theta(\{s,t\}) = \{s,t\}$} \\
\ell(\Delta)&\text{if $\theta(\{s,t\}) \neq \{s,t\}$}.\end{cases}
\]
\end{definition}
Note that  $m(s,t; \theta) \leq m(s,t)$ since one always has $\ellhat_*(x) \leq \ell(x)$. We may express $m(s,t;\theta)$ solely in terms of $m(s,t)$ and the action of $\theta$ on $\{s,t\}$ in the following way.

\begin{proposition}\label{mstar} Let $s,t \in S$ with $m(s,t) < \infty$. 
Then
\[  m(s,t;\theta) = \begin{cases}  \tfrac{1}{2} m(s,t)+\tfrac{1}{2} &\text{if $m(s,t)$ is odd and $\theta(\{s,t\})=\{s,t\}$} \\
  \tfrac{1}{2}m(s,t)+1 &\text{if $m(s,t)$ is even and $\theta(s) = s$ and $\theta(t)=t$} \\
   \tfrac{1}{2} m(s,t)&\text{if $m(s,t)$ is even and $\theta(s)=t$ and $\theta(t)=s$}
   \\
   m(s,t)&\text{if $\theta(\{s,t\})\neq \{s,t\}$}.
    \end{cases}
    \]
\end{proposition}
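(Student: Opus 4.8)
The plan is to compute $m(s,t;\theta)$ directly from the definition by evaluating $\ellhat_\theta(\Delta)$ in the case $\theta(\{s,t\}) = \{s,t\}$, reducing everything to the dihedral group $W_J = W_{\{s,t\}} = \langle s,t\rangle$. Since $m(s,t;\theta)$ depends only on $m(s,t)$ and the action of $\theta$ on $\{s,t\}$, I may replace $\theta$ by its restriction to the induced diagram automorphism of $(W_J,J)$, and compute there. The case $\theta(\{s,t\}) \neq \{s,t\}$ is immediate from the definition, giving $m(s,t;\theta) = \ell(\Delta) = m(s,t)$, so this handles the last line at once. It remains to treat the two subcases where $\theta$ stabilizes $\{s,t\}$, namely the identity action (relevant to the first two lines) and the swap $\theta(s)=t$, $\theta(t)=s$ (relevant to the third line, which forces $m(s,t)$ even since a diagram automorphism of a dihedral group swapping the two generators exists only when the Coxeter graph has the required symmetry).

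First I would recall that $\Delta = \Delta^{-1} = \theta(\Delta)$, so $\Delta \in \I_\theta(W_J)$ and $\ellhat_\theta(\Delta)$ is well-defined as the common length of words in $\hat\cR_\theta(\Delta)$. To evaluate this length, I would build an involution word for $\Delta$ explicitly, using Corollary~\ref{invdem-cor} and Corollary~\ref{h2-cor}: starting from the identity and successively applying the operators $\act_\theta$, each step either increases $\ellhat_\theta$ by $1$ while changing $\ell$ by $1$ (when $s^\theta x = xs$) or by $1$ while changing $\ell$ by $2$ (when $s^\theta x \neq xs$). By Proposition~\ref{des-lem}, the relationship between $\ell$ and $\ellhat_\theta$ is governed by how many of the covering steps are of the ``length $-1$'' type versus the ``length $-2$'' type. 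Reading this off for the dihedral group: a reduced word for $\Delta$ has $\ell(\Delta) = m(s,t)$ letters, and $\ellhat_\theta(\Delta)$ counts the number of $\act_\theta$-steps needed to reach $\Delta$ from $1$. Each $\act_\theta$-step of the ``twisted conjugation'' type ($s^\theta x \neq xs$) consumes two units of ordinary length, while each ``multiplication'' type ($s^\theta x = xs$) consumes one unit. Thus $\ell(\Delta) = 2a + b$ where $a$ is the number of twisted-conjugation steps and $b$ the number of multiplication steps, and $\ellhat_\theta(\Delta) = a + b$.

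The main computation is then a finite case analysis inside the dihedral group. When $m = m(s,t)$ is odd and $\theta$ stabilizes $\{s,t\}$ (necessarily the identity, as an odd dihedral diagram has no nontrivial automorphism fixing both vertices otherwise — but either way the odd case forces a fixed-point structure), one finds every covering step except possibly a final one is a twisted-conjugation step, giving $b=1$ and $a = (m-1)/2$, so $\ellhat_\theta(\Delta) = (m-1)/2 + 1 = \tfrac12 m + \tfrac12$. When $m$ is even and $\theta$ is the identity on $\{s,t\}$, the element $\Delta$ is central in $W_J$ and $s,t$ commute with it in the relevant sense, producing two multiplication steps and $(m-2)/2$ twisted-conjugation steps: $b = 2$, $a = (m-2)/2$, giving $\ellhat_\theta(\Delta) = (m-2)/2 + 2 = \tfrac12 m + 1$. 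When $m$ is even and $\theta$ swaps $s$ and $t$, one gets a purely twisted-conjugation chain with $b = 0$, $a = m/2$, yielding $\ellhat_\theta(\Delta) = \tfrac12 m$. These three computations match the three nontrivial lines of the proposition.

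The hard part will be verifying, cleanly and without excessive case-checking, the exact count of multiplication-type versus twisted-conjugation-type steps in each parity/automorphism scenario — in particular confirming that the longest element $\Delta$ of a dihedral group really is reached by a canonical $\act_\theta$-chain of the claimed shape, and that the fixed-point behavior of $\theta$ (identity versus swap) interacts with the parity of $m$ exactly as asserted. I expect the slickest route is not to track an explicit chain but to use the known formula $\ellhat_\id(x) = \tfrac12(\ell(x) + \kappa(x))$ from Incitti (cited after Definition~\ref{ellhat-def}) for the untwisted case, computing $\kappa(\Delta)$ as the number of $2$-cycles of $\Delta$ viewed as an involution, and to use the duality of Section~\ref{duality-sect} (specifically Corollary~\ref{lastdia-prop}, with $v_0 = \Delta = w_0$ in the dihedral group) to transfer between the identity automorphism and the swap automorphism $\diamond$. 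Indeed $\ellhat_\diamond(\Delta) = \ellhat_*(w_0) - \ellhat_*(1) = \ellhat_\id(\Delta)$ is not quite what is needed, so more care is required; the cleanest statement may instead come from directly identifying $\Delta$ as an involution in the appropriate classical-group model and reading off $\kappa$, which I would carry out in each of the four lines separately.
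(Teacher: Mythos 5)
The paper itself leaves this as ``a straightforward calculation'' for the reader, and your proposal carries out essentially that intended computation correctly: reduce to the dihedral group $W_{\{s,t\}}$, observe via Proposition \ref{des-lem} that any involution word for $\Delta$ consists of $a$ steps that raise $\ell$ by $2$ and $b$ steps that raise it by $1$, so that $2a+b=\ell(\Delta)=m(s,t)$ and $\ellhat_\theta(\Delta)=a+b$ are forced once one chain is exhibited, and then count $(a,b)$ in each case. Your counts $(a,b)=(\tfrac{m-1}{2},1)$, $(\tfrac{m-2}{2},2)$, $(\tfrac{m}{2},0)$ are all correct and match the stated formula. Two small corrections: the parenthetical asserting that an odd dihedral diagram ``has no nontrivial automorphism'' is false --- the swap of $s$ and $t$ is a diagram automorphism for every $m$, which is precisely why the odd case must be (and is) checked for both the identity and the swap, with the same count $b=1$, $a=\tfrac{m-1}{2}$ in either case; and your closing suggestion to route the computation through Incitti's formula and Corollary \ref{lastdia-prop} is, as you half-concede, a dead end and should simply be dropped in favor of the direct chain count, which is already complete.
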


\begin{proof}
The formula here derives from a straightforward calculation which we leave to the reader.
\end{proof}

Let $\a=(a_1,\dots,a_j) \in \Mon{S}$ and recall from \eqref{delta*} that  $\hat\delta_*(\a) = \idem{\idem{\idem{1} {a_1}}{ \dots}}{ a_j} \in \I_*$.
In the definition which follows, we write  $\theta_\a \in \Aut(W)$ for the automorphism  
$
\theta_\a  : g\mapsto (ugu^{-1})^*$
where
$
u = \hat\delta_*(\a).
$
For involution words, we  have these analogues of Definition \ref{mat-def} and Theorem \ref{matsumoto}:

\begin{definition}\label{d:ibraid}
Define $\sim_{\I_*}$ as the transitive  closure of the relation on $\Mon{S}$ with
\be\label{invbraid}
 ( a_1,\dots,a_j, \underbrace{s,t,s,t,\dots}_{m\text{ terms}}, b_1,\dots,b_k )
\simstar
( a_1,\dots,a_j, \underbrace{t,s,t,s,\dots}_{m\text{ terms}}, b_1,\dots,b_k )
\ee
whenever  $s,t,a_i,b_i \in S$  
and $m=m(s,t;\theta_\a)$ for $\a = (a_1,\dots,a_j)$.
\end{definition}

\begin{theorem}\label{matsumoto-twist}
If $w \in \I_*$ then $\hat \cR_*(w)$ is an equivalence class in $\Mon{S}$ under  $\sim_{\I_*}$.
\end{theorem}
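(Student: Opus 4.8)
The plan is to derive the theorem from two properties of $\sim_{\I_*}$: that each $\hat\cR_*(w)$ is \emph{closed} under $\sim_{\I_*}$, and that any two words in $\hat\cR_*(w)$ are joined by a chain of $\sim_{\I_*}$-moves passing only through $\hat\cR_*(w)$. Together these say exactly that $\hat\cR_*(w)$ is a single equivalence class. Both will follow from one local statement about a rank-two parabolic, which I will call the \emph{dihedral lemma}. To state it, fix $u \in \I_*$ and $s \neq t \in S$ with $m = m(s,t) < \infty$, and let $\theta_u$ be the automorphism $g \mapsto (ugu^{-1})^*$; this is precisely the map $\theta_\a$ of Definition~\ref{d:ibraid} for any $\a \in \hat\cR_*(u)$, since it depends only on $u = \hat\delta_*(\a)$. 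Writing $p = m(s,t;\theta_u)$, the dihedral lemma asserts that the two alternating sequences $(s,t,s,\dots)$ and $(t,s,t,\dots)$ of length $p$, applied to $u$ via $\dact_*$, produce the \emph{same} twisted involution $u'$, that both belong to $\hat\cR_*(u,u')$ (so $\ellhat_*(u') = \ellhat_*(u) + p$ and $s,t \in \DesR(u')$), and that $u'$ is the unique element of the resulting $\{s,t\}$-descending chain having both $s$ and $t$ as right descents.

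Granting the dihedral lemma, \textbf{closure} is immediate. A single $\sim_{\I_*}$-move replaces a factor $(s,t,s,\dots)_p$ by $(t,s,t,\dots)_p$ at a prefix $\a$, where $p = m(s,t;\theta_\a)$ and $u = \hat\delta_*(\a) \in \I_*$. By the lemma the two factors yield the same $\idem{u}{(\cdots)} = u'$, so using $\idem{(\idem{x}{y})}{z} = \idem{x}{(y\dem z)}$ the whole word and its image have equal value under $\hat\delta_*$; since the move also preserves word length, the image of an element of $\hat\cR_*(w)$ again has length $\ellhat_*(w)$ and value $w$, hence lies in $\hat\cR_*(w)$.

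For \textbf{connectivity} I would run a Matsumoto-style induction on $\ellhat_*(w)$, stripping the \emph{last} letter, which by Corollary~\ref{h2-cor} and Proposition~\ref{atomdes-prop} corresponds to a right descent and lowers $\ellhat_*$ by one. Take $\e, \e' \in \hat\cR_*(w)$ with last letters $s$ and $t$. If $s = t$, delete the common last letter to obtain words of $w \act_* s$; by induction these are joined by $\sim_{\I_*}$-moves, and appending $s$ again lifts every move verbatim, since the suffix in \eqref{invbraid} is arbitrary and the restored letter lies past every braid factor, so no prefix---hence no $\theta_\a$---changes. Thus all involution words of $w$ with a fixed last letter are mutually connected, and it suffices to treat $s \neq t$ by exhibiting one word of $w$ ending in $s$ and one ending in $t$ that differ by a single move. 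Here $s,t \in \DesR(w)$ and $m(s,t) < \infty$ by Lemma~\ref{matsu-lem}(a); applying the dihedral lemma in reverse, peel the maximal $\{s,t\}$-tail off $w$ to reach $w' \in \I_*$ with $\DesR(w') \cap \{s,t\} = \varnothing$, so that $w$ is the common top $u'$ for $u = w'$ and the tail has length $p = m(s,t;\theta_{w'})$. Fix $\b \in \hat\cR_*(w')$; then the two words obtained by appending to $\b$ the two alternating $\{s,t\}$-sequences of length $p$ both lie in $\hat\cR_*(w)$, have distinct last letters (one $s$, one $t$), and are related by exactly the twisted braid move \eqref{invbraid} at prefix $\b$, whose length is $m(s,t;\theta_\b) = p$. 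Chaining this with the $s=t$ case completes the induction.

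The main obstacle is the dihedral lemma, and within it the case $\theta_u(\{s,t\}) = \{s,t\}$, where Proposition~\ref{mstar} gives $p < m$. When $\theta_u(\{s,t\}) \neq \{s,t\}$ one has $p = m$, and the coincidence $\idem{u}{(s,t,\dots)_m} = \idem{u}{(t,s,\dots)_m}$ is formal: both alternating words are reduced words for the longest element $\Delta$ of $W_{\{s,t\}}$, so both equal $\idem{u}{\Delta}$ by the Demazure-product identity, leaving only the count $\ellhat_*(u') - \ellhat_*(u) = m$ to check. But when $\theta_u$ preserves $\{s,t\}$ the two length-$p$ words are \emph{not} reduced words of a common group element, so their $\dact_*$-values agree only through genuine collapsing in the twisted product; verifying this, together with the descent and length bookkeeping supplied by Corollary~\ref{invdem-cor} and Proposition~\ref{des-lem}, amounts to analyzing the twisted involution words of $\Delta$ in the dihedral system $(W_{\{s,t\}}, \{s,t\}, \theta_u)$ and transporting that analysis through the map $g \mapsto \idem{u}{g}$. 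This rank-two, $\theta$-dependent computation---made explicit by the four cases of Proposition~\ref{mstar}---is where the real work lies.
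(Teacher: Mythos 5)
Your overall architecture --- closure of $\hat\cR_*(w)$ under single moves plus connectivity by induction on $\ellhat_*(w)$ with an analysis of last letters --- is exactly the paper's, and the purely formal reductions (lifting moves past a common final letter, splicing the two alternating tails onto a common ancestor) are sound. The difficulty is that you have funneled the entire technical content of the theorem into the ``dihedral lemma,'' which you state but do not prove and which you yourself flag as ``where the real work lies.'' That lemma \emph{is} the theorem in rank two, so deferring it leaves the argument without its core. Worse, as stated it is false: take $u=w_0\in\I(S_3)$ with $s=s_1$, $t=s_2$, so that $\theta_u$ swaps $s$ and $t$ and $p=2$; then $\idem{\idem{u}{s}}{t}=u$, and the two length-$2$ words are certainly not elements of $\hat\cR_*(u,u')$ with $\ellhat_*(u')=\ellhat_*(u)+2$. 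The lemma needs the hypothesis $s,t\notin\DesR(u)$, and once you add it your closure step acquires a new obligation: for a move applied inside an involution word $\a(s,t,\dots)_p\b$ you must show that the prefix-value $u=\hat\delta_*(\a)$ has \emph{neither} $s$ nor $t$ as a right descent. That $s\notin\DesR(u)$ is automatic, but $t\notin\DesR(u)$ is not (e.g.\ $u=s_1$ in $S_4$ with $s=s_1$, $t=s_3$ has $\theta_u$ fixing both letters yet $s_1\in\DesR(u)$), and if it could fail the two sides of \eqref{invbraid} could in principle have different $\hat\delta_*$-images. The paper avoids this entirely by proving the stronger, hypothesis-free statement that the two sides of \eqref{invbraid} always have the same image under $\hat\delta_*$, via an explicit computation showing both equal one of $\Delta^*u\Delta$, $\Delta^*us\Delta$, $\Delta^*ut\Delta$, or $x\Delta$.

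For the connectivity half, your ``peel the maximal $\{s,t\}$-tail'' step likewise presupposes the unproven lemma: you need to know that descending from $w$ by $\{s,t\}$-moves reaches a twisted involution $w'$ with $\DesR(w')\cap\{s,t\}=\varnothing$ after exactly $m(s,t;\theta_{w'})$ steps along \emph{either} maximal chain, and that both chains terminate at the same $w'$. The paper establishes precisely this by writing $z=w\Delta$ with $\ell(z)=\ell(w)+\ell(\Delta)$ (Lemma \ref{matsu-lem}(a)) and splitting into four cases according to whether $s^*w<w$ and $t^*w<w$: each case identifies the common ancestor explicitly via Lemma \ref{matsu-lem}(b), verifies it lies in $\I_*$, and determines whether the tail has length $m(s,t)$ or $m(s,t;\theta)$, with the mixed-descent case requiring a length-counting contradiction to pin down the configuration. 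To complete your proof you would have to reproduce this case analysis; nothing in the proposal substitutes for it.
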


Before proving the theorem, it is helpful to   discuss the definition of $\sim_{\I_*}$ in more concrete detail. 
For this purpose, call the relations \eqref{invbraid} generating $\sim_{\I_*}$ the \emph{involution braid relations} of $W$ relative to $*$.
 We note the following corollary.

\begin{corollary}
Any braid relation between two involution words is an involution braid relation.
\end{corollary}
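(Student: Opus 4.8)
The plan is to show that a single ordinary braid move \eqref{braid-def} connecting two involution words is forced to use an alternating run whose length $m(s,t)$ already equals $m(s,t;\theta_\a)$, so that the move is literally of the form \eqref{invbraid}. Write the braid relation as $\mathbf{u} = (\a, \underbrace{s,t,s,\dots}_{m}, \b)$ and $\mathbf{v} = (\a, \underbrace{t,s,t,\dots}_{m}, \b)$ with $\a = (a_1,\dots,a_j)$, $m = m(s,t)$, and suppose $\mathbf{u},\mathbf{v}\in\hat\cR_*(w)$. By Matsumoto's theorem (Theorem \ref{matsumoto}) both are reduced words for a common element $v\in W$, which by Proposition \ref{atomexist-prop} is an atom $v\in\cA_*(w)$; in particular both are reduced words in the ordinary sense, the prefix $\a$ is a reduced involution word for $u := \hat\delta_*(\a)\in\I_*$ with $\ellhat_*(u)=j$, and since the letter following $\a$ is effective in $\mathbf{u}$ (namely $s$) and in $\mathbf{v}$ (namely $t$), Corollary \ref{h2-cor} gives $s,t\notin\DesR(u)$.

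Since $m = m(s,t) = \ell(\Delta)$, the alternating block $(s,t,s,\dots)$ of length $m$ is a reduced word for the dihedral longest element $\Delta$, so by the identity $\idem{(\idem{x}{y})}{z} = \idem{x}{(y\dem z)}$ the length-$(j+m)$ prefix of $\mathbf{u}$ represents $\idem{u}{\Delta}$. As this prefix is itself a reduced involution word, each of its letters raises $\ellhat_*$ by one, whence $\ellhat_*(\idem{u}{\Delta}) - \ellhat_*(u) = m$. The corollary therefore reduces to the local identity $\ellhat_*(\idem{u}{\Delta}) - \ellhat_*(u) = m(s,t;\theta_\a)$ for every $u\in\I_*$ with $s,t\notin\DesR(u)$, after which $m = m(s,t;\theta_\a)$ follows and the braid move is recognized as the involution braid relation \eqref{invbraid} at the same position.

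To establish this local identity I would track the alternating sequence $u_0 = u$, $u_i = u_{i-1}\dact_* r_i$ (with $r_i$ alternating between $s$ and $t$) using Corollary \ref{invdem-cor}: each step is either a commuting step $u_i = u_{i-1}r_i$ or a conjugation step $u_i = r_i^* u_{i-1} r_i$, and in both cases $\ellhat_*$ increases by one until $\idem{u}{\Delta}$ is reached, after which every further alternating letter is a right descent. The count of effective steps is exactly the $\theta_\a$-twisted involution length of $\Delta$, where $\theta_\a(g) = (ugu^{-1})^*$, and evaluating it via Proposition \ref{mstar} gives precisely $m(s,t;\theta_\a)$: when $\theta_\a$ preserves $\{s,t\}$ it matches $\ellhat_{\theta_\a}(\Delta)$ computed inside the dihedral subgroup $W_{\{s,t\}}$, while when $\theta_\a(\{s,t\})\neq\{s,t\}$ every step is forced to be a conjugation step and all $m(s,t)$ letters remain effective.

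The main obstacle is this local length identity, and in particular the dichotomy according to whether $\theta_\a$ preserves $\{s,t\}$. The subtlety is that $\theta_\a = (\mathrm{conj}_u)\circ{*}$ is only a group automorphism of $W$ and need not preserve $S$, so one cannot simply treat $(W,S,\theta_\a)$ as a twisted Coxeter system and quote the dihedral computation behind Proposition \ref{mstar} verbatim; instead I would verify directly that the reflections $(usu^{-1})^*$ and $(utu^{-1})^*$ control the descent pattern of the iterated $\dact_*$-orbit in the two dihedral directions, and that their falling outside $\{s,t\}$ is exactly what produces the non-collapsing case $m(s,t;\theta_\a)=m(s,t)$.
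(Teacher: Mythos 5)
Your overall route is genuinely different from the paper's and is viable, but as written it has a gap exactly where you flag one: the ``local length identity'' $\ellhat_*(\idem{u}{\Delta}) - \ellhat_*(u) = m(s,t;\theta_\a)$ is asserted with only a sketch, and one assertion in that sketch is false. When $\theta_\a(\{s,t\}) \neq \{s,t\}$ it is \emph{not} true that ``every step is forced to be a conjugation step'': take $W=S_4$, $\a$ an involution word for $u=s_3$, $s=s_1$, $t=s_2$; then $\theta_\a(s_2)=s_3s_2s_3\notin\{s_1,s_2\}$, yet the first step is the commuting step $u\mapsto us_1$. Fortunately this case is not needed at all, since there $m(s,t;\theta_\a)=m(s,t)$ by definition and the inequality you are after is vacuous. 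The only case that requires an argument is $\theta_\a(\{s,t\})=\{s,t\}$, and there your stated worry (that $\theta_\a$ need not preserve $S$) dissolves: from $s^*u=u\,\theta_\a(s)$ and $t^*u=u\,\theta_\a(t)$ with $\theta_\a(s),\theta_\a(t)\in\{s,t\}$, the whole alternating orbit stays in the coset $uW_{\{s,t\}}$ (note $u$ is its minimal-length representative since $s,t\notin\DesR(u)$); writing $u_i=uB_i$ with $B_i\in W_{\{s,t\}}$, one checks that $B_i$ is obtained from $B_{i-1}$ by applying $\dact$ relative to the genuine diagram automorphism $\theta_\a|_{\{s,t\}}$ of the dihedral system $(W_{\{s,t\}},\{s,t\})$, and that a letter is effective for $u_{i-1}$ exactly when it is effective for $B_{i-1}$. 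Hence the number of effective letters is $\ellhat_{\theta_\a}(\Delta)=m(s,t;\theta_\a)$, which combined with your count of $m(s,t)$ effective letters forces $m(s,t;\theta_\a)=m(s,t)$. You should supply this coset reduction explicitly; without it the central step of your proof is unsupported.

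For comparison, the paper's proof is a three-line contradiction that uses Theorem \ref{matsumoto-twist} as a black box: if $m(s,t;\theta_\a)<m(s,t)$, then applying the involution braid relation \eqref{invbraid} (with the shorter run of $m(s,t;\theta_\a)$ terms) to $\x$ produces a word $\z$ with two equal adjacent letters; since $\z\simstar\x$, Theorem \ref{matsumoto-twist} would force $\z$ to be an involution word, which is impossible because a word with a repeated adjacent letter is never reduced in the $\dact_*$ sense. Your approach avoids invoking Theorem \ref{matsumoto-twist}, at the cost of essentially re-deriving the dihedral case analysis that the paper performs once inside the proof of that theorem; the paper's argument is shorter precisely because it lets the already-proved theorem absorb that analysis.
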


\begin{proof}
Let $\x$ and $\y$ be the two sides of \eqref{braid-def} and assume both are involution words. The only way that the   braid relation $\x \sim_W \y$ can fail
to be an involution braid relation is if  $m(s,t;\theta_\a) < m(s,t)$ where $\a=(a_1,\dots,a_j)$. But in this case we would have $\x \simstar \z$ where $\z$ is a word with two equal adjacent letters and therefore not an involution word, contradicting Theorem \ref{matsumoto-twist}.
\end{proof}

Thus, when restricted to involution words, the involution braid relations for $W$ consist of the usual braid relations together with certain ``truncated braid relations'' whose size depends on a  prefix $\a$. The involution braid relations which are not braid relations in the ordinary sense include, for example, all  relations of the form 
\be \label{invbraid2} (\underbrace{s,t,s,t,\dots}_{m(s,t;*)\text{ terms}},b_1,\dots,b_k) \simstar (\underbrace{t,s,t,s,\dots}_{m(s,t;*)\text{ terms}},b_1,\dots,b_k)\ee
for $s,t,b_i \in S$;
these are the truncated braid relations induced by the empty prefix $\a = \emptyset$.
\begin{remarks} The situation for Coxeter groups of type $A$ is instructive.
\begin{itemize}
\item When $W=S_n$ and $*=\id$, Theorem \ref{braid1-thm} asserts that the relations \eqref{invbraid2} plus the ordinary braid relations generate $\sim_{\I_*}$.
This is  surprising, as $S_n$ has many other involution braid relations: for example,  
$ (s_{i_1},s_{i_2},\dots,s_{i_l}, s_j) \simstar (s_{i_1},s_{i_2},\dots,s_{i_l}, s_k) $
is  an involution braid relation for $S_n$ relative to $*=\id$ whenever 
$(s_{i_1},s_{i_2},\dots,s_{i_l})$ is an involution word for a permutation with cycles $(j,k)$ and $(j+1,k+1)$ where $|j-k|>1$.

\item When $W=S_n$ and $*$ is the nontrivial diagram automorphism, the involution braid relations of the form \eqref{invbraid2}  are given by
\[\label{invbraid3}
\left\{\ba
(s_i,\dots) &\simstar (s_{n-i},\dots)&&\text{for $i \in[n-1]$ with $n\neq 2i+1$} \\ 
(s_i,s_{i+1},\dots)&\simstar (s_{i+1},s_i,\dots)&&\text{when $n=2i+1$}.
\ea\right.\]
In contrast to the untwisted case, these relations plus the ordinary braid relations do not always span  the sets $\hat\cR_*(x)$, as one can check by considering the longest permutation in $S_4$. 
\end{itemize}
Even when $*=\id$ (e.g., when $(W,S)$ is of type $B$),
the  relations   \eqref{invbraid2} can fail to  span $\hat \cR(x)$.
\end{remarks}

\begin{proof}[Proof of Theorem \ref{matsumoto-twist}]
The proof consists of two steps. First, we will  show that if  $\x \in \hat\cR_*(z)$ for some $z \in \I_*$ and $\y \in \Mon{S}$ is such that $\x \simstar \y$, then $\y \in \hat\cR_*(z)$.
We will then show that if $\x,\y\in \hat\cR_*(z)$, then $\x \simstar \y$.

For the first step, it suffices to take  $\x$ and $\y$ to be the left and right sides of \eqref{invbraid}, respectively, and show that if $\x$ is an  involution word, then $\y$ is also.
Let $u = \hat\delta_*(\a)$ and write $\theta=\theta_\a$ for the map $g\mapsto (ugu^{-1})^*$.
Since $\x$ and $\y$ have the same length, it is enough to check  that both words have the same image under the map $\hat\delta_*:\Mon{S} \to \I_*$.
For this, we may assume without loss of generality that $k=0$ in \eqref{invbraid}.
After this reduction it is straightforward, using the usual braid relations
and the fact that 
if $\theta$ permutes $\{s,t\}$ then $us = \theta(s)^*u$ and $ut = \theta(t)^*u$,
to check that 
indeed
 $\hat\delta_*(\x) = \hat\delta_*(\y) \in \{  \Delta^* u \Delta, \Delta^*  u s\Delta, \Delta^*  u t\Delta, x\Delta\}$ where $\Delta$ is the longest element of $W_{\{ s,t\}}$.

For the second step of the proof, we proceed by induction.
Suppose $\x = (s_1,\dots,s_k)$ and $\y = (t_1,\dots,t_k)$ are two  involution words for some $z \in \I_*$. 
If $ k =0$, then $\x = \y$ so $\x \simstar \y$.
Assume $k>0$ and that $\hat\cR_*(v)$ is an equivalence class under $\simstar$ whenever $v \in \I_*$ has $\ellhat_*(v) < k = \ellhat_*(z)$.
If $s_k = t_k$ then our hypothesis implies $(s_1,\dots,s_{k-1}) \simstar (t_1,\dots,t_{k-1})$ in which case it is clear by definition that $\x \simstar \y$.
Let $s=s_k$ and $t=t_k $ and assume $s\neq t$. Then both $s,t \in \DesR(z)$, so by Lemma \ref{matsu-lem}(a) we have $m(s,t) < \infty$, and if $\Delta$ is the longest element of $W_{\{s,t\}}$ as in \eqref{delta}, then we can write $z = w\Delta$ for $w \in W$ with $\ell(z) = \ell(w) + \ell(\Delta)$. There are four cases to consider according to whether $s^*$ and $t^*$ are left descents of $x$. 
In each case we deduce that $\x \simstar \y$ as follows:
\begin{itemize}
\item Suppose $s^*w < w$ and $t^*w < w$. By Lemma \ref{matsu-lem}(b), we can write $w = \Delta^*v$ for  $v \in W$ with $\ell(w) = \ell(\Delta^*)+\ell(v)$. Thus $z = \Delta^* v \Delta$ and $\ell(z) = \ell(\Delta^*) + \ell(v) + \ell(\Delta)$. Since $\Delta=\Delta^{-1}$, it follows that $v \in \I_*$ and   if $(r_1,\dots,r_j) \in \hat\cR_*(y)$ then both 
\[ (r_1,\dots,r_j,\underbrace{\dots,t,s,t,s}_{m(s,t)\text{ terms}})\qquand 
(r_1,\dots,r_j,\underbrace{\dots,s,t,s,t}_{m(s,t)\text{ terms}})
 \]
 are  involution words for $z$. These words are equivalent under $\simstar$ by definition, as we have $m(s,t) = m(s,t;\theta)$  since otherwise the involution words would not be reduced. On the other hand, the left word is equivalent  to $\x$ and the right to $\y$ by  induction, since their respective last elements coincide.
We conclude by transitivity that $\x \simstar \y$.

\item Suppose $s^*w > w$ and $t^*w > w$. 
Since $zs<z$ and $z^* = z^{-1}$, we have $s^*z < z$. As $s^*w \not < w$, it  follows by the exchange principle that $s^*w \in \{ws,wt\}$. By identical reasoning, we must have $t^*w \in \{ws,wt\}$,
so if $\theta \in \Aut(W)$ is the map $g \mapsto (wgw^{-1})^*$ then $\theta$ permutes the set $\{s,t\}$. Therefore $\theta(\Delta) = \Delta$, so  
since $z^* = w^* \Delta^* = \Delta w^{-1} = z^{-1}$ we deduce that 
$\Delta =\theta(\Delta) = (w\Delta w^{-1})^*= \Delta (w^*w)^{-1}.$
Thus   
$w \in \I_*$. 
Now, if  $(r_1,\dots,r_j) \in \hat\cR_*(w)$, then since $ws = \theta(s)^*w$ and $wt = \theta(t)^*w$,  
 both
\[ (r_1,\dots,r_j,\underbrace{\dots,t,s,t,s}_{m(s,t;\theta)\text{ terms}})\qquand 
(r_1,\dots,r_j,\underbrace{\dots,s,t,s,t}_{m(s,t;\theta)\text{ terms}})
 \]
 are  involution words for $z$. These two words are equivalent under $\simstar$, and the left word is equivalent  to $\x$ and the right to $\y$ by  induction.
We again conclude that $\x \simstar \y$.

\item Suppose $s^*w < w$ and $t^*w > w$. As in the previous case, it follows that $t^*w=wr$ for some $r \in \{s,t\}$, and so  we have  $s^*(t^*w) = (s^*w)r< wr=t^*w$ and $t^*(t^*w)=w<t^*w$. 
By Lemma \ref{matsu-lem}(b), therefore, we can  write $w =(t \Delta)^*v$ for  $v \in W$ with $\ell(w) = \ell((t\Delta)^*)+\ell(v)$. Thus $z = (t\Delta)^* v \Delta$ and $\ell(z) = \ell((t\Delta)^*) + \ell(v) + \ell(\Delta)$. Let $u = s$ if $m(s,t)$ is odd and let $u=t$ if $m(s,t)$ is even. Since $t^*w=wr$ it follows that $u^*v = vr$ and since  $z^*=z^{-1}$ it follows that
$ uv^* = v^{-1} u^*$. From these   identities we deduce that $vr \in \I_*$. This implies in turn that $r=v$, since otherwise the identity
\[ z = (t\Delta)^* (vr) (r\Delta) = (z^*)^{-1} = (\Delta r)^* (vr) (\Delta t) = (\Delta ru)^* v (\Delta t)\]
leads to the contradiction
$\ell(z) = 2m(s,t) - 1 + \ell(v)  \leq \ell(\Delta ru) + \ell(v) + \ell(\Delta t) = 2m(s,t) -3 + \ell(v)$.
Thus   $r^*v = u^*v = vr$,
so    $v \in \I_*$; moreover,  if $(r_1,\dots,r_j) \in \hat\cR_*(v)$ then both
\[ (r_1,\dots,r_j,\underbrace{\dots,t,s,t,s}_{m(s,t)\text{ terms}})\qquand 
(r_1,\dots,r_j,\underbrace{\dots,s,t,s,t}_{m(s,t)\text{ terms}})
 \]
 are  involution words for $z$.
As in the previous cases, these words are equivalent   to each other by definition and to $\x$ and $\y$ respectively by induction, so $\x \simstar \y$.

\item If $s^*w > w$ and $t^*w < w$, then one shows that $\x \simstar \y$ by the same argument as in the previous case, with the roles of $s$ and $t$ interchanged.

\end{itemize}
We conclude from this case analysis that $\sim_{\I_*}$ spans $\hat\cR(z)$, which completes the proof.
\end{proof}

We give one application of  Theorem \ref{matsumoto-twist} to conclude this section.
An element $w \in W$  is \emph{fully commutative} 
if in every reduced word $(s_1,s_2,\dots,s_k) \in \cR(w)$,
there are no indices $0 \leq i \leq k-m$ such that $(s_{i+1},\dots,s_{i+m}) = (s,t,s,t,\dots)$ for some $s,t \in S$ with $m(s,t) =m> 2$.
Thus, an element is fully commutative if and only if any of its reduced words can be obtained from any other by successively switching pairs of adjacent commuting generators.
Stembridge \cite{Stem}   first 
introduced this terminology, though some notable results about  fully commutative elements  appear earlier in the literature.
 Billey, Jockusch, and Stanley  \cite[Theorem 2.1]{BJS}  classified the fully commutative element in the symmetric group: these  are precisely the permutations which are 321-avoiding.
Combining this fact with Corollary \ref{prec1-cor} gives the following:

\begin{corollary} If $x \in \I(S_n)$ then $|\cA(x)| = 1$ if and only if $x$ is fully commutative.
\end{corollary}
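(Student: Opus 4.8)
The plan is to chain together two equivalences that are already available, both of which pass through the single condition of $321$-avoidance. First I would invoke Corollary \ref{prec1-cor}, which establishes that for $x \in \I(S_n)$ one has $|\cA(x)| = 1$ exactly when $x$ is $321$-avoiding. Second I would invoke the theorem of Billey, Jockusch, and Stanley \cite[Theorem 2.1]{BJS}, recalled in the paragraph immediately preceding the statement, which asserts that an element of $S_n$ is fully commutative if and only if it is $321$-avoiding. Composing these gives $|\cA(x)| = 1 \Leftrightarrow x \text{ is } 321\text{-avoiding} \Leftrightarrow x \text{ is fully commutative}$, which is precisely the claim.

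Consequently the entire content of the corollary reduces to citing these two prior results, and there is no genuine obstacle to overcome here: the substantive work was done earlier, in the proof of Theorem \ref{prec1-thm} (which exhibits the poset $\(\cA(x)^{-1}, \prec_\cA\)$ with extremal elements $\hat0(x)$ and $\hat1(x)$, so that $|\cA(x)| = 1$ iff $\hat0(x) = \hat1(x)$) and in Lemma \ref{321-lem} (which identifies the condition $\hat0(x) = \hat1(x)$ with $321$-avoidance of $x$). The only point I would verify is that the Billey--Jockusch--Stanley classification of fully commutative elements applies to every permutation in $S_n$, and hence in particular to involutions, so that the restriction to $x \in \I(S_n)$ introduces no complication; this is immediate, since their characterization is stated for arbitrary elements of the symmetric group. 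I therefore expect the proof to consist of a single sentence combining Corollary \ref{prec1-cor} with \cite[Theorem 2.1]{BJS}.
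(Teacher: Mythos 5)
Your proposal is correct and is exactly the argument the paper uses: the corollary is obtained by combining Corollary \ref{prec1-cor} ($|\cA(x)|=1$ iff $x$ is $321$-avoiding) with the Billey--Jockusch--Stanley characterization of fully commutative permutations as the $321$-avoiding ones. Nothing further is needed.
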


This statement does not generalize to arbitrary Coxeter groups: outside of type $A$, it can occur that $|\cA_*(x)| = 1$ without $x \in \I_*$ being fully commutative. One direction of the  corollary does (nearly) hold in general, however, as we show  in the next proposition.


\begin{proposition}\label{fc-prop}
Assume $m(s,s^*) \neq 2$ for all $s \in S$. If $x \in \I_*$ is fully commutative, then there exists a  fully commutative element $w \in W$ such that $\hat \cR_*(x) = \cR(w)$ and $\cA_*(x) = \{w\}$. \end{proposition}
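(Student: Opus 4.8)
The plan is to prove the two equalities $\cA_*(x)=\{w\}$ and $\hat\cR_*(x)=\cR(w)$ at once by reducing the second to the first: Proposition~\ref{atomexist-prop} gives $\hat\cR_*(x)=\bigcup_{u\in\cA_*(x)}\cR(u)$, so it suffices to produce a single fully commutative $w$ with $\cA_*(x)=\{w\}$, after which $\hat\cR_*(x)=\cR(w)$ is automatic. I would argue by induction on $\ellhat_*(x)$, the case $x=1$ being trivial with $w=1$.

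For the inductive step, fix $s\in\DesR(x)$ and set $y=x\act_* s$, so $y\in\I_*$ and $\ellhat_*(y)=\ellhat_*(x)-1$ by Corollary~\ref{h2-cor}. Since $s\in\DesR(x)$ and, by Corollary~\ref{h2-cor}, $s^*\in\DesL(x)$, the element $y$ is a reduced two-sided factor of $x$ (either $y=xs$ or $y=s^*xs$); as every reduced factor of a fully commutative element is fully commutative \cite{Stem}, $y$ is again fully commutative. By induction there is a unique fully commutative $v\in\cA_*(y)$ with $\hat\cR_*(y)=\cR(v)$. Applying Proposition~\ref{atomdes-prop}(a) to $y=x\act_* s$ (for which $s\notin\DesR(y)$ and $y\act_* s=x$), the map $u\mapsto us$ is a bijection from the set of atoms of $x$ having $s$ as a right descent onto $\cA_*(y)$; hence $w:=vs$ is the unique atom of $x$ with $s\in\DesR(w)$, and $\ell(w)=\ell(v)+1=\ellhat_*(x)$.

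Two claims remain, and they are the heart of the matter: (I) no atom of $x$ lacks $s$ as a right descent, so that $\cA_*(x)=\{w\}$; and (II) the element $w=vs$ is fully commutative. Both are where full commutativity of $x$ and the hypothesis $m(s,s^*)\neq 2$ enter, and I would deduce them from Theorem~\ref{matsumoto-twist}. Since $\hat\cR_*(x)$ is a single equivalence class under $\simstar$, any two involution words of $x$ are joined by a chain of involution braid relations, which I would sort by the value $m(a,b;\theta_\a)$ of Proposition~\ref{mstar}: a relation with $m(a,b;\theta_\a)=m(a,b)$ is an ordinary braid relation and preserves the underlying group element $\delta(\cdot)$, whereas a \emph{truncated} relation, with $m(a,b;\theta_\a)<m(a,b)$, passes between two \emph{distinct} atoms. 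Thus it suffices to show that no word in $\hat\cR_*(x)$ contains an alternating block $(a,b,a,\dots)$ of length $m(a,b;\theta_\a)$ with $m(a,b)\geq3$; this guarantees that the only $\simstar$-moves among the involution words of $x$ are ordinary commutations, which forces all of $\hat\cR_*(x)$ to have a common value of $\delta$ (giving (I)) and makes that common atom fully commutative (giving (II)).

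The main obstacle is exactly this last claim. The strategy would be to show that such a block in some $\x\in\hat\cR_*(x)$ forces a forbidden alternating factor in a reduced word of $x=\hat\delta_*(\x)=\delta(\x^{*\op},\x)$, contradicting full commutativity of $x$. For an honest braid block ($m(a,b;\theta_\a)=m(a,b)\geq3$) this propagation should be direct; the delicate case is a truncated block, where Proposition~\ref{mstar} leaves only $m(a,b)\in\{3,4\}$ with $\theta_\a$ stabilizing $\{a,b\}$. Here one must analyze the effect of $\hat\delta_*$ on the short block together with its starred mirror image, and it is precisely the degenerate configuration excluded by $m(s,s^*)\neq 2$ — modelled on the rank-two example $W=\langle s\rangle\times\langle s^*\rangle$ with $x=ss^*$, which is fully commutative yet has the two distinct atoms $s$ and $s^*$ — that would otherwise survive. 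Ruling out this configuration under the standing hypothesis, and verifying that the propagation genuinely contradicts full commutativity of $x$, completes the induction.
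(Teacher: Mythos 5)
Your framework is reasonable and shares the paper's key ingredients, but the proof is not complete: the statement you yourself call ``the main obstacle'' --- that a non-commutation $\simstar$-move inside $\hat\cR_*(x)$ contradicts full commutativity of $x$ --- is exactly the content of the proposition, and you only sketch a strategy for it. (Once that claim is in hand, your deductions of (I) and (II) are correct, but they apply directly to $x$; the induction on $\ellhat_*(x)$ and the passage to $y=x\act_*s$ add nothing.) The paper closes the gap as follows: if $|\cA_*(x)|>1$, Theorems \ref{matsumoto} and \ref{matsumoto-twist} produce an involution word $\a\b\c$ of $x$ in which $\b$ is a truncated block $(s,t,s,\dots)$ of length $m(s,t;\theta_\a)<m(s,t)$ with $\theta_\a$ stabilizing $\{s,t\}$; then $v:=\hat\delta_*(\a\b)=u\Delta$ with $\ell(v)=\ell(u)+\ell(\Delta)$ and $v\leq_{T,*}x$, and since fully commutative elements form a lower set under $\leq_{T,*}$ (via Stembridge's result for the weak orders), it suffices to show $v$ is not fully commutative. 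For $m(s,t)>2$ this is immediate because $v$ has a reduced word ending in the full alternating word for $\Delta$; for $m(s,t)=2$ one has $m(s,t;\theta_\a)=1$, $\theta_\a$ swaps $s$ and $t$, and $v=ust=s^*us$, so the fact that each generator occurs the same number of times in every reduced word of a fully commutative element forces $s^*=t$, contradicting $m(s,s^*)\neq 2$. Your ``propagation'' idea points in this direction but is never executed.

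There are also two concrete errors in the sketch. First, the reduction ``it suffices to show that no word in $\hat\cR_*(x)$ contains an alternating block of length $m(a,b;\theta_\a)$ with $m(a,b)\geq 3$'' does not guarantee that the surviving moves are ordinary commutations: it leaves the truncated relations with $m(a,b)=2$ and $\theta_\a$ swapping $a,b$, which are single-letter substitutions $(\dots,a,\dots)\simstar(\dots,b,\dots)$ that do change the atom. These are precisely the moves the hypothesis $m(s,s^*)\neq 2$ must exclude --- your own rank-two example $x=ss^*$ is of this type --- so they cannot be set aside by restricting to $m(a,b)\geq 3$. Second, the claim that Proposition \ref{mstar} restricts truncated blocks to $m(a,b)\in\{3,4\}$ is false: truncation occurs for every odd $m\geq 3$ with $\theta_\a$ stabilizing $\{a,b\}$, every even $m\geq 4$ with $\theta_\a$ fixing both letters, and every even $m\geq 2$ with $\theta_\a$ swapping them. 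In fact the cases $m(a,b)\geq 3$ are the easy ones (they place a visible long braid factor below $x$); the delicate case is $m(a,b)=2$, which your case division misplaces.
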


\begin{remark}
Note that one has $m(s,s^*) \neq 2$ for all $s \in S$, for example, whenever $*=\id$ is the identity automorphism.
The result is false without this assumption, for if $s \in S$ is such that $ss^* = s^*s \neq 1$ then
  $ss^*  \in \I_*$ is fully commutative but  $\cA_*(ss^*) = \{s,s^*\}$.
\end{remark}

\begin{proof}
We denote the concatenation of two words $\textbf{u},\textbf{v} \in \Mon{S}$ by $\textbf{u}\textbf{v}$. Fix $x \in \I_*$.
Suppose $|\cA_*(x)| > 1$; we will  show that $x$ is not fully commutative. 
By Theorems \ref{matsumoto} and \ref{matsumoto-twist},
$x$ must have an involution word of the form $\a\b\c$ where $\a,\b,\c\in \Mon{S}$ are such that
\begin{itemize}
\item[(i)] $\a \in \hat \cR_*(u)$ for some twisted involution $u\in \I_*$.
\item[(ii)] $\b$ is an $m$-element sequence of the form $(s,t,s,\dots)$, where  $s,t \in S$ are distinct simple generators such that  $\theta_{\a}(\{s,t\}) =\{s,t\}$ and   $m=m(s,t;\theta_{\a}) < m(s,t) <\infty$. 
\end{itemize}
Let $v = \hat\delta_*(\a\b)$ with $\hat\delta_*$ given as in \eqref{delta*}; note that $\hat\delta_*(\a) = u$, that $\theta_{\a}$ in (ii) is the map $ g \mapsto (ugu^{-1})^*$, and that by definition $v \leq_{T,*} x$.
 It suffices to show that $v$ is not fully commutative,
since the set of fully commutative elements in $W$ is a lower set under both the left and right weak orders \cite[Proposition 2.4]{Stem}, and hence also under $\leq_{T,*}$.

To this end, let $\Delta$  denote   the longest element of  $W_{\{s,t\}}$ and note  that since $\theta_{\a}(\{s,t\}) =\{s,t\}$ and $\a\b \in \hat\cR_*(v)$ by construction, we have
\[v = \idem{u} { \underbrace{\idem{\idem{\idem{s}{t}}{s}}{\cdots}}_{m(s,t;\theta_{\a}) \text{ terms}}} = u\Delta \qquand \ell(v) = \ell(u) + \ell(\Delta).\]
If $m(s,t) = \ell(\Delta)>2$ then $v$ is evidently not fully commutative, so assume $m(s,t) = 2$. 
Since  $0<m(s,t;\theta_\a) < m(s,t)$ by hypothesis, we must  have $m(s,t;\theta_\a) = 1$, so it follows by Proposition \ref{mstar}    
that $\theta_\a$ is the nontrivial permutation of $\{s,t\}$, and  therefore 
$ \ell(v) = \ell(u) + 2 $ and $ v = ust = s^*us = t^*ut.$
But in any reduced word of a fully commutative element, a fixed generator must appear the same number of times, so $v = ust = s^*us$ can only be fully commutative if $s^*=t$. As $s^* \neq t$ since  $m(s,s^*) \neq 2=m(s,t)$,
we deduce that  $v$ is not fully commutative, as required.
\end{proof}

\end{document}